\documentclass[a4paper, reqno]{article}

\usepackage{a4wide}

\linespread{1.05}        
\normalfont

\usepackage{amsmath}
\usepackage{amsthm}
\usepackage{amssymb}
\usepackage{mathtools}
\mathtoolsset{centercolon} 

\usepackage{xcolor}


%
%

\usepackage[arrow, matrix, curve]{xy}

\usepackage{tikz}
\usetikzlibrary{calc,matrix,arrows}

\usepackage{enumitem}

\setenumerate[1]{label={\textrm{(\roman*)}}}
\setenumerate[2]{label={\textrm{(\alph*)}}}

\numberwithin{equation}{section}



\usepackage[alphabetic,backrefs]{amsrefs}

%
%
\theoremstyle{definition}
	\newtheorem{definition}{Definition}[section]
	\newtheorem{defn}[definition]{Definition} 
	\newtheorem*{definition*}{Definition}
	\newtheorem{example}[definition]{Example}
        \newtheorem{notation}[definition]{Notation}
\theoremstyle{plain}
	\newtheorem{lemma}[definition]{Lemma}
	
	\newtheorem{convention}[definition]{Convention}
	\newtheorem{proposition}[definition]{Proposition}
	\newtheorem{prop}[definition]{Proposition}
	\newtheorem{theorem}[definition]{Theorem}
	\newtheorem{thm}[definition]{Theorem}
	\newtheorem{fact}[definition]{Fact}
	\newtheorem*{theorem*}{Theorem}
	\newtheorem{corollary}[definition]{Corollary}

	\newtheorem{claim}{Claim}
        \newtheorem{observation}[definition]{Observation}
        
\theoremstyle{remark}

	\newtheorem{remark}[definition]{Remark}

\newenvironment{axioms}[1]{%
\begin{enumerate}[%
 label={\bfseries({#1}\arabic*)},
 ref={({#1}\arabic*)},
 leftmargin=*]}%
{\end{enumerate}}

\newenvironment{axiomsZero}[1]{%
\begin{enumerate}[%
 start=0,
 label={\bfseries({#1}\arabic*)},
 ref={({#1}\arabic*)},
 leftmargin=*]}%
{\end{enumerate}}

%
%


\DeclareMathOperator{\ad}{ad}
\DeclareMathOperator{\Ad}{Ad}
\DeclareMathOperator{\Aut}{Aut}
\DeclareMathOperator{\AutEff}{Aut_{\mathrm{eff}}}

\DeclareMathOperator{\Fix}{Fix}
\DeclareMathOperator{\Hol}{Hol}

\DeclareMathOperator{\id}{id}
\DeclareMathOperator{\Id}{Id}
\DeclareMathOperator{\Inn}{Inn}
\DeclareMathOperator{\Out}{Out}
\DeclareMathOperator{\proj}{proj}
\DeclareMathOperator{\rk}{rk}
\DeclareMathOperator{\Stab}{Stab}
\DeclareMathOperator{\Sym}{Sym}
\DeclareMathOperator{\Trans}{Trans}

\newcommand{\Geff}{G_{\mathrm{eff}}}


\newcommand{\C}{\mathbb{C}}
\newcommand{\E}{\mathbb{E}} 
\newcommand{\F}{\mathbb{F}}
\newcommand{\HH}{\mathbb{H}}
\newcommand{\K}{\mathbb{K}}
\newcommand{\N}{\mathbb{N}}
\newcommand{\Q}{\mathbb{Q}}
\newcommand{\R}{\mathbb{R}}
\newcommand{\Z}{\mathbb{Z}}

\newcommand{\AAA}{\mathcal{A}}
\newcommand{\CCC}{\mathcal{C}}

\newcommand{\EEE}{\mathcal{E}}
\newcommand{\FFF}{\mathcal{F}}
\newcommand{\GGG}{\mathcal{G}}
\newcommand{\LLL}{\mathcal{L}}

\newcommand{\TTT}{\mathcal{T}}
\newcommand{\UUU}{\mathcal{U}}
\newcommand{\VVV}{\mathcal{V}}

\newcommand{\XXX}{\mathcal{X}}


\newcommand{\GL}{\mathrm{GL}}

\newcommand{\SL}{\mathrm{SL}}

\newcommand{\SO}{\mathrm{SO}}



\DeclarePairedDelimiter\gen{\langle}{\rangle}

\newcommand{\xto}{\xrightarrow}
\newcommand{\into}{\hookrightarrow}

\let\ol\overline

\renewcommand{\iff}{\;\Leftrightarrow\;}

\newcommand{\disjoint}{\bigsqcup}

\newcommand\mtr[4]{\left(\begin{matrix} {#1} & {#2} \\ {#3} & {#4} \end{matrix}
\right) }

\renewcommand\phi\varphi
\renewcommand\epsilon\varepsilon
\newcommand\eps\varepsilon

\newcommand{\mX}{\widehat{\mu}}  
\newcommand{\mK}{\mu}  
\newcommand{\mT}{\widetilde{\mu}}  


\newcommand{\Defn}[1]{{\textbf{#1}}}

\begin{document}

\title{Kac--Moody symmetric spaces}
\author{Walter Freyn \and Tobias Hartnick \and Max Horn \and Ralf K\"ohl}

\maketitle

\begin{abstract}
  In the present article we introduce and study a class of topological reflection spaces that we call Kac--Moody symmetric spaces. These are associated with split real Kac--Moody groups and generalize Riemannian symmetric spaces of non-compact split type. 
  
  Based on work by the third-named author we observe that in a non-spherical Kac--Moody symmetric space there exist pairs of points that do not lie on a common geodesic; however, any two points can be connected by a chain of geodesic segments. We moreover classify maximal flats in Kac--Moody symmetric spaces and study their intersection patterns, leading to a classification of global and local automorphisms. Some of our methods apply to general topological reflection spaces beyond the Kac--Moody setting.
  
  Unlike Riemannian symmetric spaces, non-spherical non-affine irreducible Kac--Moody symmetric spaces also admit an invariant causal structure. For causal and anti-causal geodesic rays with respect to this structure we find a notion of asymptoticity, which allows us to define a future and past boundary of such Kac--Moody symmetric space. We show that these boundaries carry a natural polyhedral cell structure and are cellularly isomorphic to geometric realizations of the two halves of the twin buildings of the underlying split real Kac--Moody group. We also show that every automorphism of the symmetric space is uniquely determined by the induced cellular automorphism of the future and past boundary. 
  
  The invariant causal structure on a non-spherical non-affine irreducible Kac--Moody symmetric space gives rise to an invariant pre-order on the underlying space, and thus to a subsemigroup of the Kac--Moody group. 
  
  We conclude that while in some aspects Kac--Moody symmetric spaces closely resemble Riemannian symmetric spaces, in other aspects they behave similarly to {\em masures}, their non-Archimedean cousins. 
\end{abstract}

\section{Introduction}

Kac--Moody groups over a local field $\K$ as for instance studied in \cite{Rousseau06}, \cite{GaussentRousseau08}, \cite{HartnickKoehlMars}, \cite{GaussentRousseau14}, \cite{HartnickKoehl},  \cite{Bardy-PanseGaussentRousseau} are infinite-dimensional generalizations of the groups of $\K$-points of (split) semisimple algebraic groups. From a geometric point of view, semisimple groups over local fields arise as subgroups of the isometry groups of Riemannian symmetric spaces (in the Archimedean case) and Euclidean buildings (in the non-Archimedean case). It is thus natural to ask whether Kac--Moody groups over local fields admit a similar geometric interpretation.

For Kac--Moody groups over non-Archimedean local fields such a geometric interpretation is described in \cite{Rousseau:2011}, where the author discusses the notion of a {\em masure affine ordonn\'ee}  (sometimes translated as \emph{ordered affine hovel} into English, e.g.\ in \cite{GaussentRousseau08}). {\em Masures} are certain generalizations of Euclidean buildings that admit an action by a Kac--Moody group over a non-Archimedean local field $\K$, generalizing the notion of a Bruhat--Tits building endowed with the action of the $\K$-points of a split semisimple group. 

In the present article we investigate the Archimedean situation, focussing on the split real case. We introduce a generalization of Riemannian symmetric spaces of non-compact split type,  which we call Kac--Moody symmetric spaces and on which split real Kac--Moody groups act in a way that generalizes the action of semisimple split real Lie groups on their Riemannian symmetric spaces. It turns out that in this setting one can observe both phenomena that one is familiar with from the finite-dimensional theory and phenomena that are specific to the infinite-dimensional situation; some of these infinite-dimensional phenomena in fact have non-Archimedean analogs in the theory of {\em masures}. 
%

A key structural problem that one has to face when generalizing the notion of a Riemannian symmetric space, is that the latter is originally defined in terms of a smooth Riemannian metric on a manifold; we are unaware of any reasonable notion of smoothness on the kind of homogeneous spaces on which a (non-spherical and non-affine) real Kac--Moody group naturally acts, nor are these spaces metrizable with respect to their natural topologies. Our starting point is thus an alternative characterization of affine symmetric spaces, due to Ottmar Loos \cite{Loos68, Loos67}.

\begin{fact}[Loos \cite{Loos68, Loos67}] Let $\XXX$ be an affine symmetric space, and given $x, y \in \XXX$ denote by $x \cdot y$ the point reflection of $y$ at $x$. Then
$\mu: \XXX \times \XXX \to \XXX$, $\mu(x,y) := x \cdot y$ is a $C^1$-map satisfying the following axioms:
\begin{axioms}{RS}
\item[{\bfseries(RS1)}] for any $x\in \XXX$ we have $x\cdot x=x$,
\item[{\bfseries(RS2)}] for any pair of points $x,y\in \XXX$ we have $x\cdot(x\cdot y)=y$,
\item[{\bfseries(RS3)}] for any triple of points $x,y,z\in \XXX$ we have  $x\cdot( y\cdot z) = (x\cdot y) \cdot (x\cdot z)$,
\item[{\bfseries(RS4${}_{\rm loc}$)}] every $x \in \XXX$ has a neighbourhood $U$ such that $x\cdot y=y$ implies $y=x$ for all $y \in U$.
\end{axioms}
Conversely, if $\XXX$ is a smooth manifold and $\mu:  \XXX \times \XXX \to \XXX$ is a $C^1$-map subject to (RS1)--(RS4${}_{\rm loc}$) above, then $\XXX$ is an affine symmetric space, and $\mu(x,y)$ is the point reflection of $y$ at $x$. If $\XXX$ is a Riemannian symmetric space, then the isometries of $\XXX$ are exactly the $C^1$-maps $\alpha: \XXX \to \XXX$ satisfying $\alpha(x \cdot y) = \alpha(x) \cdot \alpha(y)$. If $\XXX$ is moreover of the non-compact type, then instead of the local condition (RS4${}_{\rm loc}$) it satisfies the global condition
\begin{axioms}{RS}
\item[{\bfseries(RS4)}] $x\cdot y=y$ implies $y=x$ for all $y \in \XXX$.
\end{axioms}
\end{fact}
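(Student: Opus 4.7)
My plan has two main parts: verify the axioms directly from the definition in the forward direction, and sketch the Lie-theoretic reconstruction of the affine symmetric space structure in the converse.

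For the forward direction, the point reflection $s_x$ at $x$ on an affine symmetric space is an involutive affine automorphism with $s_x(x) = x$ and $ds_x|_x = -\id$, depending smoothly on $x$, so $\mu(x,y) := s_x(y)$ is in particular $C^1$. Axioms (RS1) and (RS2) restate the defining properties of $s_x$. For (RS3), conjugating the symmetry $s_y$ by the affine automorphism $s_x$ yields an involutive affine map whose derivative at $s_x(y)$ equals $-\id$, hence coincides with $s_{s_x(y)}$; evaluating at $s_x(z)$ and applying (RS2) gives (RS3). For (RS4${}_{\rm loc}$), the inverse function theorem applied to the map $y \mapsto s_x(y) - y$ in a local chart, using $d(s_x - \id)|_x = -2\,\id$, shows $x$ is an isolated zero, i.e.\ an isolated fixed point of $s_x$.

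For the converse I would follow Loos's original strategy. Given $\mu$ satisfying the axioms, set $s_x := \mu(x,\cdot)$; each $s_x$ is a $C^1$-involution with isolated fixed point $x$, and rewriting (RS3) as $s_x \circ s_y = s_{s_x(y)} \circ s_x$ shows that each $s_x$ is an automorphism of $(\XXX,\mu)$. The heart of the argument is to extract a Lie triple system from the second-order jet of $\mu$ along the diagonal at a base point $x_0$: on $\mathfrak{m} := T_{x_0}\XXX$ one obtains a trilinear bracket which integrates to a finite-dimensional Lie algebra $\g = \mathfrak{h} \oplus \mathfrak{m}$ with involution $\sigma$ whose $+1$-eigenspace is $\mathfrak{h}$. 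One then builds the simply connected Lie group $G$ with induced involution, identifies $\XXX$ with a cover of $G/H$ (where $H$ is an open subgroup of $G^\sigma$), and equips this quotient with Loos's canonical (Nomizu) invariant affine connection, producing an affine symmetric space whose point reflection recovers $\mu$.

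The Riemannian isometry characterization follows because any isometry conjugates geodesic symmetries to geodesic symmetries, while conversely any $C^1$-map $\alpha$ with $\alpha \circ s_x = s_{\alpha(x)} \circ \alpha$ normalizes the transvection group $\langle s_x s_y \mid x,y \in \XXX \rangle$ and induces on $T_{x_0}\XXX$ a linear map intertwining the isotropy representations, which in the non-compact case act as the full orthogonal group of the invariant form on $\mathfrak{m}$, forcing $d\alpha|_{x_0}$ to be isometric; transitivity of the transvection group then promotes this to a global isometry. The global (RS4) in the non-compact case is a consequence of Cartan's fixed point theorem in a Hadamard manifold: the exponential map $\exp_x\colon T_x\XXX \to \XXX$ is a diffeomorphism and $s_x$ corresponds to $-\id$ on $T_x\XXX$, whose only fixed vector is $0$. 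The main obstacle is the Lie-theoretic reconstruction: one must upgrade the $C^1$-regularity of $\mu$ to the smoothness needed for the bracket construction on $T_{x_0}\XXX$ (Loos handles this via rigidity of smooth actions of Lie groups on finite-dimensional manifolds) and then verify that the integration and quotient constructions reproduce the original $\mu$. It is precisely this smoothness-and-integration step that has no analogue in the non-metrizable Kac--Moody setting, motivating the purely topological reflection space approach pursued in the remainder of the paper.
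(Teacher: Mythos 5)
This statement is labelled a \emph{Fact} and the paper gives no proof of it at all --- it is cited to Loos \cite{Loos68, Loos67} --- so the only meaningful comparison is with the standard argument. Your forward direction is fine: conjugating $s_y$ by the affine automorphism $s_x$ and matching fixed point and differential gives (RS3), and the inverse function theorem applied to $y\mapsto s_x(y)-y$ gives (RS4${}_{\rm loc}$). The converse is a reasonable sketch of Loos's reconstruction (Lie triple system from the $2$-jet of $\mu$, integration, canonical connection on $G/H$), and you correctly flag the $C^1$-to-$C^\infty$ bootstrap as the delicate step. The global (RS4) via Cartan--Hadamard and $s_x=\exp_x\circ(-\id)\circ\exp_x^{-1}$ is also correct.

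The genuine gap is in your characterization of isometries. You argue that $d\alpha|_{x_0}$ must be isometric because the isotropy representation ``acts as the full orthogonal group of the invariant form on $\mathfrak m$.'' This is false for essentially every symmetric space of non-compact type other than real hyperbolic space: for $\mathrm{SL}_3(\R)/\mathrm{SO}(3)$ the isotropy group $\mathrm{SO}(3)$ acts on the $5$-dimensional space $\mathfrak m$ of traceless symmetric matrices as a $3$-dimensional subgroup of $\mathrm{O}(5)$, and for complex hyperbolic space the isotropy group is $\mathrm{U}(n)\subsetneq\mathrm{O}(2n)$; in general the isotropy representation is not even transitive on the unit sphere when the rank is $\ge 2$. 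So ``normalizes the isotropy representation'' does not force $d\alpha|_{x_0}$ into the (conformal) orthogonal group by your mechanism. The correct route is different: an automorphism of $(\XXX,\mu)$ preserves the canonical (Nomizu) connection, hence is an affine transformation, and for a Riemannian symmetric space of non-compact (more generally semisimple) type the affine transformation group of the canonical connection coincides with the isometry group --- equivalently, $d\alpha|_{x_0}$ preserves the Lie triple product on $\mathfrak m$ and therefore the restriction of the Killing form, which is the metric up to a scale on each irreducible factor that is then pinned down by comparing with the identity component. As written, your step would fail; with this replacement the argument goes through.
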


Since we are interested in generalizations of Riemannian symmetric spaces of  non-compact type, we define the following:

\begin{definition} A pair $(\XXX, \mu)$ is called a \Defn{topological symmetric space} provided $\XXX$ is a topological space and $\mu: \XXX \times \XXX \to \XXX$, $\mu(x,y) := x \cdot y$ is a continuous map subject to the axioms (RS1)--(RS4) above. The \Defn{automorphism group} $\Aut(\XXX, \mu)$ of $(\XXX, \mu)$ is defined as
\[
\Aut(\XXX, \mu) := \{\alpha: \XXX \to \XXX\mid \alpha \text{ homeomorphism}, \,\alpha(x \cdot y) = \alpha(x) \cdot \alpha(y)\}.
\]
\end{definition}

Loos' theorem strongly uses the differentiability of $\mu$, and not much is known about general topological symmetric spaces without any smoothness assumption. For example, it is not even known to us whether a topological symmetric space which is homeomorphic to a finite-dimensional manifold necessarily arises from an affine symmetric space.

\medskip
We pursue three goals in the present article:
\begin{enumerate}
\item to develop a basic theory of topological symmetric spaces in the absence of any smoothness assumption;
\item to associate a topological symmetric space to a large class of Kac--Moody groups over an Archimedean local field (focusing on the split real case for simplicity);
\item to develop the structure theory of such Kac--Moody symmetric spaces, studying their geodesics, maximal flats, (local and global) automorphisms, causal structures and boundaries.
\end{enumerate}
Our results concerning (i) might actually be of interest beyond Kac--Moody theory. 

The three concepts of flats, geodesics and one-parameter subgroups of the isometry group are of fundamental nature in the study of Riemannian symmetric spaces. The former two are usually defined using the curvature tensor, and the existence of the latter is derived from an existence theorem for solutions of ordinary differential equations. In our topological setting we need to define flats and geodesics without reference to the curvature tensor, and to establish the existence of one-parameter subgroups without analytic tools.

Given a topological symmetric space $(\XXX, \mu)$ we call a subset $\gamma \subset \XXX$ a \Defn{geodesic} if there exists a bijection $\varphi: \R \to \gamma$ such that $\varphi(2x-y) = \mu(\varphi(x), \varphi(y))$ for all $x,y \in \R$. Compact connected subsets of geodesics will be called \Defn{geodesic segments}. We will now explain how geodesics in topological symmetric spaces give rise to one-parameter subgroups of $\Aut(\XXX, \mu)$. To formulate our result we first observe that by (RS3) every $x \in \XXX$ defines an automorphism $s_x \in \Aut(\XXX, \mu)$ by $s_x(y) := x \cdot y$ called the \Defn{point reflection} at $x$. The subgroup of $ \Aut(\XXX, \mu)$ generated by these point reflections will be denoted by $G(\XXX, \mu)$ and called the \Defn{main group} of $\XXX$.

\begin{thm}[Existence of one-parameter subgroups without differentiability assumptions; cf.\ Proposition~\ref{PropTranslationGroups}] \label{thm1.3} Let $(\XXX, \mu)$ be a topological symmetric space. Given a geodesic $\gamma \subset \XXX$ let
\[
T_\gamma := \{ s_p \circ s_q\mid p, q \in \gamma \} \subset G(\XXX, \mu).
\]
\begin{enumerate}
\item $T_\gamma \cong (\R, +)$ is a one-parameter subgroup of $G(\XXX, \mu)$ (and in particular of $\Aut(\XXX, \mu)$).
\item $T_\gamma$ acts sharply transitively on $\gamma$ by Euclidean translations.
\item If $t_1, t_2 \in T_\gamma$ and $t_1|_\gamma = t_2|_\gamma$, then $t_1 = t_2$.
\item If any two points in $\XXX$ can be connected by a finite chain of geodesic segments, then the one-parameter subgroups $T_\gamma$ generate a subgroup of $G(\XXX, \mu)$ of index $\leq 2$.
\end{enumerate}
\end{thm}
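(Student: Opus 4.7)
The plan is to start from basic properties of point reflections: by (RS2) each $s_x$ is an involution, and by (RS3) each $s_x$ is an automorphism of $\mu$; combining these gives the sliding identity $s_x \circ s_y = s_{x \cdot y} \circ s_x$ (equivalently $s_x \circ s_y \circ s_x = s_{x \cdot y}$). Fixing $\varphi: \R \to \gamma$ with $\varphi(2a-b) = \mu(\varphi(a), \varphi(b))$, a direct computation establishes that $s_{\varphi(a)} \circ s_{\varphi(b)}$ preserves $\gamma$ and acts on it as the Euclidean translation $\varphi(c) \mapsto \varphi(c + 2(a-b))$. This yields a surjective ``translation amount'' map $T_\gamma \to \R$ with natural section $t \mapsto \tau_t := s_{\varphi(t/2)} \circ s_{\varphi(0)}$.

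For items (i)--(iii), the heart of the matter is (iii), the injectivity of the translation-amount map. The trivial case is immediate: an element $s_p \circ s_q \in T_\gamma$ with vanishing translation amount forces $p = q$, hence equals $\id_\XXX$ globally. The general case must be reduced to this one via the group structure on $T_\gamma$. Since closure of $T_\gamma$ under composition and (iii) mutually depend on each other, I would establish them in tandem, combining the sliding identity (which yields $s_{\varphi(a)} \circ s_{\varphi(b)} = s_{\varphi(a+d)} \circ s_{\varphi(b+d)}$ at a discrete orbit of shifts $d$) with continuity of $\mu$ to upgrade to the required continuous rigidity statement. Once (iii) is in hand, the sliding identity gives
\[
\tau_t \circ \tau_s = s_{\varphi(t/2)} \circ s_{\varphi(-s/2)} \in T_\gamma,
\]
an element whose $\gamma$-translation amount is $t+s$, so by (iii) it coincides with $\tau_{t+s}$; this yields (i), and (ii) is immediate from the explicit formula for the action of $\tau_t$ on $\gamma$.

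For (iv), set $G_0 := \langle T_\gamma : \gamma \text{ a geodesic}\rangle$. Given a chain $x = x_0, x_1, \ldots, x_n = y$ of geodesic segments with consecutive points on a common geodesic $\gamma_i$, each $s_{x_i} \circ s_{x_{i-1}} \in T_{\gamma_i} \subseteq G_0$, and telescoping yields $s_y \circ s_x \in G_0$ for all $x, y \in \XXX$. Hence $G_0$ coincides with the full transvection group $N := \langle s_a \circ s_b : a, b \in \XXX\rangle$, which is normal in $\Aut(\XXX, \mu)$ via the conjugation rule $\alpha \circ s_a \circ \alpha^{-1} = s_{\alpha(a)}$. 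To bound the index by $2$, I would fix $x_0 \in \XXX$ and argue that $s_{x_0}$ has order dividing $2$ in the quotient $\Aut(\XXX, \mu)/N$, while every $\alpha \in \Aut(\XXX, \mu)$ lies in $N \cup s_{x_0} N$.

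The hardest step is (iii): the sliding identity alone produces equalities among the possible expressions for a single element of $T_\gamma$ only on a discrete orbit of parameter shifts, so bridging to the continuous uniqueness statement requires genuine topological input from the continuity of $\mu$. The index bound in (iv) is also delicate; establishing $\Aut(\XXX, \mu) = N \cup s_{x_0} N$ in the abstract topological setting requires more than transitivity of $N$ on $\XXX$, namely a structural description of $\Aut(\XXX, \mu)$ in terms of point reflections.
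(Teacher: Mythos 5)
Your overall architecture (sliding identity, explicit translation action on $\gamma$, translation-amount map, telescoping for (iv)) matches the paper's, but the central step is left as a sketch whose stated mechanism does not work. The sliding identity $s_{\varphi(a)}\circ s_{\varphi(b)} = s_{\varphi(2a-b)}\circ s_{\varphi(a)}$ only lets you re-represent a given element of $T_\gamma$ by pairs shifted along the \emph{discrete} orbit $(a-b)\Z$, and this set is not dense in $\R$; continuity of $\mu$ therefore cannot ``upgrade'' these identities to what you need (equality $s_{\varphi(a)}\circ s_{\varphi(b)} = s_{\varphi(a+d)}\circ s_{\varphi(b+d)}$ for \emph{all} real $d$, equivalently injectivity of the translation-amount map and closure of $T_\gamma$ under composition). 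The missing idea is the \emph{doubling identity}: writing $t[x,y]:=s_{\varphi(y)}\circ s_{\varphi((x+y)/2)}$ for the transvection taking $\varphi(x)$ to $\varphi(y)$, one computes $t[x,x+y/2]^2=t[x,x+y]$ and, for odd $n$, $t[x,x+y]^n=t[x,x+ny]$; together with $t[x,x+y]\circ t[x,x-y]=\Id$ these show that $y\mapsto t[x,x+y]$ is a homomorphism on $\Q$, and only at that point does continuity of $\mu$ extend it to $\R$. Basepoint-independence $t[x,x+y]=t[0,y]$ --- hence surjectivity of each of these homomorphisms onto $T_\gamma$, hence the group structure and (i)--(iii) --- is then deduced from commutativity of elements in the image. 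Without the doubling trick your ``in tandem'' resolution of the circularity between closure and (iii) has no engine.

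For (iv), your telescoping argument correctly shows that the $T_\gamma$ generate the full transvection group $\Trans(\XXX,\mu)$, which is exactly what the paper proves. But the index bound you then attempt --- index $\le 2$ in all of $\Aut(\XXX,\mu)$ --- is false in general: for $\E^n$ the transvection group is the translation group $\R^n$, while $\Aut(\E^n,\mu_\E)$ is the full affine group, so the index is infinite. The theorem's bound is to be read relative to the main group $G(\XXX,\mu)=\langle s_x\mid x\in\XXX\rangle$, where it is immediate: $s_x\Trans(\XXX,\mu)=s_{x_0}\Trans(\XXX,\mu)$ for all $x$ because $s_x\circ s_{x_0}$ is a transvection, so the quotient is generated by a single involution. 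Your (correct) worry that $\Aut(\XXX,\mu)=N\cup s_{x_0}N$ would require a structural description of $\Aut(\XXX,\mu)$ in terms of point reflections is precisely why that stronger statement is not what is being asserted.
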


As for the definition of a flat in a topological symmetric space, we offer two notions, which we will later show to lead to the same concept in Kac--Moody symmetric spaces. Firstly, we have the following purely synthetic definition:

\begin{definition}
A closed subset $F \subset \XXX$ of cardinality $\geq 2$ is called a \Defn{weak flat} if it satisfies the following properties:
\begin{axioms}{F}
\item $F$ is a \Defn{reflection subspace}, i.e.\ if $x, y \in F$, then $x \cdot y \in F$.
\item $F$ is \Defn{midpoint convex}, i.e.\ if $x,y \in F$ then there exists $z \in F$ with $z \cdot x = y$ (and thus $z \cdot y = x$).
\item $F$ is \Defn{weakly abelian}, i.e.\ for all $x,y,z \in F$ one has
 \[ x \cdot (z \cdot (y\cdot z))=y\cdot (z\cdot (x\cdot z)).\]
\end{axioms}
\end{definition}
Denote Euclidean symmetric space with multiplication $\mu(x,y):= 2x-y$ by $\E^n = (\R^n, \mu)$. A closed reflection subspace $F$ of a topological symmetric space $\XXX$ is called a \Defn{Euclidean flat} of \Defn{rank} $n$ if it is isomorphic to $\E^n$ as a topological reflection space. With this notion a geodesic is just a Euclidean flat of rank $1$, and every Euclidean flat is a weak flat, see Figure \ref{figure:algebraic_flat}.

\begin{figure}[h]
\label{figure:algebraic_flat}
\centering
\begin{tikzpicture}[scale=0.7]

\coordinate (x) at (1,2);
\coordinate (y) at (2,4);
\coordinate (z) at (0,4);

\coordinate (sx-z) at ($(z)!2.0!(x)$);
\coordinate (sy-z) at ($(z)!2.0!(y)$);
\coordinate (sz-sx-z) at ($(sx-z)!2.0!(z)$);
\coordinate (sz-sy-z) at ($(sy-z)!2.0!(z)$);
\coordinate (sy-sz-sx-z) at ($(sz-sx-z)!2.0!(y)$);
\coordinate (sx-sz-sy-z) at ($(sz-sy-z)!2.0!(x)$);

\draw [fill=blue] (x) circle (2pt) node [above right] {$x$};
\draw [fill=blue] (y) circle (2pt) node [above right] {$y$};
\draw [fill=blue] (z) circle (2pt) node [above right] {$z$};

\draw [fill=blue] (sx-z) circle (2pt) node [above right] {$x\cdot z$};
\draw [fill=blue] (sy-z) circle (2pt) node [above right] {$y\cdot z$};

\draw [fill=blue] (sz-sx-z) circle (2pt) node [left] {$z\cdot (x\cdot z)$};
\draw [fill=blue] (sz-sy-z) circle (2pt) node [above] {$z\cdot (y\cdot z)$};

\draw [fill=blue] (sy-sz-sx-z) circle (2pt) node [below right] {$y\cdot (z\cdot (x\cdot z)) = x\cdot (z\cdot (y\cdot z))$};

\draw (z) -- (sx-z) -- (sz-sx-z) -- (sy-sz-sx-z);
\draw (z) -- (sy-z) -- (sz-sy-z) -- (sx-sz-sy-z);

\draw[dashed] (sy-sz-sx-z) -- (sy-z);
\draw[dashed] (sx-sz-sy-z) -- (sx-z);

\end{tikzpicture}
\caption{Euclidean space is weakly abelian.}
\label{fig:alg-flat}
\end{figure}
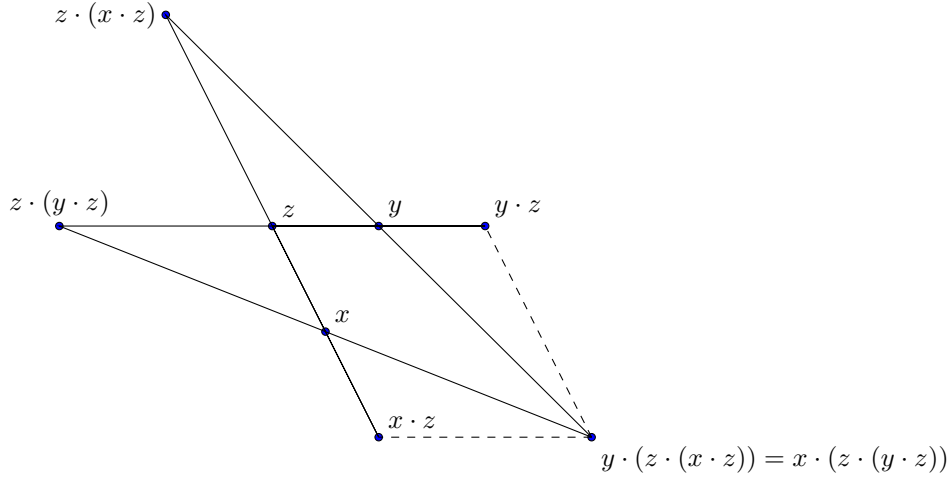

\medskip
We now turn to the main objects of our interest in the present article and introduce Kac--Moody symmetric spaces, a class of topological symmetric spaces associated with (split real) Kac--Moody groups. Given a generalized Cartan matrix {\bf A}  (see Definition~\ref{GCMdef}) we denote by $G = G({\bf A})$ the corresponding \Defn{simply connected centred split real Kac--Moody group} of type {\bf A} (see Definition~\ref{DefTopSplitKM}). Throughout this article we will assume that {\bf A} is \Defn{irreducible} and \Defn{symmetrizable} (see Definition~\ref{GCMdef}), and we will consider $G$ as a topological group with the \Defn{Kac--Peterson topology} (see Definition~\ref{DefTopSplitKM}). For some of our results we will need additional assumptions on {\bf A} (e.g.\ non-spherical, non-affine, on a few occasions two-spherical), but for the basic definitions we do not need any of these assumptions.

There exists a canonical continuous involution $\theta$ of $G$ which on each standard rank one subgroup restricts to the contragredient automorphism $g \mapsto g^{-\top}$. Any conjugate of the involution $\theta$ in the semidirect product $G \rtimes \gen{\theta}$ is called a \Defn{Cartan--Chevalley involution}. The group $G$ acts transitively by conjugation on the set $\XXX_G$ of Cartan--Chevalley involutions, and we equip $\XXX_G$ with the quotient topology with respect to this action.

\begin{proposition}[Cf.\ Corollary \ref{TopRefSpaceExternal}] \label{thm1.5} The space $\XXX_G$ is a topological symmetric space with respect to
\[ \mu: \XXX_G \times \XXX_G \to \XXX_G, \quad \mu(\alpha,\beta) := \alpha\circ \beta \circ \alpha.\]
\end{proposition}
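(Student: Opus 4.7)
The plan is to verify the four reflection-space axioms (RS1)--(RS4) and continuity of $\mu$ on the quotient space $\XXX_G$. The axioms (RS1)--(RS3) are universal identities for involutions in any group: since every Cartan--Chevalley involution satisfies $\alpha^2 = 1$, direct computation gives $\alpha\cdot\alpha = \alpha^3 = \alpha$, then $\alpha\cdot(\alpha\cdot\beta) = \alpha^2\beta\alpha^2 = \beta$, and finally
\[
\alpha\cdot(\beta\cdot\gamma) = \alpha\beta\gamma\beta\alpha = (\alpha\beta\alpha)(\alpha\gamma\alpha)(\alpha\beta\alpha) = (\alpha\cdot\beta)\cdot(\alpha\cdot\gamma).
\]
These three axioms use nothing about Kac--Moody structure, only that the elements in question are involutions.

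For continuity, I would identify $\XXX_G \cong G/K$ via $gK \mapsto g\theta g^{-1}$, where $K := G^\theta = \Stab_G(\theta)$ is the closed subgroup of $\theta$-fixed elements of $G$ (the stabilizer of $\theta$ in $G\rtimes\langle\theta\rangle$ by conjugation is precisely $\{g\in G \mid \theta(g)=g\}$, as a direct semidirect-product calculation shows). The same calculation yields the formula
\[
\mu(gK, hK) \;=\; g\theta(g^{-1})\theta(h)\,K,
\]
exhibiting $\mu$ as the descent of the evidently continuous map $G\times G \to G/K$, $(g,h) \mapsto g\theta(g^{-1})\theta(h)\,K$. Since the quotient projection $\pi\colon G \to G/K$ for a closed subgroup is always open (because $\pi^{-1}(\pi(U)) = UK$ is a union of open right translates), the product $\pi\times\pi$ is open, hence a quotient map, and the descended $\mu$ is continuous.

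The hard axiom is (RS4). By $G$-equivariance of $\mu$, it suffices to treat the case $\alpha = \theta$. Writing $\beta = h\theta h^{-1}$ and carrying out the multiplication in $G\rtimes\langle\theta\rangle$, the commutation $\theta\beta = \beta\theta$ is equivalent to the identity $\theta(h)h^{-1} = h\theta(h^{-1})$, i.e.\ to the element $v := h\theta(h^{-1}) \in G$ satisfying $v^2 = 1$. The desired conclusion $\beta = \theta$ is equivalent to $h\in K$, i.e.\ to $v = 1$. Thus (RS4) reduces to the \emph{polar-type statement} that the image of the map $\sigma\colon G \to G$, $h \mapsto h\theta(h^{-1})$, meets the set of involutions in $G$ only at the identity. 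This is the main obstacle of the proof. In the finite-dimensional situation the statement follows from the Cartan decomposition $G = K\exp(\mathfrak{p})$ together with injectivity of $\exp$ on $\mathfrak{p}$ and $\mathfrak{p}\cap\mathfrak{k} = 0$. In the split real Kac--Moody setting the analogous conclusion is obtained from the Iwasawa-type decomposition $G = KAU^+$: the split torus $A$ is torsion-free, $\theta$ inverts $A$ and interchanges $U^\pm$, and a coordinate analysis of $\sigma$ on the $AU^+$-representatives of $G/K$ forces any involution in the image to be trivial. This structural input, and the verification that the various natural topologies on $\XXX_G$ all agree with the quotient topology, is precisely what is encapsulated in the cited Theorem~\ref{topologiescoincide}.
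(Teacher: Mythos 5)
Your reduction is the right one and matches the paper's: (RS1)--(RS3) are formal identities for involutions, continuity descends through the open quotient $G\times G\to G/K\times G/K$ via the formula $\mu(gK,hK)=\tau(g)\theta(h)K$ (this is Proposition~\ref{PropCosetModel}), and (RS4) is equivalent to the statement that $\tau(G)=\{h\theta(h)^{-1}\}$ contains no nontrivial element of order two, i.e.\ $K\cap\tau(G)=\{e\}$. The gap is that you do not prove this last statement: the sentence ``a coordinate analysis of $\sigma$ on the $AU^+$-representatives of $G/K$ forces any involution in the image to be trivial'' asserts exactly the hard point. A direct attempt runs into trouble: an involution $v\in\tau(G)$ has the form $v=u_+a^2\theta(u_+)^{-1}\in U_+AU_-$, but $v^{-1}=\theta(v)$ naturally lives in $U_-AU_+$, so the uniqueness in the Birkhoff decomposition $G=\bigsqcup_n U_+nU_-$ cannot be applied to compare $v$ with $v^{-1}$ without first re-expanding $v^{-1}$ in the opposite order --- and that re-expansion is not controlled by elementary coordinate manipulations. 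The paper's proof (Proposition~\ref{lem:1=G-cap-tauG}) instead uses genuinely building-theoretic input: a symmetric element of order $\le 2$ has bounded orbits on the twin building, hence by Lemma~\ref{lem:sym-ss} fixes a $\theta$-stable twin apartment chamberwise and so lies in a $\theta$-split torus; conjugating into $T$ by an element of $K$ and invoking $N_G(T)\cap\tau(G)=A$ (Lemma~\ref{lem:T=N-cap-tauG}, which itself combines the Iwasawa decomposition with the refined Birkhoff decomposition of Lemma~\ref{fine birkhoff}) reduces to $A\cap K=\{e\}$. None of this is a routine consequence of $G=KAU^+$ alone.

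A secondary point: the structural input you need is not ``encapsulated in Theorem~\ref{topologiescoincide}'' --- that theorem compares the topologies on the three models of the \emph{reduced} space and is logically downstream of (RS4). The correct references for the missing step are Proposition~\ref{lem:1=G-cap-tauG} together with Lemmas~\ref{lem:sym-ss} and \ref{lem:T=N-cap-tauG}. Your identification $\XXX_G\cong G/K$ with the quotient topology, and the observation that the stabilizer of $\theta$ under conjugation in $G\rtimes\langle\theta\rangle$ is exactly $K=G^\theta$, are correct and are exactly how the paper passes from the involution model to the coset model.
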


\begin{definition} The symmetric space $(\XXX_G, \mu)$ is called the \Defn{unreduced Kac--Moody symmetric space} of the split real Kac--Moody group $G$.
\end{definition}

In the spherical case, i.e., if the Kac--Moody group $G$ actually is a Lie group, this is the (involution model of the) associated Riemannian symmetric space.

If the Cartan matrix ${\bf A}$ is non-invertible, then the center $Z(G)$ of $G$ has positive dimension, given by the corank of ${\bf A}$. In this case, the unreduced Kac--Moody symmetric space $\XXX_G$ fibers over a topological symmetric space $\ol{\XXX}_G$ with fiber given by a Euclidean space of dimension equal to the corank of $\mathbf{A}$, and the adjoint quotient $\Ad(G)$ of $G$ acts on $\ol{\XXX}_G$. We refer to $\ol{\XXX}_G$ as the \Defn{reduced Kac--Moody symmetric space} of $G$. In the case where ${\bf A}$ is non-invertible, it is this reduced version that resembles most closely a Riemannian symmetric space.

\medskip
The following result describes flats in Kac--Moody symmetric spaces.
\begin{theorem}[Flats in Kac--Moody symmetric spaces; cf.\ Section~\ref{classmaxflats}]\label{IntroFlats} Let $\XXX_G$ be an unreduced Kac--Moody symmetric space, and let $\ol{\XXX}_G$ be its reduced quotient.
\begin{enumerate}
\item Every weak flat in $\XXX_G$ or $\ol{\XXX}_G$ is Euclidean. In particular, all weak flats are finite-dimensional and locally compact.
\item Every weak flat in $\XXX_G$ or $\ol{\XXX}_G$ is contained in a maximal weak flat.
\item The projection $\XXX_G \to \ol{\XXX}_G$ induces a bijection between maximal weak flats in $\XXX_G$ and maximal weak flats in $\ol{\XXX}_G$.
\item $G$ acts transitively on pairs $(p, F)$ where $F$ is a maximal weak flat in $\XXX_G$ (or $\ol{\XXX}_G$) and $p \in F$. In particular, all maximal weak flats in $\XXX_G$ (respectively $\ol{\XXX}_G$) are Euclidean spaces of the same dimension $r(\XXX_G)$ (respectively $r(\ol{\XXX}_G)$).
\item $r(\XXX_G)$ equals the number of rows of ${\bf A}$, and $r(\ol{\XXX}_G)$ equals the rank of ${\bf A}$.
\end{enumerate}
\end{theorem}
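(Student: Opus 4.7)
My plan is to construct standard ``model'' flats from the real split torus and then show every weak flat is $G$-conjugate to a subset of one such model flat. Fix the base point $\theta_0 \in \XXX_G$ and let $T_\R = \exp(\mathfrak{a}) \subset G$ be the standard real split torus, where $\mathfrak{a}$ is the $(-1)$-eigenspace of $d\theta_0$ on the standard Cartan subalgebra (of real dimension $|\mathbf{A}|$). Since $\theta_0$ inverts $T_\R$, the conjugation orbit
\[
F_0 := \{t\theta_0 t^{-1} : t \in T_\R\}
\]
is a well-defined subset of $\XXX_G$. A direct calculation using $\mu(\alpha,\beta) = \alpha\beta\alpha$ together with $\theta_0 t = t^{-1}\theta_0$ shows that $a \mapsto e^a \theta_0 e^{-a}$ induces an isomorphism $\E^{|\mathbf{A}|} \xrightarrow{\sim} F_0$ of topological symmetric spaces; in particular $F_0$ is a weak flat. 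Passing to the quotient by the real center of $G$, the analogous construction produces a model flat $\overline{F_0} \cong \E^{\mathrm{rank}(\mathbf{A})}$ in $\overline{\XXX}_G$.

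For (i), I take an arbitrary weak flat $F \subset \XXX_G$ and pick $p_0 \in F$. By the transitive $G$-action on $\XXX_G$ I may assume $p_0 = \theta_0$. For each $q \in F$ the reflection $s_q$ is a Cartan--Chevalley involution, lying in the nontrivial coset of $G$ in $G \rtimes \langle \theta \rangle$, so $\tau_q := s_{\theta_0} s_q \in G$. Let $T_F \subset G$ be the subgroup generated by $\{\tau_q : q \in F\}$. The crucial algebraic step is to show that axiom (F3), unfolded using (RS1)--(RS3) and (F1)--(F2), forces $T_F$ to be an abelian subgroup of $G$; concretely, (F3) with $z = \theta_0$ gives the pointwise identity $\tau_x \tau_y (\theta_0) = \tau_y \tau_x (\theta_0)$, and this propagates to a global commutator relation in $G$ by applying (RS3) along the $T_F$-orbit. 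Since each $s_q$ is an involution, a short computation gives $\theta_0 \tau_q \theta_0 = \tau_q^{-1}$, so every element of $T_F$ is inverted by $\theta_0$. The key Kac--Moody structural input is then that any abelian subgroup of $G$ inverted by the Cartan--Chevalley involution $\theta_0$ is $K$-conjugate into $T_\R$, where $K := G^{\theta_0}$. After that conjugation $F$ sits inside $F_0$, so $F$ is a closed reflection subspace of the Euclidean space $F_0$; since $F$ is moreover midpoint convex and weakly abelian, it is itself Euclidean, hence finite-dimensional and locally compact.

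Parts (ii)--(v) then follow by bookkeeping. For (iv), the argument above $G$-moves any pair $(p, F)$ to $(\theta_0, F_0')$ with $F_0' \subseteq F_0$; if $F$ is maximal then $F_0' = F_0$, giving transitivity on pointed maximal flats. Part (ii) is immediate since every weak flat lies in a $G$-conjugate of the maximal flat $F_0$. For (iii), the fibers of $\XXX_G \to \overline{\XXX}_G$ are orbits of the real center $Z(G)_\R \subset T_\R$, hence are contained in every model flat $F_0$; consequently every maximal weak flat of $\XXX_G$ contains the fiber through each of its points, and the projection bijects maximal flats in $\XXX_G$ with maximal flats in $\overline{\XXX}_G$. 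Part (v) is the dimension count $\dim F_0 = \dim \mathfrak{a} = |\mathbf{A}|$ and $\dim \overline{F_0} = \dim(\mathfrak{a}/\mathfrak{z}(\mathfrak{a})) = \mathrm{rank}(\mathbf{A})$.

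The main obstacle is the structure-theory step in the middle paragraph: classifying abelian subgroups of the real split Kac--Moody group $G$ inverted by a Cartan--Chevalley involution, and showing they are $K$-conjugate into $T_\R$ in the Kac--Petersen topology. In the finite-dimensional Lie case this reduces to the classical conjugacy of maximal $\R$-split abelian subspaces of the $(-1)$-eigenspace of $d\theta_0$; in the Kac--Moody setting one must invoke the corresponding conjugacy results for ``Cartan subspaces of the second kind'' in the sense of Kac--Peterson, whose precise statement in the topological split real setting requires some care. A secondary technicality, on which the whole argument hinges, is verifying rigorously that (F3) does force $T_F$ to be commutative as a subgroup of $G$, not merely commutative in its pointwise action on $F$, since only the former permits the conjugacy argument to be applied.
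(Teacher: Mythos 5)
Your overall strategy (reduce to the standard flat built from the split torus, then show an arbitrary weak flat conjugates into it) is the same as the paper's, but the two steps you yourself flag as obstacles are genuine gaps, and the second one is not merely delicate --- it is false as stated.

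First, the passage from axiom (F3) to commutativity of $T_F$ in $G$. Unwinding (F3) with $z$ the base point gives exactly the pointwise identity $\tau_x\tau_y(\theta_0)=\tau_y\tau_x(\theta_0)$, which in the group model reads $xy^2x=yx^2y$, i.e.\ $[xy,yx]=e$. This is strictly weaker than $[x,y]=e$, and it does not ``propagate to a global commutator relation by applying (RS3)'': weak abelianness is a genuinely weaker condition than abelianness for reflection subspaces, and no formal manipulation of (RS1)--(RS3) upgrades one to the other. In the paper's proof of Theorem~\ref{maximalflatsareeuclidean} this upgrade is the entire content of Claims~\ref{claim:xy-diag} through \ref{claim: xy=yx}: one first shows $xy$ is diagonalizable (using midpoint convexity and Corollary~\ref{lem:mc-ss}), then produces opposite spherical residues stabilized by both $xy$ and $yx$ via a CAT(0) fixed-point argument in the twin building, and finally runs an explicit matrix computation in the reductive stabilizer of such a residue to conclude $xy=yx$. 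None of this is formal reflection-space algebra.

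Second, your ``key Kac--Moody structural input'' --- that every abelian subgroup of $G$ inverted by $\theta_0$ is $K$-conjugate into $T_\R$ --- fails in the non-spherical case. Every element of $\tau(G)$ is inverted by $\theta$ (Lemma~\ref{lem:tau-K-saturation}(i)), so the cyclic group generated by the non-diagonalizable element $\tau(u)$ of Example~\ref{Hole} is an abelian, $\theta$-inverted subgroup that is conjugate into no maximal torus; Theorem~\ref{HoleThm} shows this phenomenon is general and is precisely why Kac--Moody symmetric spaces are non-geodesic (Corollary~\ref{CorHoles}). The hypothesis that rescues the argument is midpoint convexity of $F$: it forces every point of $F$ to be infinitely $2$-divisible in $\tau(G)$, hence to lie in $\bigcap_i\tau^i(G)$, hence to be diagonalizable (Corollary~\ref{lem:mc-ss}). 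Even then one still needs Caprace's result that finitely many commuting diagonalizable one-parameter subgroups normalize, and by connectedness lie in, a common maximal torus (Claim~\ref{claim:torus} of the paper's proof). So midpoint convexity must enter at the conjugacy step, not merely at the end; as written, your middle paragraph cannot be repaired without importing essentially the whole of the paper's argument. The bookkeeping in your final paragraph for (ii)--(v) is fine once (i) and the conjugacy into a standard flat are established.
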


The integers $r(\XXX_G)$ and $r(\ol{\XXX}_G)$ are called the \Defn{rank} of $\XXX_G$ and $\ol{\XXX}_G$ respectively. In the sequel we refer to a maximal weak flat simply as a \Defn{maximal flat}, and to a pair $(p, F)$ as in (iii) as a \Defn{pointed maximal flat}. Besides maximal flats, we are also interested in minimal non-trivial flats, i.e. geodesics. 

\begin{theorem}[Geodesic connectedness of Kac--Moody symmetric spaces; cf.\ Section~\ref{geodesicconnectedness}] \label{thm1.8} The Kac--Moody symmetric spaces  $\XXX_G$ and $\ol{\XXX}_G$ have the following properties:
\begin{enumerate}
\item $\XXX_G$ and $\ol{\XXX}_G$ are \Defn{geodesically connected}, i.e., any two points in $\XXX_G$ or $\ol{\XXX}_G$ can be connected by a finite chain of geodesic segments.
\item If ${\bf A}$ is not spherical, then  $\XXX_G$ and $\ol{\XXX}_G$ are \Defn{non-geodesic}, i.e.\ there exist points $x, y \in \XXX_G$ (and also in $\ol{\XXX}_G$) which do not lie on a common geodesic (and hence are not contained in a common maximal flat).
\end{enumerate}
\end{theorem}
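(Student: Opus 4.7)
\emph{Proof plan.} For part (i), I fix a base point $x_0 \in \XXX_G$ corresponding to the standard Cartan--Chevalley involution $\theta_0$, and write $K := \Fix(\theta_0) \leq G$, so that $\XXX_G \cong G/K$. The crucial observation is that each standard rank-one subgroup $G_\alpha \cong \SL_2(\R)$ attached to a simple root $\alpha$ is $\theta_0$-stable, with $\theta_0|_{G_\alpha}$ given by $g \mapsto g^{-\top}$; hence $G_\alpha \cdot x_0$ is isomorphic as a reflection subspace to the hyperbolic plane $\SL_2(\R)/\SO(2)$, inside which any two points share a common geodesic. Since $G$ is generated by the $G_{\alpha_i}$ (one per simple root), an arbitrary $y = g \cdot x_0$ factors as $g_1 g_2 \cdots g_N \cdot x_0$ with each $g_k$ in some such rank-one subgroup. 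Setting $y_k := g_1 \cdots g_k \cdot x_0$ (so $y_0 = x_0$, $y_N = y$), the pair $(y_{k-1}, y_k)$ is the image of $(x_0, g_k \cdot x_0)$ under the reflection-space automorphism of $\XXX_G$ induced by $g_1 \cdots g_{k-1}$; since the latter pair lies on a common geodesic inside a rank-one orbit, and automorphisms map geodesic segments to geodesic segments, this produces a chain of $N$ geodesic segments from $x_0$ to $y$. The statement for $\overline{\XXX}_G$ follows by running the same argument, using that $\XXX_G \to \overline{\XXX}_G$ is a reflection-space morphism and hence transports chains.

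For part (ii), I first establish the equivalence: two points $x, y \in \XXX_G$ lie on a common geodesic if and only if they lie in a common maximal flat. Indeed, any geodesic is a weak flat of rank one and extends to a maximal flat by Theorem~\ref{IntroFlats}(ii); conversely, every maximal flat is Euclidean (Theorem~\ref{IntroFlats}(i),(iv)), and in $\E^r$ any two points lie on a common affine line. By Theorem~\ref{IntroFlats}(iv), $G$ acts transitively on pointed maximal flats, so if $F_0$ is a fixed maximal flat through $x_0$ with transitive translation subgroup $A \leq G$, the locus of $y$ sharing a common geodesic with $x_0$ equals $K \cdot F_0 = KAK \cdot x_0$. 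Thus (ii) reduces to proving $KAK \subsetneq G$ whenever $\mathbf{A}$ is non-spherical.

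To establish $KAK \neq G$, I would reduce to a rank-two hyperbolic subsystem. Since $\mathbf{A}$ is non-spherical and irreducible, one can find simple roots $\alpha, \beta$ for which $W_{\alpha\beta} = \langle s_\alpha, s_\beta \rangle$ is infinite. Inside the rank-two Kac--Moody subgroup $G_{\alpha,\beta}$, I would form products of unipotent elements lying in real-root groups whose Bruhat projections realize Weyl-group elements of unbounded length in $W_{\alpha\beta}$, while the Bruhat lengths accessible to the $K$-orbit of the rank-two flat through $x_0$ are controlled by a finite-dimensional computation involving only the maximal spherical parabolic subsystems. Comparing the two ranges of lengths should single out a $g \in G_{\alpha,\beta} \setminus KAK$, and then $(x_0, g \cdot x_0)$ is the required non-geodesic pair. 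The descent to $\overline{\XXX}_G$ is automatic, since the central fibers of the projection $\XXX_G \to \overline{\XXX}_G$ are contained in every maximal flat.

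The main obstacle is exactly this final length comparison: whereas the classical identity $KAK = G$ in the spherical case rests on compactness of $K/T$ and finiteness of $W$, its failure in the non-spherical setting is a genuine infinite-dimensional phenomenon that does not follow from formal BN-pair manipulations and will require a hands-on use of the twin BN-pair structure of the Kac--Moody group, possibly together with the twin-building model that is developed later in the paper to confirm that the putative $g$ sees no apartment through $x_0$.
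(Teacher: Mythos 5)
Your argument for part (i) is correct but takes a different route from the paper. You decompose $g$ as a product of elements of the standard rank-one subgroups and use the polar decomposition of $\SL_2(\R)$ to put each step into a flat; the paper instead uses the Iwasawa decomposition to write $y=\tau(u_1\cdots u_k t)$ with each $u_i$ in a positive real root group, and shows each $\tau(u_it_i)$ is diagonalizable because it stabilizes a pair of opposite panels (Lemma~\ref{lem:sym-ss}), yielding a chain of standard flats $F_i$ with $x_{i-1},x_i\in F_i$. Both yield $G=\bigcup_n(KAK)^n$; the paper's version additionally records the freedom in choosing the intermediate torus factors (Remark~\ref{5.14}), which is exploited later for the causal order. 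One small point you should make explicit: a hyperbolic geodesic in the orbit $G_\alpha\cdot x_0$ is a geodesic of $\XXX_G$ because it is a $K_\alpha$-translate of the line $A_\alpha\cdot x_0$ inside the closed Euclidean flat $AK$.

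Part (ii) has a genuine gap. Your reduction is fine: two points lie on a common geodesic iff they lie in a common maximal flat, and by strong transitivity the set of such $y$ for $x_0=eK$ is $KAK\cdot x_0$, so everything hinges on $KAK\subsetneq G$. But the strategy you propose for this — comparing Bruhat lengths of products of unipotents with lengths ``accessible to the $K$-orbit of the flat'' — cannot work as stated: by the Iwasawa decomposition $G=KB_\pm$, the group $K$ acts transitively on the chambers of each half of the twin building, so there is no gallery-length invariant separating $KAK$ from $G$. The actual obstruction is algebraic, not metric: $g\in KAK$ forces $\tau(g)$ to be $K$-conjugate into $A$, i.e.\ \emph{diagonalizable} (fixing a twin apartment chamberwise), and one must exhibit non-diagonalizable elements of $\tau(G)$. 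The paper does this by combining Corollary~\ref{lem:mc-ss} (midpoint-convex sets through $e$ consist of infinitely $2$-divisible, hence diagonalizable, elements) with Theorem~\ref{HoleThm} from \cite{Horn:Decomp}, which identifies the diagonalizable elements of $\tau(G)$ with $\bigcap_i\tau^i(G)$ and shows this is a proper subset in the non-spherical case; the affine Example~\ref{Hole} (where the characteristic polynomial of $\tau(u)$ fails to split over $\R[t,t^{-1}]$) shows the kind of explicit witness needed. Your proposal does not supply this input, and without it part (ii) is unproven.
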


Note that (ii) is in stark contrast to the case of Riemannian symmetric spaces, which are always geodesic. It is, however, reminiscent of the corresponding property of {\em masures}: not every pair of points is contained in a common apartment. In fact, this property is the key feature that separates the class of {\em masures} from the class of buildings. 

By construction, the group $G$ acts by automorphisms on $\XXX_G$ and thus on its quotient $\ol{\XXX}_G$. The latter action (but in general not the former) factors through a faithful action of $\Ad(G)$. As in the spherical case, the full automorphism group of $\ol{\XXX}_G$ is slightly larger than $\Ad(G)$.

\begin{theorem}[Automorphisms of reduced Kac--Moody symmetric spaces; cf.\ Proposition~\ref{AutBuilding}, Proposition~\ref{StructureOfAutG} and Theorem~\ref{ThmGlobalAut}] \label{thm1.9}
The group  $\Ad(G)$ is a finite index subgroup of the automorphism group $\Aut(\ol{\XXX}_G)$. 
More precisely, $\Aut(\ol{\XXX}_G)$ is isomorphic to $\Aut(G) \cong \Aut(\Ad(G))$, and every automorphism in $ \Aut(\Ad(G))$ can be written as a product of an inner automorphism, a diagonal automorphism, a power of a fixed Cartan--Chevalley involution and an automorphism of the Dynkin diagram $\Gamma_{\bf A}$. Moreover, $\Aut(\ol{\XXX}_G)$ embeds into the automorphism group of the twin building associated with $G$ and if ${\bf A}$ is non-spherical then
\[
\Aut(\ol{\XXX}_G) = \Aut^+(\ol{\XXX}_G) \rtimes \langle s_o \rangle,
\]
where $\Aut^+(\ol{\XXX}_G) < \Aut(\ol{\XXX}_G) $ is the index two subgroup preserving the two halves of the twin building (instead of interchanging the two halves).
\end{theorem}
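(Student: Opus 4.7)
The plan is to combine three inputs: the functoriality of the construction $G \leadsto \overline{\XXX}_G$, the boundary theory developed earlier in the paper (which embeds $\Aut(\overline{\XXX}_G)$ into the automorphism group of the twin building associated with $G$), and the known structure theorem for $\Aut(G)$ as an extension of $\Ad(G)$ by diagonal, Cartan--Chevalley and diagram automorphisms.

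First I would record the canonical map $\Aut(G) \to \Aut(\overline{\XXX}_G)$. Any $\varphi \in \Aut(G)$ permutes the set of Cartan--Chevalley involutions (characterized intrinsically as a conjugacy class inside $G \rtimes \langle \theta \rangle$) and commutes with the $Z(G)$-action, so it descends to a self-homeomorphism of $\overline{\XXX}_G$; the formula $\mu(\alpha,\beta) = \alpha \circ \beta \circ \alpha$ then automatically turns this descent into a reflection-space automorphism. Restricted to inner automorphisms this map recovers the natural $\Ad(G)$-action, giving in particular an inclusion $\Ad(G) \hookrightarrow \Aut(\overline{\XXX}_G)$. Injectivity of $\Aut(G) \to \Aut(\overline{\XXX}_G)$ follows because an automorphism fixing every Cartan--Chevalley involution centralizes a generating set of $G$, and the isomorphism $\Aut(G) \cong \Aut(\Ad(G))$ is standard since $\Aut(G)$ preserves $Z(G)$ and no outer automorphism is trivial on $\Ad(G)$.

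To obtain surjectivity I would use the boundary machinery of the paper to construct a map $\Aut(\overline{\XXX}_G) \to \Aut(\Delta_+ \sqcup \Delta_-)$: every automorphism of $\overline{\XXX}_G$ permutes causal and anti-causal asymptoticity classes of geodesic rays and hence acts simplicially on the future and past boundaries, which have already been identified with the two halves of the twin building of $G$. By the uniqueness result (also proved earlier in the paper) that an automorphism of the symmetric space is determined by its induced simplicial automorphism of the boundary, this map is injective. Composing it with the known rigidity identification $\Aut(\Delta_+ \sqcup \Delta_-) \cong \Aut(G)$ and comparing with the inclusion built in the previous step, I obtain the isomorphism $\Aut(\overline{\XXX}_G) \cong \Aut(G) \cong \Aut(\Ad(G))$ together with the asserted decomposition of every element as a product of an inner, a diagonal, a power of $s_o$, and a diagram automorphism. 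Since each of the diagonal, Cartan--Chevalley and diagram factors contributes only finitely many cosets modulo $\Ad(G)$, this yields finiteness of $[\Aut(\overline{\XXX}_G):\Ad(G)]$.

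For the semidirect decomposition in the non-spherical case I would observe that a Cartan--Chevalley involution $s_o$ acts on the twin building by swapping the two halves $\Delta_+$ and $\Delta_-$, whereas inner, diagonal and diagram automorphisms each preserve both halves. This produces a surjective sign character $\Aut(\overline{\XXX}_G) \to \Z/2$ with kernel $\Aut^+(\overline{\XXX}_G)$ and splitting $\langle s_o \rangle$, hence the stated semidirect product structure. The main obstacle in executing this plan is establishing the injectivity of the boundary map $\Aut(\overline{\XXX}_G) \to \Aut(\Delta_+ \sqcup \Delta_-)$: this is where the genuinely geometric content of the earlier parts of the paper is used, since one must leverage the invariant causal pre-order, the asymptoticity of causal rays, and the classification of pointed maximal flats in order to recover an arbitrary point of $\overline{\XXX}_G$ from its trace on the twin building boundary.
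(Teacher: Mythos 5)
Your construction of the injection $\Aut(G)\hookrightarrow\Aut(\overline{\XXX}_G)$ is fine, but the surjectivity step is where the proposal breaks down, for two reasons. First, it is circular relative to the logical structure of the paper: the invariant causal structure, the asymptoticity relation and the boundary‑rigidity statement you invoke (Theorem~\ref{thm1.15}, i.e.\ Corollary~\ref{theend}) are all built \emph{on top of} Theorem~\ref{thm1.9}. The well‑definedness of the cone field rests on Theorem~\ref{ExtendLocalAutomorphisms}, whose proof uses $\Aut(\overline{\XXX}_G)\cong\Aut(G)$; the equivariant parametrization of the municipality in Proposition~\ref{PropGlobalEquiv} is defined via the isomorphism $\Aut(\overline{\XXX})\to\Aut(\Ad(G))$; and Corollary~\ref{theend} is proved precisely by observing that the boundary map coincides with the inclusion $\Aut(\overline{\XXX}_G)\cong\Aut(G)\to\Aut(\Delta)$ furnished by Theorem~\ref{ThmGlobalAut}. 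Second, even if the boundary were set up independently, the ``known rigidity identification'' $\Aut(\Delta_+\sqcup\Delta_-)\cong\Aut(G)$ is not available in the generality of Theorem~\ref{thm1.9}: Corollary~\ref{AutBuilding} gives only an embedding $\Aut(G)\hookrightarrow\Aut(\Delta)$, with equality requiring the extra hypotheses that $\mathbf{A}$ be two‑spherical and $\Aut(\Gamma_{\mathbf{A}})=\Aut(W,S)$. In general $\Aut(\Delta)$ can be strictly larger (Coxeter‑diagram automorphisms that are not Dynkin‑diagram automorphisms), so your composition would yield at best $\Aut(\overline{\XXX}_G)\hookrightarrow\Aut(\Delta)$, and you would still owe the argument that the image lands in $\Aut(G)$ --- which is exactly the hard local‑to‑global content of Theorem~\ref{ExtendLocalAutomorphisms}.

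The paper's upper bound on $\Aut(\overline{\XXX}_G)$ is obtained by a much more elementary device that avoids the boundary entirely: by Lemma~\ref{CapraceAxiom} the main group $G(\overline{\XXX}_G)=\Ad(G)\rtimes\langle\overline{\theta}\rangle$ is normal in $\Aut(\overline{\XXX}_G)$, and conjugation gives a homomorphism $c\colon\Aut(\overline{\XXX}_G)\to\Aut(\Ad(G)\rtimes\langle\overline{\theta}\rangle)$ which is \emph{injective} because distinct points of a symmetric space have distinct point reflections: $c_\alpha=\mathrm{id}$ forces $s_{\alpha(x)}=s_x$ for all $x$, hence $\alpha=\mathrm{id}$ by (RS4) (Remark~\ref{AutEmbedding}). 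Surjectivity of $c$ is then the case‑by‑case check over Caprace's four types of automorphisms that you essentially already carry out when constructing $\Aut(G)\to\Aut(\overline{\XXX}_G)$, and composing with $\Aut(\Ad(G)\rtimes\langle\overline{\theta}\rangle)\cong\Aut(\Ad(G))\cong\Aut(G)$ finishes the identification. One further gap: finiteness of $[\Aut(\overline{\XXX}_G):\Ad(G)]$ is not automatic from the decomposition, since one must know that there are only finitely many diagonal automorphisms modulo inner ones; this fails over $\Q$ and over $\R$ is proved in Corollary~\ref{AutBuilding} using finiteness of $\R^\times/(\R^\times)^2$. Your sign‑character argument for $\Aut(\overline{\XXX}_G)=\Aut^+(\overline{\XXX}_G)\rtimes\langle s_o\rangle$ is fine once the identification with $\Aut(G)$ is in place.
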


\begin{convention} For the rest of this introduction we assume that {\bf A} is non-spherical and non-affine (on top of our standing assumptions that {\bf A} be irreducible and symmetrizable).
\end{convention}

Besides the global automorphisms in $\Aut(\ol{\XXX}_G)$ one can also consider local automorphisms of $\ol{\XXX}_G$ in the following sense. Fix a pointed maximal flat $(p, F)$ and let 
\begin{align*}
\Stab(p, F) &:= \{g \in \Aut(\ol{\XXX}_G) \mid g.F = F, g.p = p\}, &
\Stab_G(p, F) &:= \{g \in G \mid g.F = F, g.p = p\}, \\
\Fix(p, F) &:=\{g \in \Aut(\ol{\XXX}_G) \mid \forall f \in F: g.f = f\}, &
\Fix_G(p, F) &:=\{g \in G \mid \forall f \in F: g.f = f\}.
\end{align*}
Then $W( \Aut(\ol{\XXX}_G \curvearrowright \ol{\XXX}_G)) := \Stab(p, F)/\Fix(p, F)$ is independent of the choice of $(p, F)$ and acts on $F$ fixing $p$, and the same is true for the group $W(G \curvearrowright \ol{\XXX}_G) :=\Stab_G(p, F)/\Fix_G(p, F)$. The groups $W( \Aut(\ol{\XXX}_G \curvearrowright \ol{\XXX}_G))$ and $W(G \curvearrowright \ol{\XXX}_G)$ are called the \Defn{geometric Weyl groups} of $ \Aut(\ol{\XXX}_G)$ and $G$ respectively. Note that the action of these groups on $F$ preserve the subset $F^{\rm sing}(p) \subset F$ of points of $F$ which are contained in more than one maximal flat containing $p$. Moreover, if $\phi: \E^r \to F$ is an arbitrary isomorphism of reflection spaces with $\phi(0) = p$, then $\phi$ intertwines the elements of the geometric Weyl groups with linear automorphisms of $\E^r$. It thus follows that the geometric Weyl groups are contained in the group
\[
\GL(p, F, F^{\rm sing}(p)) :=  \{\alpha: F \to F\mid \alpha(F^{\rm sing}(p)) = F^{\rm sing}(p) \text{ and }\widehat{\alpha} := \phi^{-1} \circ \alpha \circ \phi \in \GL_n(\R)\}.
\]
We refer to elements of $\GL(p, F, F^{\rm sing}(p))$ as \Defn{local transformations} of $(p, F)$; one can show that the notion of a local transformation does not depend on the choice of $\phi$.

It turns out that the subset $\phi^{-1}(F^{\rm sing}(p)) \subset \E^r$ is a hyperplane arrangement, and hence every homothety of $\E^r$ gives rise to a local transformation. More generally, there exists a non-degenerate bilinear form on $\E^r$ (unique up to multiples) such that elements of $\GL(p, F, F^{\rm sing}(p))$ act by similarities with respect to this bilinear form (see Corollary \ref{CanonicalFormpF}). This yields a splitting
\[
\GL(p, F, F^{\rm sing}(p)) = \R^{>0} \times \Aut(p,F),
\]
where $\R^{>0}$ is the group of positive homotheties and $\Aut(p,F)$ is the subgroup of $\GL(p, F, F^{\rm sing}(p))$ which preserves the canonical bilinear form. Elements of $\Aut(p, F)$ are called \Defn{local automorphisms} of $(p, F)$, and by the following theorem the geometric Weyl groups acts by local automorphisms. Concerning the statement of the theorem we recall that one can associate to the generalized Cartan matrix ${\bf A}$ a Coxeter system $(W,S)$ whose Coxeter diagram $\Gamma_{(W,S)}$ has the same underlying graph as the Dynkin diagram $\Gamma_{\bf A}$ of {\bf A}, but whose labelling carries less information (see Subsection \ref{appendixweylgroup}); hence the automorphism group $\Aut(\Gamma_{\bf A})$ can be considered as a subgroup of the automorphism group $\Aut(W,S) :=\Aut(\Gamma_{(W,S)})$ of the Coxeter diagram.
\begin{theorem}[Local vs.\ global automorphisms; cf.\ Theorem~\ref{ExtendLocalAutomorphisms}] \label{thm1.11} Let $\ol{\XXX}_G$ be a reduced Kac--Moody symmetric space of irreducible non-spherical, non-affine type and let $(p, F)$ be a maximal pointed flat in $\ol{\XXX}_G$.
\begin{enumerate}
\item The action of the geometric Weyl group $W( \Aut(\ol{\XXX}_G) \curvearrowright \ol{\XXX}_G) $ on $(p, F)$ is by local automorphisms.
\item The local automorphism group $\Aut(p, F)$ is isomorphic to $(W \rtimes \Aut(W,S)) \times \Z/2\Z$, and hence the group $\GL(p, F, F^{\rm sing}(p))$ is isomorphic to $\R^{>0} \times (W \rtimes \Aut(W,S)) \times \Z/2\Z$.
\item Under the isomorphism from (ii) the subgroup $W( \Aut(\ol{\XXX}_G) \curvearrowright \ol{\XXX}_G) < \Aut(p, F)$ corresponds to the finite index subgroup $(W \rtimes \Aut(\Gamma_{\bf A})) \times \Z/2\Z < (W \rtimes \Aut(W,S)) \times \Z/2\Z$. 
\item Every local automorphisms is the restriction of a global automorphism if and only if $\Aut(W,S)=\Aut(\Gamma_{\bf A})$.
\end{enumerate}
\end{theorem}
In fact, the group $W \rtimes \Aut(W,S)$ appearing in (ii) is nothing but the (simplicial) automorphism group of the simplicial Coxeter complex $\Sigma(W,S)$ associated with the Coxeter system $(W,S)$. Concerning the Weyl group of $G$ we observe:
\begin{corollary}[Algebraic Weyl group equals geometric Weyl group; cf.\ Corollary \ref{GeometricWeylGroup2}] \label{thm1.12} The geometric Weyl group $W(G \curvearrowright \ol{\XXX}_G)$ of $G$ is isomorphic to the algebraic Weyl group $W$.
\end{corollary}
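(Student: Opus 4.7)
The plan is to deduce Corollary~\ref{thm1.12} by combining Theorem~\ref{thm1.11}(ii) with the structural description of ${\rm Aut}(\overline{\XXX}_G)$ provided by Theorem~\ref{thm1.9}. Since $W(G \curvearrowright \overline{\XXX}_G) = {\rm Stab}_G(p,F)/{\rm Fix}_G(p,F)$ is obtained from $W({\rm Aut}(\overline{\XXX}_G) \curvearrowright \overline{\XXX}_G)$ by restricting to automorphisms coming from $G$, and the latter is identified with $(W \rtimes {\rm Aut}(\Gamma_{\bf A})) \times \Z/2\Z$ by Theorem~\ref{thm1.11}(ii), it suffices to determine the image of the natural map $G \to W({\rm Aut}(\overline{\XXX}_G) \curvearrowright \overline{\XXX}_G)$ and show that it is exactly the first factor $W$.

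First I would check that the image contains $W$. For each simple reflection $s_i \in W$ one exhibits a lift $n_i$ in the standard rank one subgroup $G_i \cong \SL_2(\R)$ of $G$ (namely the standard Weyl element of $\SL_2(\R)$ conjugated appropriately to fix $p$), which stabilizes the pointed maximal flat $(p,F)$ and acts on $F$ by the reflection $s_i$ under the canonical identification of $F$ with the real Cartan subalgebra. Since the $n_i$ generate a copy of $N_K(\afr)/Z_K(\afr) \cong W$ inside ${\rm Stab}_G(p,F)/{\rm Fix}_G(p,F)$, the image contains the $W$-factor.

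Second I would show that the image does not meet the other two factors. By Theorem~\ref{thm1.9}, every element of ${\rm Aut}(\overline{\XXX}_G)$ is a product of an inner, a diagonal, a Cartan--Chevalley and a diagram component. Diagonal automorphisms, being conjugations by elements of the maximal torus, centralize the Cartan subalgebra and hence fix $F$ pointwise; they therefore lie in ${\rm Fix}(p,F)$ and contribute trivially to the Weyl group. Under Theorem~\ref{thm1.11}(ii), the non-trivial images of diagram automorphisms and of powers of the Cartan--Chevalley involution fill up precisely the factors ${\rm Aut}(\Gamma_{\bf A})$ and $\Z/2\Z$, respectively. Since these four types of automorphisms are outer with respect to ${\rm Ad}(G)$, none of their non-trivial products with any inner automorphism can be realized by an element of $G$. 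Hence the image of $G$ meets $(W \rtimes {\rm Aut}(\Gamma_{\bf A})) \times \Z/2\Z$ exactly in the subgroup $W \times \{1\} \times \{1\}$, proving the claim.

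The main obstacle is ensuring the clean separation of the $\Z/2\Z$ factor from the image of $G$. A priori it might appear that some product of a Weyl element with the Cartan--Chevalley involution accidentally becomes inner; this is ruled out by the semidirect-product decomposition ${\rm Aut}(\overline{\XXX}_G) = {\rm Aut}^+(\overline{\XXX}_G) \rtimes \langle s_o\rangle$ from Theorem~\ref{thm1.9}, together with the fact that $G$ preserves (rather than swaps) the two halves of the twin building, so no element of $G$ can project non-trivially to $\langle s_o \rangle$. With this obstruction dealt with, the corollary drops out of Theorem~\ref{thm1.11} and Theorem~\ref{thm1.9}.
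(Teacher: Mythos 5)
Your overall strategy coincides with the paper's: both arguments deduce the corollary from Theorem~\ref{thm1.11} by identifying the image of $G$ inside $(W \rtimes {\rm Aut}(\Gamma_{\bf A})) \times \Z/2\Z$, and both obtain the containment $W \subseteq W(G \curvearrowright \overline{\XXX}_G)$ by realizing the simple reflections through the extended Weyl group $\widetilde{W} < K$ (Subsection~\ref{extendedWeylgroup} and Proposition~\ref{ExtWeylInK}). The treatment of diagonal automorphisms and of the $\Z/2\Z$-factor is also essentially the paper's.

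There is, however, a gap in your exclusion of the ${\rm Aut}(\Gamma_{\bf A})$-factor. You argue: diagram automorphisms are outer, the images of diagram automorphisms fill up the factor ${\rm Aut}(\Gamma_{\bf A})$, hence no element of $G$ can induce a local automorphism with non-trivial ${\rm Aut}(\Gamma_{\bf A})$-component. This conflates global and local data. Outerness tells you that a \emph{global} automorphism whose Caprace decomposition has non-trivial diagram part is not inner; it does not, by itself, rule out that some \emph{inner} automorphism stabilizing $(p,F)$ acts on $F$ by the same linear map as a diagram automorphism does. Surjectivity of the diagram automorphisms onto the factor ${\rm Aut}(\Gamma_{\bf A})$ says nothing about which other elements of ${\rm Stab}(p,F)$ land there; to transport outerness to the local picture you would need to control the full fibre of the local action over a given element of ${\rm Aut}(\Gamma_{\bf A})$, i.e.\ the structure of ${\rm Fix}(p,F)$, which you do not address. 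The paper closes exactly this gap by a different observation: taking $(p,F)=(e,\overline{A})$, any $g \in {\rm Stab}_G(e,\overline{A})$ lies in $N_{\overline{K}}(\overline{T})$ and therefore acts on $\overline{\mathfrak a}$ through the reduced Kac--Moody representation $\overline{\rho}_{KM}(W)$ of the algebraic Weyl group; and since ${\bf A}$ is non-spherical and non-affine, $\overline{\rho}_{KM}(W)$ intersects ${\rm Aut}(W,S) \times \Z/2\Z$ trivially inside ${\rm GL}(\overline{\mathfrak a},\overline{\mathfrak a}^{\rm sing})$ (Proposition~\ref{autocoxeter} and Remark~\ref{SphericalAffineNonTrichotomy}). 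This is the content of the paper's remark that, in the non-spherical case, a diagram automorphism of $(W,S)$ ``can, if at all, only be induced by a Dynkin diagram automorphism of $G$.'' Replacing your outerness argument by this torus-normalizer argument makes the proof complete.
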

With the notable exception of Theorem \ref{thm1.8}.(ii) the part of the theory of Kac--Moody symmetric spaces described so far follows closely the classical theory of Riemannian symmetric spaces. On the other hand, it turns out that (irreducible, non-spherical, non-affine) Kac--Moody symmetric spaces also carry additional structure, which is not shared by Riemannian symmetric spaces of the non-compact type, but which is shared by a different class of affine symmetric spaces called \Defn{causal symmetric spaces} (see \cite{HilgertOlafsson}).
\begin{proposition}[Existence of an invariant causal structure, cf.\ Section \ref{SecCausalStructureEx}] \label{PropCausalIntro}There exist families $(\ol{C}^+_x)_{x \in \ol{\XXX}_G}$ and $(\ol{C}^-_x)_{x \in \ol{\XXX}_G}$ of subsets of $\ol{\XXX}_G$ with the following properties:
\begin{enumerate}
\item $(\ol{C}^+_x)_{x \in \ol{\XXX}_G}$ and $(\ol{C}^-_x)_{x \in \ol{\XXX}_G}$ are \Defn{cone fields}, i.e.,  for every $x \in \ol{\XXX}_G$ the subsets $\ol{C}^+_x \subset \ol{\XXX}_G$ and $\ol{C}^-_x \subset \ol{\XXX}_G$ each intersect every flat containing $x$ in an open cone with tip $x$.
\item $(\ol{C}^+_x)_{x \in \ol{\XXX}_G}$ and $(\ol{C}^-_x)_{x \in \ol{\XXX}_G}$ are \Defn{invariant} under $\Aut^+(\ol{\XXX}_G)$, i.e., $\alpha(\ol{C}^+_x) = \ol{C}^+_{\alpha(x)}$ and $\alpha(\ol{C}^-_x) = \ol{C}^-_{\alpha(x)}$ for all $\alpha \in \Aut^+(\ol{\XXX}_G)$ and $x \in \ol{\XXX}_G$.
\end{enumerate}
\end{proposition}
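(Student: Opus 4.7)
The strategy is to transport the classical Tits cone from a model real Cartan to every maximal flat of $\overline{\XXX}_G$ via the $G$-action, and then glue the resulting local cones together. Under our standing hypotheses that $\mathbf{A}$ is irreducible, symmetrizable, non-spherical, and non-affine, the interior $\mathcal{T}^\circ$ of the Tits cone in the model real Cartan $\afr$ is a non-empty, proper, open, convex cone with vertex at the origin. It is invariant not only under the Weyl group $W$ but even under the full automorphism group $W \rtimes \Aut(\Gamma_{(W,S)})$ of the Coxeter complex, since $\mathcal{T}^\circ$ is a union of $W$-translates of the open fundamental chamber and any Coxeter diagram automorphism permutes the simple roots.

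I would first fix a base pointed maximal flat $(o, F_0)$ and an isomorphism of reflection spaces $\phi_0 \colon \afr \xrightarrow{\sim} F_0$ with $\phi_0(0) = o$. For any other pointed maximal flat $(x, F)$, Theorem~\ref{IntroFlats}(iv) provides some $g \in G$ with $g \cdot (o, F_0) = (x, F)$, and I set
\[
C^+(x, F) := g\bigl(\phi_0(\mathcal{T}^\circ)\bigr) \subset F.
\]
This is well-defined: if $g_1, g_2 \in G$ both send $(o, F_0)$ to $(x, F)$, then $g_1^{-1}g_2 \in \Stab_G(o, F_0)$ acts on $F_0$ through the geometric Weyl group $W(G \curvearrowright \overline{\XXX}_G)$, which by Corollary~\ref{itscorollary} equals $W$ and hence preserves $\mathcal{T}^\circ$. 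By construction $C^+(x, F)$ is an open cone in $F$ with tip $x$.

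The next step is to glue these local pieces together by setting
\[
\overline{C}^+_x := \bigcup_{F \ni x} C^+(x, F),
\]
with $F$ ranging over all maximal flats through $x$. The cone field property (i) then reduces to showing that whenever $x$ lies in two distinct maximal flats $F_1, F_2$, the local cones $C^+(x, F_1)$ and $C^+(x, F_2)$ agree on $F_1 \cap F_2$. This compatibility is the main obstacle and would draw on the analysis of flat intersections carried out in Section~\ref{classmaxflats}: such an intersection is a closed convex $W_I$-stable subset of both flats for an appropriate standard parabolic $W_I \leq W$, and on it both cones restrict to the same $W_I$-invariant subcone of $\mathcal{T}^\circ$, independently of the ambient flat.

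Finally, $\Aut^+(\overline{\XXX}_G)$-invariance follows from Theorems~\ref{ThmGlobalAut} and~\ref{ExtendLocalAutomorphisms}: every $\alpha \in \Aut^+$ is a product of an inner, a diagonal, and a Dynkin diagram automorphism. The inner part lies in $G$ and hence preserves the cone field by construction; the other two restrict on each pointed flat to an element of $W \rtimes \Aut(\Gamma_{\mathbf{A}}) \leq W \rtimes \Aut(\Gamma_{(W,S)})$, which preserves $\mathcal{T}^\circ$. By contrast, the generator $s_o$ of the complement in the decomposition $\Aut(\overline{\XXX}_G) = \Aut^+(\overline{\XXX}_G) \rtimes \langle s_o \rangle$ of Theorem~\ref{ThmGlobalAut} is a point reflection, hence inverts each flat through its fixed point; it therefore inverts $\mathcal{T}^\circ$ and sends $\overline{C}^+_x$ to its past counterpart $\overline{C}^-_x$, which explains why invariance holds only for the index-two subgroup.
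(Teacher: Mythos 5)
Your overall strategy is the paper's: transport the open Tits cone to every pointed maximal flat using strong transitivity of $G$ (Theorem~\ref{IntroFlats}(iv)), use the computation of the local automorphism group (Theorem~\ref{ExtendLocalAutomorphisms}, Corollary~\ref{thm1.12}) for well-definedness and for $\Aut^+$-invariance, and glue over the flats through each point. The only structural difference is that the paper first builds the \emph{double} cone field $\overline{C}^{+,-}_x$, which is canonical under the full point stabilizer $\Aut(\overline{\XXX}_G)_x$ (whose local image is $(W\rtimes\Aut(\Gamma_{\mathbf A}))\times\Z/2\Z$, the $\Z/2\Z$ possibly swapping the cone with its negative), and only then splits it into $\overline{C}^{\pm}_x$ by choosing one of the two $\Aut^+$-invariant causal orientations (Proposition~\ref{2causalstructures}); you instead define the positive cone directly, which works because you transport only by elements of $G$ and $W(G\curvearrowright\overline{\XXX}_G)\cong W$ preserves the single cone. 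That is a legitimate shortcut. (Minor point: for $\overline{\XXX}_G$ the cone lives in $\overline{\mathfrak a}$, not $\mathfrak a_{\R}$.)

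The one place where your argument does not yet exist is exactly the step you call the main obstacle: showing that $\overline{C}^+_x\cap F=C^+(x,F)$ when $x$ lies on two distinct maximal flats $F_1,F_2$. Your sketch --- that $F_1\cap F_2$ is $W_I$-stable and that ``both cones restrict to the same $W_I$-invariant subcone'' --- is not an argument: $W_I$-invariance of a subcone of $F_1\cap F_2$ does not single it out, so invariance alone cannot show that the two restrictions agree. What is actually needed is the statement that if a ray through $x$ lies in the (open) Tits cone with respect to one chart of one flat containing it, then it does so with respect to every chart of every flat containing it; and since such a ray is necessarily singular, the two flats are \emph{not} related by a local automorphism of either, so Theorem~\ref{ExtendLocalAutomorphisms} does not apply directly. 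The proof in the paper's framework reduces to the singular locus via Proposition~\ref{hyperplanelog}: the two tori $\overline T$ and $\overline T^k$ containing the singular direction are conjugate inside the centralizer $\overline G^H$ by a product of elements of rank one subgroups $\langle U_{\beta_i},U_{-\beta_i}\rangle$ with $\beta_i\in\Phi^H$, each of which either normalizes the flat (and then acts through $W$, preserving the cone) or fixes the singular ray pointwise; hence the sign and the cone membership are preserved. Without some version of this reduction your construction only yields a family of cones in each flat separately, not a cone field satisfying property (i).
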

In analogy with the theory of causal symmetric spaces we refer to the invariant cone fields $(\ol{C}^+_x)_{x \in \ol{\XXX}_G}$ and $(\ol{C}^-_x)_{x \in \ol{\XXX}_G}$ as \Defn{causal structures} on $\ol{\XXX}_G$. Roughly speaking, the causal structures are a global version of the Tits cone, resp.\ its negative in the underlying Kac--Moody Lie algebra. We refer to Subsection \ref{SecCausalStructureEx} for a precise definition. 

From the choice of a \Defn{positive causal structure} $(\ol{C}^+_x)_{x \in \ol{\XXX}_G}$ we infer a notion of \Defn{causal} (or ``time-like\footnote{In the study of Lorentzian causal structures, causal curves are also called \Defn{time-like curves}. Since the causal structures investigated here need not be Lorentzian, we will not use this terminology.}'') \Defn{curve} in $\ol{\XXX}_G$. Namely, we say that a continuous curve $\gamma: [S, T] \to \ol{\XXX}$ with $0< S < T < \infty$ is \Defn{causal} if for every $t\in [S,T)$ there exists $\epsilon>0$ such that 
\[
\gamma((t, t + \epsilon)) \subset \ol{C}^+_{\gamma(t)}.
\]
The notion of an \Defn{anti-causal curve} is defined analogously via the \Defn{negative causal structure} $(\ol{C}^-_x)_{x \in \ol{\XXX}_G}$. Using causal geodesic rays in $\ol{\XXX}_G$ we associate two further structures with $\ol{\XXX}_G$ which have no counterpart in the theory of Riemannian symmetric spaces, but which are reminiscent to classical objects in the theory of causal symmetric spaces: the \Defn{causal boundary} of $\ol{\XXX}_G$ and the \Defn{causal pre-order} on $\ol{\XXX}_G$.

The causal boundary can be constructed as follows. Denote by $\partial_\bullet \ol{\XXX}_G$ the collection of geodesic rays in $\ol{\XXX}_G$, and by $\Delta^\pm_\bullet \subset  \partial_\bullet \ol{\XXX}_G$ the subset of all causal/anti-causal geodesic rays. By invariance of the causal structure, the subsets $\Delta^\pm_\bullet$ are invariant under $\Aut^+(\ol{\XXX})$ and their union $\Delta_\bullet$ is invariant under $\Aut(\ol{\XXX})$. Points in the causal boundary will be defined as equivalence classes of causal or anti-causal rays by an equivalence relation which mimics asymptoticity of geodesic rays in Riemannian symmetric spaces.

Recall that if $\XXX$ is a non-compact Riemannian symmetric space, then two geometric rays in $\XXX$ are called \Defn{asymptotic}, if they are at bounded Hausdorff distance. For example, two geodesic rays $r_1, r_2$ in Euclidean space $\E^n$ are asymptotic if and only if they are parallel and point in the same direction, i.e.\ they are of the form $r_1(t) = x + tv$ and $r_2(t) = y + tv$ for some $x,y \in \R^n$ and a unit vector $v$, and two geodesic rays  in the hyperbolic plane are asymptotic if they have the same endpoint in the boundary. In Subsection \ref{SubsecAsymptotic} we construct equivalence relations $\parallel$ on $\Delta^\pm_\bullet$  with the following properties:
\begin{axioms}{A}
\item If $r \in \Delta^\pm_\bullet$ and $x \in \ol{\XXX}_G$, then there exists a unique $r' \in \Delta^\pm_\bullet$ emanating from $x$ with $r \parallel r'$.
\item $\parallel$ is invariant under $\Aut^+(\ol{\XXX})$, i.e., if $r_1 \parallel r_2$, then $\alpha(r_1)\parallel \alpha(r_2)$ for all $\alpha \in \Aut^+(\ol{\XXX})$.
\item  If $r_1, r_2 \in \Delta^\pm$ are contained in a common embedded hyperbolic plane in $\ol{\XXX}_G$, which arises as the orbit of a rank one subgroup of $G$, 
then $r_1 \parallel r_2$ if and only if they are asymptotic in the hyperbolic sense.
\item If $r_1, r_2 \in \Delta^\pm$ are contained in a common maximal flat $F$, then $r_1 \parallel r_2$ if and only if they are asymptotic in the Euclidean sense.
\end{axioms}
In view of these properties we call $r_1, r_2 \in \Delta^\pm$ \Defn{asymptotic} if $r_1 \parallel r_2$.
\begin{definition} The set $\Delta^+_\parallel := \Delta^+ _\bullet/ \parallel$ of asymptoticity classes of causal rays is called the \Defn{future boundary} of the Kac--Moody symmetric space $\ol{\XXX}_G$, and the set $\Delta^-_\parallel := \Delta^- _\bullet/ \parallel$ is called its \Defn{past boundary}. The union $\Delta_\parallel := \Delta^+_\parallel \sqcup \Delta^-_\parallel$ is called the \Defn{causal boundary}.
\end{definition}
By (A2) the $\Aut^+(\ol{\XXX}_G)$-action on causal/anti-causal curves induces an action on the future/past boundary. In Subsection \ref{SecMuni} we equip the boundaries $\Delta^+_\parallel$ with the structure of an \Defn{ideal polyhedral complex}. Here, a polyhedral complex is a topological space obtained by glueing polyhedra along faces, and an ideal polyhedral complex is obtained from a polyhedral complex by removing some faces of codimension $\geq 2$ (see Subsection \ref{SecIdealPoly}). We then show that $\Aut^+(\ol{\XXX}_G)$ acts on these boundaries by polyhedral automorphisms. Unlike the spherical case, the ideal polyhedral structure on the boundary will in general not be simplicial. 

In Subsection \ref{TwinBuildinRealization} we construct an ideal polyhedral complex $|\Delta|_{\ol{\mathfrak a}}$, whose associated chamber system is given by the twin building $\Delta$ of $G$. This complex is combinatorially isomorphic to the Davis realization of $\Delta$ in the sense that the underlying cell posets are isomorphic, but in general the cells may have a different geometry and may even be of smaller dimension.
\begin{theorem}[Twin building vs.\ causal boundary; cf.\ Corollary~\ref{theend}] \label{thm1.14} The causal boundary $\Delta_\parallel$ is $\Aut(\ol{\XXX}_G)$-equivariantly geometrically isomorphic to $|\Delta|_{\ol{\mathfrak a}}$, and the past and future boundary are $\Aut^+(\ol{\XXX}_G)$-equivariantly geometrically isomorphic to the halves of $|\Delta|_{\ol{\mathfrak a}}$.
\end{theorem}
 Theorem \ref{thm1.14} should be compared to the classical fact that the geometric boundary of an irreducible Riemannian symmetric space of non-compact type, i.e.\ the collection of all geodesic rays modulo asymptoticity, carries a natural polyhedral (in fact, simplicial) structure which is isomorphic to the geometric realization of the corresponding spherical building (see, e.g., \cite{Kleiner/Leeb:1997}). This analogy is meaningful, since in the finite-dimensional case, the Tits cone is given by the whole Cartan subalgebra, and hence the canonical causal structure is the trivial causal structure in which every curve is causal. 

In the case of a hyperbolic Kac--Moody group, Theorem \ref{thm1.14} can be seen as a global version of the lightcone embedding of the twin building as described in \cite{CFF}. The analogous construction of a twin building at infinity for {\em masures} can be found in \cite[Section~3]{Rousseau:2011}; by \cite[Theorem~1]{CMR} this twin building at infinity of a {\em masure} actually carries a natural topology that turns it into a weak topological twin building in the sense of \cite{HartnickKoehlMars}. 

As in the finite-dimensional case, each asymptoticity class of causal rays in a Kac--Moody symmetric space forms an orbit under the action of an appropriate parabolic subgroup of $G$ (see Proposition~\ref{parabolicatinfinity}). Geometrically this means that if $r$ is a causal ray, which is regular in the sense that it is contained in a unique maximal flat, then all the causal rays asymptotic to $r$ can be obtained by parallel-translating $r$ in this flat and then sliding the resulting rays along suitable horospheres.

To push the analogy with the Riemannian case even further,  recall that every automorphism of a Riemannian symmetric space is uniquely determined by its action on the geometric boundary, i.e., the spherical building at infinity. In the Kac--Moody setting a similar statement is true: The automorphism is uniquely determined by its action on the causal boundary, i.e., the twin building at infinity.

\begin{theorem}[Causal boundary rigidity; cf.\ Corollary~\ref{theend}] \label{thm1.15} 
Every automorphism of $\ol{\XXX}_G$ is uniquely determined by the induced combinatorial automorphism of the causal boundary. Every automorphism in $\Aut^+(\ol{\XXX}_G)$ is uniquely determined by the induced combinatorial automorphism of the future (or past) boundary.
\end{theorem}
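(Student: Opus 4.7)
The plan is to reduce both assertions to the injectivity of $\Aut(\overline{\XXX}_G) \hookrightarrow \Aut(\Delta)$ supplied by Theorem~\ref{thm1.9}, by transporting the action on the causal boundary to an action on the twin building via the equivariant simplicial isomorphism $\Phi: \Delta_\parallel \xrightarrow{\sim} |\Delta|$ of Theorem~\ref{thm1.14}. A simplicial automorphism of a simplicial complex is the same datum as a simplicial automorphism of its geometric realization, so inducing the identity on $\Delta_\parallel$ is equivalent to inducing the identity on $\Delta$.

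For the first assertion, suppose $\alpha \in \Aut(\overline{\XXX}_G)$ acts trivially on the simplicial set $\Delta_\parallel$. In particular, $\alpha$ preserves the bipartition $\Delta_\parallel = \Delta^+_\parallel \sqcup \Delta^-_\parallel$ into future and past boundaries. Since by Theorem~\ref{thm1.9} one has $\Aut(\overline{\XXX}_G) = \Aut^+(\overline{\XXX}_G) \rtimes \langle s_o \rangle$ with $s_o$ exchanging the two halves, it follows that $\alpha \in \Aut^+(\overline{\XXX}_G)$. The equivariance of $\Phi$ then implies that the image $\overline{\alpha} \in \Aut(\Delta)$ acts trivially on $|\Delta|$, hence on $\Delta$, and the injectivity from Theorem~\ref{thm1.9} forces $\alpha = \id$.

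For the second assertion, let $\alpha \in \Aut^+(\overline{\XXX}_G)$ induce the identity on $\Delta^+_\parallel$. Equivariance of $\Phi$ shows that the image $\overline{\alpha} \in \Aut(\Delta)$ preserves halves and fixes $\Delta^+$ chamber-by-chamber. It therefore suffices to prove the twin-building rigidity statement that every half-preserving automorphism of the thick Kac--Moody twin building $\Delta$ acting trivially on $\Delta^+$ is trivial; once this is verified, the argument of the first assertion forces $\alpha = \id$, and the case of the past boundary is entirely symmetric.

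The main obstacle is precisely this twin-building rigidity. The cleanest route is to invoke the identification $\Aut(\overline{\XXX}_G) \cong \Aut(G)$ from Theorem~\ref{thm1.9} together with faithfulness of the $\Aut(G)$-action on the positive half $\Delta^+$, which holds for the thick Moufang twin buildings attached to irreducible non-spherical non-affine split real Kac--Moody groups: in the adjoint setting the kernel would centralize every positive root group and must therefore be trivial. An alternative, purely combinatorial approach uses the abundance of opposite pairs in a thick twin building: any chamber $d \in \Delta^-$ is pinned down inside its apartment by its codistances to a suitable finite family of chambers in $\Delta^+$, and since $\overline{\alpha}$ preserves codistance and fixes every chamber of $\Delta^+$, it must fix $d$ as well.
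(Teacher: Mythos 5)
Your proof of the first assertion is correct and is essentially the paper's own argument: Corollary~\ref{theend}(ii) likewise observes that the composition ${\rm Aut}(\overline{\XXX}_G) \to {\rm Aut}(\Delta_\parallel) \to {\rm Aut}(\Delta)$ coincides with the injective map ${\rm Aut}(\overline{\XXX}_G) \cong {\rm Aut}(G) \hookrightarrow {\rm Aut}(\Delta)$ of Theorem~\ref{ThmGlobalAut} and Corollary~\ref{AutBuilding}, and concludes by injectivity. (Your intermediate observation that $\alpha$ must lie in ${\rm Aut}^+$ is true but not needed for this step.)

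For the second assertion the comparison is more interesting, because the paper's Corollary~\ref{theend} does not actually spell out a proof of it --- the reduction to faithfulness of the ${\rm Aut}^+$-action on a \emph{single} half of the twin building is left implicit. You correctly identify this as the remaining content and offer two routes. The codistance route is clean and complete in outline: automorphisms induced by ${\rm Aut}(G)$ preserve the twinning, and in a thick twin building a chamber of $\Delta^-$ is determined by its codistances to $\Delta^+$ (e.g.\ two distinct chambers of $\Delta^-$ admit a chamber of $\Delta^+$ opposite one but not the other), so fixing $\Delta^+$ chamberwise forces fixing $\Delta^-$ as well. Your first route is the one closest in spirit to the paper's toolkit (the paper cites \cite[Proposition~8.82]{AbramenkoBrown2008} for the statement that the kernel of the action of a group with RGD system on either half equals the center), but as phrased it only literally covers inner automorphisms; to make it complete you should add the easy separate checks that a nontrivial diagram automorphism permutes panel types and that a nontrivial diagonal automorphism moves some chamber in $G/B_+$ (since acting trivially on $G/B_+$ forces it to centralize every negative root group). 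With either route filled in, the argument is sound.
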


Having discussed the causal boundary of Kac--Moody symmetric spaces, we now turn to the second structure on $\ol{\XXX}_G$ induced by the canonical causal structure: We write $x \prec y$ and say that $x$ \Defn{strictly causally precedes} $y$ if there exists a piecewise geodesic causal curve $\gamma: [S, T] \to \ol{\XXX}_G$ with $\gamma(S) = x$ and $\gamma(T) = y$, and we define the \Defn{causal pre-order} $\preceq$ on $\ol{\XXX}_G$ by setting $x \preceq y$ if $x \prec y$ or $x=y$. Invariance of the causal structure implies that the pre-order $\preceq$ is invariant under $\Aut^+(\ol{\XXX}_G)$. It is not clear from the definition whether the causal pre-order is anti-symmetric, i.e.\ a partial order.
\begin{proposition}[Order dichotomy, cf.\ Proposition \ref{OrderDichotomy}] \label{dichintro} Either the causal pre-order on $\ol{\XXX}_G$ is the trivial pre-order, i.e.\ any point in $\ol{\XXX}_G$ causally proceeds any other point, or the causal pre-order is a partial order.
\end{proposition}
Currently we do not know for any irreducible, non-spherical, non-affine Kac--Moody symmetric space whether its causal pre-order is trivial or a partial order, but we believe that it is not always trivial. The problem of establishing such a result is related to a more classical problem in Kac--Moody theory, namely whether Kostant's classical convexity theorem \cite[Theorem 4.1]{Kostant73} can be extended to general Kac--Moody groups. An infinitesimal version was established by Kac and Peterson in \cite{KacPeterson84}, but there is no global version available so far.

\medskip
The focus of this article by design is on non-spherical and non-affine Kac--Moody symmetric spaces. We refer to \cite{Heintze}, \cite{Freyn} for literature focusing more on the affine case. If in our article one replaces the (derived) centered Kac--Moody group by the full Kac--Moody group with larger torus corresponding to the enlargened generalized Cartan matrix in the non-invertible situation, then it is likely to be possible to carry over our results also to the affine case. An additional advantage of that alternative approach should be that all involved polyhedral cell structures were actually simplicial, at the cost that in the non-affine situation with non-invertible generalized Cartan matrix the dimension of the maximal flats were larger than necessary.

\medskip \noindent {\bf Acknowledgements:}
The authors thank the Mathematisches Forschungsinstitut Oberwolfach for the hospitality during the mini-workshops {\em Symmetric varieties and involutions of algebraic groups} in 2008 and {\em Generalizations of symmetric varieties} in 2012 and, furthermore, the Lorentz Center Leiden and the Center for Mathematical Sciences at the Technion, Haifa for hosting the subsequent international workshops on that topic in 2013 and 2015.  
The authors express their gratitude to the participants of these workshops for numerous discussions on the topic of this article; they particularly thank Bernhard M\"uhlherr for a wealth of very helpful conversations and Pierre-Emmanuel Caprace and Guy Rousseau for several deep discussions concerning the question whether Kac--Moody symmetric spaces admit a meaningful canonical pre-order. The authors also thank an anonymous referee and Guy Rousseau for extremely helpful very detailed criticism on preliminary versions of this article; these comments tremendously helped improve the quality of the structure and of the results of this article. In addition the authors thank Julius Gr\"uning and Timoth\'ee Marquis for further very useful comments on one of these preliminary versions.

The first-named author thanks the IHES in Bures-sur-Yvette for the hospitality during several extended research visits.

The second-named author thanks the JLU Gie{\ss}en for the hospitality during several extended research visits since 2014.

The last-named author acknowledges partial funding from EPSRC via the grant EP/H02283X and from DFG via the grant KO4323/13. The last-named author also thanks the Hausdorff Institute for Mathematics in Bonn for the hospitality during the trimester programs {\em Rigidity} in 2009 and {\em Algebra and Number Theory} in 2010, the IHES in Bures-sur-Yvette for the hospitality during a research visit in 2013, the Max Planck Institute for Gravitational Physics in Golm for the hospitality during two research visits in 2013 and 2015, and the Technion in Haifa for the hospitality during several extended research visits since 2014.

\tableofcontents

\section{Concepts from synthetic geometry}
\subsection{Reflection spaces} \label{reflectionspaces}

\begin{definition}\ \label{defn:reflection_space_Loos}
Let $\XXX$ be a set and $\mu: \XXX \times \XXX \to \XXX$, $(x,y) \mapsto x \cdot y$ be a map.
\begin{enumerate}
\item $(\XXX, \mu)$ is called a \Defn{reflection space} if it satisfies the following axioms:
\begin{axioms}{RS}
  \item\label{RS1} for any $x\in \XXX$ one has $x\cdot x=x$,
  \item\label{RS2} for any pair of points $x,y\in \XXX$ one has $x\cdot(x\cdot y)=y$,
  \item\label{RS3} for any triple of points $x,y,z\in \XXX$ one has  $x\cdot( y\cdot z) = (x\cdot y) \cdot (x\cdot z)$.
\end{axioms}
\item A reflection space is called \Defn{symmetric} or a \Defn{symmetric space} if it satisfies the additional axiom:
\begin{axioms}{RS} \setcounter{enumii}{3}
  \item\label{RS4} $x\cdot y=y$ implies $y=x$ for all $x,y \in \XXX$.
\end{axioms}
\end{enumerate}
The \Defn{category of reflection spaces} has the class of reflection spaces as objects; a morphism between two objects $(\XXX_1, \mu_1)$ and $(\XXX_2, \mu_2)$ is a map $\phi:\XXX_1\to \XXX_2$ such that $\phi(x \cdot y) = \phi(x) \cdot \phi(y)$ for all $x,y \in \XXX_1$. The \Defn{category of symmetric spaces} is the full subcategory whose objects are symmetric spaces.
\end{definition}

\begin{remark} Our definition of a reflection space is taken from \cite{Loos69I}. However, Loos defines a symmetric space as a smooth reflection space, in which a local version of \ref{RS4} holds. Our definition of a symmetric space is more demanding, but does not require a topology on $\XXX$. 
  An alternative definition of a discrete symmetric space can be found in \cite{Caprace05}. In view of \eqref{InvolutionPicture} in Lemma~\ref{CapraceAxiom} below, the definition of a symmetric space given in \cite{Caprace05} is equivalent to what we call a reflection space in this article.
  
In the literature the concept of a reflection space is also known as an {\em (involutory) quandle}.
\end{remark}

\begin{example}\ \label{firstexample}
\begin{enumerate}
\item For any group $G$, the pair $(G,\mu_G)$ with $\mu_G(x,y):=xy^{-1}x$ is a reflection space.
\item For $n\in\N$, the \Defn{$n$-dimensional Euclidean space} $\E^n$ is the symmetric space $(\R^n,\mu_\E)$
  with $\mu_\E(x,y):=2x-y = x-y+x$. Geometrically, $\mu_\E(x,\cdot)$ is the point reflection at $x$.

  Note that this example, of course, is just the example of part (i) for the group $(\R^n,+)$.
\item Similar to (ii), spheres and hyperbolic spaces are reflection spaces, where $\mu(x, \cdot)$ is defined as the spherical/hyperbolic point reflection at $x$.
\end{enumerate}
\end{example}

In view of the previous examples, given a reflection space $(\XXX, \mu)$ the map
\[
s_x: \XXX \to \XXX, \quad y \mapsto x\cdot y.
\]
is called the \Defn{point reflection} at $x$; a product of two point reflections is called a \Defn{transvection}. By Axiom \ref{RS2} all point reflections are involutions, and Axiom \ref{RS3} states that point reflections (and hence transvections) are automorphisms.

In the sequel denote by $\Aut(\XXX, \mu)$ the \Defn{automorphism group} of $\XXX$ and by \[S(\XXX, \mu) := \{ s_x \mid x \in \XXX \} \subset \Aut(\XXX, \mu)\] the subset of all point reflections. The subgroup  \[G(\XXX, \mu) := \langle S(\XXX, \mu) \rangle < \Aut(\XXX, \mu)\] generated by the set $S(\XXX, \mu)$ of point reflections is called the \Defn{main group} of $(\XXX, \mu)$, and the subgroup  \[\Trans(\XXX, \mu) := \langle s_x \circ s_y \mid x, y \in \XXX \rangle < G(\XXX, \mu)\] generated by all transvections is called the  \Defn{transvection group}.
By definition, $\Trans(\XXX, \mu)$ has index at most $2$ in $G(\XXX, \mu)$. The reflection space $(\XXX, \mu)$ is called \Defn{homogeneous} if $\Aut(\XXX, \mu)$ acts transitively on $\XXX$, and \Defn{reflection-homogeneous} if $G(\XXX, \mu)$ acts transitively on $\XXX$.

\medskip
The following formula describes the behavior of point reflections under conjugation.

\begin{lemma}[{see \cite[p.~64, line 15]{Loos69I}}]\label{CapraceAxiom} Let $(\XXX, \mu)$ be a reflection space, $x,y \in \XXX$ and $\alpha \in \Aut(\XXX, \mu)$. Then
\[
\alpha \circ s_y \circ \alpha^{-1} = s_{\alpha(y)}.
\]
In particular,
\begin{equation} \label{InvolutionPicture}
s_x \circ s_y \circ s_x = s_{s_x(y)}.
\end{equation}
\end{lemma}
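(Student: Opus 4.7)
The proof is essentially an unwinding of definitions, so the plan is very short. First I would observe that the two statements are related: the ``in particular'' clause follows from the general identity by taking $\alpha = s_x$, since by axiom \ref{RS3} every point reflection is an automorphism of $(\XXX, \mu)$, and by axiom \ref{RS2} we have $s_x^{-1} = s_x$, so $\alpha^{-1} = s_x$ as well. Hence it suffices to establish the first identity.

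To prove $\alpha \circ s_y \circ \alpha^{-1} = s_{\alpha(y)}$, the plan is to evaluate both sides on an arbitrary point $z \in \XXX$. For the left-hand side, writing $w := \alpha^{-1}(z)$, one has
\[
(\alpha \circ s_y \circ \alpha^{-1})(z) = \alpha(s_y(w)) = \alpha(y \cdot w).
\]
The key step is to invoke the defining property of $\alpha \in \Aut(\XXX, \mu)$, namely $\alpha(a \cdot b) = \alpha(a) \cdot \alpha(b)$, applied to $a = y$ and $b = w$. This yields $\alpha(y \cdot w) = \alpha(y) \cdot \alpha(w) = \alpha(y) \cdot z = s_{\alpha(y)}(z)$, which is the right-hand side.

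There is no real obstacle here: the lemma is a direct reformulation of the fact that automorphisms of a reflection space intertwine with point reflections, which is built into the definition of a morphism of reflection spaces. The only thing to keep in mind is the justification that $s_x$ itself is an automorphism (needed for the ``in particular'' part), which is exactly the content of axiom \ref{RS3} read as the statement that $s_x$ distributes over $\mu$, together with \ref{RS2} to produce its two-sided inverse.
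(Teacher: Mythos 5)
Your proof is correct and follows exactly the same route as the paper: evaluate both sides at an arbitrary $z$, apply the morphism property of $\alpha$ to $\alpha(y\cdot\alpha^{-1}(z))$, and deduce the second identity by specializing $\alpha = s_x$, using that point reflections are involutive automorphisms by \ref{RS2} and \ref{RS3}. Nothing to add.
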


\begin{proof} 
For $z \in \XXX$ one has
\[
(\alpha \circ s_y \circ \alpha^{-1})(z) = \alpha(y \cdot \alpha^{-1} z) = \alpha(y) \cdot z = s_{\alpha(y)}(z),
\]
which proves the first statement. The second statement then follows from the first and the fact that point reflections are involutive automorphisms.
\end{proof}

\begin{remark}\label{AutEmbedding} The lemma implies that both $G(\XXX, \mu)$ and $\Trans(\XXX, \mu)$ are normal in $\Aut(\XXX, \mu)$. In particular, if one denotes by \[c_\alpha(g) := \alpha \circ g \circ \alpha^{-1}\] the conjugation by an element $\alpha \in \Aut(\XXX, \mu)$, then the assignment $\alpha \mapsto c_\alpha$ induces group homomorphisms
\[
c: \Aut(\XXX, \mu) \to \Aut(G(\XXX, \mu)) \quad \text{and} \quad \widehat{c}: \Aut(\XXX, \mu) \to \Aut(\Trans(\XXX, \mu)).
\]
Note that if $\alpha \in \ker(c)$ then for all $x \in \XXX$ one has
\[
s_{\alpha(x)} = c_\alpha(s_x) = s_x.
\] 
Thus if $\XXX$ is symmetric, or more generally $s_x \neq s_y$ for all $x \neq y$ in $\XXX$, then \[c: \Aut(\XXX, \mu) \to \Aut(G(\XXX, \mu))\] is injective.
\end{remark}

\subsection{Involution model and quadratic representation}

The following example provides an important construction of reflection spaces. In fact, by Lemma~\ref{GSIso} below, every symmetric space arises from this construction.

\begin{example} \label{involutionmodel}
Let $G$ be a group, let $S \subset G$ be a conjugation-invariant generating subset of involutions, and define a map
\[
\psi: S \times S \to S, \quad \psi(s,r) := s \cdot r = srs.
\]
Then $(S, \psi)$ is a reflection space, called the \Defn{reflection space associated with the pair $(G,S)$}.
Indeed, for all $x, y \in S$ one has $x \cdot x = xxx = x$ and $x \cdot( x\cdot y ) = xxyxx = y$ and, finally,
\[
x \cdot (y \cdot z) = xyzyx = xyxxzxxyx = xyx \cdot xzx = (x \cdot y) \cdot (x \cdot z),
\]
i.e., axioms \ref{RS1}, \ref{RS2}, \ref{RS3} hold.

The group $G$ acts by automorphisms on $(S, \psi)$ via conjugation and its center $Z(G)$ lies in the kernel of this action. Conversely, any $g \in G$ that acts trivially by conjugation on $S$ necessarily has to be central in $G$, because $S$ generates $G$.

One concludes that the main group of $(S, \psi)$, i.e., the group generated by the point reflections of $(S,\psi)$, is isomorphic to $G/Z(G)$. Furthermore, $(S, \psi)$ is symmetric if and only if $S$ does not contain any pair of distinct commuting involutions; and it is reflection-homogeneous if and only if $S$ consists of a single conjugacy class in $G$.
\end{example}

A version of the following lemma has been established in \cite{Caprace05} for primitive reflection spaces. Essentially the same proof applies to symmetric spaces.

\begin{lemma}\label{GSIso} Let $(\XXX, \mu)$ be a symmetric space, let $S := S(\XXX, \mu)$ be the set of its point reflections, and let $G := G(\XXX, \mu)$ be the main group generated by the point reflections. Then the following assertions hold:
\begin{enumerate}
\item $S \subset G$ is a conjugation-invariant subset of $G$.
\item If $(S, \psi)$ is the reflection space associated with the pair $(G,S)$, then 
\[
s: (\XXX, \mu) \to (S, \psi), \quad x \mapsto s_x
\]
is a $G$-equivariant isomorphism of reflection spaces.
\item $G$ has trivial center and $S$ does not contain any pair of distinct commuting involutions.
\item $\XXX$ is reflection-homogeneous if and only if $S$ consists of a single conjugacy class.
\end{enumerate}
\end{lemma}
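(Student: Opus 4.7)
The plan is to derive all four assertions directly from the conjugation formula $\alpha \circ s_y \circ \alpha^{-1} = s_{\alpha(y)}$ of Lemma~\ref{CapraceAxiom}, combined with the observation from Remark~\ref{AutEmbedding} that since $\XXX$ is symmetric, both the map $s\colon x \mapsto s_x$ and the homomorphism $c\colon \Aut(\XXX,\mu) \to \Aut(G)$ are injective. The one fact that really needs to be checked is injectivity of $s$, which is the sole place where axiom~\ref{RS4} enters: if $s_x = s_y$ then $x \cdot y = s_x(y) = s_y(y) = y \cdot y = y$ by~\ref{RS1}, whence $y = x$ by~\ref{RS4}. Everything else is a one-line consequence of the conjugation formula.

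For (i), I would apply Lemma~\ref{CapraceAxiom} to an arbitrary $g \in G \subset \Aut(\XXX,\mu)$: conjugation of $s_y \in S$ by $g$ produces $s_{g(y)} \in S$, so $S$ is closed under $G$-conjugation. For (ii), the map $s$ is surjective by definition of $S$ and injective by the observation above; it is a morphism of reflection spaces since the specialization $\alpha = s_x$ of the conjugation formula yields $s_{x \cdot y} = s_{s_x(y)} = s_x \circ s_y \circ s_x = \psi(s_x, s_y)$, and $G$-equivariance is just a rereading of the same formula as $s_{g(x)} = g \circ s_x \circ g^{-1}$.

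For (iii), Remark~\ref{AutEmbedding} yields that $c$ is injective on $\Aut(\XXX,\mu)$ and hence on $G$; its restriction to $G$ sends $g$ to the inner automorphism of $G$ given by conjugation by $g$, whose kernel is exactly $Z(G)$, so $Z(G)$ is trivial. For the second assertion of (iii), if $s_x \neq s_y$ in $S$ commuted, then $s_x \circ s_y \circ s_x^{-1} = s_y$; formula~\eqref{InvolutionPicture} rewrites the left-hand side as $s_{x \cdot y}$, so injectivity of $s$ gives $x \cdot y = y$, and then~\ref{RS4} forces $x = y$, a contradiction. For (iv), if $\XXX$ is reflection-homogeneous, then for any $x,y$ some $g \in G$ carries $x$ to $y$, and then $g \circ s_x \circ g^{-1} = s_{g(x)} = s_y$, showing that $S$ is a single $G$-conjugacy class; conversely, $s_y = g \circ s_x \circ g^{-1} = s_{g(x)}$ together with injectivity of $s$ gives $g(x) = y$.

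The main obstacle is essentially bookkeeping: one must keep track of where the symmetry axiom~\ref{RS4} is used (exclusively via injectivity of $s$), and consistently translate between the reflection-space operation on $\XXX$ and conjugation in $G$ so as to see the one-line appearance of Lemma~\ref{CapraceAxiom} in each item. No ideas beyond that lemma and the symmetry axiom are required.
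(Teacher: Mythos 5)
Your proposal is correct, and its substance — the conjugation formula of Lemma~\ref{CapraceAxiom} together with injectivity of $s$ via \ref{RS1} and \ref{RS4} — is exactly what the paper uses. The only cosmetic difference is that the paper deduces (iii) and (iv) by transporting the properties recorded in Example~\ref{involutionmodel} across the isomorphism $(\XXX,\mu)\cong(S,\psi)$ established in (ii), whereas you unpack those same computations directly; this is not a genuinely different route.
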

\begin{proof} By \eqref{InvolutionPicture} on page \pageref{InvolutionPicture}, the set $S$ is invariant under conjugation by elements in $S$. Since $S$ generates $G$, it is therefore invariant under conjugation by elements in $G$. This shows (i) and makes it meaningful to consider the reflection space $(S, \psi)$ introduced in Example~\ref{involutionmodel}. Concerning (ii), the map $s$ is surjective by definition, and it is also injective, for, if  $s_x = s_y$, then by \ref{RS1} one has $s_x(y) = s_y(y) = y$, which by \ref{RS4} implies $x=y$. By \eqref{InvolutionPicture} the map $s$ is an $S$-equivariant and hence $G$-equivariant morphism, proving (ii).

 In particular, since $G$ is the main group of $(\XXX, \mu)$, it is also the main group of $(S, \psi)$. This, however, implies that $G$ has trivial center by the argument given in Example~\ref{involutionmodel}. Also, since $(S, \psi) \cong (\XXX, \mu)$ is symmetric, no two involutions in $S$ commute. This shows (iii). Assertion (iv) follows again from  $(S, \psi) \cong (\XXX, \mu)$.
\end{proof}

The reflection space $(S, \psi)$ defined in (ii) is referred to as the \Defn{involution model} of $(\XXX, \mu)$. By the lemma, every symmetric space admits an involution model.

\begin{remark}\label{TransvecRealization}
  Rather than realizing a reflection-homogeneous symmetric space $(\XXX, \mu)$ by a suitable generating conjugacy class of involutions of its main group, one can also realize it as a suitable subset of its transvection group.

  This embedding, which depends on a choice of basepoint $o \in \XXX$, is referred to as the \Defn{quadratic representation} of $\XXX$ in \cite[Section II.1]{Loos69I} (see also \cite[Lemma 2.3]{Caprace05}). Given $x \in \XXX$ one defines $t_x := s_x \circ s_o \in \Trans(\XXX, \mu)$ and sets $T(\XXX, \mu, o) := \{t_x \mid x \in \XXX\}$. Then the map 
\[
t: \XXX \to  T(\XXX, \mu, o), \quad x \mapsto t_x
\]
is a bijection; indeed, injectivity follows from $t_x \circ s_o = s_x$. This bijection induces on $T(\XXX, \mu, o)$ the structure of a symmetric space. Now by \eqref{InvolutionPicture} on page \pageref{InvolutionPicture} for all $x,y \in \XXX$ one has
\[
t_{x}t_y^{-1}t_x = s_x \circ s_o \circ (s_y \circ s_o)^{-1} s_x \circ s_o  =  s_x \circ s_y \circ s_x \circ s_o =  s_{s_x(y)} \circ s_o = t_{s_x(y)},
\]
whence the induced multiplication in this model is given by
\begin{equation}\label{stInverses}
T(\XXX, \mu, o) \times T(\XXX, \mu, o) \to T(\XXX, \mu, o), \quad (s, t) \mapsto s \cdot t = s t^{-1} s.
\end{equation}
Note that $T(\XXX, \mu, o)$ is a reflection subspace of the group $\Trans(\XXX, \mu)$, where the latter is equipped with its canonical reflection space structure as given by Example~\ref{firstexample}(i).

As another consequence of \eqref{InvolutionPicture} observe that for all $x, y\in \XXX$ one has
\[
s_x\circ s_y = s_x \circ s_o \circ  (s_o \circ s_y \circ s_o) \circ s_o = s_x \circ s_o \circ  s_{s_o(y)}  \circ s_o  = t_x \circ t_{s_o(y)}.
\]
In particular, $T(\XXX, \mu, o)$ actually generates the transvection group.
\end{remark}

\subsection{Topological reflection spaces} All the concepts introduced in the previous subsection make sense in a topological setting.

\begin{definition}
Let $\XXX$ be a topological space and let $\mu: \XXX \times \XXX \to \XXX$, $(x,y) \mapsto x \cdot y$ be a continuous map.
\begin{enumerate}
\item $(\XXX, \mu)$ is called a \Defn{topological reflection space} if it satisfies axioms \ref{RS1}--\ref{RS3}, and a \Defn{topological symmetric space} if it satisfies axioms \ref{RS1}--\ref{RS4}.
\item The \Defn{categories of topological reflection spaces} and \Defn{of topological symmetric spaces} are defined by requiring morphisms to be continuous in addition to preserving the product.
\item The \Defn{automorphism group} $\Aut(\XXX, \mu)$, the \Defn{main group} ${G}(\XXX, \mu)$ and the \Defn{transvection group} $\Trans(\XXX, \mu)$ are defined as in the abstract setting with the additional requirement that automorphisms be homeomorphisms.
\end{enumerate}
\end{definition}

The following topological variants of Examples~\ref{firstexample} and \ref{involutionmodel} provide examples for topological reflection spaces. 

\begin{example}\ 
\begin{itemize}
\item For any topological group $G$, the pair $(G,\mu_G)$ with $\mu_G(x,y):=xy^{-1}x$ is a topological reflection space.
\item The $n$-dimensional Euclidean space $\E^n =(\R^n,\mu_\E)$ is a topological symmetric space with its canonical vector space topology. Similarly, spheres and hyperbolic spaces are topological reflection spaces with their standard topologies.
\item Given a topological group $G$ and a conjugation-invariant generating subset $S$ of involutions, then $S$ is a topological reflection space with respect to the multiplication $r \cdot s = rsr$.
\end{itemize}
\end{example}
\begin{remark} We emphasize that Lemma~\ref{GSIso} does not have counterpart in the setting of general topological reflection spaces. More precisely, if $(\XXX, \mu)$ is a topological symmetric space, then the abstract reflection space underlying $(\XXX, \mu)$ can of course be realized as a subset of its main group (or inside its transvection group), but finding a group topology on either of these groups which restricts to the given topology on $(\XXX, \mu)$ is difficult without additional hypotheses on the structure of the topological symmetric space.
\end{remark}

\subsection{Flats in topological reflection spaces}
Throughout this section let $(\XXX, \mu)$ be a topological reflection space and let $x,y,z \in \XXX$. Since point reflections are involutions one has $s_x(y) = z$ if and only if $s_x(z) = y$. In this situation one calls $x$ a \Defn{midpoint} of $y$ and $z$.

In \cite{LawsonLim2007} the authors develop a rich structure theory of reflection spaces in which any pair of points has a unique midpoint, see \cite[Section~2, Axiom (S4)]{LawsonLim2007}. We will see in Corollary \ref{CorHoles} that every non-spherical Kac--Moody symmetric space contains pairs of points that do not admit a midpoint, hence it is important for us to develop the basic theory of reflection spaces without assuming the existence of midpoints. Note also that, in general topological reflection spaces, midpoints, if they exist, need not be unique, as is already clear from the example of spheres.
\begin{definition}
Let $(\XXX, \mu)$ be a topological reflection space and $\UUU \subset \XXX$ a subspace.
\begin{enumerate}
\item
$\UUU\subseteq \XXX$ is a \Defn{reflection subspace} if for $p,q\in \UUU$ also $s_p(q)\in \UUU$.
\item
$\UUU\subseteq \XXX$ is \Defn{midpoint convex} if for all $p,q\in \UUU$ there is a midpoint of $p$ and $q$ in $\UUU$.
\end{enumerate}
\end{definition}
Note that a reflection subspace of a topological reflection space $(\XXX, \mu)$ is itself a topological reflection space with respect to the restriction of $\mu$ and the subspace topology. Also note that the closure of a reflection subspace $\UUU$ is a reflection subspace\footnote{For, if $x,y$ are contained in the closure $\ol{\UUU}$, then there exist nets $(x_\alpha), (y_\alpha)$ in $\UUU$ converging to $x$ and $y$ respectively, whence $x \cdot y = \lim x_\alpha \cdot y_\alpha \in \ol{\UUU}$ by joint continuity of multiplication.}, whereas generally it is unclear to us whether the closure of a midpoint convex subset is midpoint convex, if $\XXX$ is not locally compact.\footnote{In case $\XXX$ actually is locally compact, one can argue as follows. Let $\UUU$ be a midpoint convex subset of $\XXX$ and let $\ol{\UUU}$ be its closure in $\XXX$. Then $\UUU$ contains nets $(x_\alpha)$ converging to $x$ and $(y_\alpha)$ converging to $y$. By local compactness the net $(z_\alpha)$ consisting of the midpoints $z_\alpha$ of $x_\alpha$ and $y_\alpha$ contains a subnet that in $\ol{\UUU}$ converges to some point $z$. By continuity, the reflection $s_z$ interchanges $x$ and $y$, i.e., $z \in \ol{\UUU}$ is a midpoint of $x$ and $y$.}

\begin{example}\label{ExampleFlat}
The $n$-dimensional Euclidean space $\E^n$ is midpoint convex. Moreover, $(\Z^n,\mu_\E)$ is a reflection subspace of $\E^n$ which is not midpoint convex, whereas $(\Q^n,\mu_\E)$ is a midpoint convex reflection subspace of $\E^n$, albeit not closed. The closed midpoint convex reflection subspaces of $\E^n$ are exactly the affine subspaces, i.e., the translates of $\R$-vector subspaces of the underlying $\R^n$.
\end{example}

\begin{definition}
Let $(\XXX, \mu)$ be a topological reflection space and $F \subseteq \XXX$ a reflection subspace.
\begin{enumerate}
\item $x,y\in \XXX$ \Defn{weakly commute} if for all $z\in \XXX$ one has
 \[ x \cdot (z \cdot (y\cdot z))=y\cdot (z\cdot (x\cdot z)).\]

\item $x,y\in \XXX$ \Defn{commute} if for all $a,b\in \XXX$ one has
\[ x \cdot (a \cdot (y\cdot b))=y\cdot (a\cdot (x\cdot b)).\]

\item $F$ is \Defn{(weakly) abelian} if all its points (weakly) commute.

\item $F$ is called a \Defn{(weak) flat} if it is closed, midpoint convex, (weakly) abelian, and contains at least two points.

\item $F$ is called a \Defn{Euclidean flat} of \Defn{rank} $n$ if it is closed and isomorphic to $\E^n$ as a topological reflection space.
\end{enumerate}
\end{definition} 

\begin{lemma}\label{LemmaBasicFlats} Let $(\XXX, \mu)$ be a reflection space.
\begin{enumerate}
\item Every Euclidean flat is a flat, and every flat is a weak flat.
\item Every $g \in \Aut(\XXX, \mu)$ preserves the collection of weak flats, and the subcollections of flats, Euclidean flats and Euclidean flats of a given rank $n$.
\item Every weak subflat of a Euclidean flat is Euclidean.
\end{enumerate}
\end{lemma}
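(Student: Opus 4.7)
The plan is to verify each assertion by a direct unwinding of the definitions, with all the real content concentrated in part~(iii), where Example~\ref{ExampleFlat} is the key input. For~(i), I first note that $\E^n$ is closed in itself, midpoint convex (the midpoint of $x,y$ being $(x+y)/2$), and abelian: a routine computation using $\mu_\E(u,v)=2u-v$ yields
\[
x\cdot(a\cdot(y\cdot b)) \;=\; 2x-2a+2y-b \;=\; y\cdot(a\cdot(x\cdot b))
\]
for all $x,y,a,b\in\R^n$. For $n\geq 1$ the space $\E^n$ contains at least two points, so any topological reflection space isomorphic to $\E^n$ inherits all these properties and is therefore a flat. Specialising the abelian identity to $a=b=z$ immediately shows that every abelian reflection subspace is weakly abelian, so every flat is a weak flat.

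For (ii), let $g\in\Aut(\XXX,\mu)$. By definition $g$ is a homeomorphism satisfying $g(x\cdot y)=g(x)\cdot g(y)$, and this is precisely what is needed to see that $g$ carries closed sets to closed sets, reflection subspaces to reflection subspaces, and preserves midpoints as well as (weakly) commuting pairs. Consequently $g$ sends (weak) flats to (weak) flats. Moreover $g|_F \colon F \to g(F)$ is an isomorphism of topological reflection spaces, so it preserves isomorphism type; in particular it preserves being isomorphic to $\E^n$ and so preserves the subclass of Euclidean flats of any fixed rank $n$.

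For (iii), let $F$ be a Euclidean flat and fix an isomorphism $\phi \colon \E^n \xrightarrow{\sim} F$ of topological reflection spaces. Given a weak subflat $F'\subseteq F$, the preimage $\phi^{-1}(F')$ is a closed midpoint-convex reflection subspace of $\E^n$ containing at least two points; the weakly abelian hypothesis imposes nothing extra since all of $\E^n$ is already abelian. By Example~\ref{ExampleFlat} the closed midpoint-convex reflection subspaces of $\E^n$ are precisely the affine subspaces, so $\phi^{-1}(F')$ is an affine subspace of some positive dimension $k$, hence isomorphic to $\E^k$; transporting back under $\phi$, the weak subflat $F'$ is Euclidean. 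The only non-routine step in the whole lemma is this appeal to the classification of closed midpoint-convex reflection subspaces of Euclidean space recorded in Example~\ref{ExampleFlat}; everything else is a matter of chasing the axioms \ref{RS1}--\ref{RS3} through the definitions.
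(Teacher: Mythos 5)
Your proof is correct and follows the same route as the paper: (ii) is immediate from the definitions, and (iii) is exactly the appeal to Example~\ref{ExampleFlat} via a chosen isomorphism $\phi\colon\E^n\to F$. The only difference is in (i), where the paper cites \cite[Proposition III.2.5]{Loos69I} for the abelianness of $\E^n$ while you verify the identity $x\cdot(a\cdot(y\cdot b))=2x-2a+2y-b$ directly, which is a perfectly good (and more self-contained) substitute.
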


\begin{proof} The first statement of (i) is contained in \cite[Proposition III.2.5]{Loos69I}, and the second statement of (i) is obvious, (ii) is immediate from the definitions, and (iii) follows from Example~\ref{ExampleFlat}.
\end{proof}

For an illustration that Euclidean flats are weakly abelian see  Figure \ref{fig:alg-flat}.
\begin{remark}
Theorem~\ref{maximalflatsareeuclidean} below states that in Kac--Moody symmetric spaces every weak flat is Euclidean, whence all three notions of flats coincide in that situation.

The notion of an abelian reflection subspace is taken from \cite[III.2.2, p.~134ff]{Loos69I}. Note that spheres and hyperbolic spaces are not weakly abelian, thus among constant curvature smooth examples, being weakly abelian is equivalent to flatness in the sense of zero curvature. In the smooth homogeneous context, being abelian is equivalent to the vanishing of the curvature tensor by \cite[Proposition III.2.5]{Loos69I}.
\end{remark}

Assume now that $\XXX$ is a topological reflection space and that $F \subset \XXX$ is a Euclidean flat of rank $n$. By definition this means that there exists a homeomorphism $\varphi: \R^n \to F$ which is an isomorphism of reflection spaces, where $\R^n$ carries the Euclidean reflection structure. Any such map will be referred to as a \Defn{chart} of $F$, and if $p := \varphi(0)$ then we say that the chart is \Defn{centred} at $p$. If $n=1$, then  a chart is also called a \Defn{parametrization}.

Now let $F \subset \XXX$ be a topological reflection space and $\varphi: \R^n \to F$ be a chart of $F$. Then every automorphism $\alpha \in \Aut(\XXX)$ that stabilizes the set $F$ induces a map $\widehat{\alpha} := \varphi \circ \alpha \circ \varphi^{-1}: \R^n \to \R^n$ and we observe:

\begin{proposition}\label{PropLocalLinearity}
  If 
  $\alpha \in \Aut(\XXX)$ preserves $F$, then $\widehat{\alpha} := \varphi \circ \alpha \circ \varphi^{-1}$ is an affine transformation, i.e., $\widehat{\alpha}$ is linear-by-translation.
\end{proposition}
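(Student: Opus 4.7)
The plan is to exploit the explicit defining formula $\mu_\E(x,y)=2x-y$ on $\E^n$ to turn the reflection-space compatibility condition into a Cauchy-type additivity equation. Since $\alpha$ is a homeomorphism of $\XXX$ that preserves $F$ setwise, its restriction to $F$ is a homeomorphism, and since $\varphi$ is a topological isomorphism, the conjugate $\widehat{\alpha}$ is a homeomorphism of $\E^n$ satisfying
\[
\widehat{\alpha}(2x-y)=2\widehat{\alpha}(x)-\widehat{\alpha}(y) \qquad (x,y\in\R^n). \tag{$\ast$}
\]
In particular, $\widehat{\alpha}$ is continuous, which will be crucial at the end.

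First I would reduce to the case $\widehat{\alpha}(0)=0$. Every translation $T_b\colon x\mapsto x+b$ is an automorphism of the reflection space $\E^n$ (indeed, the transvection group of $\E^n$ acts simply transitively by Euclidean translations, cf.\ Theorem~\ref{thm1.3}). Hence, after replacing $\widehat{\alpha}$ by $T_{-\widehat{\alpha}(0)}\circ\widehat{\alpha}$, I may assume $\widehat{\alpha}(0)=0$, and it then suffices to show that $\widehat{\alpha}$ is $\R$-linear.

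Next I would extract the Cauchy equation from $(\ast)$. Setting $y=0$ in $(\ast)$ gives $\widehat{\alpha}(2x)=2\widehat{\alpha}(x)$, and setting $x=0$ gives $\widehat{\alpha}(-y)=-\widehat{\alpha}(y)$, so $\widehat{\alpha}$ is odd. Substituting $-y$ for $y$ in $(\ast)$ and using oddness yields
\[
\widehat{\alpha}(2x+y)=2\widehat{\alpha}(x)+\widehat{\alpha}(y)=\widehat{\alpha}(2x)+\widehat{\alpha}(y),
\]
and writing $u:=2x$ (which ranges over all of $\R^n$) converts this into Cauchy's additivity relation
\[
\widehat{\alpha}(u+y)=\widehat{\alpha}(u)+\widehat{\alpha}(y).
\]

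Finally I invoke the classical fact that a continuous additive self-map of $\R^n$ is automatically $\R$-linear; bijectivity of $\widehat{\alpha}$ then forces the linear part to lie in $\GL_n(\R)$, and undoing the normalizing translation at the start shows that the original $\widehat{\alpha}$ is linear-by-translation, i.e.\ affine. I do not anticipate a genuine obstacle here: $\E^n$ is so rigid as a reflection space that the whole argument reduces to a standard Cauchy functional equation, and the continuity needed to pass from $\Q$-linearity to $\R$-linearity is automatic from the homeomorphism condition built into $\Aut(\XXX,\mu)$. The only point that demands any care is the observation that the translations used to normalize $\widehat{\alpha}(0)=0$ are themselves reflection-space automorphisms of $\E^n$; this is precisely what Theorem~\ref{thm1.3} guarantees in the Euclidean model.
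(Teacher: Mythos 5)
Your proof is correct and follows essentially the same route as the paper: derive $\widehat{\alpha}(2x-y)=2\widehat{\alpha}(x)-\widehat{\alpha}(y)$, normalize by a translation so that $\widehat{\alpha}(0)=0$, extract $2$-homogeneity and oddness by specializing $x$ and $y$, deduce additivity, and conclude $\R$-linearity from continuity. The only cosmetic difference is that the paper invokes density of $\Z[\tfrac{1}{2}]$ in $\R$ where you invoke the standard fact that continuous additive maps are linear; these are the same argument.
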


\begin{proof}
  The map $\widehat{\alpha} : \R^n \to \R^n$ is a topological isomorphism of reflection spaces. In particular, for all $x, y \in \R^n$ one has \[\widehat{\alpha}(2x-y)=\widehat{\alpha}(\mu(x,y))=\mu(\widehat{\alpha}(x),\widehat{\alpha}(y))=2\widehat{\alpha}(x)-\widehat{\alpha}(y).\] The group of translations acts transitively on $\R^n$, so up to composition of $\widehat{\alpha}$ with an appropriate translation one may assume $\widehat{\alpha}(0) = 0$.
By setting $y=0$ one then concludes that $\widehat{\alpha}$ is homogeneous with respect to powers of $2$ and by setting $x=0$ one concludes that $\widehat{\alpha}$ is homogeneous with respect to $-1$. Replacing $x$ by $\frac{1}{2}x$ and $y$ by $-y$ then implies that $\widehat{\alpha}$ is additive. 
Since $\Z[\frac{1}{2}]$ is dense in $\R$, this implies $\R$-linearity of $\widehat{\alpha}$. 
\end{proof}

By abuse of language one says that $\alpha$ \Defn{acts affine-linearly} on $F$.

\subsection{Geodesics and translation groups}

In this section we prove Theorem~\ref{thm1.3}.

\begin{definition}  Let $(\XXX, \mu)$ be a topological reflection space. A Euclidean flat $\gamma \subset \XXX$ of rank $1$ is called a \Defn{geodesic}, and the subset
\[
T_\gamma := \{ s_p \circ s_q\mid p, q \in \gamma \} \subset \Trans(\XXX, \mu).
\]
is called the associated \Defn{translation group}.
\end{definition}

It is not obvious a priori that $T_\gamma$ is a group. However, one can show the following:

\begin{proposition}\label{PropTranslationGroups} Let $(\XXX, \mu)$ be a topological reflection space and $\gamma \subset \XXX$ a geodesic.
\begin{enumerate}
\item $T_\gamma \cong (\R, +)$ is a one-parameter subgroup of $\Trans(\XXX, \mu)$.
\item $T_\gamma$ acts sharply transitively on $\gamma$ by Euclidean translations.
\item If $t_1, t_2 \in T_\gamma$ and $t_1|_\gamma = t_2|_\gamma$, then $t_1 = t_2$.
\end{enumerate}
\end{proposition}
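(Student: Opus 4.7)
The plan is to fix an isomorphism $\varphi\colon \E^1 \xrightarrow{\sim} \gamma$ of topological reflection spaces and write $\gamma_t := \varphi(t)$, so that $s_{\gamma_a}(\gamma_b) = \gamma_{2a-b}$. A direct computation then shows that each $s_{\gamma_\alpha} \circ s_{\gamma_\beta}$ preserves $\gamma$ (reflection-subspace) and restricts there to the Euclidean translation $\gamma_x \mapsto \gamma_{x + 2(\alpha-\beta)}$; letting $(\alpha,\beta)$ vary yields a surjective restriction map $\rho\colon T_\gamma \twoheadrightarrow T(\gamma) \cong (\R,+)$. I will first prove (iii) and then deduce (i) and (ii) formally.

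The crucial ingredient for (iii) is that by Lemma~\ref{LemmaBasicFlats}(i) the Euclidean flat $\gamma$ is a flat in the sense of the paper, hence \emph{abelian}. Unpacking the definition of ``commute'' from Subsection~2.4, for any $x,y \in \gamma$ and any $a \in \XXX$ the identity $s_x \circ s_a \circ s_y = s_y \circ s_a \circ s_x$ holds in $\Aut(\XXX,\mu)$. Specialising to $x=\gamma_\alpha$, $y=\gamma_\beta$ and $a=\gamma_c$, and rewriting both sides via Lemma~\ref{CapraceAxiom} in the form $s_{\gamma_c} s_{\gamma_\beta} = s_{\gamma_{2c-\beta}} s_{\gamma_c}$, cancellation of the common right factor $s_{\gamma_c}$ yields the global identity
\[
s_{\gamma_\alpha} \circ s_{\gamma_{2c-\beta}} = s_{\gamma_\beta} \circ s_{\gamma_{2c-\alpha}} \qquad \text{in }\Aut(\XXX,\mu), \quad \alpha,\beta,c \in \R,
\]
which after substituting $d := 2c-\beta$ becomes the key identity
\[
s_{\gamma_\alpha} \circ s_{\gamma_d} = s_{\gamma_\beta} \circ s_{\gamma_{\beta + d - \alpha}} \qquad (\alpha,\beta,d \in \R). \quad (\ast)
\]
Now if $t_1 = s_{\gamma_\alpha} \circ s_{\gamma_\beta}$ and $t_2 = s_{\gamma_{\alpha'}} \circ s_{\gamma_{\beta'}}$ agree on $\gamma$, Step~1 forces $\alpha - \beta = \alpha' - \beta'$. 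Instantiating the free parameter in $(\ast)$ to $\alpha'$ gives $t_1 = s_{\gamma_{\alpha'}} \circ s_{\gamma_{\alpha' + \beta - \alpha}} = s_{\gamma_{\alpha'}} \circ s_{\gamma_{\beta'}} = t_2$, which is (iii).

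With (iii) established, $\rho$ is bijective; for any $t_1, t_2 \in T_\gamma$ the composition $t_1 \circ t_2 \in \Aut(\XXX,\mu)$ restricts on $\gamma$ to $\rho(t_1) + \rho(t_2)$, which by surjectivity of $\rho$ equals $\rho(t)$ for a unique $t \in T_\gamma$, and by (iii) this $t$ actually equals $t_1 \circ t_2$ globally. Together with $(s_p \circ s_q)^{-1} = s_q \circ s_p \in T_\gamma$, this makes $T_\gamma$ a subgroup of $\Aut(\XXX,\mu)$ lying inside $\Trans(\XXX,\mu)$, and $\rho$ becomes a group isomorphism $T_\gamma \xrightarrow{\sim} (\R,+)$, proving (i). For (ii), Step~1 already shows that $T_\gamma$ acts on $\gamma$ by Euclidean translations, and sharpness follows from the injectivity of $\rho$ combined with the freeness of the translation action of $T(\gamma)$ on $\gamma$.

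The main obstacle is the globalisation step in $(\ast)$. The naïve approach would be a ``three-reflections identity'' $s_{\gamma_a}s_{\gamma_b}s_{\gamma_c} = s_{\gamma_{a-b+c}}$; this identity holds trivially on $\gamma \cong \E^1$ by direct calculation, but it does \emph{not} follow from the axioms (RS1)--(RS3) alone, since those axioms impose no constraint on how point reflections at points of $\gamma$ act on $\XXX \setminus \gamma$. The abelian property of $\gamma$ (input via \cite[Proposition III.2.5]{Loos69I} into Lemma~\ref{LemmaBasicFlats}(i)) is the precisely right amount of extra structure to promote the on-$\gamma$ translation identity to a global equality of automorphisms.
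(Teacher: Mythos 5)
Your route is genuinely different from the paper's, and the difference sits exactly where your gap is. The paper never invokes the abelian property of $\gamma$: it proves the statement by direct manipulation of the conjugation formula $s_x\circ s_y\circ s_x=s_{2x-y}$ from Lemma~\ref{CapraceAxiom}, first establishing the homomorphism property of $y\mapsto s_{(x+y)/2}\circ s_x$ on $\Z[\tfrac12]$ via doubling and odd-power identities, and then extending to $\R$ using the \emph{continuity} of $\mu$. That continuity is essential in the paper's argument, and your proof never uses it --- a first warning sign. More to the point, the identity $(\ast)$ you derive, namely that the automorphism $s_{\gamma_\alpha}\circ s_{\gamma_d}$ of $\XXX$ depends only on $d-\alpha$, is (together with your trivial Step~1) \emph{equivalent} to assertion~(iii). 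So your two-line derivation of $(\ast)$ has shifted the entire content of the proposition into the claim that $\gamma$ is abelian.

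That claim is not available to you. For the cancellation of the common right factor $s_{\gamma_c}$ you need $s_x\circ s_a\circ s_y=s_y\circ s_a\circ s_x$ as an identity of maps on all of $\XXX$ (the quantifier over $b\in\XXX$ is essential), and this does not follow from (RS1)--(RS3) together with the intrinsic Euclidean structure of $\gamma$. Lemma~\ref{LemmaBasicFlats}(i) is supported in the paper only by a citation to \cite[Proposition III.2.5]{Loos69I}, a statement proved for \emph{smooth} symmetric spaces via the curvature tensor; in the purely topological setting the assertion ``Euclidean flat $\Rightarrow$ abelian'', with $a,b$ quantified over all of $\XXX$ as in the definition of Subsection~2D, is in fact false. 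For example, in the topological reflection space $(\SL_2(\R),\mu_G)$ with $\mu_G(x,y)=xy^{-1}x$, the closed one-parameter subgroup $\gamma=\{\mathrm{diag}(e^t,e^{-t})\}$ is a geodesic, yet for $x=e$, $y=\mathrm{diag}(2,\tfrac12)$, $a=\left(\begin{smallmatrix}1&1\\0&1\end{smallmatrix}\right)$ and $b=e$ one computes $s_xs_as_y(b)=a^{-1}y^2a^{-1}\neq ya^{-2}y=s_ys_as_x(b)$, so $\gamma$ is not abelian (and a similar computation shows it is not even weakly abelian). If one instead reads the quantifiers in ``abelian'' as restricted to $F$, as in axiom (F3) of the introduction, then the commutation identity does hold, but only as an identity of points of $\gamma$; the cancellation step then yields $(\ast)$ only after restriction to $\gamma$, which is your trivial Step~1 rather than the global statement you need. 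The identity $(\ast)$ is of course true --- it is exactly what the proposition asserts --- but it has to be earned by the paper's dyadic-plus-continuity argument and cannot be extracted from Lemma~\ref{LemmaBasicFlats}(i).
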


For the proof of Proposition~\ref{PropTranslationGroups} use the following notation: Fix a parametrization $\phi: \R \to \gamma$ of $\gamma$ so that in particular $\phi(2x-y) = s_{\phi(x)}(\phi(y))$, given $x \in \R$ abbreviate $s_x := s_{\phi(x)}$, and given $x,y \in \R$ define a transvection
\begin{equation}\label{Deftxy}
t_\gamma[x,y] := t[x, y] := s_{y} \circ s_{(x+y)/2}.
\end{equation}
By construction, $t[x,y]$ is a transvection along $\gamma$ which maps $\phi(x)$ to $\phi(y)$, hence the notation. Note that the restriction of this transvection to $\gamma$ corresponds via $\phi$ to the translation by $y-x$ in $\R$. With this notation one has $T_\gamma = \{t_\gamma[x,y]\mid x, y \in \R\}$ and, thus, Proposition~\ref{PropTranslationGroups} is a consequence of the following lemma:

\begin{lemma} With the notation just introduced the following hold.
\begin{enumerate}
\item For every $x \in \R$ the map $t_{\gamma, x}: (\R, +) \to T_\gamma$, $y \mapsto t[x, x+y]$ is an injective group homomorphism.
\item For all $x, y\in \R$ one has $t[x,x+y] = t[0, y]$. In particular, $t_{\gamma, x}$ is onto for every $x \in \R$.
\end{enumerate}
\end{lemma}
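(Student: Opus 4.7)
The plan is to prove both parts of the lemma by first establishing a single master reduction formula: for $u, v, w \in \gamma$ (identified with real numbers via $\phi^{-1}$),
\[
s_u \circ s_v \circ s_w \;=\; s_{u - v + w}
\]
as elements of $\Aut(\XXX)$. Once this is in place, both (i) and (ii) follow by direct manipulation of the defining formulas for $t[a, b]$.

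I would first record the two equivalent forms $t[a, b] = s_b \circ s_{(a+b)/2} = s_{(a+b)/2} \circ s_a$. Both follow from the conjugation identity \eqref{InvolutionPicture}: the midpoint $m_{ab} := (a+b)/2 \in \gamma$ satisfies $s_{m_{ab}}(a) = b$, hence $s_{m_{ab}} \circ s_a \circ s_{m_{ab}} = s_b$. For the reduction itself, set $m := (u+w)/2 \in \gamma$, so that by the same midpoint trick $s_w = s_m \circ s_u \circ s_m$, giving
\[
s_u \circ s_v \circ s_w \;=\; s_u \circ s_v \circ s_m \circ s_u \circ s_m.
\]
The key input is that $\gamma$ is a Euclidean flat of rank one and hence a flat by Lemma~\ref{LemmaBasicFlats}(i); by definition of a flat, $\gamma$ is \emph{abelian}, meaning that any two points $u, m \in \gamma$ satisfy the strong operator identity $s_u \circ s_z \circ s_m = s_m \circ s_z \circ s_u$ in $\Aut(\XXX)$ for every $z \in \XXX$. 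Applying this with $z = v$, the expression above telescopes:
\[
s_u \circ s_v \circ s_m \circ s_u \circ s_m \;=\; (s_m \circ s_v \circ s_u) \circ s_u \circ s_m \;=\; s_m \circ s_v \circ s_m \;=\; s_{s_m(v)} \;=\; s_{u - v + w}.
\]

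With the reduction in hand, the rest is direct. For (ii), multiplying $t[x, x+y] = s_{x+y/2} \circ s_x$ on the right by $s_0$ and applying the reduction with $(u, v, w) = (x+y/2, x, 0)$ gives $s_{x+y/2} \circ s_x \circ s_0 = s_{y/2}$, which rearranges to $t[x, x+y] = s_{y/2} \circ s_0 = t[0, y]$. For (i), injectivity is clear since $t[x, x+y]$ restricts to $\gamma$ as translation by $y$; for the homomorphism property, one expands
\[
t[x, x+y_1] \circ t[x, x+y_2] \;=\; s_{x+y_1/2} \circ s_x \circ s_{x+y_2} \circ s_{x+y_2/2},
\]
collapses the three middle factors via the reduction with $(u, v, w) = (x, x+y_2, x+y_2/2)$ to obtain $s_{x+y_1/2} \circ s_{x - y_2/2}$, and then post-composes with $s_x$ and applies the reduction with $(u, v, w) = (x+y_1/2, x - y_2/2, x)$ to identify the product with $s_{x+(y_1+y_2)/2} \circ s_x = t[x, x+y_1+y_2]$. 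The main obstacle is the reduction formula itself; the essential point is that the full \emph{abelian} property of flats (an operator identity on all of $\XXX$), as opposed to only the weakly abelian property (a pointwise identity at one point), is precisely what powers the telescoping.
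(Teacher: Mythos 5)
Your algebra is clean, and once the reduction formula $s_u\circ s_v\circ s_w=s_{u-v+w}$ is available as an identity in $\Aut(\XXX)$, both parts do follow exactly as you describe. The gap is in how you obtain that formula. Note first that it is not an auxiliary tool but is equivalent to the lemma itself: it yields (i) and (ii) immediately, and conversely (i) and (ii) imply it. The hard content is the upgrade from an identity of restrictions to $\gamma$ (which is just Euclidean arithmetic via $\phi$) to an identity of automorphisms of all of $\XXX$, and that upgrade is precisely what your appeal to the abelian property is meant to supply. This appeal does not hold up. Under the literal reading of ``commute'' (with the middle point ranging over all of $\XXX$), Lemma~\ref{LemmaBasicFlats}(i) is not available in the strength you need: a geodesic $\gamma$ in the hyperbolic plane $\mathbb H^2$ is a closed Euclidean flat of rank one, yet for distinct $u,m\in\gamma$ and $z\notin\gamma$ the condition $s_us_zs_m=s_ms_zs_u$ is equivalent to $s_z(s_ms_u)s_z=(s_ms_u)^{-1}$, and the left-hand side is a hyperbolic translation with axis $s_z(\gamma)\neq\gamma$ while the right-hand side has axis $\gamma$; so $\gamma$ is not abelian in this extrinsic sense. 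If instead one reads the quantifiers intrinsically (middle point and argument in $F$, which is what the cited result of Loos supports), the commutation identity only asserts equality of the two maps at points of $\gamma$, which is the easy half and does not give the operator identity in $\Aut(\XXX)$ that your telescoping requires.

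The one instance you actually use ($z=v\in\gamma$) is true, but it is statement (ii) in disguise: it says that the transvection $s_{s_v(m)}\circ s_{s_v(u)}$ coincides with $s_u\circ s_m$ as an automorphism of the whole space and not merely on $\gamma$, i.e.\ that transvections along $\gamma$ depend only on the displacement. Justifying it by Lemma~\ref{LemmaBasicFlats}(i) is therefore circular in substance. The paper's proof is built to avoid exactly this: it uses only the conjugation formula $s_x\circ s_y\circ s_x=s_{2x-y}$, which genuinely holds in $\Aut(\XXX)$ by \eqref{InvolutionPicture} because $\phi$ is an isomorphism of reflection spaces, bootstraps the homomorphism property through inverses, dyadic halving and odd powers, and then invokes continuity of $\mu$ to pass from the rationals to $\R$; part (ii) is deduced afterwards from the commutativity of $T_\gamma$ established in (i). To salvage your route you would have to prove the reduction formula from these ingredients rather than cite it.
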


\begin{proof}
\begin{enumerate}
\item By Lemma~\ref{CapraceAxiom} and the formula for Euclidean reflections in $\R$ one has
\begin{equation}\label{LineReflection}
s_x \circ s_y \circ s_x = s_{2x-y}\quad (x,y \in \R).
\end{equation}
This allows one to rewrite \eqref{Deftxy} as
\begin{equation}\label{txyrewritten}
t[x,y] = s_{y} \circ s_{(x+y)/2} = s_{(x+y)/2}\circ (s_{(x+y)/2} \circ s_{y} \circ s_{(x+y)/2}) = s_{(x+y)/2} \circ s_{x},
\end{equation}
which yields in particular $t[x, x+y] = s_{x+y/2} \circ s_x$ and, thus,
\begin{equation}\label{txInverse}
t[x,x+y] \circ t[x, x-y] = s_{x+y/2} \circ (s_x \circ s_{x-y/2} \circ s_x) = s_{x+y/2} \circ s_{x+y/2} = \Id.
\end{equation}
It also follows from \eqref{LineReflection} and \eqref{txyrewritten} that
\[
t[ x, x+y/2]^2 =  (s_{x+y/4} \circ s_{x} \circ s_{x+y/4}) \circ s_{x} = s_{x+y/2} \circ s_x = t[x,x+y],
\]
whence by induction
\begin{equation}\label{tHomEven}
t[ x, x+y] =  t[x, x+2^{-n}y]^{2^n}.
\end{equation}
Next one observes that
\[
(s_{x+y/2} \circ s_{x})(\phi(x+y/2)) = s_{x+y/2}(\phi(x-y/2))=\varphi(x+3y/2),
\]
and inductively one obtains for all $k \in \N$,
\[
(s_{x+y/2} \circ s_{x})^k(\phi(x+y/2))=\varphi(x+(2k+1)y/2)
\]
Thus, if $n=2k+1$ is an odd positive integer, then
\begin{alignat*}{3}
t[x, x+y]^n &= (s_{x+y/2} \circ s_{x})^{2k+1} &
&= (s_{x+y/2} \circ s_{x})^k \circ s_{x+y/2} \circ (s_x \circ s_{x+y/2})^k \circ s_{x}\\
&= s_{x+(2k+1)y/2} \circ s_x &
&= t[x, x+ny].
\end{alignat*}
Combining this with \eqref{txInverse} and \eqref{tHomEven} it follows that the restriction of $t_{\gamma, x}$ to the dense subset $\Z[\tfrac12]$ of $\R$ is a homomorphism, whence continuity of $\mu$ implies that $t_{\gamma, x}$ is a homomorphism. Since $t[x,x+y](x) = \phi(x+y)$ it is injective.
\item Let $x, y \in \R$ and set $z := x/2+y/4$ so that
\[
s_{z}(\phi(0)) = \phi(x+y/2), \quad s_z(\phi(x)) = \phi(y/2).
\]
It follows from (i) that $s_0 \circ s_x$ and $s_z \circ s_x$ commute and hence
\[
s_0 \circ s_x = (s_z \circ s_x) \circ (s_0 \circ s_x) \circ  (s_z \circ s_x)^{-1} = s_z \circ s_x \circ s_0 \circ s_z = s_{y/2} \circ s_{x+y/2}.
\]
One deduces that $s_{y/2} \circ s_0 = s_{x+y/2} \circ s_x$, and thus taking inverses
\[
t[0,y] =  s_{y/2} \circ s_0 =  s_{x+y/2} \circ s_x = t[x, x+y].\qedhere
\]
\end{enumerate}
\end{proof}

\begin{remark}
Proposition~\ref{PropTranslationGroups} generalizes \cite[Proposition~XIII.5.5]{Lang1999} as well as \cite[Theorem~3.6(iv)]{Neeb2002} to arbitrary topological reflection spaces: any geodesic in any topological reflection space defines a one-parameter subgroup of its automorphism group.
It is quite remarkable that this property relies purely on group theory and elementary Euclidean geometry and does not require any differentiable structure whatsoever.

After the dissemination of our results in late 2016 and early 2017 variants of our Proposition~\ref{PropTranslationGroups} and the underlying lemma were published in \cite{Neeb2017} and \cite{Oeh2017}. 
\end{remark}

\subsection{Geodesically connected reflection spaces}

\begin{definition} Let $(\XXX, \mu)$ be a topological reflection space and $\gamma \subset \XXX$ a geodesic. A compact connected subset $\sigma \subset \gamma$ with non-empty relative interior is called a \Defn{geodesic segment}. A triple $\vec \sigma = (\sigma, s(\vec \sigma), t(\vec \sigma))$, where $\sigma$ is a geodesic segment and $s(\vec \sigma)$ and $t(\vec \sigma)$ are the endpoints of $\sigma$ is called an \Defn{oriented geodesic segment} from $s(\vec \sigma)$ to $t(\vec \sigma)$. Given an oriented geodesic segment $\vec \sigma$ in $\gamma$ the \Defn{parallel transport along $\vec \sigma$} is defined as the unique transvection $t[\vec \sigma] \in T_\gamma$ mapping $s(\vec \sigma)$ to $t(\vec \sigma)$.

  An \Defn{oriented piecewise geodesic curve} is a sequence $\vec \sigma =  (\vec \sigma_1, \vec \sigma_2, \dots, \vec \sigma_n)$ of oriented geodesic segments with $t(\sigma_i) = s(\sigma_{i+1})$. Then set $s(\vec \sigma) := s(\vec \sigma_1)$ and $t(\vec \sigma) := t(\vec \sigma_n)$ and say that $\vec \sigma$ is a curve from $s(\vec \sigma)$ to $t(\vec \sigma)$. Also define \Defn{parallel transport along $\vec \sigma$} as the transvection
\[t[\vec\sigma] :=  t[\vec \sigma_n] \circ \cdots \circ t[\vec \sigma_2] \circ t[\vec \sigma_1].\]
$(\XXX, \mu)$ is \Defn{geodesically connected} if for all $p, q \in \XXX$ there exists an oriented piecewise geodesic curve from $p$ to $q$.
\end{definition}

Recall that in a finite-dimensional Riemannian symmetric space any pair of points lies on a common geodesic. This is no longer the case for Kac--Moody symmetric spaces by Corollary~\ref{CorHoles} below. Nevertheless, Kac--Moody symmetric spaces still satisfy the weaker property of being geodesically connected by Lemma~\ref{lem:geod-conn}; as it turns out this is enough to deduce various basic structural features such as the following information concerning the transvection group:

\begin{proposition} \label{PropTransGeneratedByOneParam}
Let $(\XXX, \mu)$ be a geodesically connected topological reflection space.
\begin{enumerate}
\item $\Trans(\XXX, \mu)$ acts transitively on $\XXX$. In particular, $\XXX$ is reflection-homogeneous.
\item $\Trans(\XXX, \mu)$ is generated by the one-parameter subgroups $T_\gamma$, where $\gamma$ runs through all geodesics in $\XXX$.
\end{enumerate}
\end{proposition}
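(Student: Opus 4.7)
Both parts follow from turning a piecewise geodesic curve into an explicit product of transvections, each of which lies in one of the one-parameter subgroups $T_\gamma$.

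For (i), given any $p,q \in \XXX$, geodesic connectedness furnishes an oriented piecewise geodesic $\vec\sigma = (\vec\sigma_1,\dots,\vec\sigma_n)$ from $p$ to $q$. Each $\sigma_i$ lies on some geodesic $\gamma_i$, so the parallel transport $t[\vec\sigma_i]$ belongs to $T_{\gamma_i} \subset \Trans(\XXX,\mu)$, and the composition $t[\vec\sigma] = t[\vec\sigma_n]\circ\cdots\circ t[\vec\sigma_1]$ is a transvection sending $p$ to $q$. This gives transitivity of $\Trans(\XXX,\mu)$, and since $\Trans(\XXX,\mu) \subset G(\XXX,\mu)$, this also yields reflection-homogeneity.

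For (ii), let $H := \langle T_\gamma \mid \gamma \text{ geodesic in } \XXX\rangle$. Since $\Trans(\XXX,\mu)$ is by definition generated by products $s_p \circ s_q$ with $p,q\in\XXX$, it suffices to show that every such product lies in $H$. Fix $p,q \in \XXX$ and pick an oriented piecewise geodesic $\vec\sigma = (\vec\sigma_1,\dots,\vec\sigma_n)$ from $p$ to $q$ with $\sigma_i \subset \gamma_i$. Set $x_0 := p$, $x_i := t(\vec\sigma_i) = s(\vec\sigma_{i+1})$ for $1 \leq i < n$, and $x_n := q$, so that $x_{i-1}$ and $x_i$ both lie on $\gamma_i$. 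In particular $s_{x_{i-1}}\circ s_{x_i} \in T_{\gamma_i}$ for each $i$.

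The key computation is the telescoping identity
\[
s_p \circ s_q \;=\; s_{x_0}\circ s_{x_n} \;=\; (s_{x_0}\circ s_{x_1})\circ(s_{x_1}\circ s_{x_2})\circ\cdots\circ(s_{x_{n-1}}\circ s_{x_n}),
\]
which holds because each interior $s_{x_i}^2 = \Id$ by axiom \ref{RS2}. Each factor on the right lies in some $T_{\gamma_i} \subset H$, so $s_p \circ s_q \in H$, which proves the claim.

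There is no serious obstacle here; the only mild subtlety is observing that while individual point reflections $s_{x_i}$ need not themselves lie in $H$, the consecutive products $s_{x_{i-1}}\circ s_{x_i}$ do, because the two factors share a common geodesic. The telescoping identity is what allows one to reassemble these pieces into the single transvection $s_p \circ s_q$ across a curve whose segments need not all lie on a single geodesic.
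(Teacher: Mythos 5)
Your proof is correct and follows essentially the same route as the paper: part (i) is the parallel-transport observation verbatim, and part (ii) is the same telescoping of $s_p\circ s_q$ into consecutive products $s_{x_{i-1}}\circ s_{x_i}$, each lying in $T_{\gamma_i}$ because both points sit on $\gamma_i$. If anything your version is marginally leaner, since the paper additionally rewrites each factor as $s_{q_i}\circ s_{p_i}$ via the conjugation formula $s_xs_ys_x=s_{s_x(y)}$, a step that is not needed given that $T_\gamma$ is by definition the set of all products of two point reflections at points of $\gamma$.
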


\begin{proof}
\begin{enumerate}
\item If $p,q$ are distinct points in $\XXX$ and $\vec \sigma$ is an oriented piecewise geodesic curve from $p$ to $q$, then $t[\vec \sigma] \in \Trans(\XXX, \mu)$ maps $p$ to $q$.
\item Let $p$ and $q$ be distinct points in $\XXX$ and let $\vec \sigma =  (\vec \sigma_1, \vec \sigma_2, \dots, \vec \sigma_n)$ be a piecewise oriented geodesic curve between $p$ and $q$. It suffices to show that $s_q \circ s_p \in \Trans(\XXX, \mu)$ can be written as a product of elements of the translation groups corresponding to the geodesics involved in the above curve. To this end set $p_i = t(\vec \sigma_i)$ and $p_0 := p$, $q_i := s_{p_i}(p_{i-1})$. Then $t_i := s_{q_{i}}\circ s_{p_i} \in T_{\gamma_i}$ where $\gamma_i$ is the geodesic containing $\vec{\sigma}_i$ and $t_i(p_{i-1}) = q_i$. Thus (ii) follows from the computation
\begin{align*}
s_{q} \circ s_p
&= s_{p_n} \circ s_{p_0} =  (s_{p_n} \circ s_{p_{n-1}}) \circ (s_{p_{n-1}} \circ s_{p_{n-2}}) \dots (s_{p_1} \circ s_{p_0})\\
&= ((s_{p_n} \circ s_{p_{n-1}} \circ  s_{p_n}) \circ s_{p_n}) \circ \dots \circ ((s_{p_1} \circ s_{p_{0}} \circ  s_{p_1}) \circ s_{p_1})\\
&= (s_{q_n} \circ s_{p_n}) \circ \dots \circ (s_{q_1} \circ s_{p_1})\\
&= t_n \circ \dots \circ t_1. \qedhere
\end{align*}
\end{enumerate}
 \end{proof}
 \begin{remark} Part (ii) of the proposition provides an obstruction for a group to occur as the transvection group of some geodesically connected topological reflection space: Any such group has to be generated by a family of subgroups isomorphic to $(\R, +)$.
 \end{remark}

 \subsection{Local transformations of strongly transitive reflection spaces}

In a general topological reflection space, it is unclear to us whether every flat is contained in a maximal flat. Indeed, while every midpoint convex abelian reflection subspace certainly is contained in a maximal midpoint convex abelian reflection subspace, there is no reason for this maximal space to be closed. As it is unclear to us whether closures of midpoint convex subsets are again midpoint convex, we are unable to guarantee even the existence of a single maximal flat in this generality. However, if maximal flats exist, then they often give a major insight into the structure of the topological reflection space, since every automorphism has to preserve maximal flats and their intersection patterns. 
 
 \begin{definition} Let $\XXX$ be a topological reflection space which admits maximal flats.
 \begin{enumerate}
 \item A pair $(p, F)$ where $F$ is a maximal flat in $\XXX$ and $p \in F$ is a point is called a \Defn{pointed maximal flat}.
 \item Let $G$ be a group acting on $\XXX$ by automorphisms. We say that the action is \Defn{strongly transitive} if $G$ acts transitively on pointed maximal flats.
 \item $\XXX$ is called \Defn{strongly transitive} if $\Aut(\XXX)$ acts strongly transitively on $\XXX$.
\end{enumerate}
 \end{definition}

 The following observation is often useful for checking strong transitivity. It will be used, for instance, in Corollary~\ref{ConsequencesFlatClassification} below in order to show that Kac--Moody symmetric spaces are strongly transitive.

 \begin{proposition}\label{PropAutomaticallyStrong} 
Let $\Trans(\XXX) < G <\Aut(\XXX)$. If $G$ acts transitively on maximal flats in $\XXX$ and if one, whence all, of these are Euclidean, then $G$ acts strongly transitively on $\XXX$.
 \end{proposition}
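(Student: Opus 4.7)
The plan is to reduce strong transitivity on pointed maximal flats to two independent ingredients: the hypothesized transitivity of $G$ on maximal flats, and a transitivity statement for stabilizers of a fixed maximal flat on its points. I will obtain the latter from the Euclidean hypothesis together with the inclusion $\operatorname{Trans}(\XXX) < G$.

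More precisely, fix two pointed maximal flats $(p_1, F_1)$ and $(p_2, F_2)$. By the assumed transitivity on maximal flats there is some $h \in G$ with $h.F_1 = F_2$, and setting $p' := h.p_1 \in F_2$ reduces the problem to finding $g' \in G$ with $g'.F_2 = F_2$ and $g'.p' = p_2$; then $g := g' \circ h$ does the job. So the main task is to show that the setwise stabilizer of a maximal flat $F \subset \XXX$ in $G$ acts transitively on $F$.

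To see this I will exploit that $F$ is Euclidean: by hypothesis $F \cong \E^n$, and since all maximal flats are related by $G$ (which acts by automorphisms) one knows via Lemma~\ref{LemmaBasicFlats}(ii) that the Euclidean property is indeed shared by every maximal flat. Given $p', p_2 \in F$, the Euclidean structure provides a (unique) midpoint $m \in F$ of $p'$ and $p_2$. Consider the transvection $t := s_m \circ s_{p'} \in \operatorname{Trans}(\XXX)$; since $\operatorname{Trans}(\XXX) < G$ by hypothesis, one has $t \in G$. Because $p', m \in F$ and $F$ is a reflection subspace, both $s_{p'}$ and $s_m$ preserve $F$, hence so does $t$. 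Finally $t(p') = s_m(s_{p'}(p')) = s_m(p') = p_2$ by the midpoint property, which is exactly what we need.

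Combining the two steps, $g = t \circ h$ lies in $G$ and satisfies $g.F_1 = F_2$ and $g.p_1 = p_2$, establishing strong transitivity. I do not anticipate any genuine obstacle here: the only subtlety is checking that the transvection produced inside $F$ is in fact realised by an element of $\operatorname{Trans}(\XXX)$ and preserves $F$, which is immediate from the definitions together with the fact that reflections at points of a reflection subspace preserve that subspace. The hypothesis $\operatorname{Trans}(\XXX) < G$ is used precisely at this step and cannot be weakened without losing the construction.
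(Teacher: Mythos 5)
Your proof is correct and follows essentially the same route as the paper's: the paper likewise reduces to showing that the stabilizer of a maximal flat $F$ in $G$ contains $\operatorname{Trans}(F)$ (point reflections of $F$ being restrictions of point reflections of $\XXX$), which acts transitively on the Euclidean flat $F$. Your explicit midpoint-and-transvection computation is just a spelled-out version of that step.
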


 \begin{proof} Any maximal flat $F \subset \XXX$ by definition is a reflection subspace and, hence, each point reflection of $F$ is induced by a point reflection of $\XXX$. Since $G$ acts transitively on the set of maximal flats of $\XXX$, the flat $F$ is Euclidean, i.e., $F \cong \E^n$ for some $n$. It follows that the stabilizers of $F$ in $\Trans(\XXX)$ and, thus, in $G$ contain $\Trans(F)$, and hence act transitively on $F$. This implies the proposition.
 \end{proof}

 \begin{remark}
If in the situation of the preceding proposition the maximal Euclidean flats of $\XXX$ have rank $k$, then what we call strong transitivity in the present article coincides with the notion of {\em $k$-flat homogeneity} in the literature.
 \end{remark}

Let $\XXX$ be a topological reflection space which contains a maximal flat, which moreover is Euclidean and let $G$ be a group with
$\Trans(\XXX) < G <\Aut(\XXX)$. Moreover, assume that $G$ acts transitively on maximal flats of $\XXX$, and let $(p,F)$ be a pointed maximal flat in $\XXX$. By Proposition~\ref{PropAutomaticallyStrong}, $G$ acts strongly transitively on $\XXX$ and $F$ is Euclidean. Denote by \[
\Stab_{G}(p, F) := \{g \in G\mid g.F = F, g.p = p\} \quad \text{and} \quad \Fix_{G}(p, F) :=\{g \in G \mid \forall f \in F: g.f = f\}
\]
the stabilizer, respectively fixator of $(p,F)$ in $G$.

\begin{definition} \label{defsingularregular}
Let  $\XXX$ be a topological reflection space which contains a maximal flat $F$, which moreover is Euclidean, and let $p \in F$. 

A point $q \in F$ is called \Defn{singular with respect to $p$} if there exists a second maximal flat distinct from $F$ containing both $p$ and $q$, and \Defn{regular with respect to $p$} otherwise. Denote by $F^{\rm reg}(p) \subset F$ the subset of regular points in $F$ with respect to $p$, and by $F^{\rm sing}(p) \subset F$ the subset of singular points in $F$ with respect to $p$.

A map $f: F \to F$ is called \Defn{linear at p} if for some (hence any) chart $\varphi: \R^n \to F$ which is centred at $p$ we have $\varphi \circ f \circ \varphi^{-1} \in \GL_n(\R)$.
It is called a \Defn{local transformation} of the pointed flat $(p,F)$ if it is linear at $p$ and preserves the decomposition $F=F^{\rm reg}(p) \sqcup F^{\rm sing}(p)$. Denote by $\GL(p,F, F^{\rm sing}(p))$ the group of local transformations of $(p,F)$.
\end{definition}

\begin{proposition} \label{2.29} Let $\XXX$ be a topological reflection space which contains a maximal flat, which moreover is Euclidean, and assume that $G$ acts transitively on maximal flats of $\XXX$.
\begin{enumerate}
\item The group $W(G\curvearrowright\XXX) :=  \Stab_{G}(p, F)/ \Fix_{G}(p, F)$ is independent of the choice of pointed flat $(p,F)$ up to conjugation.
\item There is a homomorphism $\rho_F: W(G\curvearrowright\XXX) \to \GL(p, F, F^{\rm sing}(p))$, $\rho_F([f]) := f|_F$, which is independent of the choice of pointed flat $(p,F)$ up to conjugation.
\end{enumerate}
\end{proposition}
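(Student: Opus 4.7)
My plan is to exploit strong transitivity (guaranteed by Proposition~\ref{PropAutomaticallyStrong}) to transport one pointed flat to another by an element of $G$, and then to verify that conjugation by this element implements the required isomorphisms on both the Weyl group and on the local automorphism group compatibly with restriction.

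For part (i), I would fix two pointed maximal flats $(p,F)$ and $(p',F')$ and choose $g \in G$ with $g.p = p'$ and $g.F = F'$, which exists by strong transitivity. Conjugation $c_g\colon h \mapsto ghg^{-1}$ carries $\mathrm{Stab}_G(p,F)$ bijectively onto $\mathrm{Stab}_G(p',F')$ and $\mathrm{Fix}_G(p,F)$ bijectively onto $\mathrm{Fix}_G(p',F')$, and so descends to an isomorphism of the quotients. Different choices of $g$ only change this isomorphism by an inner automorphism, so $W(G \curvearrowright \XXX)$ is well-defined up to isomorphism.

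For part (ii), I first need to check that the formula $\rho_F([f]) := f|_F$ makes sense. If $f \in \mathrm{Stab}_G(p,F)$, then $f|_F\colon F \to F$ is an automorphism of the reflection space $F$ fixing $p$. Since $F$ is Euclidean, Proposition~\ref{PropLocalLinearity} says that in any identification $\varphi\colon \E^n \xrightarrow{\sim} F$ with $\varphi(0) = p$ the restriction $f|_F$ is an affine transformation, and the fixed-point condition $f(p) = p$ then forces it to be linear. Next, because $f$ is an automorphism of $\XXX$, it permutes the collection of maximal flats of $\XXX$ (Lemma~\ref{LemmaBasicFlats} together with maximality); thus a point $q \in F$ lies on a second maximal flat through $p$ if and only if $f(q)$ lies on a second maximal flat through $f(p)=p$. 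Hence $f|_F$ preserves the decomposition $F = F^{\mathrm{reg}}(p) \sqcup F^{\mathrm{sing}}(p)$ and therefore lies in $\mathrm{Aut}(p,F)$. Since $\mathrm{Fix}_G(p,F)$ obviously maps to $\mathrm{id}_F$, the map $\rho_F$ descends to the quotient, and it is a homomorphism because restriction is multiplicative.

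To establish independence of $(p,F)$, I return to the transporting element $g$ from part (i). Conjugation by the restriction $g|_F\colon F \to F'$ yields an isomorphism $\psi_g\colon \mathrm{Aut}(p,F) \to \mathrm{Aut}(p',F')$, since $g|_F$ is affine (again by Proposition~\ref{PropLocalLinearity}) and sends the singular/regular decomposition at $p$ to that at $p'$; the identity $(g f g^{-1})|_{F'} = g|_F \circ (f|_F) \circ (g|_F)^{-1}$ then yields the commuting square $\rho_{F'} \circ c_g = \psi_g \circ \rho_F$, which is exactly the claim of independence. The main point requiring care is the stability of the singular/regular decomposition under both $f|_F$ and $g|_F$; once one observes that automorphisms of $\XXX$ send maximal flats to maximal flats, this is immediate, and the rest of the argument is bookkeeping.
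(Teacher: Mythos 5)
Your proof is correct and follows essentially the same route as the paper: the paper's own (very terse) argument likewise derives (i) and the independence claim in (ii) from strong transitivity via Proposition~\ref{PropAutomaticallyStrong}, and derives well-definedness of $\rho_F$ from Proposition~\ref{PropLocalLinearity} together with Lemma~\ref{LemmaBasicFlats}, exactly the ingredients you use. Your write-up merely makes explicit the conjugation bookkeeping that the paper leaves implicit.
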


\begin{proof} By Proposition~\ref{PropAutomaticallyStrong} the group $G$ acts strongly transitively on $\XXX$. Assertion (i) and the second statement of assertion (ii) are immediate from strong transitivity. The first statement of assertion (ii) follows from Proposition~\ref{PropLocalLinearity} and Lemma~\ref{LemmaBasicFlats}.
\end{proof}

\begin{definition}\ \label{DefLocalAction}
\begin{enumerate}
\item The group $W(G \curvearrowright \XXX)$ is called the \Defn{(geometric) Weyl group} of the action $G \curvearrowright \XXX$. 
\item The
homomorphism $\rho_F: W(G\curvearrowright\XXX) \to \GL(p, F, F^{\rm sing}(p))$ is the \Defn{local action} of $G$ on $\XXX$. 
\end{enumerate}
\end{definition}
 
 \section{Split real Kac--Moody groups and their Iwasawa decompositions}

\subsection{Groups with RGD systems} \label{RGDBN}

This subsection provides some necessary background concerning groups with RGD systems (see \cite[Chapter~8]{AbramenkoBrown2008}); for the definitions of  a prenilpotent pair of roots as well as the definitions of the ``closed'' interval $[\alpha,\beta]$ and the ``open'' interval $]\alpha,\beta[$ of roots $\alpha, \beta$ used therein see \cite[Sections~8.5.2, 8.5.3]{AbramenkoBrown2008}.
\begin{definition} \label{defrgd}
Let $(W,S)$ be a Coxeter system with root system $\Phi$ and let $\Phi^+$ be a subset of positive roots. An \Defn{RGD system} is a triple $(G, \{U_\alpha\}_{\alpha \in \Phi}, T)$, where $G$ is a group, $T < G$ a subgroup and $\{U_\alpha\}_{\alpha \in \Phi}$ is a family of subgroups of $G$ subject to the following axioms:
\begin{axiomsZero}{RGD}
\item For each root $\alpha \in \Phi$, one has $U_\alpha \neq \{1\}$.
\item For each \index{prenilpotent pair of roots}prenilpotent pair 
$\{\alpha, \beta\} \subseteq \Phi$ of distinct roots, one has 
$[U_\alpha, U_\beta] \subseteq \langle U_\gamma \mid \gamma \in ]\alpha,
\beta[ \rangle$. 
\item\label{RGD3}
For each $s \in S$ there exists a function $\mu_s : U_{\alpha_s} \backslash
\{ 1 \} \to G$ such that for all $u \in U_{\alpha_s} \backslash \{1\}$ and
$\alpha \in \Phi$ one has $\mu_s(u) \in U_{-\alpha_s}uU_{-\alpha_s}$ and
$\mu_s(u)U_\alpha\mu_s(u)^{-1} = U_{s(\alpha)}$. Moreover, $\mu_s(y)^{-1}\mu_s(v) \in T$.
\item\label{RGD4} For each $s \in S$ one has $U_{-\alpha_s} \nsubseteq U_+ := 
\langle U_\alpha \mid \alpha \in \Phi^+ \rangle$.
\item $G =  T\langle U_\alpha \mid \alpha \in \Phi \rangle$.
\item \label{RGD6} The group $T$ normalizes every $U_\alpha$.
\end{axiomsZero}
The groups $U_\alpha$ are called \Defn{root subgroups}, the group $T$ is called \Defn{maximal (split) torus}, as are its conjugates.
Following \cite{Caprace:2007}, an RGD-system is \Defn{centred} if $G$ is generated by its root subgroups.
 \end{definition}
 Every centred RGD system $(G, \{U_\alpha\}_{\alpha \in \Phi}, T)$ gives rise to a \Defn{saturated twin BN pair} $(B_+, B_-, N)$ in the sense of Tits as follows (cf.~\cite[Theorem~8.80]{AbramenkoBrown2008}). If $\mu_s: U_{\alpha_s} \backslash \{ 1 \} \to U_{-\alpha_s}U_{\alpha_s} U_{-\alpha_s}$ is the map provided by \ref{RGD3}, the group $U_+$ is as in \ref{RGD4} and $U_- :=  \langle U_\alpha \mid \alpha \in -\Phi^+ \rangle$, then $T$ normalizes both $U_+$ and $U_-$ and one obtains a twin $BN$-pair $(B_+, B_-, N)$ by
\begin{align*}
	N   & := T . \langle \mu_s(u) \mid u \in U_{\alpha_s} \backslash \{1\},
 s \in S \rangle, \\
	B_+ & := T \ltimes U_+,\\
	B_- & := T \ltimes U_-.
\end{align*}
This twin $BN$-pair satisfies the saturation property $B_+ \cap B_- = T$ (cf.\ \cite[Corollary~8.78]{AbramenkoBrown2008}) and $T = \bigcap_{\alpha \in \Phi}N_G(U_\alpha)$ (cf.\ \cite[Corollary~8.79]{AbramenkoBrown2008});
note that $N = N_G(T)$ by \cite[Theorem~6.87(2) and Theorem~8.80]{AbramenkoBrown2008}.

The twin $BN$-pair $(B_+, B_-, N)$ then gives rise to two buildings with respective chamber sets $\Delta_\pm := G/B_\pm$ and a twinning between them (\cite[Section~8.9]{AbramenkoBrown2008}), which leads to a twin building.

The theory of twin buildings is an invaluable tool for studying groups with an RGD-system. Refer to \cite[Section~6.3 and Chapter~8]{AbramenkoBrown2008} for general background information on twin buildings endowed with a group action and to \cite{HartnickKoehlMars} for a setup of twin buildings that has been specifically tailored to suit the properties of topological Kac--Moody groups.

\subsection{Complex and split real topological Kac--Moody groups}\label{Subsection3B}

\begin{definition} \label{GCMdef}
  A  \Defn{generalized Cartan matrix} is an integral square matrix ${\mathbf{A}} = (a_{ij})_{1 \leq i, j \leq n} \in M_n(\Z)$ satisfying
$a_{ii} = 2$, $a_{ij} \leq 0$ for $i \neq j$, and $a_{ij} = 0$ if and only
  if $a_{ji} = 0$. (Cf.~\cite[\S1.1]{Kac90}.)

  The \Defn{Dynkin diagram} $\Gamma_{\mathbf{A}}$ of ${\mathbf{A}}$ is the edge-labelled graph with vertex set $\VVV = \{1, \dots, n\}$ and edge set $\EEE := \{\{i,j\} \subset \VVV \mid i\neq j, \; a_{ij}a_{ji} \neq 0\}$. If $e \in \EEE$ joins the vertices $i$ and $j$ and $a_{ij}>a_{ji}$, then $e$ is labelled by the number $a_{ij}a_{ji}$, by an arrow from $i$ to $j$ and, if $a_{ij}a_{ji}$ is not prime, by the values $a_{ij}$ and $a_{ji}$. The matrix ${\mathbf{A}}$ and the diagram $\Gamma_\mathbf{A}$ are called \Defn{irreducible} if $\Gamma_{\mathbf{A}}$ is connected, \Defn{two-spherical} if $\Gamma_{\mathbf{A}}$ has no labels $a_{ij}a_{ji}>3$, \Defn{spherical} if $\mathbf{A}$ is the Cartan matrix of a finite-dimensional Lie group, and \Defn{non-spherical} otherwise.
The \Defn{Coxeter diagram} is induced by the Dynkin diagram $\Gamma_{\mathbf{A}}$ by removing all arrows and all values $a_{ij}$ and $a_{ji}$ and replacing labels equal to one by three, labels equal to two by four, labels equal to three by six, and labels greater than three by $\infty$ (see also Section~\ref{appendixweylgroup}).
  
 The generalized Cartan matrix $\mathbf{A}$ is called \Defn{symmetrizable} if there exist a symmetric matrix $B = (b_{ij}) \in M_{n}(\R)$ and diagonal matrix $D = {\rm diag}(\epsilon_1, \dots, \epsilon_n) \in M_n(\R)$ with $\epsilon_j > 0$ such that ${\mathbf{A}} =DB$.
The matrix $D$ is not unique, but one can choose $D$ to be \Defn{minimal} in the sense of \cite[Definition~1.5.1]{Kumar02}: Each $\epsilon_i$ is a positive integer, and if ${\rm diag}(\epsilon'_1, \dots, \epsilon_n')$ is another such matrix, then $\epsilon_i \leq \epsilon_i'$ for all $i$.
\end{definition}

The key results of this article concerning Kac--Moody symmetric spaces hold in the presence of the following general hypotheses. 

\begin{convention}\label{ConventionCartanMatrix}
  In this article ${\mathbf{A}} \in M_n(\Z)$ denotes an
  irreducible symmetrizable generalized Cartan matrix.
\end{convention}

A generalized Cartan matrix ${\mathbf{A}}$ is the key ingredient for defining a topological split Kac--Moody group over $\K \in \{\R, \C\}$. Assume first that ${\mathbf{A}}$ is two-spherical. Under this additional assumption there is a very efficient way of defining these groups as colimits of diagrams of groups as described in \cite{AbramenkoMuehlherr}:
 For each vertex $i \in \VVV$ of the Dynkin diagram $\Gamma_{\mathbf{A}}$ define $G_i(\K) := \SL_2(\K)$. For every pair $\{i,j\} \subset \VVV$ ($i\neq j$) define $G_{\{i,j\}}(\K)$ as the split Lie group over $\K$ of rank two whose Dynkin diagram is the full labelled subgraph of $\Gamma_{\mathbf{A}}$ on vertices $i,j$. A fixed choice of a root basis provides natural inclusion maps $\iota_i: G_i(\K) \into G_{\{i,j\}}(\K)$.

Consider the amalgam $\AAA_{{\mathbf{A}}}$ of topological groups formed by the Lie groups $G_i(\K)$, $i \in \VVV$, and $G_{\{i,j\}}(\K)$, $i \neq j$, together with the canonical inclusions. The colimit of this amalgam in the category of topological groups turns out to be a Hausdorff topological group $G_\K({\mathbf{A}})$, which is moreover a $k_\omega$ space in the sense of Definition \ref{DefKOmega} below (see \cite[Theorem~7.22]{HartnickKoehlMars}). This colimit is abstractly isomorphic to the quotient of the free group generated by the elements of the groups $G_i(\K)$ modulo the relations given as products of conjugates of the relations contained in $G_{\{i,j\}}(\K)$; its topology equals the finest group topology such that the natural embeddings of the Lie groups $G_i(\K)$ are continuous.

\begin{definition}\label{DefTopSplitKM}
  The group $G_\R({\mathbf{A}})$ (respectively $G_\C({\mathbf{A}})$) is called the \Defn{simply connected centred split real (resp.\ complex) Kac--Moody group} of type ${\mathbf{A}}$. The topology on $G_\K({\mathbf{A}})$ defined above is called the \Defn{Kac--Peterson topology}.
  
Given a subset $I \subset \VVV$ the subgroup $G_I(\K) := \langle G_i(\K) \mid i \in I \rangle$ is called a \Defn{standard rank $|I|$ subgroup} of $G_\K(\bf A)$. Denote by $\phi_I: G_I(\K) \to G_\K(\mathbf A)$ the canonical inclusion; if $|I|=1$ one simply writes $\phi_i$ and $G_i$ instead of $\phi_{\{i\}}$ and $G_{\{i\}}$ respectively.
 \end{definition}

The embedding $\R \into \C$ induces embeddings $G_i(\R) \into G_i(\C)$ and $G_{\{i,j\}}(\R) \into G_{\{i,j\}}(\C)$ and hence an embedding $G_\R({\mathbf{A}}) \into G_\C({\mathbf{A}})$. Since our main focus lies on the real case, we will subsequently write $G := G_\R(\bf A)$, $G_i:=G_i(\R)$, etc.

The topological Kac--Moody groups $G_\R({\mathbf{A}})$ and $G_\C({\mathbf{A}})$ and all of the notions pertaining to these groups as defined in this subsection can also be defined without the assumption that ${\mathbf A}$ be two-spherical, and the results in this article are valid without the assumption of two-sphericity unless explicitly stated otherwise. However, in this more general setting the amalgamation results from \cite{AbramenkoMuehlherr} are not available, and thus the definitions become substantially more technical. We refer the reader to \cite{Tits87},  \cite{Remy02}, \cite{Caprace09}, \cite[Chapter~7]{HartnickKoehlMars} for the general definitions.

\subsection{The adjoint quotient and the semisimple adjoint quotient} \label{sectionadjoint}

The group $G = G_\R(\bf A)$ can be considered as an infinite-dimensional generalization of a finite-dimensional semisimple split real Lie group. In fact, if ${\mathbf{A}}$ is a spherical irreducible (generalized) Cartan matrix, then the resulting Kac--Moody group $G$ is an algebraically simply connected simple split real Lie group. In particular, the center of $G$ is $0$-dimensional. In this case $\mathbf{A}$ is automatically symmetrizable and, in fact, invertible.

A non-spherical irreducible symmetrizable generalized Cartan matrix $\mathbf{A}$ on the other hand need not be invertible, as for instance is the case for any generalized Cartan matrix of affine type. In this situation the group $G$ admits a positive-dimensional center $Z(G)$, which leads to some complications in our study of Kac--Moody symmetric spaces.
One way to resolve this issue is to consider instead of $G$ its \Defn{adjoint quotient} \[\Ad(G) := G/Z(G).\] This group, however, has the slight disadvantage that its maximal torus is not isomorphic to a direct product of several copies of the multiplicative group $(\R^\times,\cdot)$, i.e., it is not an algebraically simply connected split torus. We thus introduce an intermediate object, that we call the \Defn{semisimple adjoint quotient} $\ol{G}$ of $G$. By Proposition~\ref{kernelsemisimpleadjoint} below $\ol{G}$ is the unique group which admits surjections with central kernel
\[
G \to \ol{G} \to \Ad(G)
\]
such that the kernel of the former epimorphism is a product of copies of the multiplicative group $(\R^\times,\cdot)$ and the kernel of the latter epimorphism is finite.

\medskip
The construction of $\ol{G}$ relies on some key properties of the \Defn{adjoint representation} of $G$ and the \Defn{exponential function} of $G$. Both relate the complex Kac--Moody group $G_\C(\mathbf{A})$ to the (derived) complex Kac--Moody algebra $\mathfrak{g}$ associated with $\mathbf{A}$, whose basic structure theory is discussed in Section~\ref{AppendixAlgebrasAndWeylAB} in the appendix.

Symmetrizability of the generalized Cartan matrix as required in Convention~\ref{ConventionCartanMatrix} allows one to apply the Gabber--Kac Theorem \ref{GabberKac} which implies that the Lie algebra $\mathfrak g$ is the direct limit of its standard subalgebras of ranks one and two. These are the Lie algebras of the standard rank one and two subgroups of $G_\C({\mathbf{A}})$ and hence the latter groups act on them by the respective adjoint actions. It turns out that the adjoint actions of these subgroups combine into an adjoint representation
\[
\Ad_\C: G_\C({\mathbf{A}}) \to \GL(\mathfrak g),
\]
see \cite[Proposition~6.2.11]{Kumar02}. This restricts to a representation \[\Ad: G \to \GL(\mathfrak g),\] whose image is isomorphic to $\Ad(G) = G/Z(G)$.

As discussed in Section~\ref{AppendixAlgebrasAndWeylAB} the Lie algebra $\mathfrak g$ contains a canonical subalgebra \[\mathfrak h = \sum_{i=1}^n \C \check \alpha_i\] (see formula \eqref{hdef}), which intersects each of the standard rank one Lie algebras $\mathfrak g_i \cong \mathfrak{sl}(2,\C)$ of $\mathfrak g$ in the standard diagonal Cartan subalgebra $\mathfrak h_i := \C \check \alpha_i$ (see Theorem~\ref{GabberKac}). For each $i \in \{1, \dots, n\}$ there exists a natural exponential function \[\exp_i: \mathfrak h_i \hookrightarrow \mathfrak{g}_i \cong \mathfrak{sl}(2,\C) \to \SL(2,\C) \cong G_i(\C),\] whose image is denoted by $H_i$. The groups $H_i < G_\C({\bf A})$ generate the direct product \[ H_\C := \prod_{i=1}^n H_i < G_\C({\mathbf{A}})\] and one obtains a natural exponential function
\begin{eqnarray*}
\exp_\C:  \mathfrak h = \bigoplus_{i=1}^n \mathfrak h_i & \to & \prod H_i = H_\C \\ (X_1, \dots, X_n) & \mapsto & \prod_{i=1}^n \exp_i(X_i).
\end{eqnarray*}
Under the standard identifications $\mathfrak h  \cong \C^n$ and $H_\C \cong (\C^\times)^n$ this map corresponds to the usual exponential map.
Recall from \eqref{KMAcenter} on page \pageref{KMAcenter} and \eqref{dimofh} on page \pageref{dimofh} that the center $\mathfrak c = \mathfrak{z}(\mathfrak g)$ is contained in $\mathfrak h$ and has complex dimension
\begin{equation}
\dim_\C \mathfrak c = n-\rk({\mathbf{A}}). \label{mistake}
\end{equation}

\begin{definition} \label{semisimpleadjoint}
Set $C_\C := \exp_\C(\mathfrak c)$ and $C := C_\C \cap G$ and define the  \Defn{semisimple adjoint quotient} of $G$ by
\[
\ol{G} :=  G/C.
\]
The \Defn{standard maximal (split) torus} of $G = G_\R(\mathbf{A})$ is defined as $T := H_\C \cap G \cong (\R^\times)^n$. Its image $\ol{T}$ in $\ol{G}$ is called the \Defn{standard maximal (split) torus} of $\ol{G}$.
\end{definition}

Let now $\mathfrak a$  be the real form of $\mathfrak h$ defined in Notation~\ref{defa} on page \pageref{defa}. It is an immediate consequence of the definitions that $\exp_\C$ restricts to an injective map
\[
\exp: \mathfrak a \to T,
\]
whose image is denoted by $A:= \exp(\mathfrak a) \cong (\R_{>0})^n$. Moreover, the image of $A$ in $\ol{G}$ is denoted by $\ol{A}$.
The map $\exp: \mathfrak a \to A$ is a bijection which maps $\mathfrak c \cap \mathfrak a$ to $C \cap A$. 
Denoting $\ol{\mathfrak a}:= \mathfrak{a} / (\mathfrak c \cap \mathfrak a)$ as in Notation~\ref{defofa}, this
 induces a bijection \[\exp: \ol{\mathfrak a} \to \ol{A}.\] The inverse maps are denoted by $\log: A \to \mathfrak a$, respectively $\log: \ol{A} \to \ol{\mathfrak a}$. Note that, as vector spaces,
 \[
 \mathfrak a \cong \R^n \quad \text{and} \quad \ol{\mathfrak a} \cong \R^{\rk({\mathbf{A}})}.
 \]

\begin{remark} \label{erratum}
  Before continuing we point out an error in \cite{HartnickKoehlMars}. The statement of \cite[Lemma~7.5]{HartnickKoehlMars} is inaccurate, as becomes obvious from \eqref{mistake} above. The problem is that its proof only applies to $\ol{G}$ (and its analogs over other fields) but not to $G$ (or its analogs over other fields).

  As a consequence, also \cite[Proposition~7.18]{HartnickKoehlMars} has only been established for center-free Kac--Moody groups over local fields and central quotients of $\ol{G}$ (and its analogs over other local fields) instead of central quotients of $G$ (or its analogs over other local fields). That is, the results from \cite{HartnickKoehlMars} only enable us to control the topology on $H_\C/C_\C$ instead of the topology on $H_\C$. 

However, a variation of the embedding argument as used in \cite[Proposition~7.10]{HartnickKoehlMars} in fact allows one to also control the topology on $H_\C$ as follows.  
\end{remark}

\begin{proposition}\label{PropexpHomeomorphismC}
The exponential map $\exp_\C: \mathfrak h \to H_\C$ is a quotient map, where $\mathfrak h$ is equipped with its topological vector space topology and $H_\C$ with the Kac--Peterson topology.
\end{proposition}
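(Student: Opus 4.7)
The strategy is to factor $\exp_\C$ through the product of the rank-one Cartan subgroups. Since the subgroups $H_i < G_\C$ pairwise commute, multiplication yields a bijective group homomorphism
\[
m: \prod_{i=1}^n H_i \longrightarrow H_\C,
\]
and the exponential factors as $\exp_\C = m \circ \bigl(\prod_i \exp_i\bigr)$, where each $\exp_i: \mathfrak h_i \to H_i$ is the ordinary complex exponential $\C \to \C^\times$. A product of quotient maps of locally compact Hausdorff spaces is again a quotient map, so $\prod_i \exp_i$ is a quotient map onto $\prod_i H_i$ in the product topology. Consequently, the proposition reduces to showing that $m$ is a homeomorphism when the source carries the product topology and the target carries the subspace topology from the Kac--Peterson topology on $G_\C$.

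Continuity of $m$ is immediate: by construction of the Kac--Peterson topology each inclusion $G_i(\C) \hookrightarrow G_\C$ is continuous, so the composition $\prod_i H_i \hookrightarrow \prod_i G_i(\C) \to G_\C$, with the second arrow being iterated multiplication in the topological group $G_\C$, is continuous as well. For continuity of $m^{-1}$ I would adapt the embedding argument of \cite[Proposition~7.10]{HartnickKoehlMars} referred to in Remark~\ref{erratum}, using the full family of fundamental integrable highest-weight representations of $G_\C$. For each $i \in \{1, \dots, n\}$ let $\omega_i \in \mathfrak h^*$ denote the fundamental weight dual to the simple coroot $\check\alpha_i$; the associated integrable highest-weight $\mathfrak g$-module $L(\omega_i)$ integrates to a representation $\rho_i: G_\C \to \mathrm{GL}(L(\omega_i))$ whose restriction to each standard rank-one subgroup (and to each standard rank-two subgroup in the two-spherical case) is continuous, so that by the universal property defining the Kac--Peterson topology as the colimit topology in the category of topological groups $\rho_i$ is itself continuous. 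The action of $H_\C$ on the one-dimensional highest-weight line of $L(\omega_i)$ is given by a continuous character $\widetilde\omega_i : H_\C \to \C^\times$, and by duality of fundamental weights and simple coroots the tuple $(\widetilde\omega_1, \dots, \widetilde\omega_n)$ coincides with $m^{-1}$ under the natural identifications $H_i \cong \C^\times$. This furnishes the required continuous inverse of $m$.

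The principal technical hurdle is the construction of the integrated representations $\rho_i$ and the verification of their continuity on $G_\C$. In contrast to the adjoint representation, which is insensitive to $C_\C$ and so only controls the topology on $H_\C/C_\C$ as in \cite[Proposition~7.10]{HartnickKoehlMars}, the family of fundamental representations jointly separates points on $H_\C$, precisely because the fundamental weights form a dual basis to the coroots; this is the heart of the needed variation of the embedding argument. In the two-spherical setting one can obtain continuity of each $\rho_i$ directly from the amalgam description of $G_\C$ recorded in Subsection~\ref{Subsection3B}, by first exhibiting $\rho_i$ on each rank-one and rank-two subgroup, checking the compatibility relations on overlaps and then invoking universality; in the general case one must invoke the corresponding integrability and continuity results for highest-weight modules of Kac--Moody groups from the literature cited in Subsection~\ref{Subsection3B}.
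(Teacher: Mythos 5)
Your route is genuinely different from the paper's, and it has a gap at its load-bearing step. The paper does not construct any new representations: it chooses an \emph{invertible} generalized Cartan matrix $\mathbf{B}$ containing $\mathbf{A}$ as a principal submatrix, embeds $G_\C(\mathbf{A})$ as a closed subgroup of $G_\C(\mathbf{B})$ compatibly with the Kac--Peterson topologies, and then applies \cite[Proposition~7.18]{HartnickKoehlMars} to $G_\C(\mathbf{B})$ (which is legitimate there because $\mathbf{B}$ has zero-dimensional center) to conclude that $H_\C^B \cong (\C^\times)^{\mathrm{rk}(\mathbf{B})}$ topologically, whence the closed subgroup $H_\C$ is topologically $(\C^\times)^n$. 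In other words, the paper deliberately sidesteps the very point your argument hinges on.

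The gap in your proposal is the continuity of the integrated fundamental representations $\rho_i$ (equivalently, of the highest-weight characters $\widetilde\omega_i$ on $H_\C$ in the subspace topology). This is not a formality: it is essentially equivalent in difficulty to the statement being proved, and it is exactly the kind of assertion the paper's Remark~\ref{erratum} warns about (the adjoint representation, which \emph{is} known to be continuous, only sees $H_\C/C_\C$). Your appeal to the universal property of the colimit does not close it. First, that universal property is a statement about continuous homomorphisms into \emph{topological groups}; for an infinite-dimensional module $L(\omega_i)$ you have not specified a group topology on $\mathrm{GL}(L(\omega_i))$ for which the restrictions to the rank-one and rank-two subgroups are continuous, and the highest-weight matrix coefficient $g \mapsto \langle v_{\omega_i}^*, \rho_i(g) v_{\omega_i}\rangle$ is not a homomorphism, so the universal property does not apply to it directly either. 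Second, outside the two-spherical case the amalgam/colimit description of $G_\C(\mathbf{A})$ is not available at all, so "checking on rank one and two and invoking universality" is not an argument there; you would have to quote a precise continuity result for integrable highest-weight modules with respect to the Kac--Peterson topology, which you do not do. The remaining scaffolding of your argument (reducing to $m:\prod H_i \to H_\C$ being a homeomorphism, the openness of $\prod_i \exp_i$, and the identification of $(\widetilde\omega_1,\dots,\widetilde\omega_n)$ with $m^{-1}$ via $\omega_i(\check\alpha_j)=\delta_{ij}$) is sound, so the proposal would become a complete alternative proof if and only if the continuity of the $\widetilde\omega_i$ were properly established or cited; as written, it is not.
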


\begin{proof}
  It suffices to prove that the Kac--Peterson topology induces the standard topology on $H_\C \cong (\C^\times)^n$. Let $\mathbf{B}$ be an invertible generalized Cartan matrix that contains $\mathbf{A}$ as a principal submatrix. Then $G_\C(\mathbf{A})$ admits a natural topological embedding into $G_\C(\mathbf{B})$ as a closed subgroup with respect to the Kac--Peterson topology and the subgroup $H_\C$ of $G_\C(\mathbf{A})$ embeds topologically as a closed subgroup into the corresponding subgroup $H^\mathbf{B}_\C$ of $G_\C(\mathbf{B})$. Since $\mathbf{B}$ is invertible, the associated Kac--Moody algebra and group have zero-dimensional center (see formula \eqref{mistake}) and so, in fact, \cite[Proposition~7.18]{HartnickKoehlMars} applies to $G_\C(\mathbf{B})$; that is, $H^\mathbf{B}_\C$ endowed with the Kac--Peterson topology is homeomorphic to $(\C^\times)^{\rk(\mathbf{B})}$ endowed with its standard topology.
Consequently, the closed subgroup $H_\C$ is homeomorphic to $(\C^\times)^{n}$ endowed with its standard topology.
\end{proof}

 The group $A$ carries a natural group topology induced by the Kac--Peterson topology on $H_\C$, which by Proposition~\ref{PropexpHomeomorphismC} makes $A$ homeomorphic to $(\R_{>0})^n$ with its standard topology.
Moreover, one obtains the following immediate consequences:
 
\begin{proposition}\ \label{PropexpHomeomorphism}
\begin{enumerate}
\item The exponential maps $\exp: \mathfrak a \to A$ and $\exp: \ol{\mathfrak a} \to \ol{A}$ are homeomorphisms, if one endows $\mathfrak{a}$ and $\ol{\mathfrak{a}}$ with their standard vector space topologies and $A$ and $\ol{A}$ with the Kac--Peterson topology, respectively the induced quotient topology. In particular, the maps $\log: A \to \mathfrak a$ and $\log: \ol{A} \to \ol{\mathfrak a}$ are continuous.
\item The groups $T$ and $\ol{T}$ are isomorphic as topological groups to $(\R^\times)^n$ and $(\R^\times)^{\rk({\mathbf{A}})}$, respectively, and their respective identity components equal $A$ and $\ol{A}$. \qed
\end{enumerate}
\end{proposition}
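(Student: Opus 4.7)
The plan is to deduce both parts as direct corollaries of Proposition~\ref{PropexpHomeomorphismC}, consistent with the \qed marker in the statement. The decisive point is that once that proposition lets me identify $H_\C$ with the Kac--Peterson topology with $(\C^\times)^n$ with its standard product topology, everything else reduces to elementary facts about the usual complex and real exponential maps, together with standard arguments about subspace and quotient topologies.

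For part (i), I would first invoke that identification to rewrite $\exp_\C : \mathfrak{h} \to H_\C$ as the standard coordinate-wise complex exponential $\C^n \to (\C^\times)^n$. Restricted to the real form $\mathfrak{a} \subset \mathfrak{h}$ corresponding to $\R^n \subset \C^n$, this gives a bijection onto $A = (\R_{>0})^n \subset (\C^\times)^n$. Since the subspace topology that $(\R_{>0})^n$ inherits from the standard topology on $(\C^\times)^n$ is just the standard one, the restricted map is exactly the usual real exponential and is therefore a homeomorphism. For the reduced version, I would use that $\exp(\mathfrak{c} \cap \mathfrak{a}) = C \cap A$ holds by the definitions given in Definition~\ref{semisimpleadjoint} and the lines following it, so $\exp$ descends to a continuous bijection $\overline{\mathfrak{a}} \to \overline{A}$, which is again a homeomorphism by the universal property of the quotient topology applied in both directions. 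Continuity of the inverses $\log : A \to \mathfrak{a}$ and $\log : \overline{A} \to \overline{\mathfrak{a}}$ is then immediate.

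For part (ii), the same identification $H_\C \cong (\C^\times)^n$ lets me describe $T = H_\C \cap G$ as $(\R^\times)^n$: each rank-one piece $H_i \cap G_i(\R)$ is the diagonal $\R^\times$-subgroup of $\SL_2(\R)$, and the product topology restricts faithfully. Hence $T \cong (\R^\times)^n$ as topological groups, with identity component $(\R_{>0})^n = A$. The same description descends to $\overline{T} = T/C$, and the dimension count $\dim_\C \mathfrak{c} = n - \mathrm{rk}(\mathbf{A})$ from \eqref{mistake} then yields $\overline{T} \cong (\R^\times)^{\mathrm{rk}(\mathbf{A})}$ with identity component $\overline{A}$. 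No serious obstacle is expected here; the only subtlety to pay attention to is that the identification in Proposition~\ref{PropexpHomeomorphismC} be compatible with the rank-one decomposition of $H_\C$ so that passing to real points respects the product structure, but this is built into the definition of $G_\R(\mathbf{A})$ as an amalgam of real rank-one and rank-two Lie groups (Definition~\ref{DefTopSplitKM}).
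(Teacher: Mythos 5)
Your proposal is correct and matches the paper's intent exactly: the paper states this proposition with no proof beyond declaring it an ``immediate consequence'' of Proposition~\ref{PropexpHomeomorphismC}, and your argument simply fills in the routine details of that deduction (restricting the standard complex exponential to real points and passing to the quotients by $\mathfrak c \cap \mathfrak a$ and $C$). Nothing to object to.
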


Since $T \cong (\R^\times)^n$, its \Defn{torsion subgroup} $M$, i.e., its unique maximal finite subgroup, is of order $2^n$. As topological groups one has $T \cong M \times A$, where $M$ is equipped with the discrete topology. Similarly $\ol{T} \cong \ol{M} \times {\ol{A}}$, where $\ol{M}$ is the image of $M$ in $\ol{G}$, which is the torsion subgroup of $\ol{T}$ of order $2^{\rk({\mathbf{A}})}$.

\begin{proposition}\ \label{kernelsemisimpleadjoint}
\begin{enumerate}
\item The kernel $C$ of the surjection $G \to \ol{G}$ is isomorphic to $(\R^\times)^{n-\rk({\mathbf{A}})}$ as a topological group.
\item The kernel of the map $\ol{G} \to \Ad(G)$ is finite and, in fact, isomorphic to $(\Z/2\Z)^k$ for some $k < n$. In particular, it is contained in $\ol{M}$.
\end{enumerate}
\end{proposition}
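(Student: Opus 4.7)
The plan is to prove both parts by explicit coordinate calculations inside the torus, exploiting Proposition \ref{PropexpHomeomorphismC} to pass freely between $\mathfrak{h}$ and $H_\C$. Under the isomorphism $\mathfrak{h} = \bigoplus_i \C\check\alpha_i \cong \C^n$, the map $\exp_\C$ corresponds to the standard exponential $\C^n \to (\C^\times)^n$, and so the torus $T = H_\C \cap G \cong (\R^\times)^n$ is the image under $\exp_\C$ of the real subspace $\mathfrak{a} + i\pi\Lambda$, where $\Lambda = \bigoplus_i \Z \check\alpha_i$: the integer shifts in the imaginary direction account for the sign components $\{\pm 1\}^n$. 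Crucially, since $\mathbf{A}$ has integer entries, the center $\mathfrak{c} = \ker(\mathbf{A})$ is defined over $\Q$, so $\mathfrak{c} = (\mathfrak{c}\cap\mathfrak{a}) \otimes_\R \C$ with $\dim_\R(\mathfrak{c}\cap\mathfrak{a}) = n - \mathrm{rk}(\mathbf{A})$, and $\Lambda \cap \mathfrak{c}$ is a lattice of full rank in $\mathfrak{c}\cap\mathfrak{a}$.

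For (i), I will compute the intersection $C = C_\C \cap T$ directly: an element $\exp_\C(X)$ with $X \in \mathfrak{c}$ lies in $T$ iff $X$ can be written, modulo $\ker(\exp_\C) = 2\pi i \Lambda$, as an element of $\mathfrak{a} + i\pi\Lambda$. Using that $\mathfrak{c}$ is a $\Q$-subspace, this simplifies to $X \in (\mathfrak{c}\cap\mathfrak{a}) + i\pi(\Lambda \cap \mathfrak{c})$, so $C = A_0 \cdot F_0$, where $A_0 := \exp(\mathfrak{c}\cap\mathfrak{a})$ and $F_0 := \exp_\C(i\pi(\Lambda\cap\mathfrak{c}))$. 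Proposition \ref{PropexpHomeomorphism} identifies $A_0$ topologically with $(\R_{>0})^{n-\mathrm{rk}(\mathbf{A})}$; the discrete factor $F_0$ is $(\Lambda\cap\mathfrak{c})/2(\Lambda\cap\mathfrak{c}) \cong (\Z/2\Z)^{n-\mathrm{rk}(\mathbf{A})}$ (using $2\Lambda \cap \mathfrak{c} = 2(\Lambda\cap\mathfrak{c})$, which holds because $\mathfrak{c}$ is a subspace). The two factors meet trivially inside $T$, yielding $C \cong (\R_{>0})^{n-\mathrm{rk}(\mathbf{A})} \times (\Z/2\Z)^{n-\mathrm{rk}(\mathbf{A})} \cong (\R^\times)^{n-\mathrm{rk}(\mathbf{A})}$ as topological groups.

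For (ii), I first show $Z(G) \subset T$: this is the self-centralizing property of $T$ in a group with RGD system and is immediate from the Bruhat decomposition together with axiom \ref{RGD6}. Inside $T$, an element $\exp_\C(Y + i\pi L)$ with $Y \in \mathfrak{a}$, $L \in \Lambda$, is central iff $\alpha_i(Y) = 0$ and $\alpha_i(L) \in 2\Z$ for all simple roots $\alpha_i$. Hence $Z(G) = A_0 \cdot F_Z$ with the same $A_0$ as before and $F_Z := \exp_\C(i\pi L_0) \subset M$, where $L_0 := \{L \in \Lambda : \alpha_i(L) \in 2\Z\ \forall i\} \supset \Lambda \cap \mathfrak{c}$ and $M \cong (\Z/2\Z)^n$ denotes the maximal finite subgroup of $T$. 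Because $Z(G)$ and $C$ share the same continuous part $A_0$, the kernel of $\overline{G} \to \mathrm{Ad}(G)$ is $Z(G)/C \cong F_Z/F_0 \cong L_0/((\Lambda\cap\mathfrak{c}) + 2\Lambda)$, which is an elementary abelian $2$-group; this proves the isomorphism type $(\Z/2\Z)^k$ and the containment in $\overline{M} = M/F_0$. The bound $k < n$ then follows because the image of $Z(G)/C$ in $\overline{M}$ lies in a group of order $2^{\mathrm{rk}(\mathbf{A})}$; when $\mathbf{A}$ is non-invertible this gives $k \leq \mathrm{rk}(\mathbf{A}) < n$ immediately, and in the invertible case the same count combined with the irreducibility of $\mathbf{A}$ (forcing some $\alpha_i$ to have odd value on some $\check\alpha_j$) forces the inclusion $F_Z \subsetneq M$.

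The main technical obstacle is the bookkeeping of the two nested discrete lattices $2(\Lambda\cap\mathfrak{c}) \subset \Lambda\cap\mathfrak{c} \subset L_0 \subset \Lambda$ modulo $2\Lambda$, which is needed to identify the map $F_0 \to F_Z$ with the natural inclusion and to conclude injectivity from $2\Lambda \cap \mathfrak{c} = 2(\Lambda\cap\mathfrak{c})$; once these lattice identifications are in place, the third isomorphism theorem delivers the quotient description and the rank bound simultaneously.
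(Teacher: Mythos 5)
Your proof of part (i) is correct, and it is a genuinely different (and more informative) route than the paper's, which disposes of (i) by citing \cite[Proposition~1.6]{Kac90}: the explicit description $C=\exp(\mathfrak c\cap\mathfrak a)\cdot\exp_\C\bigl(i\pi(\Lambda\cap\mathfrak c)\bigr)$ checks out, the $\Q$-rationality of $\mathfrak c$ does give a full-rank lattice $\Lambda\cap\mathfrak c$, and the identity $2\Lambda\cap\mathfrak c=2(\Lambda\cap\mathfrak c)$ is exactly what is needed to see that the finite factor is $(\Z/2\Z)^{n-\mathrm{rk}(\mathbf A)}$. In part (ii), the identification of the kernel with $F_Z/F_0\cong L_0/\bigl((\Lambda\cap\mathfrak c)+2\Lambda\bigr)$, its elementary abelian $2$-group structure, its containment in $\overline M$, and the bound $k\le\mathrm{rk}(\mathbf A)<n$ in the non-invertible case are all fine; here too your computation is more explicit than the paper's one-line reference to the proof of \cite[Lemma~7.5]{HartnickKoehlMars}.

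The gap is the final step of (ii) in the invertible case. Irreducibility does \emph{not} force some $a_{ij}=\alpha_j(\check\alpha_i)$ to be odd: the matrix $\mathbf A=\left(\begin{smallmatrix}2&-4\\-4&2\end{smallmatrix}\right)$ is irreducible, symmetric (hence symmetrizable), invertible, non-spherical and non-affine, so it is admissible under Convention~\ref{ConventionCartanMatrix}, yet all of its entries are even. For such $\mathbf A$ your own formulas give $L_0=\Lambda$, hence $F_Z=M$, $F_0=\{e\}$ and $k=n$; equivalently, both elementary divisors of this $\mathbf A$ are even, so $Z(G)\cong(\Z/2\Z)^2$ while $n=2$. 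So the argument cannot close as written, and in fact your computation (which I believe is correct up to that point: centrality in $G$ is equivalent to centralizing all root groups, and a torus element $\exp_\C(Y+i\pi L)$ does so iff $\alpha_j(Y)=0$ and $\alpha_j(L)\in 2\Z$ for all $j$) indicates that the strict inequality $k<n$ is not a formal consequence of irreducibility alone. You should either restrict the strict bound to the non-invertible case, identify the additional input in \cite[Lemma~7.5]{HartnickKoehlMars} that is supposed to exclude this behaviour, or note that the bound should read $k\le n$.
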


\begin{proof} (i) follows by construction (cf.\ \cite[Proposition~1.6]{Kac90}).
(ii) Since $1$ and $-1$ are the only roots of unity contained in the real numbers $\R$, this follows from the proof of \cite[Lemma 7.5]{HartnickKoehlMars}. (Note Remark~\ref{erratum}.) 
\end{proof}

\subsection{The extended Weyl group} \label{extendedWeylgroup}

As discussed in Subsections \ref{AppendixAlgebrasAndWeylAB} and \ref{appendixweylgroup} in the appendix, the generalized Cartan matrix ${\mathbf{A}}$ gives rise to a quadruple $(\mathfrak{g}(\mathbf{A}), \mathfrak{h}(\mathbf{A}), \Pi, \check \Pi)$ (see \eqref{quadruple}) and a Coxeter datum $(W, S, \Phi, \Pi)$ (see Definition~\ref{Weylgroup}). One way to define $W$ is as the subgroup of $\GL(\mathfrak h({\bf A}))$ generated by the set $S = \{\check r_{\alpha_1}, \dots, \check r_{\alpha_n}\}$ of reflections given by
\[
\check r_{\alpha_i}(h) = h - \alpha_i(h)\check \alpha_i \quad (i=1, \dots, n),
\]
see \eqref{WeylReflDef} and also \cite[Lemma~1.2]{KacPeterson85c}. From this definition it is immediate that $W$ acts on $\mathfrak h({\bf A})$, and as pointed out in Proposition~\ref{KMRepWeyl} this action preserves the subspace $\mathfrak a$, and descends further to the quotient $\ol{\mathfrak a}$ of $\mathfrak a$. The two resulting representations are discussed further in Subsection \ref{appendixweylgroup} where they are denoted by  ${\rho}_{KM}: W \to \GL(\mathfrak a)$ and $\ol{\rho}_{KM}: W \to \GL(\ol{\mathfrak a})$ and referred to as the \Defn{Kac--Moody representation}\footnote{The terminology varies in the literature; both the Kac--Moody representation and its dual are sometimes called the \Defn{geometric representation} or the \Defn{canonical linear representation}, but we will not use these terms here.}, respectively the \Defn{reduced Kac--Moody representation} of $W$. The Kac--Moody representation is faithful and moreover, if $\mathbf{A}$ is non-affine, then the reduced Kac--Moody representation is faithful as well (see Corollary~\ref{CorollaryRedKMRepFaithful}). All these representations are constructed purely in terms of Lie algebra data; there is, however, also an alternative description of the Weyl group in terms of the group $G=G_\R({\mathbf{A}})$, which we discuss in this subsection. Our main sources here are \cite{Tits87}, \cite{KacPeterson85c}, where the corresponding results are established for $G_\C({\mathbf{A}})$ instead of $G$; one can show that the proofs carry over to the split real case. 

Consider the normalizer $N_G(T)$ of $T$ in $G$; it acts by conjugation on $T$, preserving the identity component $A$, and hence $\Ad(N_G(T))$ preserves $\mathfrak a$. Since $T$ is abelian, this action factors through $T$, hence induces a homomorphism
\[
\rho: N_G(T)/T \to \GL(\mathfrak a), \quad nT \mapsto \Ad(n)|_{\mathfrak a}.
\]
By \cite[Lemma 5.4.3(iii)]{Tits87} the representation $\rho$ is faithful with image $\rho(N_G(T)/T) = \rho_{KM}(W)$. Since the Kac--Moody representation is faithful, this establishes an isomorphism $N_G(T)/T \cong W$. In fact, there is a proper subgroup of $N_G(T)$ which still surjects onto $\rho_{KM}(W)$ and can be defined as follows (see \cite[Corollary~2.3(b)(ii)]{KacPeterson85c}): For every $i \in \{1, \dots, n\}$ define $\widetilde{s}_{\alpha_i} \in G_i <  G=G_\R({\mathbf{A}})$ by
\[
\widetilde{s}_{\alpha_i} := \phi_i \left(\begin{matrix} 0 & 1\\ -1 & 0\end{matrix}\right).
\]
and define the \Defn{extended Weyl group} by $\widetilde{W} := \langle \widetilde{s}_{\alpha_1}, \dots, \widetilde{s}_{\alpha_n} \rangle$.
By {\cite[Formula (2.6)]{KacPeterson85c}}  the extended Weyl group normalizes $T$ (and hence $A$), and by \cite[Proposition~2.1]{KacPeterson85c} the map $\rho$ restricts to an isomorphism
\[
\widetilde{W}/(\widetilde{W} \cap T) \cong W.
\]
More explicitly, since $\Ad(\widetilde{s}_{\alpha_i})|_{s_{\alpha_i}})|_{\mathfrak a} = \rho_{KM}(\check r_{\alpha_i})$ we have a canonical surjection
 \[\widetilde{W} \to W, \quad  \widetilde{s}_{\alpha_i} \mapsto \check r_{\alpha_i}\] 
with kernel $\widetilde{W} \cap T$. By \cite[Corollary~2.3.(a)]{KacPeterson85c} the elements of $\widetilde{W} \cap T$ all have order $\leq 2$; in particular $\widetilde{W} \cap T$ is contained in the torsion subgroup $M$ of $T$, which is of order $2^n$. On the other hand, the elements $\widetilde{s}_{\alpha_i}^2 \in \widetilde{W}$ are contained in $T$ and generate a subgroup of $\widetilde{W} \cap T$ of order $2^n$. We deduce that they generate $M$, and hence the torsion subgroup $M = \widetilde{W} \cap T$ of $T$ can be characterized as the kernel of the canonical surjection $\widetilde{W} \to W$.

\subsection{The twin $BN$ pair and the twin building} \label{sec:twin-BN-and-building}

Let ${\mathbf{A}}$ be an irreducible symmetrizable generalized Cartan matrix, let $G = G_\R({\mathbf{A}})$ as in Definition~\ref{DefTopSplitKM}, and let $\ol{G} = G/C$ be the semisimple adjoint quotient from Definition~\ref{semisimpleadjoint}.
Both  $G$ and $\ol{G}$ act strongly transitively on the same twin building and, hence, admit twin $BN$ pairs (see \cite[Theorem~8.9]{AbramenkoBrown2008}).

\medskip
The group $G$ in fact admits a centred RGD system $(G, \{U_\alpha\}_{\alpha \in \Phi}, T)$ in the sense of Definition~\ref{defrgd}, called the
 \Defn{canonical centred RGD system} and defined as follows, cf.\ \cite[Proposition~8.4.1]{Remy02}: The underlying set of roots $\Phi$ equals the set of real roots of the Kac--Moody algebra  $\mathfrak{g}({\mathbf{A}})$, see Section~\ref{appendixweylgroup}. The group $T$ is generated by the images of the diagonal subgroups $T_0 \subset \SL_2(\R)$ under the maps $\phi_i$ from Definition~\ref{DefTopSplitKM} and, given a simple root $\alpha_i$, one defines
\[
U_{\alpha_i} := \phi_i\left(\left\{\left(\begin{matrix} 1 & t\\ 0 & 1 \end{matrix}\right)\mid t \in \R\right\}\right).
\]
For an arbitrary real root $\alpha \in \Phi$ one writes $\alpha = w.\alpha_i$ (see Section~\ref{appendixweylgroup}) and defines \[U_\alpha := \widetilde w U_{\alpha_i} \widetilde w^{-1},\] where $w = \check r_{\alpha_{j_1}} \cdots \check r_{\alpha_{j_n}} \in W$ and $\widetilde w := \widetilde s_{\alpha_{j_1}} \cdots \widetilde s_{\alpha_{j_n}} \in \widetilde W$ as in Section~\ref{extendedWeylgroup}.

As in Section~\ref{RGDBN} denote by $(B_+, B_-, N)$ the twin $BN$ pair of $G$ induced by this RGD system and by $\Delta_\pm := G/B_\pm$ the sets of chambers or the corresponding positive and negative halves of the associated twin building (cf.~\cite[Section~8.9]{AbramenkoBrown2008}). 
 
\medskip
  The group $\ol{G}$ inherits an \Defn{induced centred RGD system} $(\ol{G}, \{\ol{U}_\alpha\}_{\alpha \in \Phi}, \ol{T})$, where $\ol{U}_\alpha \cong U_\alpha$ and $\ol{T}$, respectively denote the images of $U_\alpha$ and $T$ in $\ol{G}$. Denote by $(\ol{B}_+, \ol{B}_-, \ol{N})$ the twin $BN$ pair of $\ol{G}$ associated with the induced centred RGD system. Then, by construction, $\ol{B}_\pm = \ol{T} \ltimes \ol{U}_\pm$ where $\ol{U_\pm} := \langle \ol{U}_\alpha \mid \alpha \in \pm \Phi^+ \rangle$ as in Section~\ref{RGDBN}.

  Since $C \subset T \subset B_\pm$, one has $G/B_\pm = (G/C)/(B_\pm/C) = \ol{G}/\ol{B}_\pm$. That is, the halves of the twin buildings associated with $G$ and $\ol{G}$ coincide. In other words, the action of $G$ on $\Delta_\pm$ induces an action of $\ol{G}$ on $\Delta_\pm$. Note, furthermore, that $U_\pm \cong \ol{U_\pm}$, as by \cite[Lemma~8.31, Corollary~8.32]{AbramenkoBrown2008} both act sharply transitively on the set of chambers opposite the respective fundamental chambers in $\Delta_\mp$. 

\medskip
In general, given a group with a centred RGD system, the kernel of the action of that group on either half of the associated twin building equals the center of the group (\cite[Proposition~8.82]{AbramenkoBrown2008}). In particular, by Proposition~\ref{kernelsemisimpleadjoint} the action of $\ol{G}$ on $\Delta_\pm$ has a finite kernel, whereas the action of $G$ on $\Delta_\pm$ has an infinite kernel, if $\mathbf{A}$ is not invertible.

\medskip
We will use the following refinement of the Birkhoff decomposition. (Note that it is different from
what is known as the \Defn{refined} Birkhoff decomposition in the literature).
The spherical case is argued to hold in \cite[Remark 6.5]{HelminckWang93} by referring to \cite[Theorem 5.15]{BorelTits65}.
\begin{lemma}
\label{fine birkhoff}
 $G$ and $\ol{G}$ can be written as disjoint unions
  \[ G = \disjoint_{n\in N_G(T)} U_+ n U_-, \quad \ol{G} = \disjoint_{n\in N_{\ol{G}}(\ol{T})} \ol{U}_+ n \ol{U}_-. \]
\end{lemma}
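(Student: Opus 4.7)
The plan is to deduce both equalities from the standard Birkhoff decomposition
\[
G \;=\; \bigsqcup_{w \in W} B_+ \, w \, B_-,
\]
provided by the twin $BN$-pair underlying the centered RGD system (cf.\ \cite[Ch.~6 and §8.9]{AbramenkoBrown2008}), combined with the semidirect product structures $B_\pm = T \ltimes U_\pm$. For coverage, fix for each $w \in W$ a lift $\widetilde w \in N_G(T)$. Since $\widetilde w$ normalizes $T$, one has
\[
B_+ \widetilde w B_- \;=\; U_+ T \, \widetilde w \, T U_- \;=\; U_+ \, \widetilde w T \, U_- \;=\; \bigcup_{n \in \widetilde w T} U_+ n U_-,
\]
and summing over $w$ yields $G = \bigcup_{n \in N_G(T)} U_+ n U_-$.

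For disjointness, suppose $U_+ n U_- \cap U_+ m U_- \neq \emptyset$. Since the two cells lie in $B_+ w_n B_-$ and $B_+ w_m B_-$ respectively, where $w_n, w_m$ are the images of $n, m$ in $W = N_G(T)/T$, the standard Birkhoff decomposition forces $w_n = w_m$; hence $m = n t$ for some $t \in T$, and it suffices to show $t = 1$. If $u_+ n u_- = v_+\, n t\, v_-$, a direct rearrangement yields
\[
n^{-1}(v_+^{-1} u_+) n \;=\; t \cdot (v_- u_-^{-1}) \;\in\; n^{-1} U_+ n \,\cap\, B_-.
\]

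The crux of the argument is therefore the intersection identity $n^{-1} U_+ n \cap B_- \subseteq U_-$, from which one immediately concludes $t \cdot (v_- u_-^{-1}) \in U_-$ and hence $t \in T \cap U_- = \{1\}$ by the semidirect product $B_- = T \ltimes U_-$. To establish this identity, one uses $n U_\alpha n^{-1} = U_{w \alpha}$ (a consequence of \ref{RGD3}, \ref{RGD6}, and the extended Weyl group lifts from Section~\ref{extendedWeylgroup}) to write $n^{-1} U_+ n$ as a product $U_2 \cdot U_1$, where
\[
U_1 \;:=\; \langle U_\beta \mid \beta \in \Phi^+,\; w\beta \in \Phi^+ \rangle \subseteq U_+, \qquad U_2 \;:=\; \langle U_\beta \mid \beta \in \Phi^-,\; w\beta \in \Phi^+ \rangle \subseteq U_-.
\]
An element $u_2 u_1 \in B_- = T U_-$ then yields $u_1 \in U_+ \cap B_- \subseteq B_+ \cap B_- = T$, and hence $u_1 = 1$ by $U_+ \cap T = \{1\}$, so the element already lies in $U_-$.

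The same argument applies verbatim to $\overline G$, using its induced centered RGD system $(\overline G, \{\overline U_\alpha\}_{\alpha \in \Phi}, \overline T)$ and twin $BN$-pair $(\overline B_+, \overline B_-, \overline N)$, which satisfy $\overline B_\pm = \overline T \ltimes \overline U_\pm$ by construction. The main technical obstacle is the factorization $n^{-1} U_+ n = U_2 \cdot U_1$; it rests on the closure and nilpotency properties of root sub-sets in the RGD formalism and should either be cited from a standard source such as \cite{Remy02} or extracted from the root group analysis in the proof of the standard Birkhoff decomposition itself.
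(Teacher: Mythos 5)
Your argument is correct, but it is organized quite differently from the paper's. The paper's proof of this lemma is a pure citation: it invokes \cite[Proposition~3.3(a)]{KacPeterson85c} (alternatively \cite[Theorem~5.2.3(g)]{Kumar02}), observing via \cite[1.5.4]{Remy02} that a group with an RGD system carries a \emph{refined} Tits system in the sense of Kac--Peterson, for which the refined Birkhoff decomposition is part of the package. You instead derive the statement from the coarse Birkhoff decomposition $G=\bigsqcup_{w}B_+wB_-$ together with the unique factorization $n^{-1}U_+n=\bigl(n^{-1}U_+n\cap U_-\bigr)\cdot\bigl(n^{-1}U_+n\cap U_+\bigr)$, equivalently $U_+=(U_+\cap nU_+n^{-1})(U_+\cap nU_-n^{-1})$. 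Your coverage step, the reduction of disjointness to $n^{-1}U_+n\cap B_-\subseteq U_-$, and the use of saturation $B_+\cap B_-=T$ and $U_\pm\cap T=\{e\}$ are all sound. It is worth noting, though, that the one step you flag as the ``main technical obstacle'' is not an independent gap to be filled by a separate root-group analysis: that factorization is precisely (one of) the defining axioms of a refined Tits system, i.e.\ exactly what the paper's citation of \cite[1.5.4]{Remy02} supplies. So the two proofs ultimately rest on the same ingredient; yours makes the deduction from that ingredient explicit and self-contained, at the cost of length, while the paper outsources the whole implication to the literature. If you keep your version, replace the vague closing sentence by a precise citation of the refined Tits system axiom (or of the corresponding statement for RGD systems in \cite{AbramenkoBrown2008} or \cite{Remy02}).
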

\begin{proof}
For $G$ this is \cite[Proposition~3.3(a), p.~181]{KacPeterson85c}, also \cite[Theorem~5.2.3(g)]{Kumar02}. Note that in the latter this is proved for a \Defn{refined} Tits system
as defined in \cite{KacPeterson83}, but by \cite[1.5.4]{Remy02}, the Tits system for a group with an RGD system is indeed refined. The same argument applies to the refined Tits system for $\ol{G}$.
\end{proof}

\begin{definition}
Given a real root $\alpha \in \Phi$ define the \Defn{rank one subgroup} as \[G_\alpha := \langle U_\alpha, U_{-\alpha} \rangle.\] Note that the standard rank one subgroups of $G$ introduced in Definition~\ref{DefTopSplitKM} are the rank one subgroups associated with the simple roots.
\end{definition}

By \cite[Proposition~7.15]{HartnickKoehlMars} (see also \cite[Section~2E]{KacPeterson83}) the subgroups $B_\pm$ are closed in $G$ with respect to the Kac--Peterson topology and hence $\Delta_\pm$ are Hausdorff $k_\omega$-spaces with respect to the quotient topology by \cite[Assertion 11, p.~116f]{FranklinThomas77}. 

The following proposition summarizes further topological properties of the various subgroups defined above:

\begin{proposition}\ \label{TopologyGSummary}
\begin{enumerate}
\item  $T$ is closed in $G$ and isomorphic to $(\R^\times)^n$ as a topological group. Similarly, $\ol{T}$ is closed in $\ol{G}$ and isomorphic to $(\R^\times)^{\rk({\mathbf{A}})}$ as a topological group.
\item Multiplication induces isomorphisms of topological groups $M \times A \to T$ and $\ol{M} \times \ol{A} \to \ol{T}$, where $M$ and $\ol{M}$ are the torsion subgroups and $A$ and $\ol{A}$ are the connected components of $T$ and $\ol{T}$, respectively. Furthermore, the center of $\ol{G}$ is contained in $\ol M$.
\item Every rank one subgroup in $G$ or $\ol{G}$ is isomorphic as a topological group to $\mathrm{(P)SL}_2(\R)$ with its unique connected Lie group topology, and every root subgroup is isomorphic as a topological group to $(\R, +)$ endowed with its standard topology.
\item Multiplication induces homeomorphisms $M \times A \times U_\pm \to B_\pm$ and $\ol{M} \times \ol A \times \ol{U}_\pm \to \ol{B}_\pm$.
\end{enumerate}
\medskip
If the generalized Cartan matrix $\mathbf{A}$ is two-spherical, then moreover the following hold:
\begin{enumerate}\setcounter{enumi}{4}
\item $B_+B_-$ is open in $G$ and multiplication defines a homeomorphism $U_+ \times T \times U_- \to B_+B_-$;\\
$\ol{B}_+\ol{B}_-$ is open in $\ol{G}$ and multiplication induces a homeomorphism $U_+ \times \ol{T} \times U_- \to \ol{B}_+\ol{B}_-$. 
\item $U_+AU_-$ is open in $G$ and multiplication defines a homeomorphism $U_+ \times A \times U_- \to U_+AU_-$ ;\\
 $\ol{U}_+\ol{A}\,\ol{U}_-$ is open in $\ol G$ and multiplication induces a homeomorphism $\ol{U}_+ \times \ol{A} \times \ol{U}_- \to \ol{U}_+\ol{A}\,\ol{U}_-$. 
\end{enumerate}
\end{proposition}
\begin{proof}
\begin{enumerate}
\item $T$ is closed in $G$ by \cite[Corollary~7.17(iii)]{HartnickKoehlMars}, and so is $\ol{T}$ in $\ol{G}$. The remaining statements follow from Proposition~\ref{PropexpHomeomorphism}.
\item This follows from the discussion after Proposition~\ref{PropexpHomeomorphism}, together with Proposition~\ref{kernelsemisimpleadjoint}.
\item is immediate by \cite[Corollary~7.16(iii)]{HartnickKoehlMars} and \cite[Corollary~7.17(ii)]{HartnickKoehlMars}. 
\item follows from \cite[Proposition~7.27(ii)]{HartnickKoehlMars} plus assertion (ii).

\item follows from  \cite[Lemma~6.1, Proposition~6.6, Proposition~7.31]{HartnickKoehlMars}.
\item follows from (i) and (v): Since $T = A \times M$ with $M$ finite, $A$ is open in $T$ and thus $U_+ \times A \times U_- \subset U_+ \times T \times U_-$ is open. Consequently, the restriction of the open map $U_+ \times T \times U_- \to B_+B_-$ to the open subset $U_+ \times A \times U_-$ is also open, in particular its image is open.
For $\ol{G}$ one argues similarly. \qedhere
\end{enumerate}
\end{proof}

\begin{remark} \label{twospherical1}
It is an interesting question whether for general Cartan matrices ${\mathbf A}$ the map $U_+ \times \ol{T} \times U_- \to \ol{B}_+\ol{B}_-$ is open. Currently this is only known under the additional hypothesis that $\mathbf{A}$ be two-spherical \cite[Proposition~7.31]{HartnickKoehlMars}, but we expect that it is possible to remove this hypothesis;
 in fact, already Kac and Peterson had this expectation in \cite[Section~4G]{KacPeterson83}. If this expectation can be confirmed, then one can remove the assumption of two-sphericity in Proposition~\ref{TopologyGSummary} and consequently in a number of results below. Our suggested approach towards proving the conjecture makes use of an unfolding argument as described in \cite[Definition~1.10]{Hainke/Koehl/Levy:2015} that is very likely to allow one to embed an arbitrary symmetrizable split real Kac--Moody group $G$ as a closed subgroup into a simply laced split real Kac--Moody group $G'$ in such a way that the RGD systems are compatible with one another (see also \cite[Theorem~E]{Marquis:2015}). The fact that \cite[Proposition~7.31]{HartnickKoehlMars} applies to the ambient simply-laced Kac--Moody group $G'$ should allow one to prove the analogous statement for the original Kac--Moody group $G$ via (co)restrictions of the multiplication map.

  Note here that (co)restrictions of open maps of course frequently fail to be open. However, since one is dealing with a bijection in this situation, one can as well establish the continuity of the inverse map, a property that behaves very well under (co)restrictions.     
\end{remark}

\subsection{The Cartan--Chevalley involution and the twist map}\label{sec:twist-map}

Each of the standard rank one subgroups $\mathrm{(P)SL}_2(\R) \cong G_i < G$ admits a continuous involution $\theta_i$ induced by $g \mapsto (g^{-1})^T$.
By \cite[Section~8.2]{Caprace09} (also \cite[Section~2]{KacPeterson85c}), for suitable choices of the given isomorphisms $\mathrm{(P)SL}_2(\R) \cong G_i$ these involutions $\theta_i$ extend uniquely to an involution $\theta : G \to G$, called the \Defn{Cartan--Chevalley involution} of $G$.

The fixed point set of $\theta$  is denoted by
\[
K := G^\theta = \{k \in G\mid \theta(k) = k\}.
\]
Since $\theta$ is continuous by \cite[Lemma~7.20]{HartnickKoehlMars}, the group $K$ is a closed subgroup of $G$ and therefore a $k_\omega$-topological group  (cf.\ \cite[p.~118]{FranklinThomas77}). 

\begin{proposition}\label{ExtWeylInK} The extended Weyl group $\widetilde{W}$ introduced in Section~\ref{extendedWeylgroup} is contained in $N_K(T) < K$.
\end{proposition}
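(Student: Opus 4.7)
The plan is to reduce the containment $\widetilde W \subseteq K$ to checking it on the generators $\widetilde{s}_{\alpha_i}$ of $\widetilde W$, and then to verify the generator case by a direct $2 \times 2$ matrix computation inside the standard rank one subgroup $G_i$.

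First I would recall that $K = G^\theta$ is closed under products and inverses (being a fixed-point set of a group endomorphism), so in order to show that all elements of $\widetilde W = \langle \widetilde s_{\alpha_1}, \dots, \widetilde s_{\alpha_n} \rangle$ lie in $K$, it suffices to show that $\theta(\widetilde s_{\alpha_i}) = \widetilde s_{\alpha_i}$ for each $i \in \{1,\dots,n\}$. By construction $\widetilde s_{\alpha_i} \in \phi_i(\SL_2(\R)) = G_i$, and the Cartan--Chevalley involution $\theta$ is precisely the unique extension of the involutions $\theta_i$ on the standard rank one subgroups $G_i \cong (\mathrm P)\SL_2(\R)$ that come from $g \mapsto (g^{-1})^\top$. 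Consequently it suffices to show that the matrix $m := \mtr{0}{1}{-1}{0} \in \SL_2(\R)$ satisfies $(m^{-1})^\top = m$.

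The required computation is immediate:
\[
m^{-1} = \mtr{0}{-1}{1}{0}, \qquad (m^{-1})^\top = \mtr{0}{1}{-1}{0} = m,
\]
so $\theta_i(\widetilde s_{\alpha_i}) = \widetilde s_{\alpha_i}$, hence $\theta(\widetilde s_{\alpha_i}) = \widetilde s_{\alpha_i}$, and therefore each generator lies in $K$. Since $K$ is a subgroup (in fact a closed subgroup, as noted just before the proposition), we conclude $\widetilde W = \langle \widetilde s_{\alpha_1},\dots,\widetilde s_{\alpha_n}\rangle \subseteq K$, as required.

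There is essentially no obstacle here: the only point one must be careful about is that the identification $G_i \cong (\mathrm P)\SL_2(\R)$ used to define $\widetilde s_{\alpha_i}$ is the same identification used to define $\theta_i$; this is exactly the compatibility statement guaranteed by \cite{Caprace09} and already recorded in the paragraph preceding the proposition, so no additional work is needed.
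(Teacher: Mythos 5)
Your proof is correct and is essentially the paper's own argument: the paper likewise observes that each generator $\widetilde s_{\alpha_i}=\phi_i\left(\begin{smallmatrix}0&1\\-1&0\end{smallmatrix}\right)$ lies in $G_i^{\theta_i}<K$ and that these generate $\widetilde W$. You have merely written out the $2\times 2$ verification that the paper leaves implicit.
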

\begin{proof} For $i \in \{1, \dots, n\}$ 
\[
\widetilde{s}_{\alpha_i} := \phi_i \left(\begin{matrix} 0 & 1\\ -1 & 0\end{matrix}\right) \in G_i^{\theta_i} < K,
\]
and these generate $\widetilde{W}$, hence $\widetilde{W} < K$. Moreover, we have seen in Section~\ref{extendedWeylgroup} that $\widetilde{W}$ normalizes $T$, hence $\widetilde{W} \subset N_K(T)$ as claimed.
\end{proof}
We will actually see that $\widetilde{W} = N_K(T)$ in Corollary \ref{AbstractExtendedWeylGroup} below.

\begin{lemma}\label{CCinvB} The Cartan--Chevalley involution stabilizes $T$ and maps $U_+$ to $U_-$. In particular, $\theta(B_+) = B_-$.
\end{lemma}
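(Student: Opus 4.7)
The plan is to reduce everything to the rank-one case, where the Cartan--Chevalley involution is given explicitly as $g \mapsto g^{-\top}$, and then propagate the conclusion to arbitrary real roots via conjugation by elements of the extended Weyl group, using that $\widetilde{W} \subset K$ by Proposition~\ref{ExtWeylInK}.

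First, I would examine $\theta$ on a standard rank-one subgroup $G_i \cong \mathrm{SL}_2(\R)$. The intersection $T \cap G_i$ identifies with the diagonal torus $T_0 \subset \mathrm{SL}_2(\R)$; since $\mathrm{diag}(a,a^{-1})^{-\top} = \mathrm{diag}(a^{-1},a) \in T_0$, the involution $\theta_i$ preserves $T_0$ setwise (it is the inversion on $T_0$). Similarly, $\left(\begin{smallmatrix}1 & t\\ 0 & 1\end{smallmatrix}\right)^{-\top} = \left(\begin{smallmatrix}1 & 0\\ -t & 1\end{smallmatrix}\right)$, so $\theta_i$ sends the positive root group $U_{\alpha_i}$ onto the negative one $U_{-\alpha_i}$ (and vice versa). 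Since $T$ is generated by the subgroups $\phi_i(T_0)$, invariance on each $T_0$ gives $\theta(T) = T$.

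Next, I would extend the second computation from simple to arbitrary real roots. Given $\alpha \in \Phi$, write $\alpha = w.\alpha_i$ and $U_\alpha = \widetilde w\, U_{\alpha_i}\,\widetilde w^{-1}$ with $\widetilde w \in \widetilde W$ as in Subsection~\ref{extendedWeylgroup}. Proposition~\ref{ExtWeylInK} gives $\widetilde w \in K$, so $\theta(\widetilde w) = \widetilde w$. Hence
\[
\theta(U_\alpha) \;=\; \widetilde w\,\theta(U_{\alpha_i})\,\widetilde w^{-1} \;=\; \widetilde w\, U_{-\alpha_i}\,\widetilde w^{-1} \;=\; U_{-\alpha}.
\]
Taking the group generated over all $\alpha \in \Phi^+$ yields $\theta(U_+) = U_-$, as desired. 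Combining with $\theta(T) = T$ and the decomposition $B_\pm = T \ltimes U_\pm$ then gives $\theta(B_+) = \theta(T)\theta(U_+) = T U_- = B_-$.

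The only subtlety I foresee is the consistent choice of the rank-one isomorphisms $G_i \cong \mathrm{SL}_2(\R)$ used both to define $\theta$ and to define the root groups $U_{\pm\alpha_i}$; but the paper has already fixed these compatibly in Subsection~\ref{sec:twist-map} (citing Caprace and Kac--Peterson for the existence of $\theta$), so one simply has to use the same normalisations as in the construction of the canonical centered RGD system in Subsection~\ref{Subsection3B}. No approximation, continuity, or topology arguments are required, as the statement is purely algebraic once the rank-one behaviour and $\widetilde W \subset K$ are in hand.
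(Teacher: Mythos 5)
Your proof is correct and follows essentially the same route as the paper: the explicit rank-one computation showing $\theta_i$ inverts the diagonal torus and swaps $U_{\alpha_i}$ with $U_{-\alpha_i}$, combined with $B_\pm = T\ltimes U_\pm$. In fact your extra step — propagating $\theta(U_\alpha)=U_{-\alpha}$ from simple to arbitrary real roots by conjugating with $\widetilde W\subset K$ (Proposition~\ref{ExtWeylInK}) — supplies a detail the paper's one-line proof leaves implicit, and it is genuinely needed outside the two-spherical case, where $U_+$ is generated by all positive real root groups rather than by the simple ones alone.
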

\begin{proof} This follows from the observation that on each of the rank one subgroups, $\theta$ preserves the diagonal subgroup and interchanges the groups  $U_{\alpha_i} = \phi_i\left(\left\{\left(\begin{matrix} 1 & t\\ 0 & 1 \end{matrix}\right)\mid t \in \R\right\}\right)$ and $U_{-\alpha_i} = \phi_i\left(\left\{\left(\begin{matrix} 1 & 0\\ t & 1 \end{matrix}\right)\mid t \in \R\right\}\right)$.
\end{proof}

\begin{proposition}\label{CCIdescends}
  The Cartan-Chevalley involution preserves $C$ and hence induces a continuous involution $\ol{\theta}$ of $\ol{G}$, which stabilizes $\ol{T}$ and maps $\ol{B}_+$ to $\ol{B}_-$. 
\end{proposition}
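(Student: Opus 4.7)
The plan is to show that $\theta$ preserves the central subgroup $C$, which is the only non-trivial content of the proposition; everything else follows by taking quotients from Lemma~\ref{CCinvB}. The key observation will be that $\theta$ acts by inversion on the Cartan subgroup $T$, and since $C$ is a subgroup of $T$, it is automatically preserved.

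First I would verify the inclusion $C \subset T$. By Definition~\ref{semisimpleadjoint} one has $C = C_\C \cap G$ with $C_\C = \exp_\C(\mathfrak c) \subset H_\C$, while $T = H_\C \cap G$, so $C \subset T$. Next I would show that $\theta|_T$ coincides with the inversion map. By construction, $T$ is generated by the diagonal subgroups $H_i \cap G \subset G_i \cong \mathrm{(P)SL}_2(\R)$, and on each such diagonal subgroup $\theta_i$ (which is the restriction of $\theta$) is given by $g \mapsto (g^{-1})^\top$; on diagonal matrices this is exactly inversion. Since $T$ is abelian (it is a quotient of $\prod H_i$), the map $\theta|_T$ is the group homomorphism $t \mapsto t^{-1}$. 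Any subgroup of $T$, in particular $C$, is stable under inversion, hence $\theta(C) = C$.

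Having established that $\theta(C) = C$, the involution $\theta$ descends uniquely to a self-map $\overline{\theta} : \overline{G} \to \overline{G}$. It is an involution because $\theta$ is. Its continuity is automatic: the quotient map $\pi : G \to \overline{G}$ is continuous and open, $\theta$ is continuous by \cite[Lemma~7.20]{HartnickKoehlMars} (as cited just after the definition of $K$), and $\overline{\theta} \circ \pi = \pi \circ \theta$, so $\overline{\theta}$ is continuous by the universal property of the quotient topology.

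Finally, the behavior on $\overline{T}$ and $\overline{B}_\pm$ is an immediate consequence of Lemma~\ref{CCinvB}: since $\theta(T) = T$ and $\theta(B_+) = B_-$, applying $\pi$ gives $\overline{\theta}(\overline{T}) = \pi(\theta(T)) = \pi(T) = \overline{T}$ and $\overline{\theta}(\overline{B}_+) = \pi(\theta(B_+)) = \pi(B_-) = \overline{B}_-$. I do not anticipate any serious obstacles in this argument; the only mildly subtle point is to confirm that $\theta$ acts as inversion on the full Cartan subgroup $T$ rather than merely preserving it setwise, but this is clear from the rank-one description $\theta_i(g) = (g^{-1})^\top$ and the fact that $T$ is generated by the $H_i$'s.
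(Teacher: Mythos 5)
Your argument is correct, but it takes a different route from the paper's. You stay entirely at the group level: $C = C_\C \cap G \subset H_\C \cap G = T$, the involution $\theta$ acts by inversion on $T$ (since it is $g \mapsto (g^{-1})^\top$ on each diagonal generator $T_i$ and $T$ is abelian), and every subgroup of $T$ is stable under inversion, so $\theta(C)=C$. The paper instead works infinitesimally: it introduces the involution $d\theta$ of $\mathfrak g$ given on each $\mathfrak g_i \cong \mathfrak{sl}_2(\C)$ by $X \mapsto -X^\ast$, observes that $d\theta(\mathfrak g_\alpha)=\mathfrak g_{-\alpha}$ and hence that $d\theta$ preserves each $\ker(\alpha_i)\subset\mathfrak h$, concludes from $\mathfrak c = \bigcap_i \ker(\alpha_i)$ (formula \eqref{KMAcenter}) that $\mathfrak c$ is $d\theta$-invariant, and then transports this to the group via the fact that $\exp_\C$ intertwines $d\theta$ and $\theta$. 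Your version is shorter and avoids invoking $d\theta$ and the intertwining property of $\exp_\C$ altogether; note that the inversion property of $\theta|_T$ that you use is the same observation the paper records only afterwards, in the proof of Lemma~\ref{tauonT}, but you derive it directly from the rank-one description of $\theta$, so there is no circularity. One very minor inaccuracy: $T$ is not a quotient of $\prod H_i$ but a subgroup of $H_\C=\prod H_i$ (indeed the direct product of the real diagonal subgroups $T_i$); this does not affect the argument, since commutativity of $T$ follows either way. The passage to $\overline G$ and the statements about $\overline T$ and $\overline B_\pm$ are handled identically in both proofs, via Lemma~\ref{CCinvB} and the quotient topology.
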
 
\begin{proof} Let $d\theta: \mathfrak g \to \mathfrak g$ be the involution of $\mathfrak g$ which on the rank one subalgebras $\mathfrak g_i \cong \mathfrak{sl}_2(\C)$ is given by $X \mapsto -X^\ast$. This satisfies $d\theta(\mathfrak g_\alpha) = \mathfrak g_{-\alpha}$ (cf.\ \cite[p.~7]{Kac90}) for every root $\alpha$, and in particular preserves $\ker(\alpha_i)\subset \mathfrak h$ for every $i \in \{1, \dots, n\}$. It thus follows from the definition of $\mathfrak c$ in \eqref{KMAcenter} on page \pageref{KMAcenter} that the latter is $d\theta$-invariant. Since $\exp_\C$ intertwines $d\theta$ and $\theta$ (the latter considered as an automorphism of $G_\C(\mathbf{A}))$, it follows that $\theta$ preserves $C$. The other statements now follow from Lemma~\ref{CCinvB}.
\end{proof}
Note that the image of $K$ in $\ol{G}$ is equal to $\ol{K} := \ol{G}^{\ol{\theta}}$, as both groups are generated by the panel stabilizers $\ol{G_i}^{\ol{\theta}} \ol{T}^{\ol{\theta}}$, $1 \leq i \leq n$ (cf. \cite[Theorem 1.2]{deMedtsGramlichHorn09}).

\medskip
Let us recall and adjust to our setting some of the notions introduced in \cite[Section~2]{Richardson1982}; see also \cite[Section~6]{HelminckWang93}, \cite[Section~5]{KacWang92}.

\begin{defn}\label{DefTwist} Let $G = G_\R(\mathbf{A})$ be the simply connected split real Kac--Moody group of type $\mathbf{A}$, let $\theta$ be its Cartan--Chevalley involution, let $\ol{G}$ be the semisimple adjoint quotient of $G$, and let $\ol{\theta}$ be the involution of $\ol{G}$ induced by $\theta$.
\begin{enumerate}
\item The maps  
\[ G\times G \to G, \quad (g,x) \mapsto g*x := g x \theta(g)^{-1} \quad \text{and} \quad \ol{G}\times \ol{G} \to \ol{G}, \quad (g,x) \mapsto g*x := g x \ol{\theta}(g)^{-1} \]
are called the \Defn{twisted conjugation maps} of $G$ and $\ol{G}$, respectively.
\item
The \Defn{twist maps} of $G$, respectively $\ol{G}$ are the continuous map
\[\tau: G \to G, \quad g\mapsto g*e = g\theta(g)^{-1} \quad \text{and} \quad \ol{\tau}: G \to G, \quad g\mapsto g*e = g\ol{\theta}(g)^{-1}.\]
\end{enumerate}
\end{defn}
Note that twisted conjugation defines a left-action of $G$ on itself, since
\[
g*(h*x) = g*(h x \theta(h)^{-1}) = gh x\theta(h)^{-1}\theta(g)^{-1} = (gh)x \theta(gh)^{-1} = (gh)\ast x,
\]
while $\tau$ is an orbit map of this group action; a similar statement holds for $\ol{G}$. The following lemma summarizes various basic properties of the twist map.

\begin{lemma}\ \label{lem:tau-K-saturation}
\begin{enumerate}
\item For $g \in \tau(G)$ one has $\theta(g) = g^{-1}$ and $\tau(g) = g^2$.
\item For $g,h\in G$ one has $\tau(gh)=g*\tau(h)$.
\item $\tau^{-1}(e)=K$.
\item For $g,h\in G$ one has $gK=hK \iff \tau(g) = \tau(h) \iff \tau(h^{-1}g)=e$.
\item For every $S\subseteq G$ one has $\tau^{-1}(\tau(S))=SK$.
\item $\tau$ factors through $G/K$, yielding a surjective map
\[ \hat\tau: G/K \to \tau(G), \quad gK \mapsto \tau(g). \]
\end{enumerate}
Analogous statements hold for $\ol{G}$ instead of $G$.
\end{lemma}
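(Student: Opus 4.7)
The plan is to handle the six statements in the order given, with (ii) and (iii) as the technical backbones from which (iv)--(vi) fall out by routine manipulation; (i) is a direct consequence of (ii) applied to a special case. Throughout, the same arguments work verbatim for $\overline{G}$, since $\overline{\theta}$ is likewise a continuous involution, so I would prove everything for $G$ and remark that the computations transfer.

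First I would dispatch (ii) by direct calculation: since $\theta$ is an involutive homomorphism, $\tau(gh) = gh\theta(gh)^{-1} = gh\theta(h)^{-1}\theta(g)^{-1} = g\tau(h)\theta(g)^{-1} = g*\tau(h)$. This shows $\tau$ is the orbit map of the twisted $G$-action through $e$. Statement (iii) is equally immediate: $\tau(g)=e$ is equivalent to $g=\theta(g)$, i.e., $g \in K$. For (i), write $g = \tau(h) = h\theta(h)^{-1}$ for some $h \in G$; then $\theta(g) = \theta(h)h^{-1} = (h\theta(h)^{-1})^{-1} = g^{-1}$, and substituting back gives $\tau(g) = g\theta(g)^{-1} = g\cdot g = g^2$.

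Next I would deduce (iv) by combining (ii) and (iii). The standard coset equivalence gives $gK = hK \Leftrightarrow h^{-1}g \in K \Leftrightarrow \tau(h^{-1}g) = e$, which establishes the second equivalence. For the first, apply (ii) to rewrite $\tau(h^{-1}g) = h^{-1}*\tau(g) = h^{-1}\tau(g)\theta(h)$; this equals $e$ exactly when $\tau(g) = h\theta(h)^{-1} = \tau(h)$. From (iv) one reads off (v) directly: $g \in \tau^{-1}(\tau(S))$ iff there exists $s \in S$ with $\tau(g)=\tau(s)$, iff there exists $s \in S$ with $gK = sK$, iff $g \in SK$. Finally, (vi) is just the universal property of the quotient: (iv) says $\tau$ is constant on left $K$-cosets, so it factors through a well-defined map $\hat\tau: G/K \to \tau(G)$, which is tautologically surjective onto $\tau(G)$.

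No real obstacle is expected; the only thing to watch is the order of factors when unwinding $\theta(gh)^{-1} = \theta(h)^{-1}\theta(g)^{-1}$ and when identifying the twisted action, which is why I would establish (ii) first and use it as a black box throughout the rest of the argument.
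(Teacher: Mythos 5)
Your proof is correct and follows essentially the same route as the paper: direct computations for (i)--(iii), with (iv)--(vi) deduced from them by routine coset manipulations (the paper phrases (ii) via the left-action property $\tau(gh)=gh*e=g*(h*e)$ and derives (vi) from (v) rather than directly from (iv), but these are cosmetic differences).
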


\begin{remark}
In fact, Definition \ref{DefTwist} makes sense for an arbitrary group $G$ with involution $\theta \in \Aut(G)$, and Lemma~\ref{lem:tau-K-saturation} remains valid in this generality for $K := G^\theta$. In this broader context, one sees that
the twist map from Definition~\ref{DefTwist} can be considered as a non-Galois version of the famous Lang map from \cite[Section~2]{Lang1956}.

Furthermore, even in the case of real Kac--Moody groups there exist involutions $\theta$ different from the Cartan--Chevalley involution that lead to symmetric spaces $G/G^\theta$ worthwhile of further study; we refer to \cite{KacWang92} and \cite{GramlichHornMuehlherr} for a discussion of abstract involutions of Kac--Moody algebras and Kac--Moody groups that might provide a starting point for studying these more general Kac--Moody symmetric spaces.
\end{remark}

\begin{proof}[Proof of Lemma~\ref{lem:tau-K-saturation}]
\begin{enumerate}
\item For $g = h\theta(h)^{-1} \in \tau(G)$ one computes $\theta(g) = \theta(h)\theta(\theta(h)^{-1})
= \theta(h)h^{-1} = g^{-1}$ and $\tau(g) = h\theta(h)^{-1}\theta(h\theta(h)^{-1})^{-1}= (h\theta(h)^{-1})^2 = g^2$.
\item One has $\tau(gh) = gh \ast e = g \ast (h \ast e) = g\ast \tau(h)$.
\item
For $g\in G$, one has $\tau(g)=e\iff g\theta(g)^{-1}=e \iff g=\theta(g) \iff g\in K$.
\item
One computes
\begin{align*}
gK = hK &\iff \exists k\in K: g=hk
\implies \tau(g)=\tau(hk)=\tau(h)
\implies g\theta(g)^{-1} = h\theta(h)^{-1} \\
&\implies h^{-1}g=\theta(h)^{-1}\theta(g)=\theta(h^{-1}g)
\implies h^{-1}g\in K
\implies gK = hK.
\end{align*}
Moreover, by (iii), one has $h^{-1}g\in K \iff \tau(h^{-1}g)=e$.
\item
Let $B:=\tau(S)$. Then $x\in\tau^{-1}(B) \iff \tau(x)\in B \iff \exists s\in S : \tau(x)=\tau(s) \iff \exists s\in S: xK=sK \iff x\in SK$.
\item follows from (v).
\qedhere
\end{enumerate}
\end{proof}

Concerning the statement of the following lemma we recall that $M$ is the torsion subgroup of $T$, that $A$ is the identity component of $T$, and that $T=MA\cong M\times A$.

\begin{lemma}\ \label{tauonT}
\begin{enumerate}
\item $\tau(t) = t^2$ for all $t\in T$.
\item $A = \tau(T) = \tau(A)$. 
\item $B_\pm \cap K =T \cap K = M$ and $A \cap K= \{ e \}$.
\end{enumerate}
\end{lemma}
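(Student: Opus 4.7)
For (i), the plan is to note that on each rank-one subgroup $G_i \cong \mathrm{SL}_2(\R)$, the involution $\theta_i$ is the map $g \mapsto (g^{-1})^T$, which sends a diagonal element $\mathrm{diag}(a,a^{-1})$ to $\mathrm{diag}(a^{-1},a) = \mathrm{diag}(a,a^{-1})^{-1}$. Since $T$ is generated by (and equal to the product of) the images of the standard diagonal subgroups and is abelian, $\theta$ restricts to the inversion map $t \mapsto t^{-1}$ on $T$. Therefore $\tau(t) = t\theta(t)^{-1} = t \cdot (t^{-1})^{-1} = t^2$.

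Part (ii) is then a direct computation: by (i), $\tau(T) = T^2$ and $\tau(A) = A^2$. Using the decomposition $T \cong M \times A$ from Proposition~\ref{TopologyGSummary}(ii), where $M \cong (\Z/2\Z)^n$ satisfies $M^2 = \{e\}$ and $A \cong (\R_{>0})^n$ satisfies $A^2 = A$, one reads off $\tau(T) = \{e\} \times A = A$ and $\tau(A) = A$.

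For (iii), the identities $T \cap K = M$ and $A \cap K = \{e\}$ follow quickly from (i): $t \in K$ iff $\theta(t) = t$ iff $t = t^{-1}$ iff $t^2 = e$, and the $2$-torsion subgroup of $T \cong M \times A$ with $A$ torsion-free equals $M$. The substantive claim is the inclusion $B_\pm \cap K \subseteq T$. The plan is to write any $g \in B_\pm \cap K$ uniquely as $g = tu$ with $t \in T$, $u \in U_\pm$, using the semidirect product structure $B_\pm = T \ltimes U_\pm$. The condition $\theta(g) = g$ and Lemma~\ref{CCinvB} (which gives $\theta(T) = T$ and $\theta(U_\pm) = U_\mp$) yield
\[
t^{-1}\theta(t) \;=\; u\,\theta(u)^{-1},
\]
with LHS in $T$ and RHS in $U_\pm U_\mp$. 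The key ingredient is the fine Birkhoff decomposition (Lemma~\ref{fine birkhoff}), which gives the disjoint union $G = \bigsqcup_{n \in N_G(T)} U_+ n U_-$: any element of $T \cap U_+ U_-$ lies in both $U_+ \cdot x \cdot U_-$ and $U_+ \cdot e \cdot U_-$, hence must equal $e$; and similarly for $T \cap U_- U_+$. This forces $\theta(u) = u$ with $u \in U_\pm$, and since $\theta(u) \in U_\mp$ one concludes $u \in U_+ \cap U_- \subseteq B_+ \cap B_- = T$, which combined with $U_\pm \cap T = \{e\}$ gives $u = e$ and $g = t \in T \cap K = M$.

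The main obstacle is the last step of (iii): establishing that a $\theta$-fixed element of $B_\pm$ must have trivial unipotent component. The naive approach via working in $B_\pm$ alone is not enough because $\theta$ mixes the positive and negative unipotent parts; the crucial move is to project the identity $t^{-1}\theta(t) = u\theta(u)^{-1}$ against the refined Birkhoff stratification, which provides the required separation between the torus and the opposite-unipotent product. Once this disjointness is invoked, the rest is immediate.
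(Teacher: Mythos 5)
Your proof is correct, and parts (i) and (ii) follow the paper essentially verbatim (inversion of $\theta$ on the diagonal subgroups, then $M^2=\{e\}$ and $A^2=A$). The interesting divergence is in (iii). The paper disposes of $B_\pm\cap K\subseteq T$ in one line: if $g\in B_\pm\cap K$ then $g=\theta(g)\in\theta(B_\pm)=B_\mp$, so $g\in B_+\cap B_-=T$ by the saturation property of the twin $BN$-pair. You instead decompose $g=tu$ in $B_\pm=T\ltimes U_\pm$, derive $t^{-1}\theta(t)=u\,\theta(u)^{-1}$, and invoke the refined Birkhoff decomposition (Lemma~\ref{fine birkhoff}) to conclude $T\cap U_+U_-=\{e\}$, forcing $\theta(u)=u$ and then $u\in U_+\cap U_-\subseteq B_+\cap B_-=T$, whence $u=e$. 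This is a valid argument, and it has the minor virtue of exhibiting explicitly why the unipotent component dies. But note that your final step still appeals to the same saturation fact $B_+\cap B_-=T$ that makes the paper's one-line argument work, so the Birkhoff machinery is strictly superfluous here; the direct route via $\theta(B_\pm)=B_\mp$ is both shorter and uses weaker input. Your treatment of $T\cap K=M$ and $A\cap K=\{e\}$ via $2$-torsion in $M\times A$ is equivalent to the paper's computation with $\tau(t)=a^2$.
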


\begin{proof} The key observation is that $T$ is the direct product of the diagonal subgroups $T_i \cong \R^\times$ in $G_{i}$, and on each of the $T_i$ the involution $\theta$ acts by inversion. In particular, $\tau(t) = t\theta(t)^{-1} = t^2$ for all $t \in T_i$, whence (i) follows. Since the set of squares in $\R^\times$ is given by $\R^{>0}$, and every element in $\R^{>0}$ has a positive square root, (ii) follows from (i). Concerning (iii), observe first that, if $g \in B_\pm \cap K$, then $\theta(g) = g$. Since $\theta(B_\pm) = B_{\mp}$, this implies $g \in B_+ \cap B_- = T$, so $B_\pm \cap K =T \cap K$. Now let $t \in T$ and write $t=ma$ with $a \in A \cong (\R^\times)^n$, $m \in M = (\Z/2\Z)^n$ (see Definition~\ref{semisimpleadjoint}). Then $\tau(t) = m^2a^2 = a^2$, and thus $\tau(t) = e$ if and only if $a = e$, as $A$ is torsion-free. Hence $T \cap K = M$ and $A \cap K = \{e\}$ by Lemma~\ref{lem:tau-K-saturation}(iii).
\end{proof}

In the following proof, we use the abstract Iwasawa decomposition $G = KB_\pm = K U_\pm T = U_\pm T K$ (cf. \cite{deMedtsGramlichHorn09}). While we will in short order actually refine this (see Theorem~\ref{ThmIwasawa}), the next lemma only needs this basic form.

\begin{lemma}\ \label{lem:NGT=A NKT} \label{lem:T=N-cap-tauG}
\begin{enumerate}
\item $N_G(T) \cap \tau(G) = A$.
\item $N_G(T) = A \rtimes N_K(T)$.
\end{enumerate}
\end{lemma}

\begin{proof}
\begin{enumerate}
\item
By Lemma~\ref{tauonT}, one has $A=\tau(T) \subset N_G(T)\cap\tau(G)$. It remains to show the other inclusion.
Let $g\in N_G(T) \cap \tau(G)$. Since $g\in \tau(G) = \tau(U_+TK)$, there exist
 $u\in U_+$,  $t\in T$, $k\in K$ such that
 \[ g=\tau(utk)= \tau(ut) = ut^2 \theta(u)^{-1} \in U_+ t^2 U_-.\]
Since also $g\in N_G(T)$, the refined Birkhoff decomposition (see Lemma~\ref{fine birkhoff})
 yields $g = t^2 \in \tau(T) = A$ as claimed.

\item
First show that $N_G(T)=N_K(T)T$. Indeed, the inclusion $\supseteq$ is clear. Let $g\in N_G(T)$. By the Iwasawa decomposition, there exist $u\in U_+$,  $t\in T$, $k\in K$ such that $g=kut$. As $T=T^g$, one concludes $T^{k^{-1}} = T^u$ and therefore
\[ T^u = T^{k^{-1}} = \theta(T^{k^{-1}}) = \theta(T^u) = T^{\theta(u)}
\implies \tau(u) = u\theta(u)^{-1} \in N_G(T).
\]
But by (i), one has $\tau(u)\in A$, hence by the refined Birkhoff decomposition, $u=1$.
Thus $T^k=T$, hence $k\in N_K(T)$ and $g=kt\in N_K(T)T$ as claimed.
 
Furthermore, one has $T=MA$, and by Lemma~\ref{tauonT} also $M = K \cap T \subset N_K(T)$.
Hence $N_G(T)=N_K(T)T=N_K(T)MA = N_K(T)A$. Since $A\subset T$ is normalized by $N_K(T)$ and $A \cap N_K(T) \subset A\cap K=\{e\}$ (see Lemma~\ref{tauonT}), one arrives at $N_G(T) = A \rtimes N_K(T)$.
\qedhere
\end{enumerate}
\end{proof}
As an application of the previous lemma one can now concretely identify the extended Weyl group $\widetilde{W}$, which was introduced in Subsection \ref{extendedWeylgroup} by means of a set of generators.
\begin{corollary}\label{AbstractExtendedWeylGroup} The extended Weyl group satisfies $\widetilde{W} = N_K(T)$, its image in $\overline{G}$ is given by $N_{\overline{K}}(\overline{T})$.
\end{corollary}
\begin{proof} By Proposition \ref{ExtWeylInK} one has $\widetilde{W} < N_K(T)$. Towards the opposite inclusion recall from Subsection \ref{extendedWeylgroup} that the canonical surjection $\pi: N_G(T) \to N_G(T)/T$ is still surjective when restricted to $\widetilde{W}$ and that $\widetilde{W} \cap T = M$, the torsion subgroup of $T$. Since $\widetilde{W} < N_K(T)$, the first assertion implies that $\pi(N_K(T)) = N_G(T)/T = \pi(\widetilde{W})$; by the second assertion it suffices to show that $\ker(\pi|_{N_K(T)}) = M$. Now by Lemma \ref{lem:NGT=A NKT} one has
  \[
N_G(T)/T = (A \rtimes N_K(T))/(A \times M) \cong N_K(T)/M,
\]
where the isomorphism is induced by the inclusion $N_K(T) \hookrightarrow N_G(T)$. Thus $\ker(\pi|_{N_K(T)}) = M$, which proves the first statement, and the second statement follows from the first one.
\end{proof}

The following technical observation depends heavily on the language of twin buildings. Refer to \cite[Sections~5.8 and 6.3]{AbramenkoBrown2008} and \cite{Horn:Decomp} for the necessary background information. 
Note that the automorphism $\theta$ of $G$ acts on the twin building $\Delta$ by Proposition~\ref{AutBuilding} below. 
A twin apartment of $\Delta$ is called \Defn{$\theta$-stable} if it is invariant as a set under the action of $\theta$.

\begin{lemma}[{\cite[Lemma~4.2]{Horn:Decomp}}] \label{lem:sym-ss}
 Suppose $g\in G$ is symmetric, i.e., $\theta(g) = g^{-1}$. Then the following assertions concerning the action of $g$ on the twin building of $G$ are equivalent:
 \begin{enumerate}
 \item\label{prop:enum:theta-apt} $g$ fixes a $\theta$-stable twin apartment chamberwise.
 \item\label{prop:enum:twin-apt} $g$ fixes a twin apartment chamberwise.
 \item\label{prop:enum:apt} $g$ fixes an apartment chamberwise.
 \item\label{prop:enum:cham} $g$ stabilizes a chamber.
 \item\label{prop:enum:orb} $g$ has a bounded (with respect to the gallery metric) orbit.
 \item\label{prop:enum:res} $g$ stabilizes a spherical residue.
 \qed
 \end{enumerate}
\end{lemma}

\subsection{The topological Iwasawa decomposition}

The goal of this subsection is to prove the following decomposition results for $G$ and $\ol{G}$.
For this, we use the topological structure of these groups as well as the twin buildings $\Delta_\pm$ as discussed in Section~\ref{sec:twin-BN-and-building}.

\begin{theorem}[Topological Iwasawa decomposition]\label{ThmIwasawa}
  Let $G = G_\R(\mathbf{A})$ be the simply connected split real Kac--Moody group of type $\mathbf{A}$ and let $\ol{G}$ be its semisimple adjoint quotient. 
\begin{enumerate}
\item $K \cap B_\pm = M$ and $\ol{K} \cap \ol{B}_\pm = \ol{M}$. In particular, the center of $\ol{G}$ is contained in $\ol{K}$.
\item Multiplication induces continuous bijections $m_1: U_\pm\times A \times K \to G$, $m_2: K\times A \times U_\pm \to G$ and homeomorphisms $\ol{m}_1: \ol{U}_\pm\times \ol{A} \times \ol{K} \to \ol{G}$ and $\ol{m}_2:  \ol{K} \times \ol{A} \times  \ol{U}_\pm\to \ol{G}$.
\item The action of $K$ on both halves of the twin building factors through $\ol{K}$, which acts transitively on both halves of the twin building. Moreover, $\Delta_\pm \cong \ol{K}/\ol{M}$, where $\ol{K}/\ol{M}$ carries the quotient topology.
\end{enumerate}
\end{theorem}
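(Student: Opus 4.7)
The plan is to establish the three parts in the order (i), (iii), (ii), since the transitivity statement in (iii) is the key geometric input that drives the Iwasawa decomposition in (ii).

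For part (i), the equality $K\cap B_{\pm} = M$ is already contained in Lemma~\ref{tauonT}(iii). To pass to $\overline{G}$, I would observe that the kernel $C$ of $G\to\overline{G}$ lies in $T \subset B_{\pm}$, so $\overline{K}\cap\overline{B}_{\pm}$ is the image of $(K\cdot C)\cap(B_\pm\cdot C) = (K\cap B_\pm)\cdot C = M\cdot C$, which maps to $\overline{M}$. The claim about the center of $\overline{G}$ then follows from Proposition~\ref{kernelsemisimpleadjoint}(ii): the kernel of $\overline{G}\to\mathrm{Ad}(G)$, which by definition equals $Z(\overline{G})$, is contained in $\overline{M}\subset\overline{K}$.

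For part (iii), first note that $Z(G)\subset T\subset B_\pm$ acts trivially on $\Delta_\pm = G/B_\pm$, so the action factors through $\mathrm{Ad}(G)$ and a fortiori through $\overline{G}$, and I want to prove $\overline{K}$ acts transitively on $\Delta_+$ (the $\Delta_-$ case is symmetric). I would argue by induction on gallery distance from the fundamental chamber $C_0 := \overline{B}_+$. The inductive step reduces to showing that for each $i\in\{1,\dots,n\}$ the subgroup $\overline{K}_i := \overline{K}\cap\overline{G}_i$ acts transitively on the $i$-panel of $C_0$; this is the rank-one case, where $\overline{K}_i$ is the image of $\mathrm{SO}(2)\subset\mathrm{SL}_2(\R)$ and the panel is identified with $\mathrm{SL}_2(\R)/B_{\mathrm{SL}_2}\cong\mathbb{RP}^1$, on which $\mathrm{SO}(2)$ is classically transitive. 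Combined with part (i), the orbit map yields a continuous bijection $\overline{K}/\overline{M}\to\Delta_+$. To see it is a homeomorphism I would use that both sides are $k_\omega$-quotients of $k_\omega$-spaces: the quotient map $\overline{G}\to\Delta_+$ restricts on the closed subgroup $\overline{K}$ to a continuous surjection, and a standard $k_\omega$-argument upgrades a continuous bijection between such quotients to a homeomorphism.

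For part (ii), existence of the factorization is immediate from (iii): transitivity of $\overline{K}$ on $\Delta_+ = \overline{G}/\overline{B}_+$ gives $\overline{G} = \overline{K}\cdot\overline{B}_+ = \overline{K}\cdot\overline{M}\cdot\overline{A}\cdot\overline{U}_+ = \overline{K}\overline{A}\,\overline{U}_+$, since $\overline{M}\subset\overline{K}$, and applying $\overline{\theta}$ (which exchanges $\overline{U}_\pm$, fixes $\overline{K}$ setwise and sends $\overline{a}\mapsto\overline{a}^{-1}$ on $\overline{A}$) yields $\overline{G} = \overline{U}_\pm\overline{A}\,\overline{K}$; pulling back along $G\to\overline{G}$ (with $C\subset A$) gives the analogous equalities for $G$. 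For uniqueness, suppose $u_1 a_1 k_1 = u_2 a_2 k_2$ with $u_i\in U_+$, $a_i\in A$, $k_i\in K$; applying the twist map $\tau$ and using $\tau(k)=e$, $\theta(u)\in U_-$, and $\tau(ua) = u a^2 \theta(u)^{-1}$ gives
\[
u_1 a_1^2\theta(u_1)^{-1} = u_2 a_2^2\theta(u_2)^{-1} \in U_+ A U_-.
\]
The refined Birkhoff decomposition (Lemma~\ref{fine birkhoff} at $n=e$) implies the multiplication map $U_+\times A\times U_-\to U_+AU_-$ is a set-theoretic bijection, forcing $u_1=u_2$ and $a_1^2 = a_2^2$; since $A\cong(\R_{>0})^n$ is uniquely 2-divisible, $a_1=a_2$ and hence $k_1=k_2$. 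Continuity of $m_1,m_2,\overline{m}_1,\overline{m}_2$ is automatic, and for the homeomorphism claim in $\overline{G}$ I would invoke Proposition~\ref{TopologyGSummary}(iv), which makes $\overline{U}_+\times\overline{A}\times\overline{U}_-\to\overline{U}_+\overline{A}\,\overline{U}_-$ a homeomorphism onto an open subset; the assignment $\overline{g}\mapsto\overline{g}\,\overline{\theta}(\overline{g})^{-1}$ then extracts $(\overline{u},\overline{a})$ continuously, and $\overline{k}=\overline{a}^{-1}\overline{u}^{-1}\overline{g}$ follows.

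The main obstacles are twofold. Geometrically, the rank-one reduction for transitivity of $\overline{K}$ needs a careful gallery argument showing that an element of $\overline{K}_i$ fixing a chamber $C$ and moving across an $i$-panel can be conjugated (or composed) to act on arbitrary adjacent chambers; the subtlety is that strictly speaking $\overline{K}_i$ acts on the $i$-panel of $C_0$, and one must translate the argument by elements of $\overline{K}$ produced inductively. Topologically, the upgrade from continuous bijection to homeomorphism for $\overline{m}_1,\overline{m}_2$ relies on Proposition~\ref{TopologyGSummary}(iv), whose current proof assumes the generalized Cartan matrix $\mathbf{A}$ to be two-spherical (cf.\ Remark~\ref{twospherical1}); removing this hypothesis, as discussed there, would be the deeper open point.
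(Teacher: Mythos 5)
Your discrete arguments are essentially sound and largely parallel the paper's: part (i) is Lemma~\ref{tauonT}(iii) plus the observation that $C\subset T\subset B_\pm$; the transitivity in (iii) is exactly the content of the cited result $G=KB_\pm$ from \cite{deMedtsGramlichHorn09} (the paper simply quotes it rather than re-running the gallery induction, and deduces transitivity of $\overline{K}$ from that of $K$ rather than the other way around); and your uniqueness argument for $m_1$ via $\tau$ and the refined Birkhoff decomposition is a valid alternative to the paper's more direct route, which gets injectivity straight from $B_\pm\cap K=M$ together with $M\cap A=\{e\}$ and the semidirect product structure $B_\pm=T\ltimes U_\pm$.

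The genuine problem is your topological upgrade of $\overline{m}_1,\overline{m}_2$ to homeomorphisms. You route this through Proposition~\ref{TopologyGSummary}(iv), extracting $(\overline{u},\overline{a})$ from $\overline{\tau}(\overline{g})\in\overline{U}_+\overline{A}\,\overline{U}_-$. That proposition is only available under the two-sphericity hypothesis (cf.\ Remark~\ref{twospherical1}), whereas Theorem~\ref{ThmIwasawa} is stated, and is needed later, without that hypothesis. You flag this as ``the deeper open point,'' but it is not open for this theorem: the paper circumvents it entirely. The actual proof first shows, via the $k_\omega$-machinery (Lemmas~\ref{komegahomeo} and \ref{Coveringkomega}), that the orbit maps $\iota_\pm\colon\overline{K}\to\widetilde{\Delta}_\pm=\overline{G}/\overline{A}\,\overline{U}_\pm$ are homeomorphisms (Proposition~\ref{TopIwasawaMain}); this uses only the filtration $\overline{G}=\lim_\to\overline{G}^\pm_k$ and compactness of $\overline{K}^\pm_k$, with no two-sphericity. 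One then gets a continuous section $g\mapsto(k(g),b(g))$ with $k(g)=\iota_\pm^{-1}(g\overline{A}\,\overline{U}_\pm)$ and $b(g)=k(g)^{-1}g$, which is inverse to $\overline{K}\times\overline{A}\,\overline{U}_\pm\to\overline{G}$; the remaining openness of $\overline{A}\times\overline{U}_\pm\to\overline{A}\,\overline{U}_\pm$ comes from \cite[Proposition~7.27]{HartnickKoehlMars}, again with no two-sphericity. The same homeomorphism $\iota_\pm$, composed with the finite covering $\widetilde{\Delta}_\pm\to\Delta_\pm$, is also what makes your sketched ``standard $k_\omega$-argument'' for $\Delta_\pm\cong\overline{K}/\overline{M}$ in (iii) precise. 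So to match the stated generality you should replace the appeal to Proposition~\ref{TopologyGSummary}(iv) by the $k_\omega$-argument of Proposition~\ref{TopIwasawaMain}.
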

\begin{proof}[Proof of Theorem \ref{ThmIwasawa}, discrete version] First establish the results concerning $G$. (i) follows from Lemma~\ref{tauonT}. Concerning (iii), recall from \cite{deMedtsGramlichHorn09} that 
  $G = KB_\pm$. In particular, $K$ acts transitively on $\Delta_\pm$.

  Now consider the map $m_1$ from (ii). Since $B_\pm =MAU_\pm$ and $G = KB_\pm$, one has $G = KMAU_\pm = KAU_\pm$, i.e.\ $m_1$ is surjective. Injectivity of $m_1$ follows from $B_\pm \cap K = M$, so that $m_1$ is a bijection. Since inversion intertwines $m_1$ and $m_2$, it follows that also $m_2$ is bijective, establishing the discrete part of Theorem \ref{ThmIwasawa} for $G$.

  Concerning $\ol{G}$, since the action of $K$ on $\Delta_\pm$ factors through $\ol{K}$, the latter acts transitively on $\Delta_\pm$, i.e.\ $\ol{G} = \ol{K}\,\ol{B}_\pm$.
The fact $K \cap B_\pm = M < T$ implies $\ol{K} \cap \ol{B}_\pm = \ol{M} < \ol{T}$. In particular, $\Delta_\pm \cong \ol{K}/\ol{M}$ as sets. This in turn implies bijectivity of $\ol{m}_1$ by the same argument used to show bijectivity of $m_1$ and, thus, also of $\ol{m}_2$. The statement about the center of $\ol{G}$ follows from Proposition~\ref{TopologyGSummary}(ii).
\end{proof}

\begin{remark}
Note that the automorphism $\theta$ of $G$ acts on the twin building $\Delta$ by Proposition~\ref{AutBuilding} below. An immediate consequence of the transitive action of $K$ resp. $\ol{K}$ on the chambers of $\Delta_\pm$ therefore is that every chamber $c\in\Delta_\pm$ is opposite its image $\theta(c)\in\Delta_\mp$. Thus they define a unique $\theta$-stable twin apartment, 
\end{remark}

Before turning to the topological version of the theorem, recall some basic facts about $k_\omega$-spaces, cf.\ \cite{FranklinThomas77}. 
\begin{definition}\label{DefKOmega}
A Hausdorff topological space $X$ is called a \Defn{$k_\omega$-space}, if it is the direct limit of an increasing family of compact subspaces $(X_n)_{n \in \N}$, i.e., if $X = \bigcup_n X_n$ and a subset $Y$ of $X$ is open in $X$ if and only if each intersection $Y \cap X_n$ is open in $X_n$; the increasing family $(X_n)_{n \in \N}$ is called a \Defn{$k_\omega$-sequence} for $X$ and the pair $(X, (X_n))$ is called a \Defn{$k_\omega$-pair}.
\end{definition}
\begin{lemma}\label{komegahomeo} Let $(X, (X_n))$ and $(Y, (Y_n))$ be $k_\omega$-pairs and let $f: X \to Y$ be a continuous bijection such that
\[
\forall n \in \N \quad \exists m \in \N: f(X_m) \supset Y_n.
\]
Then $(f(X_n))$ is a $k_\omega$-sequence for $Y$ and $f$ is a homeomorphism.
\end{lemma}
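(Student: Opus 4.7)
The plan is to prove the two claims in turn — first that $(f(X_n))$ is a $k_\omega$-sequence for $Y$, and then that this automatically forces $f$ to be a homeomorphism — using in both cases the fact that a map out of a $k_\omega$-space is continuous iff its restrictions to each compact piece are continuous, together with the standard fact that a continuous bijection from a compact space to a Hausdorff space is closed.

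First I would observe that each $f(X_n)$ is compact as the continuous image of a compact space, hence closed in the Hausdorff space $Y$, and that $\bigcup_n f(X_n) = f(X) = Y$ by surjectivity of $f$. Up to relabeling I may assume the sequence $(f(X_n))$ is increasing. To show that $Y$ carries the direct limit topology with respect to this sequence, let $U \subseteq Y$ be any subset such that $U \cap f(X_n)$ is open in $f(X_n)$ for all $n$. Since $(Y, (Y_n))$ is a $k_\omega$-pair, it suffices to show that $U \cap Y_m$ is open in $Y_m$ for every $m$. By hypothesis, pick $n$ with $Y_m \subseteq f(X_n)$; using transitivity of the subspace topology (which is valid because $Y_m$ inherits its topology equally from $Y$ or from the closed subset $f(X_n)$), one has $U \cap Y_m = (U \cap f(X_n)) \cap Y_m$, which is open in $Y_m$ since $U \cap f(X_n)$ is open in $f(X_n)$. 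The converse direction (if $U$ is open in $Y$ then $U \cap f(X_n)$ is open in $f(X_n)$) is automatic from the definition of subspace topology. Therefore $(f(X_n))$ is a $k_\omega$-sequence for $Y$.

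Finally, to show that $f$ is a homeomorphism it suffices to prove that $f^{-1}\colon Y \to X$ is continuous, i.e.\ that $f(C)$ is closed in $Y$ for every closed $C \subseteq X$. By the $k_\omega$-structure on $Y$ just established, it is enough to check that $f(C) \cap f(X_n)$ is closed in $f(X_n)$ for every $n$. Since $f$ is a bijection, $f(C) \cap f(X_n) = f(C \cap X_n)$. The set $C \cap X_n$ is closed in the compact space $X_n$, hence compact, so $f(C \cap X_n)$ is a compact subset of the Hausdorff space $f(X_n)$ and therefore closed in it. This completes the proof.

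The only subtle point, and thus the main obstacle to watch out for, is the genuine use of the cofinality hypothesis $f(X_{m(n)}) \supseteq Y_n$: without it the direct limit topology induced by $(f(X_n))$ on the set $Y$ could be strictly finer than the ambient topology of $Y$, and the key openness check in the second paragraph would fail. Everything else is a routine application of the defining property of $k_\omega$-spaces combined with the compact-to-Hausdorff rigidity.
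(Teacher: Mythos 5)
Your proof is correct and follows essentially the same strategy as the paper: first establish that $(f(X_n))$ is a $k_\omega$-sequence for $Y$ using the cofinality hypothesis, then deduce that $f^{-1}$ is continuous by checking it piecewise on the compact members. The only difference is cosmetic — the paper cites the fact that every compact subset of a $k_\omega$-space lies in some member of the sequence to conclude that the two sequences are mutually cofinal, whereas you verify the direct-limit property by hand (using only the hypothesized inclusion $Y_n \subseteq f(X_m)$, the other direction being trivial), which is equally valid.
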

\begin{proof} Since $Y$ is Hausdorff, the sets $f(X_n)$ are compact. Hence by \cite[Assertion~7, p.~114]{FranklinThomas77} for every $n \in \N$ there exists $m \in \N$ such that $f(X_n) \subset Y_m$. The hypothesis therefore implies that the sequences $(f(X_n))$ and $(Y_n)$ define the same limit topology on $Y$, i.e.\ $(f(X_n))$ is a $k_\omega$-sequence for $Y$.  Now for each $n$ the map $f: X_n \to f(X_n)$ is a homeomorphism, and hence $f$ yields a homeomorphism
\[
f: X =  \lim_{\to} X_n \to  \lim_{\to} f(X_n) = Y.\qedhere
\]
\end{proof}

\begin{lemma}\label{Coveringkomega} Let $(X, (X_n))$ be  a $k_\omega$-pair and let $\pi: \widetilde{X} \to X$ be a finite-sheeted covering. Then $(\widetilde{X}, \pi^{-1}(X_n))$ is a $k_\omega$-pair.
\end{lemma}

\begin{proof} Since $\pi$ is a finite-sheeted covering, it is proper and, hence, $\widetilde{X}_n := \pi^{-1}(X_n)$ is compact for every $n \in \N$. Now let $\widetilde x \in \widetilde{X}$
 and $x := \pi(\widetilde{x})$. Then there exist open neighbourhoods $\widetilde{V}$ of $\widetilde{x}$ and $V$ of $x$ such that $\pi$ restricts to a homeomorphism $\widetilde{V} \to V$. Now let $\widetilde{U}$ be a subset of $\widetilde{X}$ containing $\widetilde{x}$ and $U := \pi(\widetilde{U})$ Then one has the following chain of equivalences:
 \begin{align*}
 \widetilde{U} \text { is a neighbourhood of }\widetilde{x} & \iff \widetilde{U} \cap \widetilde{V} \text{ is a neighbourhood of }\widetilde{x}\\
 & \iff U \cap V \text{ is a neighbourhood of } x\\
  & \iff U \cap V \cap X_n \text{ is a neighbourhood of } x \text{ for all sufficiently large }n \in \N\\
    & \iff \widetilde{U} \cap \widetilde{V} \cap \widetilde{X}_n \text{ is a neighbourhood of } \widetilde{x} \text{ for all sufficiently large }n \in \N\\
        & \iff \widetilde{U} \cap \widetilde{X}_n \text{ is a neighbourhood of } \widetilde{x} \text{ for all sufficiently large }n \in \N.
 \end{align*}
This shows in particular that a subset of $\widetilde{X}$ is open if and only if its intersection with $\widetilde{X_n}$ is open for all sufficiently large $n \in \N$. We deduce that $(\widetilde{X},  (\widetilde{X}_n))$ is a $k_\omega$-pair.
\end{proof}

Let $\Delta_\pm = \ol{G}/\ol{B}_\pm$ denote one half of the twin building $\Delta$. Recall from Proposition~\ref{TopologyGSummary}(iv) that $\ol{B}_+$ has the decomposition $\ol{B}_+ = \ol{M}\,\ol{A}\,\ol{U}_+$, where $\ol{M} = \ol{K} \cap \ol{B}_+$ is a finite group. Denote by $\widetilde{\Delta}_\pm$ the quotient $\ol{G}/\ol{A}\,\ol{U}_\pm$. Then the canonical projections
\begin{equation}\label{MapPiFiniteCovering}
\pi_\pm: \widetilde{\Delta}_\pm \to \Delta_\pm
\end{equation}
are finite-sheeted covering maps with fiber $\ol{M}$.

\begin{proposition}\label{TopIwasawaMain} The maps 
\[
\iota_\pm: \ol{K} \to  \widetilde{\Delta}_\pm, \quad k \mapsto k\ol{A}\,\ol{U}_\pm
\]
are homeomorphisms.
\end{proposition}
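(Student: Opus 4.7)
The plan is first to show that $\iota_\pm$ is a continuous bijection, then to promote this to a homeomorphism via the $k_\omega$-space machinery provided by Lemma~\ref{komegahomeo}.

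Continuity of $\iota_\pm$ is immediate, since it factors as the inclusion $\overline{K}\hookrightarrow\overline{G}$ followed by the open quotient map $\pi:\overline{G}\to\widetilde{\Delta}_\pm$. For surjectivity I will invoke the discrete part of Theorem~\ref{ThmIwasawa}, which gives $\overline{G}=\overline{K}\,\overline{A}\,\overline{U}_\pm$. For injectivity, if $k,k'\in\overline{K}$ with $k\overline{A}\,\overline{U}_\pm = k'\overline{A}\,\overline{U}_\pm$, then $k^{-1}k'\in\overline{K}\cap\overline{A}\,\overline{U}_\pm\subseteq\overline{K}\cap\overline{B}_\pm=\overline{M}$ by Theorem~\ref{ThmIwasawa}(i), and then the uniqueness of the decomposition $\overline{B}_\pm=\overline{M}\,\overline{A}\,\overline{U}_\pm$ provided by Proposition~\ref{TopologyGSummary}(v) forces $k^{-1}k'=e$.

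To apply Lemma~\ref{komegahomeo} I will first verify that both source and target are Hausdorff $k_\omega$-spaces. The group $K=G^{\theta}$ is closed in the $k_\omega$-group $G$ and hence $k_\omega$, and $\overline{K}=\overline{G}^{\overline{\theta}}$ is closed in the $k_\omega$-group $\overline{G}=G/C$, so it is $k_\omega$ as well. On the target side, $\Delta_\pm$ is $k_\omega$ as recalled just before the proposition, and by~\eqref{MapPiFiniteCovering} the projection $\pi_\pm:\widetilde{\Delta}_\pm\to\Delta_\pm$ is a finite-sheeted covering with fiber $\overline{M}$, so Lemma~\ref{Coveringkomega} makes $\widetilde{\Delta}_\pm$ a $k_\omega$-space with $k_\omega$-sequence $\bigl(\widetilde{D}_n\bigr):=\bigl(\pi_\pm^{-1}(D_n)\bigr)$ whenever $(D_n)$ is one for $\Delta_\pm$. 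The natural exhaustion of $\Delta_\pm$ to work with is the Bruhat-length filtration $D_n:=\bigcup_{\ell(w)\leq n}\overline{B}_+ w\overline{B}_\pm/\overline{B}_\pm$.

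Next I will construct a compatible $k_\omega$-sequence $(K_n)$ for $\overline{K}$ such that $\iota_\pm(K_n)\supseteq\widetilde{D}_n$, so Lemma~\ref{komegahomeo} applies. The sequence $(K_n)$ will be obtained by starting from the Bruhat-adapted $k_\omega$-exhaustion of $K$ from \cite[Section~7]{HartnickKoehlMars} and pushing it forward to $\overline{K}$ via the quotient $G\to\overline{G}$; the geometric fact that makes the containment $\iota_\pm(K_n)\supseteq\widetilde{D}_n$ hold is that $\overline{K}$ already acts transitively on $\Delta_\pm$ with stabilizer $\overline{M}$, and the covering $\pi_\pm$ has fiber $\overline{M}\subset\overline{K}$, so lifts of length-bounded compacta in $\Delta_\pm$ to $\widetilde{\Delta}_\pm$ can be realized inside images of length-bounded compacta of $\overline{K}$.

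The main obstacle is precisely this last matching step: at the stage of the proof, $\iota_\pm$ is only a continuous bijection, so it is not formal that the image of a compact subset of $\overline{K}$ large enough to surject onto $D_n\subset\Delta_\pm$ actually covers all of $\widetilde{D}_n\subset\widetilde{\Delta}_\pm$. The point is to check that the Bruhat exhaustion of $K$ from \cite[Section~7]{HartnickKoehlMars} is compatible with the Bruhat exhaustion of the twin building, up to multiplication by the finite (hence compact) group $\overline{M}$ that acts as the deck group of $\pi_\pm$. Once this compatibility is verified, Lemma~\ref{komegahomeo} immediately yields that $\iota_\pm$ is a homeomorphism.
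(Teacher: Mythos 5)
Your overall architecture -- continuous bijection plus the $k_\omega$-transfer Lemma~\ref{komegahomeo} -- is exactly the paper's, and your continuity, surjectivity and injectivity arguments are fine. But the proof stops at precisely the one nontrivial point. You correctly identify "the main obstacle" as producing a compact exhaustion $(K_n)$ of $\overline{K}$ with $\iota_\pm(K_n)\supseteq\widetilde{D}_n$, and then you do not close it: "pushing forward the Bruhat-adapted exhaustion of $K$" and "check that the exhaustions are compatible up to $\overline{M}$" are descriptions of what must be proved, not proofs. In particular, your stated justification (transitivity of $\overline{K}$ on $\Delta_\pm$ with stabilizer $\overline{M}$, plus the fiber of $\pi_\pm$ being $\overline{M}$) does not by itself yield the containment: from a continuous bijection one cannot conclude that the image of a compact set surjecting onto $D_n$ after composing with $\pi_\pm$ covers all of $\widetilde{D}_n$ without explicitly saturating by the fiber, and the compatibility of the $K$-exhaustion with the Bruhat length filtration is the actual content.

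The paper closes this gap by choosing the exhaustion so that the matching is automatic: set $\overline{G}^{\pm}_k := \bigcup_{l(w)\leq k}\overline{B}_\pm w\overline{B}_\pm$ and $\overline{K}^\pm_k := \overline{K}\cap\overline{G}^{\pm}_k$. Since $\overline{G}^{\pm}_k$ is a union of $(\overline{B}_\pm,\overline{B}_\pm)$-double cosets it is right-$\overline{A}\,\overline{U}_\pm$-stable, so the Iwasawa factor $k$ of any $g=kau\in\overline{G}^{\pm}_k$ again lies in $\overline{G}^{\pm}_k$; hence $\iota_\pm(\overline{K}^\pm_k)=\widetilde{\Delta}_{k,\pm}$ on the nose (no lifting or $\overline{M}$-saturation needed). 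Compactness of $\overline{K}^\pm_k$ is then verified directly by writing it as a finite union of products $K_{\alpha_1}\cdots K_{\alpha_k}$ of the compact fundamental subgroups $K_{\alpha_i}\cong\SO_2(\R)$, and $\overline{K}=\lim_\to\overline{K}^+_k$ follows from $\overline{G}=\lim_\to\overline{G}^{\pm}_k$ (\cite[Corollary~7.11]{HartnickKoehlMars}) since direct limits commute with the relevant quotients. Note also that your detour through Lemma~\ref{Coveringkomega} is unnecessary at this stage: the equality $\iota_\pm(\overline{K}^\pm_k)=\widetilde{\Delta}_{k,\pm}$ already exhibits $(\widetilde{\Delta}_\pm,(\widetilde{\Delta}_{k,\pm}))$ as a $k_\omega$-pair, and the covering $\pi_\pm$ is only needed afterwards, in deducing part (iii) of Theorem~\ref{ThmIwasawa}. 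If you replace your "to be verified" step by this choice of exhaustion and the $\SO_2(\R)$-product argument for compactness, your proof becomes the paper's.
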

\begin{proof} It follows from the abstract Iwasawa decomposition that $\iota_\pm$ are continuous bijections. Let
\[\ol{G}^{\pm}_k := \bigcup_{w \in W,\, l(w) \leq k} \ol{B}_\pm w \ol{B}_\pm,\]
denote by $\widetilde{\Delta}_{k, \pm}$ and $\Delta_{k, \pm}$ the respective image of $\ol{G}^\pm_k$ in $\widetilde{\Delta}_\pm$ and $\Delta_\pm$, and let $\ol{K}^\pm_k :=\ol{K} \cap G^{\pm}_k$. 
Then by \cite[Corollary~7.11]{HartnickKoehlMars} and the observation that direct limits commute with quotients one has
\begin{equation}
\ol{G} = \lim_\to \ol{G}^{\pm}_k,
\end{equation}
and, thus, \[\ol{K} =  \lim_\to \ol{K}^+_k.\]

The subsets $\ol{K}^\pm_k \subset \ol{K}$ are compact: Indeed,  by \cite[Theorem~1.2]{deMedtsGramlichHorn09} $\ol{K}^\pm_k$ equals the finite union of products of the form $MK_{\alpha_1} \cdots MK_{\alpha_k}$, where $M = T \cap K$ is finite (see Lemma~\ref{tauonT}) and each $K_{\alpha_i} \cong \SO_2(\R)$ is compact. Since multiplication is continuous and $\ol{K}$ is Hausdorff, this implies that $\ol{K}^{\pm}_k$ are compact, and hence $(\ol{K}, (\ol{K}^{\pm}_k))$ is a $k_\omega$-pair.

By the discrete version of Theorem~\ref{ThmIwasawa}, the group $\ol{K}$ acts transitively on $\widetilde{\Delta}_\pm$ and one has $\iota_\pm(\ol{K}^\pm_k)  = \widetilde{\Delta}_{k, \pm}$. In particular, the spaces $\widetilde{\Delta}_{k, \pm}$ are compact. Therefore $(\widetilde{\Delta}_\pm, (\widetilde{\Delta}_{k, \pm}))$ is a $k_\omega$-pair
%
%
%
and the proposition follows from Lemma~\ref{komegahomeo}.
\end{proof}

\begin{proof}[Proof of Theorem \ref{ThmIwasawa}] Assertion (i) has already been proved for the discrete version of the theorem. Concerning (iii), the finite-sheeted coverings $\pi_\pm :  \widetilde{\Delta}_\pm \to \Delta_\pm$ from \eqref{MapPiFiniteCovering} are continuous and open. By Proposition~\ref{TopIwasawaMain} this implies that the orbit maps $\ol{K} \mapsto \Delta_\pm$ are continuous and open, hence $\Delta_\pm \cong \ol{K}/\ol{M}$ as topological spaces as claimed.

  In order to prove (ii), it is clear that the maps under consideration are continuous, since they are induced by the group multiplication. It thus remains to show that $\ol{m}_2$, and hence $\ol{m}_1$, are open. Given $g \in \ol{G}$, define $k(g) := \iota_\pm^{-1}(g\ol{A}\,\ol{U}_\pm)$, where $\iota_\pm$ is as in Proposition~\ref{TopIwasawaMain} and let $b(g) := k(g)^{-1}g$. Since $\iota_\pm$ is open and $\ol G$ is a topological group, one obtains a continuous map
\[
i_\pm: \ol G \to \ol K \times \ol A\,\ol{U}_\pm, \quad g \mapsto (k(g), b(g))
\]
such that $g = k(g)b(g)$. This map is inverse to the multiplication map $m: \ol K \times \ol A\,\ol{U}_\pm \to \ol G$, showing that $m$ is a homeomorphism. It remains to see that the multiplication map $\ol A \times \ol{U}_\pm \to \ol A\,\ol{U}_\pm$ is open; this however follows from \cite[Proposition~7.27]{HartnickKoehlMars}. This finishes the proof of Theorem \ref{ThmIwasawa}.
\end{proof}

\subsection{The image of the twist map}

The goal of this subsection is to understand the images of the twist maps inside their ambient groups.

\begin{proposition} \label{lem:1=G-cap-tauG}
 $K\cap\tau(G)=\{e\}$ and $\ol{K} \cap \ol{\tau}(\ol{G}) = \{e\}$.
\end{proposition}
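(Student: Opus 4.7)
The strategy is to use the action of $K$ on the twin building to conjugate any element $h \in K \cap \tau(G)$ into $M = B_+ \cap K$, and then to identify $M \cap \tau(G)$ with $\{e\}$ via Lemma~\ref{lem:T=N-cap-tauG}.

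Suppose $h \in K \cap \tau(G)$. Then $\theta(h) = h$ (from $h \in K$) and $\theta(h) = h^{-1}$ (by Lemma~\ref{lem:tau-K-saturation}(i)), so $h^2 = e$. Hence $h$ is a symmetric element whose orbits on the twin building have cardinality at most $2$, so Lemma~\ref{lem:sym-ss} applies: $h$ fixes a $\theta$-stable twin apartment chamberwise, and in particular stabilizes some chamber $c \in \Delta_+$.

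By Theorem~\ref{ThmIwasawa}(iii), $K$ acts transitively on $\Delta_+$, so write $c = kB_+$ for some $k \in K$. Then $k^{-1}hk$ lies simultaneously in $B_+$, in $K$, and in $\tau(G)$: for the last point, if $h = \tau(g)$ then a short computation exploiting $\theta(k) = k$ yields $k^{-1}hk = \tau(k^{-1}g)$. Theorem~\ref{ThmIwasawa}(i) thus puts $k^{-1}hk$ in $M \cap \tau(G)$. Since $M \subset T \subset N_G(T)$, Lemma~\ref{lem:T=N-cap-tauG} forces $M \cap \tau(G) \subset A$, and $M \cap A = \{e\}$ because $M$ is $2$-torsion while $A \cong (\R^{>0})^n$ is torsion-free. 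Hence $h = e$.

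The case of $\overline{G}$ runs identically, using the $\overline{G}$-analogues of Lemmas~\ref{lem:tau-K-saturation}, \ref{lem:sym-ss} and \ref{lem:T=N-cap-tauG} together with Theorem~\ref{ThmIwasawa} applied to $\overline{K}$ and the fact that $\overline{M} \cap \overline{A} = \{e\}$. The substantive step of the argument is the appeal to Lemma~\ref{lem:sym-ss}, which bridges the algebraic fact $h^2 = e$ to the geometric conclusion that $h$ stabilizes a chamber in $\Delta_+$; once that bridge has been crossed, the rest is a linear chain of invocations of the Iwasawa decomposition and of the torus-level identification in Lemma~\ref{lem:T=N-cap-tauG}.
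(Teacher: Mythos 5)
Your proof is correct and follows essentially the same route as the paper's: both pass from $h^2=e$ through Lemma~\ref{lem:sym-ss} to a stabilized chamber, conjugate by an element of $K$ into the standard situation, and finish with Lemma~\ref{lem:T=N-cap-tauG}. The only cosmetic difference is that you land $k^{-1}hk$ in $M=K\cap B_+$ and use $M\cap A=\{e\}$, whereas the paper conjugates the ambient $\theta$-split torus onto $T$ and concludes via $A\cap K=\{e\}$.
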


\begin{proof}
 Suppose $g\in K\cap\tau(G)$. Then $g=\theta(g)=g^{-1}$ by Lemma~\ref{lem:tau-K-saturation}(i), so $g$ has order 1 or 2.
 Hence its orbits are bounded, and so by Lemma~\ref{lem:sym-ss}
 it stabilizes a chamber $c$ in the twin building of $G$. But then also $\theta(c)=\theta(g.c)=g.\theta(c)$,
 so $g$ fixes chamberwise the (unique) $\theta$-stable twin apartment containing the two opposite chambers $c$ and $\theta(c)$ and, thus, is contained
 in the corresponding $\theta$-split torus $T'$ of $G$ (where $\theta$-split means that $\theta$ leaves $T'$ invariant and acts via inversion on $T'$). Since $K = G^\theta$ acts
 transitively on each half of the twin building, there exists $k\in K$ with ${}^kT'=T$.
 Thus $k * g = {}^kg \in T \cap \tau(G) \cap K$ and, by Lemma~\ref{lem:T=N-cap-tauG}, in fact ${}^kg\in A$ .
 But then ${}^kg\in A\cap K = \{e\}$, hence $g=e$, i.e., $K\cap\tau(G)=\{e\}$.

Similarly, one proves $\ol{K} \cap \ol{\tau}(\ol{G}) = \{e\}$.
\end{proof}

\begin{proposition}\label{tauGgenerates} The group $G$ (respectively, $\ol{G}$) is generated by its subset $\tau(G)$ (respectively, $\ol{\tau}(\ol{G})$).
\end{proposition}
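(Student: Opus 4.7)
The plan is to reduce the statement to the analogous claim inside each standard rank one subgroup $G_i\cong\SL_2(\R)$, and then dispatch that case by a short normality argument. By Definition~\ref{DefTopSplitKM}, $G$ is generated by the standard rank one subgroups $G_1,\dots,G_n$, so it suffices to show $G_i\subset\langle\tau(G)\rangle$ for every $i$. By Section~\ref{sec:twist-map}, the Cartan--Chevalley involution $\theta$ preserves each $G_i$ and restricts there to the standard involution $\theta_i\colon g\mapsto(g^{-1})^{\top}$ under the identification $G_i\cong\SL_2(\R)$. In particular $\tau(G_i)\subseteq\tau(G)\cap G_i$, and it is enough to prove that $\tau(G_i)$ generates $G_i$.

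Under the above identification, $\tau(G_i)$ becomes $\{gg^{\top}:g\in\SL_2(\R)\}$, which equals the set $P$ of positive definite symmetric matrices of determinant one. I would prove that $P$ generates $\SL_2(\R)$ by showing that $H:=\langle P\rangle$ is normal. For any $k\in\SL_2(\R)$ and $p\in P$ a direct computation yields
\[ kpk^{-1}=(kpk^{\top})\cdot(kk^{\top})^{-1}, \]
and both factors on the right lie in $P$ (being positive definite symmetric of determinant one); hence $H$ is normal. Because $P$ contains non-central elements, $H\notin\{\{e\},\{\pm I\}\}$, and since these together with $\SL_2(\R)$ are the only normal subgroups of $\SL_2(\R)$ (a consequence of the simplicity of $\PSL_2(\R)$), necessarily $H=\SL_2(\R)$.

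For $\overline{G}$ the argument is exactly parallel: $\overline{G}$ is generated by the images $\overline{G_i}$ of the $G_i$, each of which is isomorphic as a topological group to $\SL_2(\R)$ or $\PSL_2(\R)$ by Proposition~\ref{TopologyGSummary}(vi); the descended involution $\overline{\theta}$ restricts to the analogous Cartan--Chevalley involution on each $\overline{G_i}$, and the same normality computation applies (the $\PSL_2(\R)$ case is if anything easier, since $\PSL_2(\R)$ is simple). I do not anticipate any serious obstacle; the entire argument essentially reduces to the one-line matrix identity displayed above inside $\SL_2(\R)$, together with the structural input that $G$ (respectively $\overline{G}$) is generated by its rank one subgroups and that $\theta$ (respectively $\overline{\theta}$) restricts to the standard involution on each.
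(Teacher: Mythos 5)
Your proof is correct and follows essentially the same route as the paper: reduce to the standard rank one subgroups, which $\tau$ preserves, and observe that $\tau(G_i)$ is the set of positive definite symmetric matrices of determinant one, which generates $\mathrm{(P)SL}_2(\R)$. The only difference is that you spell out the ``simple computation'' the paper leaves implicit, via the clean normality identity $kpk^{-1}=(kpk^{\top})(kk^{\top})^{-1}$ combined with the simplicity of $\PSL_2(\R)$.
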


\begin{proof} The map $\tau$ preserves each of the fundamental rank one subgroups $G_i \cong \mathrm{(P)SL}_2(\R)$. A simple computation in $\mathrm{(P)SL}_2(\R)$ shows that $\tau(G_i)$ generates $G_i$ (the matrix group $\SL_2(\R)$ is generated by the set of positive definite symmetric matrices). Thus $\langle \tau(G)\rangle \leq G$ contains each of the fundamental rank one subgroups, whence coincides with $G$. The proof for $\ol{G}$ is the same.
\end{proof}

\begin{proposition}\label{tauGexplicit}
The following assertions hold.
  \begin{enumerate}
\item $\tau(G) = \tau(U_+A) = U_+ \ast A$ and  $\ol{\tau}(\ol{G}) = \ol{\tau}(\ol{U}_+\ol A) = \ol{U}_+ \ast \ol A$.
\item $\tau(G) \subset U_+AU_- \subset G$ and  $\ol{\tau}(\ol{G}) \subset \ol{U}_+\ol{A}\,\ol{U}_- \subset \ol{G}$; more precisely,
\[
\tau(G) = \{u_+au_- \in U_+AU_-\mid u_- = \theta(u_+)^{-1}\}, \quad \ol{\tau}(\ol{G})= \{u_+au_- \in \ol{U}_+\ol{A}\,\ol{U}_-\mid u_- = \theta(u_+)^{-1}\}.
\]
\item Every $g \in \tau(G)$ (respectively, $g \in \ol{\tau}(\ol{G})$) can be written as $g = \tau(u_1 \cdots u_mt)$ with $t 
\in A$ (respectively, $t \in \ol{A}$) and $u_i\in U_{\beta_i}$ (respectively, $u_i\in \ol{U}_{\beta_i}$) for some $\beta_i \in \Phi^+$.
\item If the generalized Cartan matrix $\mathbf{A}$ is two-spherical, then every $g \in \tau(G)$ (respectively, $g \in \ol{\tau}(\ol{G})$) can be written as $g = \tau(u_1 \cdots u_mt)$ with $t 
\in A$ (respectively, $t \in \ol{A}$) and $u_i\in U_{\beta_i}$ (respectively, $u_i\in \ol{U}_{\beta_i}$) for some $\beta_i \in \Pi$.
\end{enumerate}
\end{proposition}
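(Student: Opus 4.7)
The plan is to combine the topological Iwasawa decomposition (Theorem~\ref{ThmIwasawa}) with an explicit computation of $\tau$ on elements of $U_+A$, exploiting two facts: $\theta$ inverts $T$ (immediate from $\tau(t)=t^2$ in Lemma~\ref{tauonT}(i), since $\tau(t)=t\theta(t)^{-1}$) and $\theta$ interchanges $U_+$ and $U_-$ (Lemma~\ref{CCinvB}). The arguments for $G$ and $\overline{G}$ are completely parallel, so I only describe the case of $G$.

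For (i), Theorem~\ref{ThmIwasawa}(ii) gives $G=U_+AK$, and since $\tau$ factors through $G/K$ by Lemma~\ref{lem:tau-K-saturation}(vi), one immediately obtains $\tau(G)=\tau(U_+A)$. A direct computation then shows that for $u\in U_+$ and $a\in A$,
\[
\tau(ua)\;=\;ua\,\theta(a)^{-1}\theta(u)^{-1}\;=\;u\,a^{2}\,\theta(u)^{-1}\;=\;u\ast a^{2}.
\]
Since $a\mapsto a^{2}$ is a bijection of $A\cong(\R_{>0})^{n}$ (equivalently, $A=\tau(A)$ by Lemma~\ref{tauonT}(ii)), $\tau(U_+A)=\{u\ast b\mid u\in U_+,\,b\in A\}=U_+\ast A$, proving (i).

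For (ii), the same computation shows every element of $\tau(G)$ lies in $U_+AU_-$ with its $U_-$-factor determined by the $U_+$-factor via $u_-=\theta(u_+)^{-1}$. Conversely, given $g=u_+au_-$ with $u_-=\theta(u_+)^{-1}$, the $2$-divisibility of $A$ supplies $b\in A$ with $b^{2}=a$, whence $g=\tau(u_+b)\in\tau(G)$. Uniqueness of the triple $(u_+,a,u_-)$ for a given element of $U_+AU_-$ follows from the saturation of the twin $BN$-pair ($B_+\cap B_-=T$, cf.\ Section~\ref{RGDBN}) together with $U_\pm\cap T=\{e\}$. Assertion (iii) is then an immediate consequence of (i) and the definition $U_+=\langle U_\alpha\mid\alpha\in\Phi^+\rangle$: every $u\in U_+$ is a finite product of elements of positive root subgroups, and setting $t:=a$ gives the required form.

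For (iv), I invoke two-sphericity: by Abramenko--M\"uhlherr \cite{AbramenkoMuehlherr}, $G$ is then the colimit (in the category of topological groups) of its rank-at-most-two standard subgroups $G_{\{i,j\}}$, each a finite-dimensional split real semisimple Lie group. In any such rank-two spherical Lie group, a classical computation shows that the positive unipotent subgroup is generated by the two simple positive root subgroups $U_{\alpha_i},U_{\alpha_j}$ via commutator relations. An induction on the length of a product expression as in (iii), using this rank-two fact to rewrite each non-simple factor $U_\beta$ as a word in simple root subgroups (absorbing any stray torus contributions into the tail $t\in A$), produces the form required in (iv). The main technical obstacle is precisely this last step: reducing arbitrary positive root subgroups to simple ones relies on the amalgam presentation of $G$, which is exactly where two-sphericity is essential. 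As flagged in Remark~\ref{twospherical1}, lifting this assumption is tied to an open problem about Kac--Moody groups beyond the two-spherical case.
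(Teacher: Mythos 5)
Your arguments for (i)--(iii) are correct and follow essentially the same route as the paper: Iwasawa decomposition plus the fact that $\tau$ factors through $G/K$ give $\tau(G)=\tau(U_+A)$, the computation $\tau(u_+a)=u_+a^2\theta(u_+)^{-1}$ together with $\theta(U_+)=U_-$ and the $2$-divisibility of $A$ give (ii), and (iii) is immediate from the generation of $U_+$ by all positive real root groups. (The uniqueness of the triple in $U_+AU_-$ that you invoke is not needed for the stated equality, but it does no harm.)

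Part (iv) contains a genuine gap. What you must show is that in the two-spherical case every element of $U_+$ is a product of elements of the \emph{simple} root groups $U_{\alpha_i}$, $\alpha_i\in\Pi$. Your proposed induction "rewrites each non-simple factor $U_\beta$ as a word in simple root subgroups" using the rank-two fact inside the standard subgroups $G_{\{i,j\}}$. But a general positive real root $\beta\in\Phi^+$ need not lie in $\Z\alpha_i+\Z\alpha_j$ for any pair $i,j$; already a root such as $\alpha_1+\alpha_2+\alpha_3$ is supported on three simple roots, so $U_\beta$ is not contained in any standard rank-two subgroup and the rank-two commutator computation says nothing about it. One also cannot simply conjugate back to a simple root group by $\widetilde s_{\alpha_j}\in\widetilde W$, since those elements do not lie in $U_+$. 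The statement that $U_+=\langle U_{\alpha}\mid\alpha\in\Pi\rangle$ for two-spherical diagrams is a non-trivial theorem; the paper quotes it from Devillers--M\"uhlherr \cite[Corollary~1.2]{DevillersMuehlherr} (noting that two-sphericity suffices for the generation statement, even though the full presentation there requires three-sphericity). To repair your argument you should either cite that result or give a genuine induction (e.g.\ on the height of $\beta$) establishing the generation claim, which is substantially more work than the amalgam presentation of $G$ provides on its own.
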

\begin{proof} 
By the Iwasawa decomposition (see Theorem~\ref{ThmIwasawa}), every $g \in G$ can be written as $g = uhk$ with $u \in U_+$, $h \in A$ and $k \in K$. Then $x := \tau(g) = \tau(uh) = u \ast \tau(h)$ by Lemma~\ref{lem:tau-K-saturation}. Now $\tau(A) = A$ by Lemma~\ref{tauonT}, and hence $\tau(G) = U_+ \ast A$. Assertion (i) follows. 

If $u_+ \in U_+$ and $h \in A$, then $\tau(u_+h) = u_+ * \tau(h) = u_+h^2\theta(u_+)^{-1}$ by Lemma~\ref{tauonT}. Moreover, $h^2 \in A$ and $\theta(u_+)^{-1} \in U_-$
 by Lemma~\ref{CCinvB}. Thus (ii) follows from (i) and the fact that every element of $A$ has a square root in $A$.

 Finally, since $A$ normalizes $U_+$, it follows from (i) that $\tau(G) = \tau(U_+A) = \tau(AU_+)$. Then (iii) and (iv) follow from the fact that $U_+$ is generated by the $(U_{\alpha})_{\alpha \in \Phi^+}$ (see \cite[Theorem~8.84]{AbramenkoBrown2008}) and even by the $(U_{\alpha})_{\alpha \in \Pi}$ in the two-spherical case (see \cite[Corollary~1.2]{DevillersMuehlherr} and note from its proof that two-sphericity suffices for the generation result, only the validity of the given presentation requires three-sphericity).

The proofs for $\ol{G}$ are similar.
\end{proof}
It follows from Proposition \ref{tauGexplicit} and continuity of $\theta$ that the map
\begin{equation}\label{U+AToTauG}
h: U_+ \times A \to \tau(G), \quad (u_+,h)  \mapsto  u_+h\theta(u_+)^{-1}.
\end{equation}
is a continuous bijection. We do not currently know whether it is always a homeomorphism. This problem is closely related to the problem whether the continuous bijection $m: U_+ \times A \times U_- \to U_+AU_-$ is always a homeomorphism (as is the case if $\mathbf{A}$ is two-spherical by Proposition~\ref{TopologyGSummary}(vi), but probably holds in much greater generality as discussed Remark \ref{twospherical1}). 
\begin{corollary}\label{TopologytauGexplicit} If the map  $m: U_+ \times A \times U_- \to U_+AU_-$ is a homeomorphism, then the continuous bijection $h: U_+ \times A \to \tau(G)$ from \eqref{U+AToTauG} is a homeomorphism whose inverse is given explicitly by
\[
h^{-1}: \tau(G) \into U_+AU_- \xto{m^{-1}} U_+ \times A \times U_- \to U_+ \times A,
\]
where the first map is the inclusion and the last map is the canonical projection that forgets the last component. In particular, this holds if $\mathbf{A}$ is two-spherical.
\end{corollary}
\begin{proof} Since $h$ is a continuous bijection, only its openness remains to show. It is immediate from the definitions that $h^{-1} \circ h$ is the identity, hence $h^{-1}$ is indeed the inverse of $h$ and openness of $h$ is equivalent to continuity of $h^{-1}$, which follows from continuity of $m^{-1}$.
\end{proof}
The same argument also shows that there is a homeomorphism 
\[
 \ol{\tau}(\ol{G})\to \ol{U}_+ \times \ol{A},
\]
given by the same formula.

\section{Models for Kac--Moody symmetric spaces}

\subsection{Topological symmetric spaces from involutions}
Let $G$ be an arbitrary topological group, let $\theta \in \Aut(G)$ be a continuous involution and let $K = G^\theta$. In this generality one can introduce a twist map
\begin{eqnarray*}
  \tau: G & \to & G \\ g & \mapsto & g\theta(g)^{-1}
\end{eqnarray*}
  as in Definition~\ref{DefTwist}, which will satisfy the properties described in Lemma~\ref{lem:tau-K-saturation}. Since $\theta$ is continuous, $K$ is a closed subgroup of $G$, and thus $G/K$ is a Hausdorff topological space with respect to the quotient topology. Using the involution $\theta$ and the associated twist map $\tau$ one defines a multiplication map
\begin{eqnarray}
\mK : G/K \times G/K & \to & G/K \label{mugeneral} \\ (gK,\,hK) & \mapsto & \tau(g)\theta(h)K. \notag
\end{eqnarray}

Note that $\mK$ is continuous, since $\tau$, $\theta$ and the group multiplication are.

\begin{proposition}\label{PropCosetModel}
If
\begin{equation}\label{ConditionForSymmetryOfReflSpace}
K\cap\tau(G)=\{e\},
\end{equation}
then the pair $(G/K,\mK)$ is a topological symmetric space and the natural action
\begin{eqnarray*}
G & \to & \Sym(G/K) \\ g & \mapsto & (aK \mapsto gaK)
\end{eqnarray*}
is by automorphisms.
\end{proposition}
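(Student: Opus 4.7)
The plan is to verify (a) that the map $\mu$ is well-defined on cosets and continuous, (b) each of the axioms (RS1)--(RS4), and (c) the compatibility of the $G$-action, in that order. Throughout I will use only the identities already recorded in Lemma~\ref{lem:tau-K-saturation} (in particular $\theta(g) = g^{-1}$ for every $g \in \tau(G)$), together with $K = G^\theta$ and the fact that $\theta$, $\tau$ and the group operations on $G$ are continuous.

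First I would check well-definedness: replacing $g$ by $gk$ with $k\in K$ leaves $\tau(g)$ unchanged because $\tau(gk) = gk\theta(k)^{-1}\theta(g)^{-1} = \tau(g)$, while replacing $h$ by $hk$ changes $\theta(h)$ to $\theta(h)k$, which still lies in $\theta(h)K$. Continuity of $\mu$ is immediate from the formula. Axioms (RS1)--(RS3) are then short computations modulo $K$: for (RS1), $\tau(g)\theta(g) = g\theta(g)^{-1}\theta(g) = g$; for (RS2), using $\theta(\tau(g)) = \tau(g)^{-1}$ from Lemma~\ref{lem:tau-K-saturation}(i), one gets $\tau(g)\theta(\tau(g)\theta(h)) = \tau(g)\tau(g)^{-1}h = h$; and (RS3) reduces, after expanding both sides, to the identity $\tau(h)^{-1} = \theta(h)h^{-1}$, which again follows from the definition of $\tau$.

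The main obstacle is (RS4), and this is precisely where the hypothesis $K\cap\tau(G)=\{e\}$ enters. Suppose $\mu(gK,hK) = hK$, i.e.\ $k := h^{-1}\tau(g)\theta(h) \in K$. The plan is to show that $\tau(h^{-1}g) = k$, and then invoke the hypothesis to conclude $k=e$, whence $h^{-1}g \in K$ by Lemma~\ref{lem:tau-K-saturation}(iii), which is exactly $gK = hK$. The key calculation is
\[
\tau(h^{-1}g) = h^{-1}g\,\theta(h^{-1}g)^{-1} = h^{-1}g\,\theta(g)^{-1}\theta(h) = h^{-1}\tau(g)\theta(h) = k,
\]
so that $k \in K\cap\tau(G) = \{e\}$ as desired.

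Finally, for the $G$-action, I would verify that left translation by $g\in G$ commutes with $\mu$: a direct computation gives $\tau(ga) = ga\,\theta(a)^{-1}\theta(g)^{-1}$ and $\theta(gb) = \theta(g)\theta(b)$, so the inner factors $\theta(g)^{-1}\theta(g)$ cancel and one obtains $\tau(ga)\theta(gb) = g\,\tau(a)\theta(b)$, which modulo $K$ is $\mu(gaK,gbK) = g\mu(aK,bK)$. Since left translation by $g$ is a homeomorphism of $G/K$, this shows it is an automorphism of $(G/K,\mu)$.
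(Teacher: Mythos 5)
Your proof is correct and follows essentially the same route as the paper's: the same direct coset computations for (RS1)--(RS3), the same reduction of (RS4) to $\tau(h^{-1}g)\in K\cap\tau(G)=\{e\}$ via Lemma~\ref{lem:tau-K-saturation}, and the same cancellation $\theta(g)^{-1}\theta(g)$ for equivariance of the $G$-action. The only addition is your explicit check of well-definedness on cosets, which the paper leaves implicit.
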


\begin{proof} For  $a,b,c\in G$ one computes:
\begin{enumerate}[label={(RS\arabic*)},leftmargin=*]
\item $\mK(aK,\,aK) = \tau(a)\theta(a)K = aK$,
\item $\mK(aK,\,\mK(aK,\,bK))
= \mK(aK,\,\tau(a)\theta(b)K)
= \tau(a)\theta(\tau(a)\theta(b)) K
= bK$,
\item $\begin{aligned}[t]
\mK(aK,\,\mK(bK,\,cK))
&= \mK(aK,\,\tau(b)\theta(c)K)
= \tau(a) \theta(\tau(b)\theta(c))K \\
&= \tau(a) \theta(b) b^{-1} \theta( \theta(c))K \\
&= \tau(a)\theta(b) b^{-1} \tau(a) \theta( \tau(a)\theta(c)) K \\
&= \tau( \tau(a)\theta(b)) \theta( \tau(a)\theta(c)) K \\
&= \mK(\tau(a)\theta(b)K,\,\tau(a)\theta(c)K)
= \mK(\mK(aK,\,bK),\,\mK(aK,\,cK)),
\end{aligned}$
\item $\begin{aligned}[t]
\mK(aK,bK)=bK
&\iff \tau(a)\theta(b)K = bK
\iff  b^{-1}a\theta(a)^{-1}\theta(b) = \tau(b^{-1}a)\in K \\
&\overset{\eqref{ConditionForSymmetryOfReflSpace}}{\iff} \tau(b^{-1}a)=e
\iff \tau(a)=\tau(b)
\iff aK = bK.
\end{aligned}$
\end{enumerate}
Since $\mK$ is continuous, this establishes that $(G/K,\mK)$ is a topological symmetric space. The second statement follows from the fact that for $a,b, g\in G$ one has
\[ \mK(gaK,\,gbK)
= \tau(ga)\theta(gb)K
= g \tau(a) \theta(g)^{-1} \theta(g)\theta(b)K
= g \mK(aK,\,bK).
\qedhere
\]
\end{proof}

\subsection{Reduced and unreduced Kac-Moody symmetric spaces}

We are now ready to associate symmetric spaces with a large class of Kac--Moody groups. We choose to work in the following general setting.

\begin{convention}\label{ConventionRealKM}
   The matrix  ${\mathbf{A}} \in M_n(\Z)$ denotes a generalized Cartan matrix of size $n \times n$ and rank $l \leq n$, subject to the restrictions given in Convention~\ref{ConventionCartanMatrix}. That is, $\mathbf{A}$ is assumed to be irreducible and symmetrizable.

  The group $G := G_\R({\mathbf{A}})$ denotes the associated simply connected centred split real Kac--Moody group, and $\ol{G}$ denotes its semisimple adjoint quotient, cf. Definition~\ref{semisimpleadjoint}. $\theta$ and $\ol{\theta}$ denote the Cartan--Chevalley involutions on $G$, respectively $\ol{G}$, and $K$ and $\ol{K}$ denote their respective fixed point groups.
\end{convention}

Recall from Proposition~\ref{lem:1=G-cap-tauG} that $K\cap\tau(G)=\{e\}$ and $\ol{K}\cap\ol{\tau}(\ol{G})=\{e\}$. It thus follows from Proposition~\ref{PropCosetModel}  that both $G/K$ and $\ol{G}/\ol{K}$ carry the structure of a topological symmetric space given by $(gK,hK) \mapsto \mu(gK,hK) = \tau(g)\theta(h)K$.

\begin{definition}
\begin{enumerate}
\item $(G/K, \mK)$ is called the \Defn{unreduced Kac-Moody symmetric space} associated with ${\mathbf{A}}$.
\item  $(\ol{G}/\ol{K}, \ol{\mK})$ is called the \Defn{reduced Kac-Moody symmetric space} associated with ${\mathbf{A}}$.
\end{enumerate}
\end{definition}

If ${\mathbf{A}}$ is invertible, then by Proposition~\ref{kernelsemisimpleadjoint}(i) both versions of the Kac--Moody symmetric space coincide; in this case they are referred to as \emph{the} Kac--Moody symmetric space associated with ${\mathbf{A}}$. In general, however, these two spaces behave quite differently.
Note that $\ol{G}/\ol{K} = \Ad(G)/\Ad(K)$, since the center of $\ol{G}$ is contained in $\ol{K}$ by Theorem~\ref{ThmIwasawa}(i), i.e., the three different groups $G$, $\ol{G}$, $\Ad(G)$ do not lead to a third version of a Kac--Moody symmetric space.

\medskip
A first observation is that the unreduced Kac--Moody symmetric space $(G/K, \mK)$ fibers over the reduced Kac--Moody symmetric space with fiber $\E^{n-l}$.

\begin{proposition} 
\begin{enumerate}
\item The canonical projection $\pi_{{\mathbf{A}}}: G/K \to \ol{G}/\ol{K}$ is a morphism of topological reflection spaces.
\item The fiber $\pi_{{\mathbf{A}}}^{-1}(e\ol{K})$ is isomorphic to $\E^{n-l}$ as a topological reflection space. 
\end{enumerate}
\end{proposition}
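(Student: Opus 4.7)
For part (i), I first note that the canonical quotient map $p\colon G\to \overline G=G/C$ is a continuous group homomorphism which satisfies $p(K)=\overline K$ (as observed after Proposition~\ref{CCIdescends}) and which intertwines $\theta$ with $\overline\theta$ (Proposition~\ref{CCIdescends}). The first fact implies that $p$ descends to a continuous map $\pi_{\mathbf A}\colon G/K\to \overline G/\overline K$; the second implies $p\circ\tau = \overline\tau\circ p$, whence a direct substitution gives
\[
\pi_{\mathbf A}(\mu(gK,hK))
= p\bigl(\tau(g)\theta(h)\bigr)\overline K
= \overline\tau(p(g))\,\overline\theta(p(h))\overline K
= \overline\mu(\pi_{\mathbf A}(gK),\pi_{\mathbf A}(hK)).
\]

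For part (ii), I identify the fibre as a homogeneous space of $C$. Since $\ker p = C$ and $p(K)=\overline K$, the preimage $p^{-1}(\overline K)$ equals $CK$, so $\pi_{\mathbf A}^{-1}(e\overline K)=CK/K$ as a subset of $G/K$. The map $C\to CK/K$, $c\mapsto cK$, is a continuous surjection whose fibres are cosets of $C\cap K$, and it descends to a homeomorphism $C/(C\cap K)\xrightarrow{\cong}\pi_{\mathbf A}^{-1}(e\overline K)$. This is the main technical step and requires a short verification using that $C$ is a closed subgroup of $G$ with the Kac--Peterson topology (Proposition~\ref{kernelsemisimpleadjoint}(i) together with Proposition~\ref{TopologyGSummary}(i)).

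It then remains to compute $C\cap K$ and the induced reflection space structure on the fibre. By Lemma~\ref{tauonT}(iii), $T\cap K=M$, so $C\cap K=C\cap M$, which is the $2$-torsion of $C\cong(\R^\times)^{n-l}$ (Proposition~\ref{kernelsemisimpleadjoint}(i)); hence $C=(C\cap M)\times C_0$ with identity component $C_0=\exp(\mathfrak c\cap\mathfrak a_\R)\subset A$. Since moreover $A\cap M=\{e\}$ (again Lemma~\ref{tauonT}(iii)), the inclusion $C_0\hookrightarrow C$ descends to a homeomorphism $C_0\xrightarrow{\cong}C/(C\cap K)$, and composition with $\log\colon C_0\to \mathfrak c\cap\mathfrak a_\R\cong \R^{n-l}$ gives a homeomorphism of the fibre with $\R^{n-l}$. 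Using Lemma~\ref{tauonT}(i), so that $\theta(c)=c^{-1}$ for every $c\in A$, the multiplication $\mu$ on the fibre reads
\[
\mu(c_1K,c_2K)=c_1\theta(c_1)^{-1}\theta(c_2)K=c_1^{\,2}c_2^{-1}K,
\]
which under $\log$ transports to $(x_1,x_2)\mapsto 2x_1-x_2=\mu_\E(x_1,x_2)$. This identifies the fibre with $\mathbb E^{n-l}$ as a topological reflection space.

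The main obstacle is the topological bookkeeping in the second paragraph: verifying that $C/(C\cap K)\to CK/K$ is not merely a continuous bijection but a homeomorphism, given that $G$ is an infinite-dimensional $k_\omega$-group rather than a Lie group. All remaining steps are either direct substitutions using the defining properties of $\tau$ and $\theta$, or elementary linear algebra on the finite-dimensional real vector space $\mathfrak c\cap\mathfrak a_\R$ via $\exp$ and $\log$.
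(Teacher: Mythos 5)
Your proof is correct and follows essentially the same route as the paper: part (i) is the same substitution using that the projection intertwines $\tau,\theta$ with $\overline\tau,\overline\theta$, and for part (ii) the paper likewise identifies the fibre with $C/(C\cap K)$, parametrizes it by $\exp\colon\mathfrak c\cap\mathfrak a\to C/(C\cap K)$, and computes $\mu(\exp X,\exp Y)=\exp(2X-Y)$ modulo $C\cap K$. The only cosmetic difference is that you split $C$ into its $2$-torsion and identity component before applying $\log$, whereas the paper works directly with the coset parametrization and disposes of the topological point (which you flag but leave as "a short verification") by invoking Proposition~\ref{PropexpHomeomorphism}, i.e.\ that $\exp\colon\mathfrak a\to A$ is a homeomorphism for the Kac--Peterson topology.
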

\begin{proof} (i) Denote the projection $G \to \ol{G}$ by $g \mapsto [g]$ so that $\pi_{{\mathbf{A}}}$ is given by $\pi_{{\mathbf{A}}}(gK) = [g]\ol{K}$. Then for all $g,h \in G$ one has
\[
\pi_{{\mathbf{A}}}(gK) \cdot \pi_{{\mathbf{A}}}(hK)= [g]\ol{K} \cdot [h]\ol{K} \stackrel{\eqref{mugeneral}}{=} \ol{\tau}([g])\ol{\theta}([h])\ol{K} = [\tau(g)\theta(h)]\ol{K} = \pi_{{\mathbf{A}}}(\tau(g)\theta(h)K) = \pi_{{\mathbf{A}}}(gK\cdot hK). 
\]
(ii) By definition, $\pi_{{\mathbf{A}}}^{-1}(e\ol{K}) = CK/K \cong C/(C\cap K) \cong (\R_{>0})^{n-\rk(\mathbf{A})}$, where the second isomorphism follows from Proposition~\ref{kernelsemisimpleadjoint} and Lemma~\ref{tauonT}(iii). One can parametrize this fiber via
\[
\phi_o: \mathfrak c \cap \mathfrak a \to C/(C \cap K), \quad X \mapsto \exp(X) (C \cap K).
\]
By endowing the vector space $\mathfrak c \cap \mathfrak a$ with its Euclidean reflection space structure this map becomes an isomorphism of reflection spaces. Indeed, if $X, Y \in \mathfrak c \cap \mathfrak a$, then
\begin{eqnarray*}
\phi_o(X) \cdot \phi_o(Y) &=& \exp(X)(C \cap K) \exp(Y) (C \cap K) \quad = \quad \tau(\exp(X))\theta(\exp(Y))(C \cap K)\\
&=& \exp(X)^2\exp(Y)^{-1}(C \cap K) \quad = \quad  \exp(2X-Y) (C \cap K)\\ 
&=& \phi_o(X\cdot Y). 
\end{eqnarray*}
Thus the parametrization is an abstract isomorphism of reflection spaces and, in fact, a topological isomorphism by Proposition~\ref{PropexpHomeomorphism}.
\end{proof}

\begin{lemma}\label{LemmaKernelofAction} The kernel of the action of $G$ on $G/K$ equals the centralizer $C_K(G)$ of $G$ in $K$ and the kernel of the action of $\ol{G}$ on $\ol{G}/\ol{K}$ equals the center $Z(\ol{G})$ of $\ol{G}$.
\end{lemma}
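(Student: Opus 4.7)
The plan is to identify the kernel of the $G$-action on $G/K$ as the largest normal subgroup of $G$ contained in $K$, namely $\bigcap_{h\in G} hKh^{-1}$, and then show that under the hypotheses this intersection collapses onto $Z(G)\cap K$ (equivalently $C_K(G)$). Indeed, since left multiplication is the action, $g$ lies in the kernel if and only if $ghK=hK$ for every $h\in G$, i.e.\ $h^{-1}gh\in K$ for all $h\in G$. Taking $h=e$ shows $g\in K$, and the containment $C_K(G)\subseteq\ker$ is immediate because an element centralizing $G$ is conjugation-fixed.

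For the reverse inclusion, the key trick is to exploit the Cartan--Chevalley involution. If $g\in K$ and $h^{-1}gh\in K$ for all $h\in G$, then applying $\theta$ and using $\theta(g)=g$ yields
\[
\theta(h)^{-1}g\theta(h)=h^{-1}gh\quad\text{for all }h\in G,
\]
which rearranges to $\tau(h)g=g\tau(h)$ for every $h\in G$, where $\tau(h)=h\theta(h)^{-1}$ is the twist map. Hence $g$ centralizes the whole image $\tau(G)$. Proposition~\ref{tauGgenerates} states that $\tau(G)$ generates $G$, so $g\in Z(G)$, and combined with $g\in K$ this gives $g\in Z(G)\cap K=C_K(G)$, proving the first claim.

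For the reduced case, the identical argument (using $\overline\tau$, $\overline\theta$, $\overline K$ and the analogous statement $\overline{\tau}(\overline G)$ generates $\overline G$ from Proposition~\ref{tauGgenerates}) shows that the kernel of the $\overline G$-action on $\overline G/\overline K$ equals $Z(\overline G)\cap\overline K$. By Theorem~\ref{ThmIwasawa}(i), $Z(\overline G)\subseteq\overline K$, so this intersection is simply $Z(\overline G)$, as asserted.

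No serious obstacle is expected: the argument is essentially a manipulation of the identity $\theta(h^{-1}gh)=h^{-1}gh$ combined with the generation statement for $\tau(G)$. The only subtlety worth pointing out is why one must pass to $Z(\overline G)$ rather than $C_{\overline K}(\overline G)$ in the reduced formulation; this is just bookkeeping, reflecting the fact that the full center already sits inside $\overline K$ in the semisimple adjoint quotient, whereas in the unreduced setting $Z(G)$ contains the positive-dimensional subgroup $C$ most of which is not $\theta$-fixed (by Lemma~\ref{tauonT}, $\theta$ acts by inversion on $T\supseteq C$), so only the finite part $Z(G)\cap K$ survives.
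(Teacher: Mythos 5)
Your proof is correct and follows essentially the same route as the paper's: both reduce the problem to showing that a kernel element lies in $K$, is therefore $\theta$-fixed, and hence centralizes $\tau(G)$, at which point Proposition~\ref{tauGgenerates} finishes the argument. The only cosmetic difference is that you derive the commutation $\tau(h)g=g\tau(h)$ by applying $\theta$ directly to $h^{-1}gh\in K$, whereas the paper routes the same computation through the twist-map identities of Lemma~\ref{lem:tau-K-saturation}.
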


\begin{proof}
  Since $C_K(G) < K$, it acts trivially on $G/K$: for all $g \in C_K(G)$, $a \in G$ one has $gaK=agK=aK$. On the other hand, if $g \in G$ acts trivially on $G/K$, then for all $h \in G$ one has $ghK = hK$. In particular $gK = K$, i.e.\ $g \in K$ and, thus, $\theta(g) = g$. Lemma~\ref{lem:tau-K-saturation} implies
\begin{alignat*}{3}
hgh^{-1}\in K
&\Rightarrow \tau(h^{-1}gh) = e &
&\Rightarrow h^{-1} \ast \tau (gh) = e\\
&\Rightarrow g \ast \tau(h) =h \ast e &
&\Rightarrow g \ast \tau(h) = \tau(h)\\
&\Rightarrow g\tau(h)g^{-1} = \tau(h).
\end{alignat*}
Thus $g$ centralizes $\tau(G)$. Since $\tau(G)$ generates $G$ (see Proposition~\ref{tauGgenerates}), the element $g$ therefore centralizes $G$, i.e., $g \in C_K(G)$. The same argument shows that $g \in \ol{G}$ acts trivially on $\ol{G}/\ol{K}$ if and only if $g \in C_{\ol{K}}(\ol{G}) = Z(\ol{G})$ (cf.\ Theorem~\ref{ThmIwasawa}(i)).
\end{proof}

\begin{definition} \label{Geff}
Define \[\Geff := G/C_K(G).\] By Lemma~\ref{LemmaKernelofAction}  the group $\Geff$ then acts effectively (i.e., faithfully) on $G/K$. Similarly, $\Ad(G) = \ol{G}/Z(\ol{G})$ acts effectively on $\ol{G}/\ol{K}$.
\end{definition}

\begin{remark}
  By the topological Iwasawa decomposition Theorem \ref{ThmIwasawa} there exists a homeomorphism
\[
U_+\times {\ol{A}} \to \ol{G}/\ol{K}, \quad (u, a) \mapsto ua\ol{K}.
\]
This allows one to define the structure of a topological symmetric space on $U_+ \times {\ol{A}}$ by transporting the multiplication map via this homeomorphism. Unfortunately, at the moment we do not know of any good way of describing this induced multiplication map intrinsically, nor do we have an intrinsic description for the induced $G$-action on $U_+ \times {\ol{A}}$.

The key problem is to derive a formula of how to decompose a product $(k_1a_1u_1)(k_2a_2u_2)$ with respect to $\ol{K} \times {\ol{A}} \times U_+$. In the finite-dimensional situation this is achieved in \cite{Kostant73}.
\end{remark}

\subsection{Reflections, transvections and reflection-homogeneity} \label{model1}

Since $\theta$ stabilizes both $C_K(G)$ and $K$, it induces an involutive automorphism of $\Geff$ and an involutive permutation $\theta: G/K \to G/K$ via $\theta(gK) := \theta(g)K$. Defining the basepoint of $G/K$ as $o := eK$, one in fact has $s_{o}(gK) = \tau(e)\theta(g)K = \theta(gK)$, i.e., $\theta$ coincides with the point reflection $s_o$ of the symmetric space $G/K$ at $o$.
In particular, one obtains a subgroup
\[
\Geff \rtimes \gen{\theta} < \Aut(G/K, \mK).
\]

Similarly, by Proposition~\ref{CCIdescends} the Cartan--Chevalley involution $\theta$ induces an involution $\ol{\theta}: \ol{G} \to \ol{G}$ which in turn yields an involutive automorphism $\ol{\theta}: \ol{G}/\ol{K} \to \ol{G}/\ol{K}$ and a subgroup 
\[
\Ad(G)  \rtimes \gen{\ol\theta} < \Aut(\ol{G}/\ol{K},\ol{\mK}),
\]
where $\ol{\theta}$ corresponds to the point reflection at $\ol{o} := e\ol{K}$.

\begin{proposition}\ \label{PropMainTransKM} 
\begin{enumerate}
\item
The set of point reflections of $G/K$ (respectively, $\ol{G}/\ol{K}$) equals the conjugacy class of $s_o$ (respectively, $s_{\ol{o}}$) in $\Geff \rtimes \gen{\theta}$ (respectively, $\Ad(G)  \rtimes \gen{\ol\theta}$).
\item The set of transvections of $G/K$ (respectively, $\ol{G}/\ol{K}$) is given by $\tau(G)^2 C_K(G)$ (respectively, $\ol{\tau}({\ol{G}})^2Z(\ol{G})$), where $\tau(G)^2=\tau(G)\tau(G)$ (and analogously for $\ol{\tau}({\ol{G}})$).
\item The respective transvection groups of $G/K$ and $\ol{G}/\ol{K}$ are \[\Trans(G/K, \mK) = \Geff \qquad \text{ and } \qquad \Trans(\ol{G}/\ol{K}, \ol{\mK}) = \Ad(G).\] The main groups of $G/K$, respectively $\ol{G}/\ol{K}$ are given by \[G(G/K,\mu) = \Geff \rtimes \gen{\theta} \qquad \text{ and } \qquad G(\ol{G}/\ol{K},\ol{\mu})=\Ad(G)  \rtimes \gen{\ol\theta}.\]
\item $G/K$ and $\ol{G}/\ol{K}$ are reflection-homogeneous.
\end{enumerate}
\end{proposition}

\begin{proof}
For $g, h \in G$ one has
\begin{eqnarray}
s_{gK}(hK) &=& \mK(gK, hK) \quad = \quad  \tau(g)\theta(h)K \quad =\quad  g\theta(g)^{-1}\theta(h)K \notag \\
&=& g\theta(g^{-1}h)K = (gC_K(G) \circ s_o \circ g^{-1}C_K(G))(hK) \notag \\
& = & (gC_K(G) \circ \theta \circ g^{-1}C_K(G))(hK),    \label{factorsthroughGK}
\end{eqnarray}
i.e., $s_{gK}$ is conjugate to $s_o$ via $gC_K(G) \in \Geff$. 
Furthermore, observe that for $g \in G$ one has
\begin{equation}\label{QuadraticTwist}
s_{gK} \circ s_o = gC_K(G) \circ s_o \circ g^{-1}C_K(G) \circ s_o = g\theta(g)^{-1}C_K(G)= \tau(g)C_K(G),
\end{equation}
Given $g,h \in G$ therefore 
\[s_{gK}s_{hK} = (s_{gK}s_o)(s_{hK}s_o)^{-1} = \tau(g)\tau(h)^{-1}C_K(G) = \tau(g) \tau(\theta(h))C_K(G),\]
whence the transvections are exactly the elements of $\tau(G)^2 C_K(G) \supset \tau(G) C_K(G)$.
The other claims concerning $G$ now follow readily, using Proposition~\ref{tauGgenerates} and Lemma~\ref{GSIso}. The claims concerning $\ol{G}$ are shown analogously.
\end{proof}

\subsection{Models for Kac-Moody symmetric spaces} \label{model23}

Recall from Section~\ref{reflectionspaces} that every reflection-homogeneous symmetric space can be realized as a subset of its main group (the ``involution model'' from Lemma~\ref{GSIso}) and as a subset of its transvection group (the ``quadratic representation'' from Remark~\ref{TransvecRealization}) with suitably defined multiplications.

\medskip
In view of Example~\ref{involutionmodel} and Proposition~\ref{PropMainTransKM} the \Defn{involution model} of the reflection-homogeneous symmetric space $(G/K,\mu)$ is given by the pair $(\XXX,\mX)$ where \[\XXX := \{ {}^g\theta \in \Geff \rtimes \gen{\theta} \mid g \in \Geff \rtimes \gen{\theta} \} \qquad \text{ and } \qquad \mX: \XXX \times \XXX \to \XXX, \quad (\alpha, \beta) \mapsto \alpha \beta \alpha.\] 
The map $\pi: G \to \XXX, \quad g \mapsto {}^g \theta = gC_K(G) \circ \theta \circ g^{-1}C_K(G)$ by \eqref{factorsthroughGK} factors through $\hat\pi: G/K \to \XXX$, which is an isomorphism of reflection spaces.

\medskip
The \Defn{quadratic representation} of $G/K$ depends on the choice of a basepoint $o \in G/K$. For $o = eK$ by Proposition~\ref{PropMainTransKM} the quadratic representation is given by the map \[
t: G/K \to \Geff, \quad gK \mapsto s_{gK} \circ s_o.
\]
By \eqref{QuadraticTwist} one has $s_{gK} \circ s_o = \tau(g)C_K(G)$. Thus the image  $\TTT = T(G/K, \mK, o) \subset \Trans(G/K, \mK)$ 
of the quadratic representation of $G/K$ is given by the image of $\tau(G)$ in $\Geff$, and the product on $\TTT$ is given by $\widetilde{m}(s,t):= st^{-1}s$ by Remark~\ref{TransvecRealization}. Note that $\tau$ induces an isomorphism of reflection spaces $(G/K, \mK) \to (\TTT, \widetilde{m})$.

By definition, the canonical projection $G \to \Geff$ restricts to a surjection $\tau(G) \to \TTT$. Since the kernel of the projection $G \to \Geff$ is contained in $K$, it intersects $\tau(G)$ trivially by Proposition~\ref{lem:1=G-cap-tauG}. It follows that the projection $\tau(G) \to \TTT$ is actually bijective and so by transport of structure the multiplication \[\mT: \tau(G) \times \tau(G) \to \tau(G), \quad \mT(x,y) = xy^{-1}x\] provides a symmetric space such that \[(\tau(G), \mT) \cong (\TTT, \widetilde{m}) \cong (G/K, \mK).\] This symmetric space $(\tau(G), \mT)$ is called the \Defn{group model} of $G/K$.

 \medskip
The left-multiplication action of $G$ on $G/K$ translates into $G$-actions on $\TTT$ and $\tau(G)$ by automorphisms. Since $t(ghK) = \tau(gh)C_K(G) = g\ast\tau(h)C_K(G)$, the induced $G$-action on $\tau(G)$ is given by twisted conjugation. It follows that the isomorphisms $G/K \to \tau(G)$ is explicitly given by \[\hat\tau: G/K \to \tau(G) : gK = geK \mapsto g\ast \tau(e) = \tau(g).\] 
Combining the isomorphisms $\hat\tau: G/K \to \tau(G)$ and $\hat\pi: G/K \to \XXX$ one also obtain an isomorphism $\rho: \tau(G) \to \XXX$ making the diagram in Figure \ref{fig:presym-maps} commute. 
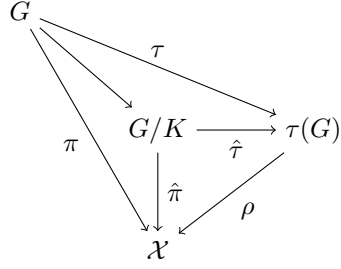
\begin{figure}[h]
\centering
\begin{tikzpicture}
\matrix (m) [matrix of math nodes, row sep=3em,
column sep=3em]
{
|[name=G]| G \\&|[name=GK]| G/K &|[name=tauG]| \tau(G) \\
 & |[name=X]| \XXX \\
};
\path[->] (G) edge node[above] {} (GK);
\path[->] (G) edge node[above]  {$\tau$} (tauG);
\path[->] (G) edge node[below left] {$\pi$} (X);
\path[->] (GK) edge node[right] {$\hat\pi$} (X);
\path[->] (GK) edge node[below] {$\hat\tau$} (tauG);
\path[->] (tauG) edge node[below right] {$\rho$} (X);
\end{tikzpicture}
\caption{Isomorphisms between the different models.}
\label{fig:presym-maps}
\end{figure}
Denoting by $[h]$ the image of $h \in G$ under the projection $G \to \Geff$ this isomorphism is explicitly given as follows.

\begin{lemma}\label{rhoFormula} 
Let $\rho : \tau(G) \to \XXX : h\mapsto [h]\theta$. Then $\rho$ makes the diagram in Figure \ref{fig:presym-maps} commute. In particular, it is an isomorphism of reflection spaces.
\end{lemma}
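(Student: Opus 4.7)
The plan is to check that the formula $\rho(h) = [h]\theta$ makes the triangle involving $\tau$, $\pi$ and $\rho$ commute by a direct computation in the semidirect product $G_{\rm eff} \rtimes \langle \theta \rangle$; the commutativity of the remaining triangles in Figure \ref{fig:presym-maps} is then a formal consequence of the definition of $\hat\tau$ and $\hat\pi$ together with the surjectivity of $G \to G/K$. Finally, since $\hat\tau$ and $\hat\pi$ are already known to be isomorphisms of reflection spaces, the identity $\rho = \hat\pi \circ \hat\tau^{-1}$ forces $\rho$ to be one as well.

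The core computation is the identity $\rho \circ \tau = \pi$. Fix $g \in G$. Using the semidirect product rule $\theta \cdot [x] = [\theta(x)] \cdot \theta$ valid for any $x \in G$, one computes
\[
\pi(g) \;=\; {}^g\theta \;=\; [g]\,\theta\,[g]^{-1} \;=\; [g]\,[\theta(g)^{-1}]\,\theta \;=\; [g\,\theta(g)^{-1}]\,\theta \;=\; [\tau(g)]\,\theta \;=\; \rho(\tau(g)),
\]
which is exactly $\rho(\tau(g))$. This also confirms that the image of $\rho$ lies inside $\XXX$, so $\rho$ is well-defined as a map $\tau(G) \to \XXX$ (well-definedness at the level of $G$ is automatic, since the formula $h \mapsto [h]\theta$ is applied to an element of $G$, not to an equivalence class).

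Next I would deduce commutativity of the other triangles. The factorization $G \to G/K \xrightarrow{\hat\tau} \tau(G)$ of $\tau$ and of $G \to G/K \xrightarrow{\hat\pi} \XXX$ of $\pi$ were established in the paragraphs preceding the lemma. Combining these with $\rho \circ \tau = \pi$ and the surjectivity of $G \to G/K$ yields $\rho \circ \hat\tau = \hat\pi$, so the whole diagram commutes.

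Finally, the isomorphism assertion follows without any further work: the previous subsection showed that $\hat\pi : G/K \to \XXX$ is an isomorphism of reflection spaces (this is the involution model provided by Lemma~\ref{GSIso}), and that $\hat\tau : G/K \to \tau(G)$ is an isomorphism of reflection spaces (the quadratic representation, via the bijection $\tau(G) \to \TTT$ noted earlier, which uses $K \cap \tau(G) = \{e\}$ from Proposition~\ref{lem:1=G-cap-tauG}). Therefore $\rho = \hat\pi \circ \hat\tau^{-1}$ is an isomorphism of reflection spaces, completing the proof. There is essentially no obstacle; the only mild subtlety is bookkeeping in the semidirect product to turn $\theta [g]^{-1}$ into $[\theta(g)^{-1}] \theta$, which is what produces the twist $\tau(g)$ from the conjugation ${}^g\theta$.
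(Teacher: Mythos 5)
Your proof is correct and is essentially the paper's own argument: the paper likewise reduces everything to the single identity $\rho\circ\tau=\pi$ and verifies it by the same semidirect-product manipulation $[g]\theta[g]^{-1}=[g\theta(g)^{-1}]\theta$, just written in the reverse order. The additional remarks on well-definedness and on deducing the isomorphism property from $\hat\pi$ and $\hat\tau$ are fine and implicit in the paper's "it suffices to check".
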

\begin{proof} It suffices to check that $\rho \circ \tau = \pi$. For this one computes
\[
\rho \circ \tau(g) = \rho(g\theta(g)^{-1}) = [g\theta(g)^{-1}]\theta = [g]\theta[g]^{-1}.\qedhere
\]
\end{proof}

\begin{remark} \label{modelGbar}
For each of the three models of the unreduced symmetric space there is a corresponding model of the reduced symmetric space. The \Defn{coset model} $\ol{G}/\ol{K}$ was already discussed above.
The \Defn{involution model} of $\ol{G}/\ol{K}$ is given by the conjugacy class $\ol{\XXX}$ of $s_{\ol o} = \ol{\theta}$ in $\Ad(G) \rtimes \langle \ol{\theta}\rangle$. Since the latter group can be embedded as a subgroup into the automorphism groups $\Aut(G) < \Aut(\Delta)$ of the group $G$ and\footnote{See Proposition~\ref{AutBuilding} below for the fact that $\Aut(G)$ embeds into $\Aut(\Delta)$.}  its twin building $\Delta$, one can consider $\ol{\XXX}$ both as a set of involutions of the group $G$ and of the twin building $\Delta$. In either of these pictures, the multiplication is given by \[\mX(\alpha, \beta) = \alpha\circ \beta^{-1} \circ \alpha.\] 

The \Defn{group model} of $\ol{G}/\ol{K}$ is given by $(\ol{\tau}(\ol{G}), \mT)$ with multiplication given by \[\mT(x,y) = xy^{-1}x.\] 

As in the unreduced model one has isomorphism between these models as depicted in Figure~\ref{fig:presym-maps2}. Here the isomorphism
$\ol{\rho} :  \ol{\tau}(\ol{G}) \to\ol{\XXX} \subset \Aut(G)$ is given by
\begin{equation} \label{formularho}
  \ol{\rho}(g) = c_g \circ \theta,
\end{equation}
  where $c_g$ denotes the inner automorphism defined by $g$.
\end{remark}
  
\begin{figure}[h]
\centering
\begin{tikzpicture}
\matrix (m) [matrix of math nodes, row sep=3em,
column sep=3em]
{
|[name=G]| \ol{G} \\&|[name=GK]| \ol{G}/\ol{K} &|[name=tauG]| \ol{\tau}(\ol{G}) \\
 & |[name=X]| \ol{\XXX} \\
};
\path[->] (G) edge node[above] {} (GK);
\path[->] (G) edge node[above]  {$\ol{\tau}$} (tauG);
\path[->] (G) edge node[below left] {$\ol{\pi}$} (X);
\path[->] (GK) edge node[right] {$\hat{\ol{\pi}}$} (X);
\path[->] (GK) edge node[below] {$\hat{\ol{\tau}}$} (tauG);
\path[->] (tauG) edge node[below right] {$\ol{\rho}$} (X);
\end{tikzpicture}
\caption{Isomorphisms between the reduced models.}
\label{fig:presym-maps2}
\end{figure}
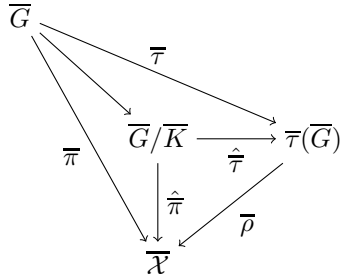

\subsection{Comparison of topologies}

Sections~\ref{model1} and \ref{model23} provided three mutually isomorphic models of the reduced and unreduced Kac--Moody symmetric space --- the coset models $(G/K, \mK)$ and $(\ol{G}/\ol{K}, \ol{\mK})$, the involution models $({\XXX}, \mX)$ and $(\ol{\XXX}, \mX)$, and the group models $(\tau({G}), \mT)$ and $(\ol{\tau}(\ol{G}), \mT)$. 
\begin{convention}\label{ConventionTopologies}
In the sequel we will equip the reflections spaces above with the quotient topologies with respect to the canonical projections $G \to G/K$ and $\ol{G} \to {\ol{G}}/\ol{K}$, respectively the maps $\pi$ and $\ol{\pi}$, respectively the maps $\tau$ and $\ol{\tau}$, unless explicitly stated otherwise. We refer to these topologies as the \Defn{external topologies} on the reflection spaces in question.
\end{convention}
Proposition~\ref{thm1.5} from the introduction now is an immediate consequence of Proposition~\ref{PropCosetModel}, Lemma~\ref{rhoFormula} and Remark~\ref{modelGbar}:
\begin{corollary}\label{TopRefSpaceExternal} With the external topologies from Convention \ref{ConventionTopologies} the reflection spaces $(G/K, \mK)$, $({\XXX}, \mX)$ and $(\tau({G}), \mT)$ (respectively $(\ol{G}/\ol{K}, \ol{\mK})$, $(\ol{\XXX}, \mX)$ and $(\ol{\tau}(\ol{G}), \mT)$) are mutually isomorphic topological reflection spaces.\qed
\end{corollary}
One may ask whether one can describe the canonical topologies of the coset and involution model in more intrinsic terms, without reference to the quotient maps above. We discuss this here for reduced symmetric spaces.
\begin{definition}
\begin{enumerate}
\item The \Defn{internal topology} on $\ol{\tau}(\ol{G})$ is defined as the subspace topology via the embedding $\ol{\tau}(\ol{G}) \hookrightarrow \ol{G}$.
\item The \Defn{internal topology} on $\ol{\XXX}$ is defined as follows: Equip $\Ad(G)$ with the quotient topology with respect to the canonical projection $G \to \Ad(G)$  or, equivalently, $\ol{G} \to \Ad(G)$. Then equip $\Ad(G) \rtimes \langle \ol\theta \rangle$ with the unique group topology in which the finite index subgroup $\Ad(G)$ is open and carries the quotient topology just defined. Finally, equip $\ol{\XXX} \subset \Ad(G) \rtimes \langle \ol\theta \rangle$ with the subspace topology.
\end{enumerate}
\end{definition}
\begin{proposition} \label{topologiescoincide} Equip $\ol{G}/\ol{K}$ with its external topology and $\ol{\tau}(\ol{G})$ and $\ol{\XXX}$ with their internal topologies. 
\begin{enumerate}
\item The maps $\hat{\ol{\pi}}$ and $\hat{\ol{\tau}}$ in Figure~\ref{fig:presym-maps2} are continuous and the map $\ol{\rho}$ in Figure~\ref{fig:presym-maps2} is a homeomorphism.
\item If the multiplication map $m: \ol{U}_+ \times \ol{A} \times \ol{U}_- \to \ol{U}_+\ol{A}\ol{U}_-$ is a homeomorphism, then each of the maps $\hat{\ol{\pi}}$, $\hat{\ol{\tau}}$ and $\ol{\rho}$ in Figure \ref{fig:presym-maps2} is a homeomorphism.
\item If the multiplication map $m: \ol{U}_+ \times \ol{A} \times \ol{U}_- \to \ol{U}_+\ol{A}\ol{U}_-$ is a homeomorphism, then the spaces $(\ol{\XXX}, \mX)$ and $(\ol{\tau}(\ol{G}), \mT)$ are topological reflection spaces with respect to their internal topologies. Moreover, the internal and external topologies on these spaces coincide, and they are isomorphic as topological reflection spaces to each other and to $(\ol{G}/\ol{K}, \ol{\mK})$.
\end{enumerate}
\end{proposition}
\begin{proof} By the commuting diagram in Figure \ref{fig:presym-maps2} it suffices to investigate the maps $\hat{\ol{\tau}}$ and $\ol{\rho}$.
 
(i) The map $\widehat{\ol{\tau}}$ is continuous, since the twist map is continuous. Similarly, continuity of $\ol{\rho}$ follows from formula \eqref{formularho} for $\ol{\rho}$ in Remark~\ref{modelGbar}. It remains to show that $\ol{\rho}$ is open. Proposition~\ref{lem:1=G-cap-tauG} and Theorem~\ref{ThmIwasawa}(i) imply $\ol{\tau}(\ol{G}) \cap Z(\ol{G}) \leq \ol{\tau}(\ol{G}) \cap \ol{K} = \{ e \}$. One concludes that $\ol{\tau}(\ol{G})$ embeds into $\Ad(G)$. 
After identifying $\ol{\tau}(\ol{G})$ with its image in $\Ad(G)$ according to \eqref{formularho} the map $\ol{\rho}^{-1} : \XXX \to \tau(G)$ is given by $\psi \mapsto \psi \circ \theta^{-1}$. Since $\Ad(G) \rtimes \langle \ol\theta \rangle$ is a topological group, $\ol{\rho}^{-1}$ is continuous, and hence $\ol{\rho}$ is open, i.e., a homeomorphism. 

From now on we assume that the map $m: \ol{U}_+ \times \ol{A} \times \ol{U}_- \to \ol{U}_+\ol{A}\ol{U}_-$ is a homeomorphism.

(ii) The map $\widehat{\ol{\tau}}$ is continuous, since the twist map is continuous. For the openness of $\hat{\ol{\tau}}$ note that by the topological Iwasawa decomposition (Theorem~\ref{ThmIwasawa}) there is a homeomorphism $h_1: \ol{U}_+ \times {\ol{A}} \to \ol{G}/\ol{K}$ given by $(u_+, a) \mapsto u_+a\ol{K}$. On the other hand, by Corollary \ref{TopologytauGexplicit} there is a homeomorphism $h: \ol{U}_+ \times {\ol{A}} \to  \ol{\tau}(\ol{G})$ given by $h(u_+, a) = u_+a\theta(u_+)^{-1}$. It thus suffices to show that the composition
\[
h_2: \ol{U}_+ \times {\ol{A}} \xto{h_1} \ol{G}/\ol{K} \xto{\widehat{\ol{\tau}}} \ol{\tau}(\ol{G}) \xto{{h^{-1}}} \ol{U}_+ \times {\ol{A}}
\]
is open. Now $\widehat{\ol \tau} \circ h_1(u_+, a) = u_+a^2\theta(u_+)^{-1}$ and, hence, $h_2(u_+, a) = (u_+, a^2)$. Now openness of $h_1$, and hence of $\widehat{\ol{\tau}}$, follows from the fact that the map ${\ol{A}} \to {\ol{A}} : a \mapsto a^2$ is open.

(iii) This is immediate from (ii) and Corollary \ref{TopRefSpaceExternal}.
\end{proof}

Note that the assumption in (ii) and (iii) is satisfied in the two-spherical case, but probably holds more generally (see Remark \ref{twospherical1}). In order to establish a version of the proposition for unreduced Kac--Moody symmetric spaces, one would need to extend the topological Iwasawa decomposition to the unreduced case.


\section{Flats and geodesics in Kac--Moody symmetric spaces}

Throughout this section $G$ denotes a simply connected centred split real Kac--Moody group of irreducible symmetrizable type, the group $\ol{G}$ denotes its semisimple adjoint quotient, and $\Ad(G)$ its adjoint quotient. Moreover, $\Delta = \Delta^-\sqcup \Delta^+$ denotes the twin building associated to the RGD systems of these groups.

The purpose of this section is to investigate the flats of the Kac--Moody symmetric spaces $G/K$ and $\ol{G}/\ol{K}$. 

\subsection{Standard flats}
We start by constructing explicit examples of Euclidean flats in Kac--Moody symmetric spaces. We will see in Theorem \ref{maximalflatsareeuclidean} below that these are exactly the maximal flats. Recall from Proposition~\ref{PropexpHomeomorphism}(i) that we have homeomorphisms $\exp: \mathfrak a \to A$ and $\exp: \ol{\mathfrak a} \to \ol{A}$. 
\begin{proposition}\label{StdFlatsEuclidean} 
Equip $\mathfrak a$ (respectively $\ol{\mathfrak a}$) with its Euclidean reflection space structure. Then for every $g \in G$ (respectively, $\ol{g} \in \ol{G}$) the map
\[
\varphi_g: \mathfrak a \to gAK, \quad  X\mapsto g\exp(X)K \quad (\text{respectively}, \;\varphi_{\ol{g}}: \ol{\mathfrak a} \to \ol{g}\ol{A}\ol{K}, \quad  X\mapsto g\exp(X)\ol{K} )
\]
is an isomorphism of topological reflection spaces. Moreover, the subset $gAK \subset G/K$ (respectively, $\ol{g}\ol{A}\,\ol{K} \subset \ol{G}/\ol{K}$) is closed, hence a Euclidean flat of dimension $\dim \mathfrak a = n$ (respectively, $\dim \ol{\mathfrak a} = \rk(\bf A)$).
\end{proposition}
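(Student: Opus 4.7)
I will verify separately that $\varphi_g$ is a morphism of reflection spaces, a bijection onto $gAK$, a homeomorphism, and that $gAK$ is closed in $G/K$; dimension then follows from $\dim\mathfrak{a} = n$ (and $\dim\overline{\mathfrak{a}} = \mathrm{rk}(\mathbf{A})$ in the reduced case).

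For the morphism property, I fix $X, Y \in \mathfrak{a}$ and compute from $\mu(gK, hK) = \tau(g)\theta(h)K$, using that $\theta$ acts by inversion on $T$ (a consequence of Lemma~\ref{tauonT}(i), since $t^2 = \tau(t) = t\theta(t)^{-1}$ forces $\theta(t) = t^{-1}$) and that $\exp: (\mathfrak{a}, +) \to A$ is a homomorphism:
\[
\varphi_g(X) \cdot \varphi_g(Y) = g\exp(X)\theta(g\exp(X))^{-1}\theta(g\exp(Y))K = g\exp(2X-Y)K = \varphi_g(2X-Y).
\]
Bijectivity onto $gAK$ is then clear: surjectivity follows from surjectivity of $\exp$, and injectivity reduces to $A \cap K = \{e\}$, which is Lemma~\ref{tauonT}(iii).

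The main step is the homeomorphism property. Since left translation by $g$ is a self-homeomorphism of $G/K$ and since $\exp: \mathfrak{a} \to A$ is a homeomorphism (Proposition~\ref{PropexpHomeomorphism}), everything reduces to showing that the natural continuous bijection $A \to AK/K$, $a \mapsto aK$, admits a continuous inverse. For this the twist map provides the key idea: since $\theta|_A = (\cdot)^{-1}$ and $\theta|_K = \mathrm{id}$, one computes $\tau(ak) = a\theta(a)^{-1} = a^2$ for all $a \in A$, $k \in K$. Hence $\tau|_{AK}$ factors through the quotient to give a continuous map $\widetilde{\tau}: AK/K \to A$, $aK \mapsto a^2$; postcomposing with the continuous square root on $A \cong (\mathbb{R}_{>0})^n$ yields the required continuous inverse. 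This step is the crux of the argument in the unreduced case, circumventing the fact that the topological Iwasawa decomposition Theorem~\ref{ThmIwasawa} only asserts a continuous bijection $U_+ \times A \times K \to G$ rather than a homeomorphism.

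For closedness of $gAK$ in $G/K$, it suffices by left-translation to show $AK$ is closed in $G$. I claim $AK = \tau^{-1}(A)$. The inclusion $\subseteq$ is the formula $\tau(ak) = a^2$ from above. Conversely, given $g \in \tau^{-1}(A)$, write $g = uak$ via Iwasawa (Theorem~\ref{ThmIwasawa}(ii)); then $\tau(g) = u a^2 \theta(u)^{-1} \in A$ with $\theta(u) \in U_-$, and uniqueness of the refined Birkhoff decomposition (Lemma~\ref{fine birkhoff}) forces $u = e$, so $g \in AK$. Since $T$ is closed in $G$ (Proposition~\ref{TopologyGSummary}(i)) and $A$ is its identity component, $A$ is closed, and hence so is $AK = \tau^{-1}(A)$; closedness of $AK/K$ in $G/K$ follows. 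The reduced version proceeds identically, and is in fact even easier, since Theorem~\ref{ThmIwasawa}(ii) already gives $\overline{m}_1: \overline{U}_+ \times \overline{A} \times \overline{K} \to \overline{G}$ as a genuine homeomorphism, from which the homeomorphism property of $\varphi_{\overline{g}}$ is immediate, while the closedness argument goes through verbatim with $\overline{\tau}$ in place of $\tau$.
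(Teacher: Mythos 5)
Your proof is correct, and the reflection-space computation $\varphi_e(X)\cdot\varphi_e(Y)=\varphi_e(2X-Y)$ via $\theta|_A=(\cdot)^{-1}$ is exactly the one in the paper. Where you diverge is in the topological part. The paper handles closedness by invoking the reduced topological Iwasawa decomposition (Theorem~\ref{ThmIwasawa}(ii) gives a genuine homeomorphism $\overline{U}_\pm\times\overline{A}\times\overline{K}\to\overline{G}$), concluding that $\overline{g}\overline{A}\,\overline{K}$ is closed in $\overline{G}/\overline{K}$, and then pulling back along the continuous projection $G/K\to\overline{G}/\overline{K}$ to get closedness of $gAK$; it does not explicitly spell out why $\varphi_g$ is a homeomorphism onto its image in the unreduced case, where $m_1$ is only asserted to be a continuous bijection. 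You instead work intrinsically in $G$: the identity $AK=\tau^{-1}(A)$, proved via the Iwasawa and refined Birkhoff decompositions, gives closedness directly from closedness of $A$, and the continuous inverse $aK\mapsto\sqrt{\tau(aK)}$ settles the homeomorphism question --- the same squaring trick the paper deploys later in Theorem~\ref{topologiescoincide} when comparing the topologies of the three models. Your route buys a self-contained unreduced argument and an explicit justification of the topological (not just algebraic) isomorphism; the paper's route is shorter but leans on the reduced case. One pedantic remark: to extract $u=e$ from $ua^2\theta(u)^{-1}=a^2$ after applying Lemma~\ref{fine birkhoff} you also need $U_+\cap U_-=\{e\}$ (equivalently, uniqueness of the factorization within a Birkhoff double coset), which is standard for RGD systems and contained in the references cited there, but is not literally the disjointness statement of that lemma.
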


\begin{proof} First observe that the subsets $gAK \subset G/K$ (respectively, $\ol{g}\ol{A}\,\ol{K} \subset \ol{G}/\ol{K}$) are closed. By Theorem~\ref{ThmIwasawa}, multiplication $\ol{U}_\pm \times \ol{A} \times \ol{K} \to G$ induces a homeomorphism. Therefore, $\ol{A}\,\ol{K}$ and any of its translates $\ol{g}\ol{A}\,\ol{K}$ are closed in $\ol{G}/\ol{K}$, and so are the preimages $gAK$ in $G/K$. It remains to show that the maps $\varphi_g$ are isomorphisms of reflection spaces. Since both $G$ and $\ol{G}$ act by automorphisms, one may assume that $g = e$, respectively $\ol{g} = e$. Thus let $X, Y \in \mathfrak a$. Using that $\theta(t) = t^{-1}$ for all $t \in A = \tau(A)$ (see Lemmas~\ref{lem:tau-K-saturation} and \ref{tauonT}) and that $\exp$ is a group homomorphism one computes
\begin{eqnarray*}
\mK(\varphi_e(X), \varphi_e(Y))
&= &\tau(\exp(X))\theta(\exp(Y))K\\
&= &\exp(X)\theta(\exp(X))^{-1}\theta(\exp(Y))K\\
&= &\exp(X)\exp(X) \exp(-Y)K\\
&= &\exp(2X-Y)K\\
&= &\varphi_e(X\cdot Y),
\end{eqnarray*}	
and the computation for the reduced case is identical. 
\end{proof}
\begin{definition} For every $g \in G$ (respectively, $\ol{g} \in \ol{G}$) the flat $gAK \subset G/K$ (respectively, $\ol{g}\ol{A}\,\ol{K} \subset \ol{G}/\ol{K}$) is called a \Defn{standard flat}. 
\end{definition}
The following proposition describes images of standard flats under the various isomorphisms of models. By abuse of language we will also refer to these images as standard flats in the respective models.
\begin{proposition}\  \label{propthreemodelsflats}
\begin{enumerate}
\item The image of the standard flat $gAK$ under the isomorphism $\hat \pi: G/K \to \XXX$ is given by 
\[
\XXX_{{}^gT} := \{\alpha \in \XXX\,|\,\alpha({}^gT) \subseteq {}^gT\} =  \{\alpha \in \XXX\,|\,\alpha({{}^gT})= {{}^gT}\}.
\]
\item The image of the standard flat $gAK$ under the isomorphism $\hat \tau: G/K \to \tau(G)$ is given by
\[
F[g] := g \ast A = g \ast \tau(A) \subset \tau(G).
\]
\end{enumerate}
The analogous statements hold for $G$ replaced by $\ol{G}$.
\end{proposition}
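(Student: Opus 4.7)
The plan is to prove (ii) first as a direct computation and then handle (i) by a parallel argument that uses the twist map formula $\tau(gh) = g\ast\tau(h)$ at a crucial step.

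For (ii), compute $\hat\tau(gaK) = \tau(ga) = g\ast\tau(a)$ (Lemma~\ref{lem:tau-K-saturation}(ii)) and $\tau(a) = a^2$ for $a \in A$ (Lemma~\ref{tauonT}(i)). Since $A$ is the exponential image of the real vector space $\mathfrak a$ (Proposition~\ref{PropexpHomeomorphism}), the squaring map $A \to A$ is a bijection, so $\hat\tau(gAK) = g\ast A$; combined with $\tau(A) = A$ (Lemma~\ref{tauonT}(ii)) this gives both advertised forms $g\ast A = g\ast\tau(A) = F[g]$.

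For (i), first observe that the two presentations of $\XXX_{{}^gT}$ coincide because every $\alpha\in\XXX$ is an involutive automorphism: if $\alpha({}^gT)\subseteq{}^gT$, then ${}^gT = \alpha^2({}^gT)\subseteq \alpha({}^gT)$ forces equality. The inclusion $\hat\pi(gAK)\subseteq \XXX_{{}^gT}$ is then a short direct check using $a\in T$ and $\theta(T)=T$ (Lemma~\ref{CCinvB}):
\[
({}^{ga}\theta)({}^gT) \;=\; (ga)\,\theta(a^{-1}Ta)\,(ga)^{-1} \;=\; (ga)\,\theta(T)\,(ga)^{-1} \;=\; (ga)\,T\,(ga)^{-1} \;=\; gTg^{-1}.
\]

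The reverse inclusion is the main content. Given $\alpha\in\XXX_{{}^gT}$, use the bijection $\hat\pi\colon G/K\to\XXX$ from Section~\ref{model23} to write $\alpha = \hat\pi(g'K) = [\tau(g')]\theta$ for some $g'\in G$. Expanding $\alpha({}^gT) = {}^gT$, and absorbing $C_K(G)\subseteq Z(G)\subseteq T$ into $N_G(T)$, translates to $g^{-1}\tau(g')\theta(g) \in N_G(T)$. The decisive algebraic simplification is
\[
g^{-1}\tau(g')\theta(g) \;=\; g^{-1}\ast\tau(g') \;=\; \tau(g^{-1}g'),
\]
which is just Lemma~\ref{lem:tau-K-saturation}(ii). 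Hence $\tau(g^{-1}g')\in N_G(T)\cap\tau(G) = A$ by Lemma~\ref{lem:T=N-cap-tauG}, and $A = \tau(A)$ furnishes an $a\in A$ with $\tau(a) = \tau(g^{-1}g')$. By Lemma~\ref{lem:tau-K-saturation}(iv) this yields $g'K = gaK$, so $\alpha = \hat\pi(gaK)\in\hat\pi(gAK)$. The analogous statements for $\overline{G}$ follow by applying the same arguments verbatim to the barred data $\overline{\tau},\overline{A},\overline{T},\overline{K}$, using the $\overline{G}$-versions of all cited lemmas (noted after Lemma~\ref{lem:tau-K-saturation} and valid by the same proofs for Lemmas~\ref{lem:NGT=A NKT} and~\ref{lem:T=N-cap-tauG}). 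The only step requiring genuine insight rather than routine bookkeeping is recognizing the rewriting of $g^{-1}\tau(g')\theta(g)$ as $\tau(g^{-1}g')$, which is precisely what allows the sharp intersection $N_G(T)\cap\tau(G) = A$ to force $g'K \in gAK$.
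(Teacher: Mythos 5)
Your proof is correct and follows essentially the same route as the paper's: both reduce the hard inclusion to the identity $N_G(T)\cap\tau(G)=A=\tau(A)$ (Lemma~\ref{lem:T=N-cap-tauG} with Lemma~\ref{tauonT}) and then use that $\tau(g_1)=\tau(g_2)$ forces $g_1K=g_2K$. The only cosmetic difference is that the paper first reduces to $g=e$ by equivariance of $g\mapsto gAK$, $g\mapsto\XXX_{{}^gT}$, $g\mapsto F[g]$, whereas you carry $g$ through the computation via the identity $g^{-1}\ast\tau(g')=\tau(g^{-1}g')$ — which amounts to the same thing.
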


\begin{proof}
  Observe that the two descriptions of $\XXX_{{}^gT} $ indeed coincide because $\XXX$ consists of involutions.
Moreover, the maps $g \mapsto gAK$ and $g \mapsto \XXX_{{}^gT}$ and $g \mapsto F[g]$ are all equivariant under the respective $G$-actions. It therefore suffices to show that \begin{equation}\label{StdFlatToShow}
\hat\pi(AK) = \XXX_T \quad \text{and} \quad \hat \tau(TK) = A.
\end{equation}
Certainly, $\hat\pi(AK) \subseteq \pi(T) \subseteq \XXX_T$. Conversely, let ${}^h\theta \in \XXX_T$. This means that
\[
 T = {}^h\theta(T) = (h \circ \theta \circ h^{-1})(T) = h\theta(h^{-1}Th)h^{-1} = \tau(h) \theta(T) \tau(h)^{-1} = \tau(h) T \tau(h)^{-1}.
\]
Hence $\tau(h)\in N_G(T)$. By Corollary~\ref{lem:T=N-cap-tauG} and Lemma~\ref{tauonT} one has $N_G(T)\cap\tau(G)=A=\tau(A)$, so there is $t\in A$ such that $\tau(h) = \tau(t)$ and, therefore, $tK=hK$ by Lemma~\ref{lem:tau-K-saturation}. Thus $hK = tK \in AK$, showing that ${}^h\theta = {}^t\theta = \hat\pi(tK) \in \hat\pi(AK)$ and hence
\[
 \hat\pi(AK) = \pi(T) = \XXX_T.
\]
Finally, $\hat\tau(TK) = \tau(T) = A$. This establishes~\eqref{StdFlatToShow} and finishes the proof.
\end{proof}
\begin{remark} \label{5.4} Denote by $\FFF_{\rm std}(G/K)$ the set of standard flats in $G/K$. By definition, $G$ acts transitively on $\FFF_{\rm std}(G/K)$ via left-multiplication. Recall from Lemma~\ref{lem:NGT=A NKT} that $N_G(T) = A \rtimes N_K(T)$. Since $A$ is the identity component of $T$ (see Definition~\ref{semisimpleadjoint}), one has $N_K(T)\leq N_K(A)$, since conjugation in $G$ is continuous. Conversely, by \cite[Lemma~4.9]{Caprace09} the torus $T$ is the unique torus of $G$ containing $A$, so any element normalizing $A$ necessarily has to normalize $T$, and one deduces that
\begin{equation}
N_K(A) = N_K(T).
\end{equation} 
Thus every $g = N_G(T)$ can be written as $g = ak$ with $a\in A$ and $k \in N_K(A)$, and thus $gAK = akAK = a(kAk^{-1})kK = aAK = AK$. In other words, $N_G(T)$ stabilizes $AK$.

The coset space $G/N_G(T)$ can be identified with the set $\TTT(G)$ of maximal tori of $G$ via the map $gN_G(T) \mapsto {}^gT$. One thus obtains a $G$-equivariant surjection
\begin{equation}\label{TorusParametrization}
\TTT(G) \to \FFF_{\rm std}(G/K), \quad {}^gT \mapsto gAK.
\end{equation}
In other words, the standard flats are parametrized by the maximal tori. The same argument applies to $\ol{G}$ instead of $G$. 

Assertion (ii) of Proposition~\ref{propthreemodelsflats} implies that the parametrization map in \eqref{TorusParametrization} is actually a bijection: Indeed, the standard flat associated with ${}^gT$ in the group model is given by $F[g] = g \ast A = gA\theta(g)^{-1}$, and one has
\[
F[g]\theta(F[g]) = gA\theta(A)g^{-1} = gAg^{-1}.
\]
One can therefore recover $gAg^{-1}$ from the associated flat. Now by \cite[Lemma~4.9]{Caprace09} the group $gAg^{-1}$ is contained in a unique maximal torus of $G$, and this maximal torus is exactly ${}^gT$. Thus $F[g]$ determines $^{g}T$, and the map \eqref{TorusParametrization} is thus bijective.

The same argument applies to maximal tori in $\ol{G}$, as $C < T$ (cf.\ Definition~\ref{semisimpleadjoint}) is central in $G$, whence contained in any $G$-conjugate of $T$ and, moreover, stabilized by any conjugate of $\theta$.
\end{remark}
Note that maximal tori in $G$ are precisely the chamberwise stabilizers of the twin apartments of the twin building $\Delta$, as are the maximal tori in $\ol{G}$. Altogether one observes the following:
\begin{corollary}\label{StdFlatTori}
  The following objects are in $G$-equivariant bijection with the elements of $G/N_G(T)= \ol{G}/N_{\ol{G}}(\ol{T})$:
\begin{enumerate}
\item twin apartments of $\Delta$,
\item maximal tori of $\ol{G}$,
\item maximal tori of $G$,
\item standard flats in $G/K$,
\item standard flats in $\ol{G}/\ol{K}$.
\end{enumerate}
In particular, $G$ acts transitively on these objects, every standard flat in $G/K$ projects to a standard flat in $\ol{G}/\ol{K}$, and every standard flat in $\ol{G}/\ol{K}$ lifts uniquely to a standard flat in $G/K$. \qed
\end{corollary}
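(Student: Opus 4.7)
My approach is to package the discussion preceding the corollary into a chain of $G$-equivariant bijections, one per pair of adjacent items on the list. The bijections (iii)$\leftrightarrow$(iv) and (ii)$\leftrightarrow$(v) have already been essentially established in the remark immediately above the corollary: the map ${}^gT \mapsto gAK$ is $G$-equivariant and surjective by construction, and it is injective because one can recover $gAg^{-1} = F[g]\theta(F[g])$ from $F[g]$ and then invoke \cite[Lemma~4.9]{Caprace09} to obtain the unique maximal torus of $G$ containing it. The same argument works verbatim for $\overline{G}$, since $C < T$ is central and hence contained in every conjugate of $T$.

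For (i)$\leftrightarrow$(iii) I would use the standard fact about the RGD/$BN$-structure from Section~\ref{RGDBN}: a maximal torus of $G$ is precisely the chamberwise stabilizer of the (unique) twin apartment it fixes. Strong transitivity of $G$ on $\Delta$ then makes (i) a single $G$-orbit; the setwise stabilizer of the distinguished twin apartment is $N_G(T)$, and the chamberwise stabilizer is $T$. Hence (i), (iii) and $G/N_G(T)$ are in $G$-equivariant bijection, with the correspondence sending a twin apartment to its chamberwise stabilizer and a torus to the unique twin apartment it fixes. The analogous reasoning applied to $\overline{G}$ yields the bijection of (i) with (ii) via parameter $\overline{G}/N_{\overline{G}}(\overline{T})$.

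It then remains to identify $G/N_G(T)$ with $\overline{G}/N_{\overline{G}}(\overline{T})$. Since $C = \ker(G \to \overline{G})$ is central and lies in $T \subset N_G(T)$, the projection sends $N_G(T)$ onto $N_{\overline{G}}(\overline{T})$ with kernel $C$, so the induced map of coset spaces is a $G$-equivariant bijection. The ``in particular'' assertions are now immediate: transitivity holds because each set is a single coset space, and the canonical projection $G/K \to \overline{G}/\overline{K}$ sends $gAK$ to $\overline{g}\overline{A}\,\overline{K}$, so standard flats project to standard flats; bijectivity on standard flats is then forced by the common parameter space $G/N_G(T) = \overline{G}/N_{\overline{G}}(\overline{T})$.

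I do not anticipate any substantive obstacle: all the nontrivial content has been isolated either in the preceding remark or in the quoted uniqueness of a torus containing a given conjugate of $A$ from \cite{Caprace09}; what remains here is careful bookkeeping of the equivariance and of the identifications between parameter spaces. The only point that requires a moment's care is the verification that reduction modulo $C$ is compatible with all five descriptions simultaneously, which is transparent because $C$ is central, lies in each maximal torus of $G$, and is fixed by every Cartan--Chevalley involution (as in Proposition~\ref{CCIdescends}).
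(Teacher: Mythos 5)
Your proposal is correct and follows essentially the same route as the paper, which proves the corollary simply by combining the preceding remark (the bijection $\mathcal T(G)\to\mathcal F_{\rm std}(G/K)$, ${}^gT\mapsto gAK$, with injectivity via recovering $gAg^{-1}=F[g]\theta(F[g])$ and \cite[Lemma~4.9]{Caprace09}) with the observation that maximal tori are exactly the chamberwise stabilizers of twin apartments. Your additional bookkeeping on the identification $G/N_G(T)=\overline{G}/N_{\overline{G}}(\overline{T})$ via centrality of $C$ is exactly what the paper leaves implicit.
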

By Theorem~\ref{maximalflatsareeuclidean} below the standard flats in either of the two Kac--Moody symmetric spaces are exactly the maximal flats. This in turn implies that the maximal flats in $\ol{G}/\ol{K}$ are in one-to-one correspondence to the maximal flats in $G/K$.

\subsection{Midpoint convex subsets and geodesic connectedness} \label{geodesicconnectedness}

Our next goal is to characterize midpoint convex subsets of Kac--Moody symmetric spaces.
The following definition borrowed from \cite[Section~4.2.2]{Caprace09} is key to this characterization.

\begin{defn}
  An element $g\in G$ (or $\ol{g} \in \ol{G}$) is called \Defn{diagonalizable} if it stabilizes a pair of opposite chambers in $\Delta$ and, hence, stabilizes a twin apartment chamberwise.
\end{defn}

The following example shows that, in the non-spherical case, elements of $\ol{\tau}(\ol{G})$ need not be diagonalizable. The reader is referred to \cite{Horn:Decomp} for a more detailed discussion of this theme.

\begin{example}\label{Hole}
Let $n\geq 1$ and consider the affine example $\ol{G}:=\SL_{n+1}(\R[t,t^{-1}])$ of type $\tilde{A}_n$ with the Cartan--Chevalley involution $\theta(x):=((x^{-1})^T)^\sigma$, where $\sigma$ is the ring automorphism of $\R[t,t^{-1}]$ which fixes $\R$ and interchanges $t$ and $t^{-1}$. Then let
\begin{align*}
u :=  \left(\begin{smallmatrix} 1 & 1+t \\ 0 & 1 \\ && \ddots \\ &&& 1 \end{smallmatrix}\right ) \in B_+,\qquad
v:=\tau(u) &=  u\theta(u)^{-1} =
\left(\begin{smallmatrix} 1 & 1+t \\ 0 & 1 \\ && \ddots \\ &&& 1\end{smallmatrix}\right )\cdot
\left(\begin{smallmatrix} 1 & 0 \\ 1+t^{-1} & 1 \\ && \ddots \\ &&& 1\end{smallmatrix}\right ) \\
&=
\left(\begin{smallmatrix} 1 + (1+t)(1+t^{-1}) & 1+t \\ 1+t^{-1} & 1
 \\ && \ddots \\ &&& 1\end{smallmatrix}\right )
\end{align*}
and the characteristic polynomial of $v$ is
\begin{align*}
c_\lambda(v)
&= \left( (\lambda - (1 + (1+t)(1+t^{-1}))(\lambda-1) - (1+t)(1+t^{-1}) \right) \cdot (\lambda-1)^{n-1} \\
&= \left(\lambda^2 - ( t + 4 +  t^{-1}) \lambda + 1 \right) \cdot (\lambda-1)^{n-1}.
\end{align*}
However, the polynomial $c_\lambda(v)$ does not split into linear factors over $\R[t,t^{-1}]$, whence $v$ is not conjugate within $\ol{G}$ to an element of the torus $\ol{T}$, which consists of diagonal matrices with entries from $\R$.
\end{example}

The following result demonstrates that the behaviour described in the preceding example is not merely an affine but instead a general non-spherical phenomenon: 

\begin{theorem}[{\cite[Theorem~5.7 and Proposition~6.3]{Horn:Decomp}}]\label{HoleThm}
The set $Q:=\bigcap_{i=1}^\infty \tau^i(G)$ (respectively, $\ol{Q}:=\bigcap_{i=1}^\infty \ol{\tau}^i(\ol{G})$) equals the set of diagonalizable elements in $\tau(G)$ (respectively, $\ol{\tau}(\ol{G})$). Moreover, if $G$ is of non-spherical type, then $Q\neq\tau(G)$ and $\ol{Q} \neq \ol{\tau}(\ol{G})$, i.e, both $\tau(G)$ and $\ol{\tau}(\ol{G})$ contain elements which are not diagonalizable.
\end{theorem}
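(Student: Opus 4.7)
The plan is to first rewrite $Q$ as a ``$2$-divisibility'' condition inside $\tau(G)$. For any $h\in\tau(G)$, Lemma~\ref{lem:tau-K-saturation}(i) gives $\theta(h)=h^{-1}$ and hence $\tau(h)=h^{2}$; iterating, $\tau^{n}(G)=\{y^{2^{n-1}}\mid y\in\tau(G)\}$, so that
\[ Q=\{g\in\tau(G)\mid \forall n\geq 1\ \exists h_{n}\in\tau(G):\ h_{n}^{2^{n-1}}=g\}. \]
I would also note that $\tau(G)$ is torsion-free: any $g\in\tau(G)$ of finite order has a finite and therefore bounded orbit on $\Delta$, is diagonalizable by Lemma~\ref{lem:sym-ss}, and is thus conjugate into $T=M\times A$; since $A$ is torsion-free, $g$ is conjugate into $M$ and so an involution, whence $\tau(g)=g^{2}=e$ places $g$ in $K\cap\tau(G)=\{e\}$ by Proposition~\ref{lem:1=G-cap-tauG}.

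For the inclusion ``diagonalizable $\Rightarrow g\in Q$'' the argument I have in mind is the following. If $g\in\tau(G)$ is diagonalizable, Lemma~\ref{lem:sym-ss} produces a $\theta$-stable apartment fixed chamberwise by $g$, so $g=ktk^{-1}$ with $k\in K$ and $t\in T$. Conjugation by $K$ commutes with $\tau$ (because $\theta$ fixes $K$), hence $t=k^{-1}gk\in T\cap\tau(G)$, and Lemma~\ref{lem:T=N-cap-tauG} gives $t\in A$. Because $A\cong(\R_{>0})^{n}$ is divisible, for each $n$ there exists $a_{n}\in A$ with $a_{n}^{2^{n-1}}=t$, and $h_{n}:=ka_{n}k^{-1}$ is a $K$-conjugate of an element of $A=\tau(A)\subseteq\tau(G)$, hence lies in $\tau(G)$ and satisfies $h_{n}^{2^{n-1}}=g$. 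Therefore $g\in Q$.

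The reverse implication, $g\in Q\Rightarrow g$ diagonalizable, is the heart of the proof and where I expect the main difficulty. My preferred approach is geometric: one lets $\tau(G)$ act on the CAT(0) Davis realization $|\Delta_{+}|$ of the positive half of the twin building. For any isometry $h$ of a complete CAT(0) space one has $\ell(h^{N})=N\ell(h)$ for the translation length $\ell(h):=\inf_{x}d(x,hx)$, so the relation $g=h_{n}^{2^{n-1}}$ forces $\ell(h_{n})=\ell(g)/2^{n-1}\to 0$. By Lemma~\ref{lem:sym-ss} it would suffice to upgrade this to the existence of a fixed chamber for $g$; the main obstacle is that vanishing translation length does not automatically imply the existence of a fixed point, because $h_{n}$ could a priori be parabolic in the CAT(0) sense. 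I would try to rule out this parabolic scenario by combining the polar description $\tau(G)=U_{+}\ast A$ of Proposition~\ref{tauGexplicit} with the Birkhoff decomposition (Lemma~\ref{fine birkhoff}): writing $h_{n}=u_{n}\ast b_{n}$ and comparing $h_{n}^{2^{n-1}}$ to the $U_{+}AU_{-}$-form of $g$, one can try to separate the ``$A$-part'' and the ``$U_{+}$-part'' and show that the latter must become trivial in the limit $n\to\infty$, reducing the diagonalizability of $g$ to the statement that $g$ is $K$-conjugate into $A$.

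For the ``moreover'' assertion it suffices to exhibit, in every non-spherical case, one element of $\tau(G)$ that is not diagonalizable, since $G$ and $\overline G$ act on the common twin building $\Delta$ with the same notion of diagonalizability. Example~\ref{Hole} already furnishes such an element $v=\tau(u)$ in affine type $\tilde A_{n}$, showing that the characteristic polynomial of $v$ does not split over $\R[t,t^{-1}]$ and hence $v$ is not $\overline{G}$-conjugate into $\overline{T}$. In a general non-spherical irreducible situation I would locate a rank-two non-spherical sub-Dynkin-diagram --- either affine of type $\tilde A_{1}$ or indefinite --- and transplant the Example~\ref{Hole} construction into the corresponding standard rank-two subgroup; the resulting element has an unbounded orbit on the associated rank-two sub-building, and is therefore not diagonalizable by Lemma~\ref{lem:sym-ss}, either in $G$ or in $\overline G$.
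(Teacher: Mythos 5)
First, note that the paper does not prove this theorem itself: it is quoted verbatim from \cite[Theorem~5.6]{Horn:Decomp}, so there is no in-paper argument to compare yours with; I can only assess your proposal on its own terms. Your reformulation $Q=\{g\in\tau(G)\mid \forall n\,\exists h_n\in\tau(G):h_n^{2^{n-1}}=g\}$ is correct, and the inclusion ``diagonalizable $\Rightarrow g\in Q$'' is complete and correct (Lemma~\ref{lem:sym-ss} plus transitivity of $K$ on $\theta$-stable twin apartments puts $g={}^kt$ with $t\in T\cap\tau(G)=A$ by Lemma~\ref{lem:T=N-cap-tauG}, and $A$ is uniquely $2$-divisible with $K$-conjugation commuting with $\tau$).

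The converse inclusion, which is the substance of the theorem, is not proved. You acknowledge this yourself: the CAT(0) translation-length computation $\ell(h_n)=\ell(g)/2^{n-1}\to 0$ (even granting $\ell(h^N)=N\ell(h)$ for possibly parabolic isometries of a non-proper space) constrains the \emph{roots} of $g$, not $g$ itself, and there is no mechanism in your sketch forcing $g$ to be elliptic; note also that $h_n$ commutes with $g=h_n^{2^{n-1}}$, so even the hyperbolic case is not excluded by this line of reasoning. The proposed repair --- ``separate the $A$-part and the $U_+$-part'' of $h_n^{2^{n-1}}$ via Proposition~\ref{tauGexplicit} and Lemma~\ref{fine birkhoff} --- is precisely the hard combinatorial content that is missing: products of elements of $U_+AU_-$ do not remain in recognizable Birkhoff position, which is exactly why non-diagonalizable elements such as $\tau(u)\in U_+U_-$ in Example~\ref{Hole} exist at all. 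As it stands this direction is a plan, not a proof.

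The ``moreover'' part also has a genuine error. Your strategy is to locate a \emph{rank-two} non-spherical subdiagram and transplant Example~\ref{Hole}. Such a subdiagram need not exist: every two-spherical non-spherical diagram (e.g.\ $E_{10}$, or the $AE_n$ series) has, by definition, only spherical rank-two subdiagrams, and conversely a rank-two hyperbolic diagram has no proper non-spherical subdiagram at all, while Example~\ref{Hole} only covers affine type $\tilde A_n$, not indefinite rank two. So the reduction fails in both directions. (The secondary point --- that an unbounded orbit on a residue sub-building implies non-diagonalizability in $G$, via the distance-non-increasing projection onto the residue and Lemma~\ref{lem:sym-ss} --- is fine, but it is applied to a subgroup that need not exist.) A correct argument has to produce a non-diagonalizable element of $\tau(G)$ uniformly from non-sphericity, i.e.\ from the infiniteness of $W$, rather than from a distinguished small subdiagram.
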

 
 The description of the set of diagonalizable elements in $\tau(G)$, respectively $\ol{\tau}(\ol{G})$ has the following implication:
 
\begin{corollary} \label{lem:mc-ss}
If $F\subseteq\tau(G)$ (or $F \subseteq \ol{\tau}(\ol{G})$) is midpoint convex and $e\in F$, then any $x\in F$ is diagonalizable.
\end{corollary}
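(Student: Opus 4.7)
The strategy is to invoke Theorem~\ref{HoleThm}, which identifies the diagonalizable elements of $\tau(G)$ with $Q=\bigcap_{i\geq 1}\tau^i(G)$; thus it will be enough to show that every $x\in F$ lies in $\tau^n(G)$ for all $n\geq 1$.

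The first step will be to produce, by iterated use of midpoint convexity, an infinite chain of square roots of $x$ inside $F$. Set $x_0:=x$ and, having constructed $x_i\in F$, apply midpoint convexity to the pair $e,x_i\in F$ to obtain a midpoint $x_{i+1}\in F$. Since the product in the group model is $a\cdot b=ab^{-1}a$, the midpoint equation $x_{i+1}\cdot e=x_i$ collapses to $x_{i+1}^{2}=x_i$. This produces a sequence $x=x_0,x_1,x_2,\dots\in F\subseteq\tau(G)$ with $x_{i+1}^{2}=x_i$ for every $i\geq 0$.

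The second step combines this chain with Lemma~\ref{lem:tau-K-saturation}(i), which says $\tau(h)=h^{2}$ for every $h\in\tau(G)$; in particular $x_{i-1}=\tau(x_i)$ for all $i\geq 1$. A short reverse induction on $k\in\{0,1,\dots,n\}$ then yields $x_{n-k}\in\tau^{k+1}(G)$: the base case $k=0$ is simply $x_n\in\tau(G)$, while the inductive step reads
\[
x_{n-k-1}=\tau(x_{n-k})\in\tau\bigl(\tau^{k+1}(G)\bigr)=\tau^{k+2}(G).
\]
Taking $k=n$ gives $x=x_0\in\tau^{n+1}(G)$, and since $n$ was arbitrary, $x\in Q$, so $x$ is diagonalizable by Theorem~\ref{HoleThm}. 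The argument for $F\subseteq\overline{\tau}(\overline G)$ is verbatim the same with $\overline\tau$ and $\overline G$ in place of $\tau$ and $G$. I do not expect any real obstacle here: the only genuine input is Theorem~\ref{HoleThm}, and the translation of the midpoint condition into a square-root equation is immediate from the defining formula of the product in the group model.
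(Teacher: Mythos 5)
Your proposal is correct and follows essentially the same route as the paper: iterate midpoint convexity with $e$ to obtain a chain of square roots in $F$, identify squaring with $\tau$ via Lemma~\ref{lem:tau-K-saturation}(i), conclude $x\in\bigcap_i\tau^i(G)$, and apply Theorem~\ref{HoleThm}. The only difference is cosmetic (your explicit reverse induction versus the paper's "iteration of this argument").
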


\begin{proof}
Let $x\in F$. Then, by midpoint convexity, there is $x'\in F$ such that $x=s_{x'}(e) = \tilde\mu(x',e) = x'^2 = \tau(x')$, where $\tilde\mu$ is the multiplication map of the group model from Section~\ref{model23} and the last equality holds by Lemma~\ref{lem:tau-K-saturation}(i). Iteration of this argument implies that for every $n\in\N$ there is $z_n\in F$ such that $z_n^{2^n}=\tau^n(z_n)=x$. Hence $x\in \bigcap_{i=1}^\infty \tau^i(G)$ (respectively, $x\in \bigcap_{i=1}^\infty \ol{\tau}^i(\ol{G})$ ) and, thus, $x$ is diagonalizable by the preceding theorem.
\end{proof}

\begin{corollary}\label{CorHoles} In every non-spherical Kac--Moody symmetric space (reduced or unreduced) there exists a pair of points that do not admit a midpoint and therefore do not lie on a common geodesic. \qed
\end{corollary}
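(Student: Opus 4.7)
The plan is to work in the group model $(\tau(G), \mT)$ of the unreduced Kac--Moody symmetric space (and analogously in $(\overline{\tau}(\overline{G}), \mT)$ for the reduced case) and exploit the fine structure of the descending chain $\tau(G) \supseteq \tau^2(G) \supseteq \tau^3(G) \supseteq \cdots$ provided by Theorem~\ref{HoleThm}.

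First, I would observe that the chain $\tau^n(G)$ is strictly decreasing, at least at the first step. Indeed, if one had $\tau^2(G) = \tau(G)$, then by induction $\tau^{n}(G) = \tau(G)$ for every $n \geq 1$, hence $Q = \bigcap_n \tau^n(G) = \tau(G)$, contradicting the non-sphericity assertion of Theorem~\ref{HoleThm}. I would therefore fix some element $x \in \tau(G) \setminus \tau^2(G)$.

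Next, I would unpack what it means for $e$ and $x$ to admit a midpoint in $\tau(G)$. Recall that the multiplication in the group model is $\mT(s,t) = st^{-1}s$. If $m \in \tau(G)$ is a midpoint of $e$ and $x$, then $x = s_m(e) = \mT(m,e) = m e^{-1} m = m^2$. Since $m \in \tau(G)$, Lemma~\ref{lem:tau-K-saturation}(i) gives $m^2 = \tau(m)$, and hence $x = \tau(m) \in \tau(\tau(G)) = \tau^2(G)$, contradicting the choice of $x$. Therefore $e$ and $x$ do not admit a midpoint in $\tau(G)$. Translating this back through the isomorphism $\hat\tau : G/K \to \tau(G)$, the points $eK$ and $\hat\tau^{-1}(x)$ in the coset model admit no midpoint either.

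To conclude the second assertion, I would note that if two points $p,q$ in any topological reflection space lie on a common geodesic $\gamma \cong \mathbb{E}^1$, then they automatically admit a midpoint: in the Euclidean model the arithmetic mean $\tfrac{1}{2}(\varphi^{-1}(p) + \varphi^{-1}(q))$ gives such a midpoint in $\gamma$, and hence in the ambient space. The contrapositive then yields: points without a common midpoint cannot lie on a common geodesic. Finally, the identical argument carried out with $\overline{\tau}(\overline{G})$, $\overline{\tau}^n(\overline{G})$ and $\overline{Q}$ in place of $\tau(G)$, $\tau^n(G)$ and $Q$ handles the reduced Kac--Moody symmetric space. The only non-obstacle worth flagging is that one must make sure the midpoint $m$, if it exists, genuinely lies in $\tau(G)$ --- but this is automatic since $\tau(G)$ is the entire underlying set of the symmetric space in the group model.
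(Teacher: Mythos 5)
Your proof is correct and follows essentially the same route as the paper: the corollary is stated with an immediate \qed because it is exactly the first step of the proof of Corollary~\ref{lem:mc-ss} (a midpoint $m$ of $e$ and $x$ in the group model forces $x=m^2=\tau(m)\in\tau^2(G)$) combined with Theorem~\ref{HoleThm}, and your induction showing $Q\neq\tau(G)\Rightarrow\tau^2(G)\subsetneq\tau(G)$ makes this deduction fully explicit. The only (harmless) difference is that you work with the single inclusion $\tau^2(G)\subsetneq\tau(G)$ rather than invoking the full statement of Corollary~\ref{lem:mc-ss} about diagonalizable elements.
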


For instance, the elements $\id=\tau(\id)$ and $\tau(u)$ from Example~\ref{Hole} do not admit a midpoint.

\begin{remark}
The preceding corollary illustrates that Kac--Moody symmetric spaces suffer from exactly the same deficits as the {\em masures} introduced in \cite[Section~3]{GaussentRousseau08} and discussed in detail in \cite{Rousseau:2011}. 
\end{remark}

Despite the lack of geodesics expressed by Corollary \ref{CorHoles} one nevertheless has the following:

\begin{prop}[{cf.\ \cite[Proposition~6.4]{Horn:Decomp}}] \label{lem:geod-conn}
Kac--Moody symmetric spaces are geodesically connected. In particular, \[G = \bigcup_{n \in \N} (KAK)^n.\]
\end{prop}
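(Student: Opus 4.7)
The two claims are essentially equivalent, and I would prove them together by first establishing the decomposition $G = \bigcup_n (KAK)^n$ and then reading off the desired chains of geodesic segments from it.

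To establish the decomposition, I would exploit that $G$ is generated by its standard rank-one subgroups $G_{\alpha_i}$ ($1 \le i \le n$) by definition of a Kac--Moody group. Each $G_{\alpha_i}$ is isomorphic as a topological group to $\mathrm{(P)SL}_2(\R)$ by Proposition~\ref{TopologyGSummary}(vi), so the classical Cartan decomposition gives $G_{\alpha_i} = K_{\alpha_i}A_{\alpha_i}K_{\alpha_i}$, where $K_{\alpha_i} := G_{\alpha_i}^{\theta_i} \subset K$ is its maximal compact subgroup and $A_{\alpha_i} \subset A$ is the positive part of its diagonal torus. Hence $G_{\alpha_i} \subset KAK$, and any $g \in G$, being a finite product of elements from these subgroups, must lie in $(KAK)^m$ for some $m$. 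The analogous argument applies to $\overline{G}$, which is likewise generated by the images of the $G_{\alpha_i}$'s.

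For geodesic connectedness of $G/K$, by $G$-equivariance it suffices to connect $eK$ to an arbitrary point $gK$. Using $(KAK)^n = K(AK)^n$ (absorbing $K \cdot K = K$), I would write $g = k_0 a_1 k_1 \cdots a_n k_n$ with $k_i \in K$ and $a_i \in A$, then set $g_0 := k_0$, $g_i := g_{i-1} a_i k_i$ for $i \geq 1$, and $p_i := g_i K$, so that $p_0 = eK$ and $p_n = gK$. For each $i$, the standard flat $F_i := g_{i-1}AK$ is Euclidean of dimension $n$ by Proposition~\ref{StdFlatsEuclidean}, and it contains both $p_{i-1} = g_{i-1}K$ (corresponding to the $A$-factor $e$, after absorbing $k_{i-1} \in K$) and $p_i = g_{i-1}a_i K$ (corresponding to the $A$-factor $a_i$, after absorbing $k_i \in K$). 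Since any two points in a Euclidean flat are joined by a geodesic segment, concatenating the $n$ segments produced in the flats $F_1, \ldots, F_n$ yields the desired piecewise geodesic curve from $eK$ to $gK$. The same argument transfers verbatim to $\overline{G}/\overline{K}$.

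No serious obstacle is anticipated. The only point requiring care is checking that consecutive legs of the chain genuinely share their common endpoint, which reduces to the trivial identity $gK = gkK$ for $k \in K$. The construction is essentially a synthetic analogue of breaking a path in a classical Riemannian symmetric space into ``Cartan legs'', with the caveat that in the Kac--Moody setting the total number of legs cannot in general be reduced to one, as shown by Corollary~\ref{CorHoles}.
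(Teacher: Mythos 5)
Your proof is correct, but it follows a genuinely different route from the paper's. The paper works in the group model $\tau(G)$ and starts from the Iwasawa decomposition: by Proposition~\ref{tauGexplicit}(iii) it writes $y=\tau(u_1\cdots u_k t)$ with $t\in A$ and each $u_i$ in a positive real root group $U_{\beta_i}$, and then the key step is to show that each elementary factor $\tau(u_it_i)\in A\langle U_{\beta_i},U_{-\beta_i}\rangle$ stabilizes a pair of opposite panels of the twin building and is therefore diagonalizable by Lemma~\ref{lem:sym-ss}, hence lies in a standard flat through $e$; the chain of flats is then obtained by twisted translation. You instead avoid the Iwasawa decomposition and the diagonalizability lemma altogether: you assemble $G=\bigcup_n(KAK)^n$ from the classical rank-one Cartan decompositions $G_{\alpha_i}=K_{\alpha_i}A_{\alpha_i}K_{\alpha_i}$ of the generating subgroups $G_{\alpha_i}\cong\mathrm{(P)SL}_2(\R)$ (Proposition~\ref{TopologyGSummary}(vi)), and the flats $F_i=g_{i-1}AK$ containing consecutive points are then visible by inspection, with Proposition~\ref{StdFlatsEuclidean} supplying the Euclidean structure needed to produce the segments. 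Your argument is the more elementary and self-contained of the two; also, you obtain the decomposition $G=\bigcup_n(KAK)^n$ as an input rather than, as in the paper, a byproduct. What the paper's route buys is that the resulting geodesic segments are indexed by positive root groups and admit the refactoring freedom recorded in Remark~\ref{5.14}, which is exactly the form of the construction exploited later in the analysis of the causal pre-order (Theorem~\ref{SemigroupExplicit}), where one needs to slide the intermediate points along root groups while keeping the legs causal. One trivial caveat in your construction: if $p_{i-1}=p_i$ the corresponding leg is degenerate (a geodesic segment is required to have non-empty relative interior) and should simply be dropped.
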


\begin{proof}
  One needs to show that any pair $x, y \in \tau(G)$ can be connected by a piecewise geodesic curve. The resulting geodesic connectedness of $G/K$ then implies that of $\ol{G}/\ol{K}$.
  
By transitivity of the action of the group $G$ on the symmetric space $\tau(G)$ one may assume without loss of generality that $x=e$. By Proposition~\ref{tauGexplicit}(iii) one can write $y=\tau(u_1\cdots u_kt)$ with $t \in A$ and 
$u_i\in U_{\beta_i}$ for some $\beta_i\in\Phi^+$.
 
 For $\alpha\in\Phi^+$ and $u\in U_\alpha$ and $t \in A$, the element \[\tau(ut) = ut\theta(t)^{-1}\theta(u)^{-1} = ut^2\theta(u)^{-1}\in A \langle U_\alpha, U_{-\alpha} \rangle\] (cf.~Lemma~\ref{tauonT}) stabilizes two opposite spherical residues (in fact, two opposite panels), whence is diagonalizable by Lemma~\ref{lem:sym-ss}.
 Applying this to $u_it_i$ with $t_i=1$ for $1\leq i < k$ and $t_k=t$, one obtains standard flats $F_{i}'$ containing $e$ and $\tau(u_it_i)$ and, thus, geodesic segments joining $e$ and $\tau(u_it_i)$. Then $F_{i}:=u_1t_1\cdots u_{i-1}t_{i-1} \ast F_{i}'$ is a standard flat containing $\tau(u_1t_1\cdots u_{i-1}t_{i-1})$ and $\tau(u_1t_1\cdots u_it_i)$. Setting $x_0=e$ and $x_{i}:=\tau(u_1t_1\cdots u_it_i)$ for $0\leq i\leq k$, one has 
 $x_i \in F_i\cap F_{i+1}$ and, moreover, $x_{k}=y$.
The claim follows.
\end{proof}

We have proved Theorem~\ref{thm1.8}.

\begin{remark} \label{5.14}
  Note that in the proof of the preceding proposition one actually has quite some freedom in choosing the individual geodesic segments. For instance, by \ref{RGD6} for any factorization $t=t_1\cdots t_k$ within $A$ there exist $u'_i \in U_{\beta_i}$ such that \[u_1\cdots u_kt = u'_1t_1 \cdots u'_kt_k.\]
  Of course, the argument in the proof applies to any such factorization.
\end{remark}

\subsection{The classification of maximal flats} \label{classmaxflats}

The methods for analyzing flats developed so far allow one to characterize the maximal (weak) flats in Kac--Moody symmetric spaces. The proof of the following theorem makes use of the various different models of the Kac--Moody symmetric space, in particular the group model. We recall from Convention \ref{ConventionTopologies} that we always equip the group model with the external topology (which we can show to coincide with the internal topology in the two-spherical case, but currently not in general). This has the effect that the coset model and the group model are isomorphic as topological reflection spaces, and in particular flats in one of them correspond to flats in the other.

\begin{theorem} \label{maximalflatsareeuclidean}
Every weak flat in a Kac--Moody symmetric space (reduced or unreduced) is contained in a standard flat. In particular, 
\begin{enumerate}
\item standard flats are exactly the maximal (weak) flats;
\item all weak flats are Euclidean, hence all weak flats are flats;
\item $G$, respectively $\ol{G}$ acts transitively on maximal (weak) flats.
\end{enumerate}
\end{theorem}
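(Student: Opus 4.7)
The plan is to prove the central claim that every weak flat $F$ in either $\XXX_G$ or $\overline{\XXX}_G$ is contained in a standard flat; assertions (i), (ii), (iii) then follow formally. Indeed, (i) is the definition of maximality, (ii) follows because a closed midpoint-convex reflection subspace of $\mathbb{E}^n$ is an affine subspace by Example~\ref{ExampleFlat}, and (iii) is immediate from the transitivity of $G$ on standard flats recorded in Corollary~\ref{StdFlatTori}. The $G$-equivariant projection $\pi_{\mathbf A}: G/K \to \overline G/\overline K$ has Euclidean central fibres and preserves axioms (F1)--(F3), which together with Corollary~\ref{StdFlatTori} reduces the problem to the unreduced case. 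I would then work in the group model $(\tau(G),\widetilde\mu)$ with its twisted $G$-action, and translate so that $e \in F$.

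By Corollary~\ref{lem:mc-ss} every element of $F$ is then diagonalizable, hence stabilizes some twin apartment of $\Delta$ chamberwise and lies in some maximal torus of $G$. The heart of the proof --- and its main obstacle --- is to promote this individual diagonalizability to a simultaneous statement: all elements of $F$ lie in a common maximal torus $T' = gTg^{-1}$ of $G$. The strategy is to exploit the weakly abelian axiom to force group-theoretic commutation of any two $x,y \in F$. Rewriting (F3) via Lemma~\ref{CapraceAxiom} yields the equivalent assertion $(s_x s_z s_y)^2(z) = z$ for every $z \in \XXX$; specializing $z$ first to the basepoint and then to points of the form obtained by applying the twist map to a real root subgroup element, and combining the resulting relations with Lemma~\ref{lem:sym-ss} and the refined Birkhoff decomposition (Lemma~\ref{fine birkhoff}), should imply that $x$ and $y$ centralize one another in $G$. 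Two commuting diagonalizable elements of $G$ necessarily stabilize a common twin apartment chamberwise, and an inductive closure argument applied to the reflection subspace generated by finitely many elements of $F$ then places all of $F$ inside a single conjugate of $A$.

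The delicate point in the previous paragraph is the passage from \emph{weakly} abelian to commutative: in a general group, weak commutativity is strictly weaker than commutativity, so the argument must genuinely use diagonalizability, the twin-building interpretation of semisimple elements via Lemma~\ref{lem:sym-ss}, and the Iwasawa decomposition, rather than being purely formal. Once $F$ is contained in a single maximal torus $T'$, Proposition~\ref{propthreemodelsflats}(ii) identifies the pertinent intersection with the standard flat $F[g] = g\ast A$ associated with $T' = gTg^{-1}$, so that $F \subset F[g]$. By Proposition~\ref{StdFlatsEuclidean}, $F[g]$ is Euclidean of dimension $n$, and therefore $F \subset F[g] \cong \mathbb E^n$ is itself an affine subspace by Lemma~\ref{LemmaBasicFlats}(iii) and Example~\ref{ExampleFlat}, which completes the proof.
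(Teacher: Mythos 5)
Your overall architecture matches the paper's: work in the group model with $e\in F$, use Corollary~\ref{lem:mc-ss} to diagonalize individual elements, upgrade weak abelianness to genuine commutativity, place $F$ in a single maximal torus, and finish with Proposition~\ref{propthreemodelsflats} and the dimension count. The endgame and the formal deductions of (i)--(iii) are fine. But the step you yourself flag as ``the heart of the proof'' is not actually proved: you assert that specializing $z$ in the weakly abelian identity to the basepoint and to twist-images of root-group elements, ``combined with'' Lemma~\ref{lem:sym-ss} and the Birkhoff decomposition, ``should imply'' that $x$ and $y$ centralize each other. That is a wish, not an argument, and it is precisely where all the work lies. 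The paper's route is quite different in its details: specializing only to $z=e$ gives the weaker relation $[xy,yx]=e$ (equivalently $xy^2x=yx^2y$), \emph{not} $[x,y]=e$; one then needs midpoint convexity a second time (a midpoint $x'$ of $e$ and $x$ with $x'^2=x$ and $x'yx'\in F$) to show that the \emph{product} $xy$ is diagonalizable; a CAT(0) fixed-point/projection argument to find opposite spherical residues stabilized by both $xy$ and $yx=\theta(xy)^{-1}$; an explicit matrix computation inside the reductive Levi of that residue (using $yx=(xy)^T$ in suitable coordinates together with $[xy,yx]=e$) to produce a common fixed chamber; and only then the conclusion $xy=yx$. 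None of these ingredients appears in your sketch, and I do not see how your proposed specializations of $z$ plus Birkhoff would substitute for them.

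Two further points. First, your auxiliary claim that ``two commuting diagonalizable elements of $G$ necessarily stabilize a common twin apartment chamberwise'' is itself nontrivial in the Kac--Moody setting and is not how the paper proceeds: it passes to the closures $\overline{\langle x_i\rangle}\cong(\R,+)$ and invokes \cite[Proposition~4.4]{Caprace09} for commuting \emph{connected} subgroups, plus connectedness and discreteness of $W$ to land inside a single torus; for bare elements the assertion would need justification. Second, your reduction of the reduced case to the unreduced case via $\pi_{\mathbf A}$ is not automatic either --- the preimage of a weak flat need not obviously be midpoint convex or weakly abelian (the identities only hold modulo the central fibre) --- which is presumably why the paper runs the two cases in parallel rather than reducing one to the other. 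The first of these is a secondary issue; the missing commutation argument is the genuine gap.
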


\begin{proof}
Let $F\subset \tau(G)$ be a weak flat. It suffices to show that $F$ is contained in a standard flat corresponding to some maximal (split) torus of $G$. Since $G$ acts transitively on $\tau(G)$, one may additionally assume without loss of generality that $e\in F$. Note that this assumption will in fact enable us to prove that the flat $F$ is contained in a standard flat of a {\em $\theta$-split} maximal torus, i.e., that $\theta$ acts by inversion on that maximal torus.

\medskip
From now on assume $e \in F$, let $x,y\in F$, and use the notation for the group model from Section~\ref{model23}.

\begin{claim} \label{claim:[xy,yx]=1} $[xy,yx]=e$, or equivalently, $xy^2x = yx^2y$. \end{claim}
One computes 
\[ \mT(x, \mT(e, \mT(y,e)) = \mT(x, \mT(e, y^2)) = \mT(x, y^{-2}) = xy^2x \]
and, similarly,
\[ \mT( y, \mT(e, \mT(x,e)) = yx^2y. \]
Hence, as $F$ is weakly abelian, 
\begin{alignat*}{3}
&&\ & xy^2x = \mT(x, \mT(e, \mT(y,e)) = \mT( y, \mT(e, \mT(x,e)) = yx^2y \\
&\iff&& (xy)(yx) = (yx)(xy) \\
&\iff&& [xy,yx]=e.
\end{alignat*}

\begin{claim} \label{claim:xy-diag} $xy$ is diagonalizable. \end{claim}
By midpoint convexity of $F$, there is a midpoint $x'\in F$ between $e$
and $x$, whence $x'^2 = s_{x'}(e) = x$. Moreover, $s_e(y)=y^{-1}\in F$, and so
$s_{x'}(y^{-1})= x'yx' \in F$.
By Corollary~\ref{lem:mc-ss} the element $x'yx'$ is diagonalizable.
Hence, by definition, there exists a twin apartment $\Sigma$ of the twin building of $G$ which is fixed chamberwise by $x'yx'$.
Let $c$ be a chamber of $\Sigma$ and set $(\Sigma',c'):=x'.(\Sigma,c)$. Then 
\[
xy.(\Sigma',c') = xyx'.(\Sigma,c) = x' (x' y x'). (\Sigma,c) = x' (\Sigma,c) = (\Sigma',c'),
\]
and so $xy$ stabilizes $\Sigma'$ and fixes $c'$. That is, $xy$ fixes $\Sigma'$ pointwise and, by definition, is diagonalizable.

\begin{claim} \label{claim:xy-yx-stab-R} In each half $\Delta_\pm$ of the twin building there exist opposite spherical residues $R_+ \subset \Delta_+$ and $R_- = \theta(R_+) \in \Delta_-$ stabilized by both $xy$ and $yx$. \end{claim}

Both $xy$ and $yx=\theta(y)^{-1}\theta(x)^{-1}=\theta(xy)^{-1}$ (by Lemma~\ref{lem:tau-K-saturation}(i) plus $x, y \in F \subset \tau(G)$) are diagonalizable and, thus, both fix some twin apartment
chamberwise. In particular, both admit fixed points in the CAT(0) realizations $X_\pm$
of either half $\Delta_\pm$ of the twin building. (See \cite{Davis98}, also \cite[Section~2.1]{Caprace09}.)

One can now find a common fixed point of $xy$ and $yx$ in $X_+$ by a standard commutation argument as follows:
For $p\in\Fix(xy)$, one has 
\[ yx.p = yx.(xy.p) \overset{\text{Claim~\ref{claim:[xy,yx]=1}}}= xy.(yx.p), \]
whence $yx.p\in\Fix(xy)$.
Thus the convex set $\Fix(xy)$ is preserved by the isometry $yx$. Let $q$
be a point fixed by $yx$ and $r_+$ its (unique) projection to $\Fix(xy)$ in the CAT(0) space $X_+$.
Since $\Fix(xy)$ is preserved by $yx$, it follows that $r_+$ is also fixed by $yx$.
The point $r_+ \in X_+$ corresponds to a spherical residue $R_+$ of $\Delta_+$ stabilized by both $yx$ and $xy$.

Consequently, the residue $R_- := \theta(R_+)$ opposite $R_+$ is stabilized by both $\theta(xy)=(yx)^{-1}$ and $\theta(yx)=(xy)^{-1}$ and, hence, also by $yx \in \left\langle (yx)^{-1} \right\rangle < G$ and $xy \in \left\langle (xy)^{-1} \right\rangle < G$. 

\begin{claim} \label{claimopposite} $xy$ fixes a chamber $d\in R_+$ and $yx$
fixes $\widehat d:=\proj_{R_+} \theta(d)$ opposite $d$ in $R_+$. \end{claim}
Since $xy$ is diagonalizable, it fixes a twin apartment chamberwise and so there is a chamber $c\in\Delta_+$ fixed by $xy$. Thus
the chamber $d:=\proj_{R_+}(c)$ is also fixed by $xy$. The involution $\theta$ induces an involution $\theta_{R_+}(c):=\proj_{R_+}(\theta(c))$ on $R_+$,
which maps every chamber in $R_+$ to a chamber opposite in $R_+$.
The chamber $\widehat d:=\theta(d)$ is fixed by $yx=\theta(xy)^{-1}$:
\[ yx. \widehat d  = 
\proj_{yx.R_+}(yx.\theta(d))
\overset{\text{Claim~\ref{claim:xy-yx-stab-R}}}= 
\proj_{R_+}(yx.\theta(d))
= \proj_{R_+}(\theta((xy)^{-1}.d))
= \proj_{R_+}(\theta(d)) = \widehat d.
\]

\begin{claim} There exists a chamber $d'\in R_+$ fixed by both $xy$ and $yx$.  \end{claim}

By Claims~\ref{claim:xy-yx-stab-R} and \ref{claimopposite} the elements $xy$ and $yx$ are contained in opposite
Borel subgroups of the reductive split real Lie group stabilizing the opposite spherical residues $R_+$ and $\theta(R_+)$ (cf.\ \cite[Corollary~7.16]{HartnickKoehlMars}).

This reductive Lie group is a subgroup of $\GL_{n+1}(\R)$. By \cite[Proposition~16.1.5]{HilgertNeeb12} one can model the stabilizer of $d$ as lower triangular matrices, the stabilizer of $\widehat d$ as upper triangular matrices,
and $\theta$ as transpose-inverse. Thus $yx = \theta(xy)^{-1} = (xy)^T$.
One concludes that both $xy$ and $yx$ are diagonal
in this coordinatization:
Suppose 
\[xy=
\begin{pmatrix}
v_1    & v_2    & \ldots & v_n    \\
0      & *      & \ldots & *      \\
\vdots & \vdots & \ddots & \vdots \\
0      & 0      & \ldots & *
\end{pmatrix}
\quad\text{ and thus }\quad
yx = (xy)^T =
\begin{pmatrix}
v_1    & 0      & \ldots & 0      \\
v_2    & *      & \ldots & 0      \\
\vdots & \vdots & \ddots & \vdots \\
v_n    & *      & \ldots & *
\end{pmatrix}.
\]
Computing the product $xy\cdot yx$ yields the top left entry $v_1^2+\cdots+v_n^2$.
On the other hand, the top left entry of $yx\cdot xy$ is $v_1^2$. By Claim~\ref{claim:[xy,yx]=1} one has $[xy,yx]=e$ and hence $v_2^2+\cdots+v_n^2=0$ and so $v_2=\ldots=v_n=0$.
Inductively one obtains that $xy$ and $yx$ act by the same diagonal matrix on $R_+$.
Therefore there is a chamber $d'$ stabilized by both $xy$ and $yx$.

\begin{claim} \label{claim: xy=yx} $xy = yx$. \end{claim}

Since $xy$ and $yx$ stabilize a chamber $d'$, one has
$\theta(d') = \theta(xy.d') =  x^{-1} y^{-1}.\theta(d')$, thus $\theta(d')=yx.\theta(d')$.
It follows that $xy$ and $yx$ stabilize both $d'$ and $\theta(d')$ and, hence, fix a $\theta$-stable twin apartment. 
Thus they are contained in a common $\theta$-split torus. As an immediate consequence, 
$(xy)^{-1}=\theta(xy)=\theta(x)\theta(y)=x^{-1} y^{-1}$.
Hence $xy=yx$.

\begin{claim} \label{claim:commutative} For each $x, y \in F$ one has $y^{-1}, xy \in F$. That is, $F$ is a commutative subgroup of $G$. \end{claim}

Recall that $e \in F$ by assumption. Let $x'\in F$ be a midpoint of $x$ and $e$, i.e., $x=s_{x'}(e)=x'^2$. Then $s_e(y)=y^{-1}\in F$ and
$s_{x'}(y^{-1})=x'yx'\overset{\text{Claim~\ref{claim: xy=yx}}}= x'^2y=xy \in F$. This finishes the proof of Claim~\ref{claim:commutative}.

\medskip
Concerning the statement of the following claim, we observe that the intersection of midpoint convex reflection subspaces of $F$ is again a midpoint convex reflection subspace. In particular, if $ x_1, \ldots, x_t \in F$ then there exists a unique smallest midpoint convex reflection subspace of $F$ containing $x_1, \dots, x_t$. We denote this subspace by $\langle x_1, \ldots, x_t \rangle$.

\begin{claim} \label{claim:torus} For every finite subset $\{ x_1, \ldots, x_t \} \subset F$ the topological closure $\ol{\langle x_1, \ldots, x_t \rangle}$ of ${\langle x_1, \ldots, x_t \rangle}$ is a diagonalizable subgroup of $G$, i.e., is contained in a maximal torus of $G$. Moreover, there exists $m \in \N$ such that $\ol{\langle x_1, \ldots, x_t \rangle} \cong (\R_{>0}^m,\cdot) \cong (\R^m,+)$.  \end{claim}

Since the weak flat $F$ is closed, one has $\ol{\langle x_1, \ldots, x_t \rangle} \subseteq F$. Since $F$ is a commutative group, any reflection subspace is a subgroup, and so is its closure.
By Corollary~\ref{lem:mc-ss} and the fact that maximal tori of $G$ are closed (see \cite[Corollary~7.17]{HartnickKoehlMars}), each of the subgroups $H_i := \ol{\langle x_i \rangle } \leq F$ is diagonalizable. Moreover, $H_i \cong (\R,+)$ by direct computation in any torus containing $x_i$ (see also Proposition~\ref{PropTranslationGroups}). The groups $H_i$ commute with one another by Claim~\ref{claim:commutative}, whence \cite[Proposition~4.4]{Caprace09} implies that $\ol{\langle x_1, \ldots, x_t \rangle}$ normalizes a maximal torus $T$ of $G$. Moreover, since $W = N_G(T)/T$ is discrete and $\ol{\langle x_1, \ldots, x_t \rangle}$ is connected, one actually has $\ol{\langle x_1, \ldots, x_t \rangle} \leq T$. Connectedness then additionally implies that $\ol{\langle x_1, \ldots, x_t \rangle} \leq A = \tau(T) \cong (\R^n,+)$, where $n$ is the rank of $G$. (Note that in $\ol{G}$ one obtains a torus isomorphic to $(\R^{\rk(\mathbf{A})},+)$ instead.) The final statement follows from the classification of closed connected subgroups of $(\R^n,+)$.

\begin{claim} $F$ is contained in a standard flat. \end{claim}

Let $m := \max \left\{ \dim_\R\left(\ol{\langle x_1, \ldots, x_t \rangle}\right) \mid t \in \N, x_1, \ldots, x_t \in F \right\} \leq n$, where $n$ is the rank of $G$, and let $\{ x_1, \ldots, x_t \} \subset F$ such that $\dim_\R\left(\ol{\langle x_1, \ldots, x_t \rangle}\right)=m$.
Then $F = \ol{\langle x_1, \ldots, x_t \rangle}$: indeed, otherwise there exists $x_{t+1} \in F \backslash \ol{\langle x_1, \ldots, x_t \rangle}$ and $\dim_\R\left(\ol{\langle x_1, \ldots, x_t, x_{t+1} \rangle}\right) = m+1$, a contradiction.

\medskip \noindent
The proof for $\ol{G}$ is essentially the same.
\end{proof}

\begin{corollary}\label{ConsequencesFlatClassification}
\begin{enumerate}
\item $G$ acts strongly transitively on $G/K$.
 \item $G$ and $\ol{G}$ and $\Ad(G)$ act strongly transitively on $\ol{G}/\ol{K}$.
\item Maximal flats in $\ol{G}/\ol{K}$ lift uniquely to maximal flats in $G/K$.
\end{enumerate}
\end{corollary}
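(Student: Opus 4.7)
The plan is to deduce all three assertions as essentially formal consequences of Theorem~\ref{maximalflatsareeuclidean}, combined with the abstract strong-transitivity criterion of Proposition~\ref{PropAutomaticallyStrong} and the torus parametrization of standard flats in Corollary~\ref{StdFlatTori}.

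For assertion (i), I would first record that $\mathrm{Trans}(G/K,\mu) = G_{\mathrm{eff}}$ by Proposition~\ref{PropMainTransKM}, so the natural action of $G$ on $G/K$ factors through a subgroup $H := G_{\mathrm{eff}}$ lying between $\mathrm{Trans}(G/K)$ and $\mathrm{Aut}(G/K)$ (in fact equal to $\mathrm{Trans}(G/K)$). By Theorem~\ref{maximalflatsareeuclidean} every maximal weak flat in $G/K$ is a standard flat, all standard flats are Euclidean, and $G$ acts transitively on them. Proposition~\ref{PropAutomaticallyStrong} then immediately yields strong transitivity of $H$, hence of $G$. Assertion (ii) is entirely analogous: Proposition~\ref{PropMainTransKM} identifies $\mathrm{Trans}(\overline{G}/\overline{K},\overline{\mu}) = \mathrm{Ad}(G)$, and the actions of $G$ and $\overline{G}$ on $\overline{G}/\overline{K}$ both factor through $\mathrm{Ad}(G)$; again Theorem~\ref{maximalflatsareeuclidean} (applied to $\overline{G}/\overline{K}$) provides transitivity on the Euclidean maximal flats, and Proposition~\ref{PropAutomaticallyStrong} promotes this to strong transitivity for each of $\mathrm{Ad}(G)$, $\overline{G}$, and $G$.

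For assertion (iii), the content is that the projection $\pi_{\mathbf{A}}: G/K \to \overline{G}/\overline{K}$ restricts to a bijection between maximal flats in the two spaces. By Theorem~\ref{maximalflatsareeuclidean}(i) maximal flats on either side coincide with standard flats, and by Corollary~\ref{StdFlatTori} both sets of standard flats are in $G$-equivariant bijection with $G/N_G(T) = \overline{G}/N_{\overline{G}}(\overline{T})$, compatibly with $\pi_{\mathbf{A}}$: namely $gN_G(T) \mapsto gAK$ on one side and $gN_G(T) \mapsto \overline{g}\overline{A}\,\overline{K}$ on the other. Since $\pi_{\mathbf{A}}(gAK) = \overline{g}\overline{A}\,\overline{K}$, both the existence and uniqueness of the lift are immediate from this bijection.

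I do not expect any genuine obstacle; the work has already been done. The only point that requires a little care is to verify, in (i) and (ii), that the hypothesis $\mathrm{Trans}(\XXX) < H < \mathrm{Aut}(\XXX)$ in Proposition~\ref{PropAutomaticallyStrong} is satisfied by the groups in question, which is guaranteed by the identification of the transvection groups in Proposition~\ref{PropMainTransKM}. For (iii), one should double-check that the bijection in Corollary~\ref{StdFlatTori} intertwines the two standard-flat maps with $\pi_{\mathbf{A}}$, which is clear from the defining formulas $gAK \mapsto \overline{g}\overline{A}\,\overline{K}$.
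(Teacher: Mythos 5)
Your proposal is correct and follows essentially the same route as the paper, which likewise derives the corollary from Theorem~\ref{maximalflatsareeuclidean} together with Corollary~\ref{StdFlatTori} and Proposition~\ref{PropAutomaticallyStrong} (the paper merely cites Lemma~\ref{LemmaKernelofAction} rather than Proposition~\ref{PropMainTransKM} to justify that $\mathrm{Ad}(G)$ acts on $\overline{G}/\overline{K}$). Your extra care in verifying the hypothesis $\mathrm{Trans}(\XXX) < H < \mathrm{Aut}(\XXX)$ is a welcome but inessential elaboration of the same argument.
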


\begin{proof} In view of Theorem \ref{maximalflatsareeuclidean} this follows from Corollary \ref{StdFlatTori} and Proposition~\ref{PropAutomaticallyStrong}. Note that $\Ad(G)$ indeed acts on $\ol{G}/\ol{K}$ by Lemma~\ref{LemmaKernelofAction}.
\end{proof}

We have established Theorem~\ref{IntroFlats}.

\section{Local and global automorphisms of Kac--Moody symmetric spaces} \label{aut}

We keep the notation of the previous section, i.e., $G$ denotes a simply connected centred split real Kac--Moody group of irreducible symmetrizable type, the group $\ol{G}$ denotes its semisimple adjoint quotient, and $\Ad(G)$ its adjoint quotient. Moreover, $\Delta = \Delta^-\sqcup \Delta^+$ denotes the twin building associated to the RGD systems of these groups.

\subsection{Automorphisms of Kac--Moody groups}

The abstract automorphisms of the groups $G$, $\ol{G}$ and $\Ad(G)$ have been classified in \cite{Caprace09}. Since $\R$ does not admit any non-trivial field automorphism, this classification can be stated as follows.

\begin{theorem}[{Caprace \cite[Theorem~4.2]{Caprace09}}]\label{ThmAut} Let $\GGG \in \{G, \ol{G}, \Ad(G)\}$. Then every automorphism of $\GGG$ can be written as a product of an inner automorphism of $\GGG$, a diagram automorphism, a diagonal automorphism and a power of the Cartan--Chevalley involution $\theta$.\qed
\end{theorem}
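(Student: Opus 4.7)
The plan is to reduce the classification to the known classification of automorphisms of the twin building $\Delta$, following the strategy of Caprace. Since $\mathbb{R}$ admits no non-trivial field automorphism, the ``field component'' of Caprace's general statement disappears, and one is left with four types of automorphisms: inner, diagonal, diagram, and powers of $\theta$.

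First, I would argue that every abstract automorphism $\alpha$ of $\mathcal{G}$ either preserves or swaps the two conjugacy classes of Borel subgroups $B_+$ and $B_-$. The group-theoretic characterization of these classes has to be intrinsic: one identifies them via a suitable notion of ``locally finite contracted subgroup'' or, in Caprace's approach, via the Levi decomposition of parabolics together with the asymptotic behaviour of root subgroups. Since the Cartan--Chevalley involution $\theta$ interchanges $B_+$ and $B_-$ (Lemma~\ref{CCinvB}), composing $\alpha$ with $\theta$ if necessary reduces to the case where $\alpha$ preserves the conjugacy class of $B_+$ (and therefore of $B_-$ by a parallel argument). In this case $\alpha$ induces an automorphism of the positive half of $\Delta$, and through the twinning also of the negative half; in other words it induces a simplicial automorphism of the twin building $\Delta$ commuting with the twinning.

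Second, I would invoke Tits' classification of automorphisms of twin buildings (or its Moufang refinement): any type-preserving automorphism is inner up to a diagram automorphism of $\Gamma_{\bf A}$. Composing with a suitable diagram automorphism and a suitable inner automorphism (which lifts to $\mathcal{G}$ since $\mathcal{G}$ acts strongly transitively on $\Delta$), we may assume $\alpha$ fixes the standard twin apartment $\Sigma$ chamberwise. Then $\alpha$ fixes the standard torus $T$ (respectively $\overline{T}$), and it stabilizes every root subgroup $U_\gamma$, $\gamma \in \Phi$. Since each $U_\gamma \cong (\R,+)$ and $\alpha|_{U_\gamma}$ must commute with the conjugation action of $T$ through the character $\gamma$, the restriction $\alpha|_{U_\gamma}$ is necessarily a scalar dilation $u \mapsto \lambda_\gamma \cdot u$, where $\lambda_\gamma \in \R^\times$; compatibility with the commutator relations \ref{RGD3} of the RGD system forces the family $(\lambda_\gamma)$ to come from a single element of an extended torus, producing a diagonal automorphism $\delta$.

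Third, after composing with $\delta^{-1}$ the automorphism $\alpha$ fixes every $U_\gamma$ pointwise. By axiom~(RGD5) the group $\mathcal{G}$ is generated by the $U_\gamma$, so $\alpha$ acts trivially on the adjoint quotient $\mathrm{Ad}(G)$. This already proves the theorem for $\mathcal{G} = \mathrm{Ad}(G)$. For $\mathcal{G} \in \{G, \overline{G}\}$ the residual ambiguity is an automorphism whose restriction to each root subgroup is trivial, which therefore acts trivially on $G/Z(G)$; such an automorphism is determined by its values on $T$ (respectively $\overline{T}$), and checking compatibility with the $\widetilde W$-relations in Section~\ref{extendedWeylgroup} shows that it lies in the group of diagonal automorphisms, thus completing the factorization.

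The main obstacle is the first step: extracting an intrinsic, automorphism-invariant characterization of the conjugacy class of $B_+$ inside $\mathcal{G}$. In the finite-dimensional semisimple case this is a routine consequence of Borel subgroups being precisely the maximal connected solvable subgroups, but in the Kac--Moody setting $B_+$ is neither solvable nor recognizable by elementary means. Caprace's resolution uses a refined notion of contraction and an analysis of which subgroups fix an end of $\Delta$; implementing that analysis in our topological setting, while avoiding any use of the (non-existent) differentiable structure on $\XXX_G$, is the delicate technical core of the argument and the reason we prefer to cite \cite[Theorem~4.2]{Caprace09} rather than reproduce the proof in full.
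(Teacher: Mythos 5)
The paper does not prove this statement at all: it is quoted verbatim as Caprace's Theorem~4.2 of \cite{Caprace09}, with a \verb|\qed| immediately after the statement, so the ``paper's own proof'' is just the citation. Your proposal is a reasonable outline of how Caprace's argument is organized, but as a proof it has a gap that you yourself name: the entire non-trivial content of the theorem is concentrated in your first step, namely showing that an \emph{abstract} automorphism of $\mathcal G$ permutes the conjugacy classes of $B_+$ and $B_-$ and hence induces an automorphism of the twin building. Once that is granted, the rest is comparatively routine; without it, nothing has been proved. Since you end by deferring precisely this step to \cite{Caprace09}, your argument is not an independent proof but a gloss on the same citation the paper uses, and it should be presented as such.

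Two further points where the sketch as written would not go through. First, your second step invokes ``Tits' classification of automorphisms of twin buildings'' to conclude that a type-preserving automorphism is inner up to a diagram automorphism; in this paper that kind of local-to-global rigidity (M\"uhlherr--Ronan, cf.\ Corollary~\ref{AutBuilding}) is only available under the two-spherical hypothesis, whereas Theorem~\ref{ThmAut} is stated for all generalized Cartan matrices satisfying Convention~\ref{ConventionCartanMatrix}. What you actually need is weaker --- only that the building automorphism \emph{induced by a group automorphism} can be normalized to fix a twin apartment chamberwise, which follows from strong transitivity --- but the appeal to building rigidity as stated is misplaced. Second, in the third step, knowing that $\alpha$ stabilizes each $U_\gamma\cong(\R,+)$ does not by itself force $\alpha|_{U_\gamma}$ to be a scalar dilation: abstract automorphisms of $(\R,+)$ are arbitrary $\Q$-linear maps, and $\alpha|_T$ is a priori an arbitrary abstract automorphism of $(\R^\times)^n$, so the claimed equivariance gives no linearity. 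The standard way to close this is to restrict $\alpha$ to each rank-one subgroup $G_i\cong\mathrm{(P)SL}_2(\R)$ and invoke the Borel--Tits/Schreier--van der Waerden description of abstract automorphisms of $\mathrm{SL}_2$ over $\R$ (where the absence of field automorphisms kills the semilinear part); this rank-one input is what actually produces the diagonal automorphism.
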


This result has  several immediate consequences. Firstly, every automorphism of $G$ preserves the center and hence descends to an automorphism of $\Ad(G)$. It also descends to an automorphism of $\ol{G}=G/C$, since $C\leq Z(G)$ is the unique complement to the torsion subgroup of $Z(G)$, cf. Proposition~\ref{kernelsemisimpleadjoint}, and hence $C$ is a characteristic subgroup of $Z(G)$ and thus of $G$.
One thus obtains homomorphisms
\begin{equation}\label{AutomorphismsDescend}
\Aut(G) \to \Aut(\ol{G}) \to \Aut(\Ad(G)).
\end{equation}
Secondly, these homomorphism are injective, because for each automorphism, there is a root subgroup not centralized by it; but the root subgroups can be seen in any central quotient of $G$.

Thirdly, it follows from the concrete description of automorphisms in Theorem~\ref{ThmAut} that every automorphism of $\Ad(G)$ or $\ol{G}$ can be extended to $G$. That is, the homomorphisms in \eqref{AutomorphismsDescend} are also surjective, and hence isomorphisms.

We compare these automorphisms to combinatorial automorphisms of the twin building $\Delta$. By an \Defn{automorphisms} of $\Delta$ we shall mean a self-map of the chamber set $\Delta = \Delta^+ \cup \Delta^-$ which preserves adjacency and opposition of chambers (but may swap the two halves of $\Delta$). An automorphism will be called \Defn{type-preserving} if it  preserves distances and codistances (and hence the underlying chamber system). Denote by ${\rm Aut}(\Delta)$ and ${\rm Aut}_S(\Delta)$ the groups of all automorphisms, respectively all type-preserving automorphisms of $\Delta$.

We can identify chambers of $\Delta$ with Borel subgroups of $G$, i.e., conjugates of $B^+$ or $B^-$. Every inner automorphism of $G$ certainly maps $B$ to a Borel subgroup; the same holds for diagram and diagonal automorphisms. Also, the Cartan--Chevalley involution swaps $B^+$ and $B^-$ and thus preserves the set of Borel subgroups. We deduce with Theorem \ref{ThmAut} that every automorphism of $G$ induces an automorphism of $\Delta$, and hence we obtain a homomorphism $\Aut(G) \to \Aut(\Delta)$. Moreover, all of the basic types of automorphisms of $G$ except the diagram automorphisms induced type-preserving automorphisms. We denote the group generated by all such automorphisms by ${\rm Aut}_S(G) < {\rm Aut}(G)$.

\begin{remark}\label{RemarkCoxeterVsDynkin}
  Recall from Definition~\ref{GCMdef} that the Dynkin diagram $\Gamma_{\mathbf{A}}$ arises from the Coxeter diagram of $(W,S)$ by adding certain labels. In particular, the automorphism group $\Aut(\Gamma_{\mathbf{A}})$ of the Dynkin diagram is a subgroup of the automorphism group $\Aut(W,S)$ of the Coxeter diagram.
\end{remark}

\begin{proposition}\label{AutBuilding}
The homomorphism $\Aut(G) \to \Aut(\Delta)$ is injective, and thus
\[ \Aut(G)  \cong \Aut(\ol{G}) \cong \Aut(\Ad(G)) \into \Aut(\Delta). \]
If $G$ is two-spherical and if $\Aut(\Gamma_{\mathbf{A}})=\Aut(W,S)$, then it is an isomorphism, and thus
\[ \Aut(G)  \cong \Aut(\ol{G}) \cong \Aut(\Ad(G)) \cong \Aut(\Delta). \]
\end{proposition}

\begin{proof}
 Every diagram automorphism of $G$ induces a non-trivial automorphism on each of the two halves of $\Delta$, and the Cartan-Chevalley involution swaps these two halves. It thus follows from Theorem \ref{ThmAut} that the kernel of the homomorphism $\Aut(G) \to \Aut(\Delta)$ necessarily lies in the subgroup of $\Aut(G)$ generated by the inner and the diagonal automorphisms of $G$, which is a group with an RGD system with abelian maximal torus, trivial center, and the same twin building $\Delta$. By \cite[Proposition~8.82]{AbramenkoBrown2008} therefore the kernel of the homomorphism $\Aut(G) \to \Aut(\Delta)$ is trivial. 

Now assume that $G$ is two-spherical and that $\Aut(\Gamma_{\mathbf{A}})=\Aut(W,S)$. To prove surjectivity, one needs to prove that any automorphism $\alpha \in \Aut(\Delta)$ is induced by an automorphism of $G$.

Each automorphism $\alpha$ of $\Delta$ induces a well-defined permutation of the diagram of $\Delta$, which necessarily has to be an automorphism of the underlying Coxeter diagram. Hence the automorphism $\alpha$ is the product of a type-preserving automorphism of $\Delta$ and a Coxeter diagram automorphism. If $G$ and $\Delta$ admit the same diagram automorphisms, i.e., if the automorphisms of the Dynkin diagram equal the automorphisms of the Coxeter diagram, one may assume that $\alpha$ is type-preserving.

Let $C_+$ and $C_- = \theta(C_+)$ be opposite chambers of the twin building $\Delta$. By the strongly transitive action of $G$ on $\Delta$ (see \cite[Lemma~6.70 and Theorem~8.9]{AbramenkoBrown2008}) there exists an inner automorphism of $G$ that maps the set $\{ \alpha(C_+), \alpha(C_-) \}$ onto the set $\{ C_+, C_- \}$. By composing $\alpha$ with this inner automorphism and, if necessary, the Cartan--Chevalley involution $\theta$, one may actually assume that the type-preserving automorphism $\alpha$ fixes the chambers $C_+$ and $C_-$.

If the diagram is two-spherical, then the extension theorem by M\"uhlherr and Ronan \cite[Theorem~1.2]{Muehlherr/Ronan:1995} (see also \cite[Theorem~5.213]{AbramenkoBrown2008}) implies that the type-preserving automorphism $\alpha$ is the unique extension that fixes $C_-$ of its restrictions to the residues of rank two containing $C_+$ in the positive half $\Delta_+$ of the twin building $\Delta$.

By inspection, those local rank-two restrictions are all induced by automorphisms of the corresponding split real Lie groups of rank two that as a family together provide an automorphism of the amalgam $\AAA(\mathbf{A})$ of fundamental subgroups of rank two of $G$. This amalgam automorphism, again using two-sphericity, induces a unique automorphism of $G$ by \cite[Theorem~7.22]{HartnickKoehlMars} (see Section~\ref{Subsection3B}) whose image under the natural map is $\alpha$.
\end{proof}
Note that in the two-spherical case we always have an isomorphism ${\rm Aut}_S(G) \cong {\rm Aut}_S(\Delta)$, irrespective of the automorphism group of the Dynkin diagram.
\begin{prop}\label{StructureOfAutG}
The group $\Out(G) := \Aut(G)/\Inn(G)$ is finite.
More explicitly, we have
\[ \Aut(G) \cong (\Ad(G) \rtimes (D \times \gen{\theta})) \rtimes \Aut(\Gamma_{\mathbf{A}}) \]
where $\Inn(G)\cong G/Z(G) = \Ad(G)$ and $D$ is a finite elementary abelian 2-group of diagonal automorphisms.
Moreover, $\theta$ commutes with the elements of $\Aut(\Gamma_{\mathbf{A}})$.
\end{prop}

\begin{proof}
Note that $\Aut(\Gamma_{\mathbf{A}})$, as the automorphism group of a finite graph, is finite and that the group generated by the involution $\theta$ has order 2. Thus the first claim follows from the second claim.

Now consider the subgroup $H$ of $\Aut(G)$ generated by the inner and diagonal automorphisms. By \cite[Theorem~4.2]{Caprace09}, it suffices to consider the diagonal automorphisms coming from a diagonal automorphism of fundamental rank-1 subgroups $G_{\alpha_i}\cong\SL_2(\R)$, with $\alpha_i\in\Pi$. But these are either inner, or are inner followed by conjugation with \[ \mtr{-1}001\in\GL_2(\R). \]
Denote the latter automorphisms of $G$ by $d_i$. Then $H$ is generated by $\Inn(G)$ together with $\tilde{D}:=\gen{d_1,\dots,d_n}$. Clearly $\tilde{D}$ is an elementary abelian 2-group, i.e., it is an $\F_2$-vector space. Hence it contains a complement $D$ to $\tilde{D}\cap \Inn(G)$, and one has $H\cong \Inn(G)\rtimes D$.

The Cartan--Chevalley involution $\theta$ centralizes the $d_i$, hence $\theta$ commutes with $D$ and of course normalizes $\Inn(G)$. Thus $H':=\gen{H,\theta} \cong \Inn(G) \rtimes (D\times \theta)$.

Finally, any diagram automorphism permute the $G_{\alpha_i}$ and hence normalize $\tilde{D}$; it also commutes with $\theta$. Thus $\Aut(\Gamma_{\mathbf{A}})$ normalizes $H'$. Since all elements of $H'$ centralize the Weyl group one moreover has $\Aut(\Gamma_{\mathbf{A}})\cap H' = \{1\}$.
\end{proof}

Since $\theta$ commutes with the diagonal automorphisms in $D$ and with diagram automorphisms, we also conclude the following:

\begin{corollary}\label{ConjClassOfTheta}
The $\Aut(G)$-conjugacy class of $\theta$ in $\Aut(G)$ coincides with its $\Ad(G)$-conjugacy class.
\end{corollary}

\begin{remark}
A large part of the proof of Proposition~\ref{StructureOfAutG} is dedicated to proving
that there are only finitely many diagonal automorphisms modulo inner automorphisms.  
This is also implied (but non-constructively)
by the fact, which is also true for (rational points of) algebraic groups, that the index of the adjoint quotient $\Ad(G)$ inside the adjoint split real Kac--Moody group of type $\mathbf{A}$ is finite: Indeed, the exact sequence $0 \to \FFF \to \ol{\TTT}^{sc} \to \TTT^{ad} \to 0$ of torus schemes (where $\ol{\TTT}^{sc}$ is the simply connected torus of the semisimple adjoint quotient $\ol{G}$ and $\TTT^{ad}$ is the torus of the adjoint Kac--Moody group of type $\mathbf{A}$ and $\FFF$ is defined as the kernel of $\ol{\TTT}^{sc} \to \TTT^{ad}$) yields an exact sequence $0 \to \FFF(\R) \to \ol{\TTT}^{sc}(\R) \to \TTT^{ad}(\R) \to H^1_{et}(\R,\FFF) \to 0$ of $\R$-points; since \'etale cohomology is finite over $\R$ (see, e.g., \cite{Milne:1980}), the claim follows.
\end{remark}

\subsection{Automorphisms of the main group}
The next main goal is to describe the automorphism group of the reduced Kac--Moody symmetric space $\ol{\XXX} = \ol{G}/\ol{K}$. Recall from Proposition~\ref{PropMainTransKM} that
\[
 \Trans(\ol{\XXX}) = \Ad(G) \quad \text{and} \quad G(\ol{\XXX})=\Ad(G)  \rtimes \gen{\ol\theta}.\]
Also recall from Remark \ref{AutEmbedding} that there exists an embedding
\[
 c: \Aut(\ol{\XXX}) \to \Aut( G(\ol{\XXX})) = \Aut(\Ad(G)  \rtimes \gen{\ol\theta}), \quad \alpha \mapsto c_\alpha,
\]
where $c_\alpha(g) := \alpha \circ g \circ \alpha^{-1}$. Thus in order to determine $\Aut(\ol{\XXX})$ one first needs to determine $\Aut(\Ad(G)  \rtimes \gen{\ol\theta})$. For this we will use the following general lemma.

\begin{lemma}\label{StructureOfAut(H x theta)}
Let $H$ be a perfect group and $\theta\in\Aut(H)$ an involution.
Endow $\Aut(H)$ with the multiplication $\alpha\beta:=\beta\circ\alpha$.
Then the following hold:
\begin{enumerate}
\item The following is a well-defined subgroup of the holomorph $\Aut(H)\ltimes H$ of $H$:
\[ \Hol_\theta(H) := \{ (\beta,b) \in \Aut(H)\ltimes H \mid \beta\circ\theta = c_b\circ\theta\circ\beta,\ \theta(b)=b^{-1} \}, \]
where $c_b$ is the inner automorphism $x\mapsto bxb^{-1}$.
\item There is an isomorphism $\phi: \Aut(H \rtimes \gen{\theta}) \to \Hol_\theta(H)$.
\item Let $\pi:\Hol_\theta(H)\to\Aut(H)$ be the restriction of the natural projection $\Aut(H)\ltimes H\to\Aut(H)$. Then \[
\ker\pi = \{ (\id,z) \mid z\in Z(H),\ \theta(z)=z^{-1} \}.
\]
\item
For the inner automorphisms of $\Aut(H\rtimes\gen{\theta})$, we have
\[ \phi( c_{(h,\id)} ) = (c_h, \tau(h))
   \quad\text{ and }\quad
   \phi( c_{(1,\theta)}) = (\theta, 1). \]
\end{enumerate}
\end{lemma}

\begin{proof}
\begin{enumerate}
\item
Clearly $\Hol_\theta(H)$ contains the identity $(\id_H,1_H)$. Let $(\beta,b),\ (\gamma,c)\in\Hol_\theta(H)$. To see that $(\beta,b)(\gamma,c)=(\beta\gamma, \gamma(b)c)$ is contained in $\Hol_\theta(H)$, we verify that
\[ \theta(\gamma(b)c) = (\theta\circ\gamma)(b) \theta(c) = (c_c^{-1}\circ\gamma\circ\theta)(b) c^{-1} = c^{-1}\gamma(b^{-1}) = (\gamma(b)c)^{-1} \]
as well as
\[ (\beta\gamma)\circ\theta
= \gamma\circ\beta\circ\theta
= \gamma\circ c_b\circ\theta\circ\beta
= c_{\gamma(b)}\circ c_c\circ\theta\circ\gamma\circ\beta
= c_{\gamma(b)c}\circ\theta\circ (\beta\gamma).
\]
Finally, $(\beta,b)^{-1}=(\beta^{-1}, \beta^{-1}(b^{-1}))$ is in $\Hol_\theta(H)$ because
\[ \theta(\beta^{-1}(b^{-1})) = (\beta^{-1}\circ c_b \circ \theta)(b^{-1}) = \beta^{-1}(b) \]
and furthermore, $\beta\circ\theta = c_b\circ\theta\circ\beta$ implies
\[
\theta\circ\beta^{-1} = \beta^{-1}\circ c_b\circ\theta = c_{\beta^{-1}(b)}\circ\beta^{-1}\circ\theta
\implies
c_{\beta^{-1}(b^{-1})} \circ\theta\circ\beta^{-1} = \beta^{-1}\circ\theta
.
\]

\item
By slight abuse of notation, we identify $H$ with the subgroup $\{(h,1) \mid h\in H\}\leq H\rtimes\gen\theta$.
Then the commutator subgroup of $H\rtimes\gen\theta$ is contained in $H$.
But $H$ is perfect, hence it equals that commutator subgroup, which is characteristic.
It follows that every $\alpha\in \Aut(H \rtimes \gen{\theta})$ normalizes $H$, thus its restriction to $H$ is an automorphism $\beta\in\Aut(H)$. 

Since $\alpha$ is surjective and normalizes $H$, we must have $\alpha( (1,\theta) ) = (b,\theta)$ for some $b\in H$, and then $\alpha$ is uniquely determined by the pair $(\beta,b)$.
The fact that $\alpha$ is a homomorphism puts restrictions on the pair $(\beta,b)$: For $g,h\in H$, we must have
\begin{align*}
(\beta(g\theta(h)),\id)
&= \alpha( (g\theta(h),\id) )
= \alpha( (g,\theta)(h,\theta) )
\overset!= \alpha( (g,\theta))\alpha((h,\theta)) \\
&= (\beta(g)b,\theta)(\beta(h)b,\theta)
 = (\beta(g)b\theta(\beta(h))\theta(b),\id),
\end{align*}
hence $\beta(\theta(h)) = b\theta(\beta(h))\theta(b)$.
For $h=1$ this yields $\theta(b)=b^{-1}$. We thus find
\[ \beta\circ\theta = c_b\circ\theta\circ\beta
. \]
One readily checks that these two conditions are necessary and sufficient to make $\alpha$ a homomorphism.
This yields the desired map $\phi$, which by construction is bijective. It remains to verify that $\phi$ is indeed a group homomorphism to verify the claim.

To this end, suppose we have $\alpha,\alpha'\in \Aut(H \rtimes \gen{\theta})$
satisfying $\phi(\alpha)=(\beta,b)$ resp. $\phi(\alpha')=(\gamma,c)$.
Let $\alpha\alpha':=\alpha'\circ\alpha$ and let $\alpha''$ be the preimage under $\psi$ of $\psi(\alpha)\psi(\alpha')) = (\beta\gamma, \gamma(b)c)$.
Then for $h\in H$ and $e\in\{0,1\}$ we have as desired
\[
(\alpha\alpha')(h,\theta^e)
= \alpha'((\beta(h)b^e,\theta^e))
= (\gamma(\beta(h)) (\gamma(b)c)^e, \theta^e)
= \alpha''(h,\theta^e).
\]

\item
Suppose $\alpha\in\ker\phi$. Then $\beta=\id$, and we get $c_b=[\id,\theta]=\id$, i.e., $b\in Z(H)$.

\item Follows from elementary computations.
\qedhere
\end{enumerate}
\end{proof}

\begin{proposition}\ \label{Aut(AdG x theta) = Aut(AdG)}
\begin{enumerate}
\item
There is a short exact sequence
\[ 1 \to Z(G) \to \Aut(G\rtimes\gen{\theta}) \xto{\pi} \Aut(G) \to 1. \]
\item There is an isomorphism
\[ \Aut(\Ad(G)\rtimes\gen{\ol\theta}) \to \Aut(\Ad(G)) \cong \Aut(G). \]
\item Following the notation in Definition~\ref{Geff} and Proposition~\ref{StructureOfAutG}, there is an isomorphism
\[ 
\Aut(\Geff\rtimes\gen{\ol\theta}) \to 
\AutEff(G) := (\Geff \rtimes (D \times \gen{\theta})) \rtimes \Aut(\Gamma_{\mathbf{A}})
\]\end{enumerate}
\end{proposition}

\begin{proof}
\begin{enumerate}
\item
We choose for $\pi$ the composition of the maps from Lemma~\ref{StructureOfAut(H x theta)} (ii) and (iii). 
To see that this map is surjective, let $\beta\in\Aut(G)$. By Proposition~\ref{StructureOfAutG} one then has $\beta=c_g \circ d \circ \theta^r \circ \gamma$ for an inner automorphism $c_g$, some $d\in D$, $r\in\{0,1\}$ and $\gamma\in\Aut(\Gamma_{\mathbf{A}})$.
Recall that $\theta$ commutes with the elements of $D$ and $\Aut(\Gamma_{\mathbf{A}})$ and also with itself.
Therefore
\[
\beta\circ\theta = (c_g \circ d \circ \theta^r \circ \gamma) \circ \theta
= c_g \circ \theta \circ d \circ \theta^r \circ \gamma
= c_{\tau(g)} \circ \theta \circ \beta.
\]
Since also $\theta(\tau(g))=\tau(g)^{-1}$, we have $(\beta,\tau(g))\in\Hol_\theta(H)$. By Lemma~\ref{StructureOfAut(H x theta)} (ii) it follows that $\beta$ is in the image of $\pi$, which thus is surjective.

The center of $G$ is contained in $T$. But $\theta$ acts on $T$ and hence on $Z(G)$ by inversion, thus by Lemma~\ref{StructureOfAut(H x theta)}(iii) we have $\ker(\pi)\cong Z(G)$ as claimed.

\item
This follows similar to (i) together with the fact that $\Ad(G)$ is center-free.

\item
Consider the natural epimorphism
\[ p: \AutEff(G)\to\Aut(G),\quad (g, d, \theta^r, \gamma) \mapsto c_g\circ d\circ \theta^r\circ\gamma, \]
which maps $g\in\Geff$ to $c_g$ (here, we equip $\Aut(G)$ with composition $\circ$ as multiplication).
Then the claim follows by showing that the following map is an anti-isomorphism (recall that anti-isomorphic groups are isomorphic):
\[ \kappa: \AutEff(G)\to\Hol_\theta(\Geff),\quad
\alpha = (g, d, \theta^r, \gamma) \mapsto (p(\alpha), \tau(g))
. \]
The map is well-defined by an argument  similar to that in (i).
To verify that it is an anti-homomorphism, let
\[ \alpha := (g, d, \theta^r, \gamma),\quad \beta := (h, d', \theta^s, \gamma') \]
be arbitrary elements of $\AutEff(G)$ and set
\[ \tilde\alpha := d \circ \theta^r \circ \gamma, \quad \tilde\beta := d' \circ \theta^s \circ \gamma'. \]
Then we get
\begin{align*}
\kappa(\alpha)\kappa(\beta)
  &= (p(\alpha),\tau(g))\cdot (p(\beta),\tau(h)) \\
  &= (p(\alpha)p(\beta),\  c_h(\tilde\beta(\tau(g))) \cdot h\theta(h^{-1}) )
         & (\text{since } {p(\beta) = c_h\circ\tilde\beta}) \\
  &= (p(\alpha)p(\beta),\  h \tau(\tilde\beta(g))) \theta(h^{-1}) )
         & (\text{since } {\tilde\beta\circ\theta = \theta\circ\tilde\beta}) \\
  &= (p(\beta)\circ p(\alpha),\  \tau(h\tilde\beta(g))),\\
  &= ( p(\beta\alpha),\ \tau(h\tilde\beta(g))) \\
  &= \kappa( (h \tilde\beta(g),\ d' \gamma'(d),\ \theta^{r+s},\ \gamma'\gamma )) \\
  &= \kappa(\beta\alpha).
\end{align*}

Finally, we compute
\begin{align*}
\ker(\kappa)
&= \{ \alpha=(g, d, \theta^r, \gamma) \in\AutEff(G) \mid \alpha\in\ker(p) \text{ and } \tau(g)=1 \} \\
&= \{ (g,\id,\id,\id) \mid g\in Z(\Geff) \text{ and } \theta(g)=g \}.
\end{align*}
But $\theta$ acts by inversion on $Z(\Geff)$, hence $g\in Z(\Geff)$ with $\theta(g)=g$ satisfies $g^2=1$, i.e., is torsion. But $\Geff=G/C_K(G)$, and $C_K(G)=Z(G)\cap M$ is precisely the torsion part of $Z(G)$, hence $Z(\Geff)$ is torsion free. It follows that the kernel of $\kappa$ is trivial, whence $\kappa$ is an isomorphism.
\qedhere
\end{enumerate}
\end{proof}

\subsection{Global automorphisms of reduced Kac--Moody symmetric spaces}

In this section we prove Theorem~\ref{thm1.9}.

\begin{theorem}\ \label{ThmGlobalAut}
Consider the reduced resp.\ non-reduced Kac--Moody symmetric spaces $\ol{\XXX} = \ol{G}/\ol{K}$ resp.\ $\XXX = G/K$. Then the following are true:
\begin{enumerate}
\item $\Aut(\XXX) \cong \AutEff(G).$
\item $\Aut(\ol{\XXX}) \cong \Aut(G) \cong \Aut(\ol{G}) \cong \Aut(\Ad(G)).$
\end{enumerate}
\end{theorem}

\begin{proof}
\begin{enumerate}
\item
By Proposition~\ref{PropMainTransKM} (iii) we have $G(\XXX,\mu)=\Geff\rtimes\gen{\theta}$.
Combining the maps $c$ from Remark~\ref{AutEmbedding}, $\phi$ from Lemma~\ref{StructureOfAut(H x theta)} (ii) and $\kappa$ from \ref{Aut(AdG x theta) = Aut(AdG)} (iii), we get an embedding
\[
\Aut(\XXX,\mu) \xto{c} \Aut(\Geff\rtimes\gen{\theta}) \xto{\phi} \Hol_\theta(\Geff) \xto{\kappa^{-1}} \AutEff(G).
\]
It suffices to show that the composition $\psi:=\kappa^{-1}\circ\phi\circ c$ is surjective. Theorem~\ref{ThmAut} allows us to reduce a case-by-case analysis of the different types of automorphisms which together generate $\AutEff(G)$. 

\begin{enumerate}
\item Let $h\in\Geff$. Then $h$ acts on $\XXX$ as an automorphism, which $c$ maps to the inner automorphism $c_{(h,\id)}$ of $\Aut(\Geff\ltimes\gen\theta)$. By Lemma~\ref{StructureOfAut(H x theta)} (iii), $\phi$ maps this to $(c_h,\tau(h))\in\Hol_\theta(\Geff)$, thus $\psi(h) = \kappa^{-1}(c_h,\tau(h)) = (h,\id,\id,\id)$, whence $\Geff$ considered as a subgroup of $\AutEff(G)$ is contained in the image of $\psi$.

\item Let $f\in C_{\Aut(G)}(\theta)$, so in particular we may consider $f=\theta$ or $f \in D$ or $f \in \Aut(\Gamma_{\mathbf{A}})$. Then $f(K)=K$ and thus $f$ induces a permutation of $G/K$. Moreover, $f$ is compatible with $\mu$; indeed,
\[ f(\mu(gK, hK)) = f( g\theta(g^{-1}h)K ) = f(g)\theta(f(g)^{-1}f(h))K = \mu(f(gK), f(hK)). \]
It follows that $f$ induces an automorphism of $\XXX$. Then for $(g,\theta^e)\in \Geff\rtimes\gen\theta = G(\XXX,\mu) \unlhd \Aut(\XXX,\mu)$, and any $hK\in G/K = \XXX$, we get
\[ c(f)(g,\theta^e)(hK)
= f( g\theta^e( f^{-1}(hK)))
= f(g)\theta^e(hK)
= (f(g), \theta^e)(hK).
\]
Thus $\phi(c(f))=(f,1)\in\Hol_\theta(\Geff)$, and $\psi(f) = \kappa^{-1}(f,1) = f \in \AutEff(G)$.
\end{enumerate}

\item
By Proposition~\ref{AutBuilding} one has $\Aut(G) \cong \Aut(\ol{G}) \cong \Aut(\Ad(G))$.
It therefore suffices to construct an isomorphism $\Aut(\ol{\XXX}) \to \Aut(\Ad(G))$, which follows analogous to case (i).
\qedhere
\end{enumerate}
\end{proof}

\begin{remark} By Theorem \ref{ThmAut} one can write $\Aut(G)$ as a semidirect product $\Aut(G) = \Aut^+(G) \rtimes \gen{\theta}$, where $\Aut^+(G)$ denotes the index two subgroup generated by all inner automorphisms, diagram automorphisms and diagonal automorphisms. In the sequel denote by $\Aut^+(\ol{\XXX})$ the image of $\Aut^+(G)$ under the isomorphism $\Aut(G) \to \Aut(\ol{\XXX})$ from Theorem \ref{ThmGlobalAut}. Then 
\begin{equation}\label{Aut+}
\Aut(\ol{\XXX}) = \Aut^+(\ol{\XXX}) \rtimes \langle s_o \rangle,
\end{equation}
where $o \in \ol{\XXX}$ is an arbitrary basepoint. We also denote by $\Aut_S(\ol{\XXX})$ the subgroup of $\Aut(\ol{\XXX})$ which corresponds to $\Aut_S(G)$ under the isomorphism from Theorem \ref{ThmGlobalAut}.(ii). Note that both $\Aut^+(\ol{\XXX})$ and $\Aut(\ol{\XXX})$ contain the transvection group $\Ad(G)$. 
\end{remark}

\subsection{Local transformations and the Coxeter complex}\label{SubsecLocalAut}

\begin{convention} For the remainder of Section \ref{aut} we assume that ${\mathbf A}$ is non-affine.
\end{convention}
In this section we investigate the local transformations of $\ol{\XXX}$. Recall that for a pointed maximal flat $(p, F)$ the set $F^\mathrm{sing}(p)$ of singular points of $F$ with respect to $p$ and the group $\GL(p,F, F^{\rm sing}(p))$ of local transformations of $(p,F)$ were defined in Definition~\ref{defsingularregular}. By strong transitivity, these notions do not depend on the choice of pointed maximal flat up to isomorphism, and we will work with the \Defn{standard pointed flat} $(e, \ol{A}\ol{K})$ of the coset model.

By Proposition~\ref{StdFlatsEuclidean} a chart of the flat $\ol{A}\ol{K}$ centred at $e$ is given by
\begin{equation}\label{StandardChart}
\varphi_e:  \ol{\mathfrak a} \to \ol{A}\ol{K}, \quad X \mapsto \exp(X)\ol{K}.
\end{equation}
Recall from Definition~\ref{defsingularregular} that $\ol{AK}^{\rm sing}(e)$ denotes the
subset of $\ol{AK}$ consisting of points singular with respect to $e$. If one defines 
\[
\ol{\mathfrak a}^{\rm sing} :=\varphi_e^{-1}(\ol{AK}^{\rm sing}(e)) \quad \text{and} \quad \GL(\ol{\mathfrak a}, \ol{\mathfrak a}^{\rm sing}) := \{f \in \GL(\ol{\mathfrak a})\mid f(\ol{\mathfrak a}^{\rm sing} )= \ol{\mathfrak a}^{\rm sing} \},
\]
then one obtains an isomorphism
\begin{equation}\label{AutGlobalToLocal}
\GL(e, \ol{A}) \xto{\cong} \GL(\ol{\mathfrak a}, \ol{\mathfrak a}^{\rm sing}), \quad F \mapsto f:= \varphi_e^{-1}\circ F \circ \varphi_e.
\end{equation}
By strong transitivity of $\ol{G}$ (see Corollary~\ref{ConsequencesFlatClassification}), any two maximal flats through $e$ in $\ol{G}/\ol{K}$ are $\ol{K}$-conjugate, i.e.
\begin{equation}\label{asing1}
\ol{\mathfrak a}^{\rm sing} = \bigcup_{\{k \in K \mid \Ad(k)\ol{\mathfrak a} \neq \ol{\mathfrak a}\}} \ol{\mathfrak a} \cap \Ad(k)\ol{\mathfrak a},
\end{equation}
The results from the appendix allow one to describe this set in a more combinatorial way. Recall from Definition \ref{DefKMRep} the definition of the Kac--Moody representations $\rho_{KM}: W \to \GL({\mathfrak a})$ and the reduced Kac--Moody representation $\ol{\rho}_{KM}: W \to \GL(\ol{\mathfrak a})$ of the Weyl group. As ${\bf A}$ is assumed to be non-affine, both of these representations are faithful by Corollary~\ref{CorollaryRedKMRepFaithful}, and reflections in $W$ act as linear reflections under these representations.

Given a real root $\alpha \in \Phi$ with associated root reflection $\check r_\alpha \in W$ denote by  \[{H}_{\alpha} := \Fix({\rho}_{KM}(\check r_\alpha)) < {\mathfrak a} \quad \text{and} \quad \ol{H}_{\alpha} := \Fix(\ol{\rho}_{KM}(\check r_\alpha)) < \ol{\mathfrak a}\] the corresponding reflection hyperplanes in $\mathfrak a$ and $\ol{\mathfrak a}$ respectively (cf.~Definition~\ref{overlinehDEF}). Recall from \eqref{formcoroots} on page \pageref{formcoroots} that the reflections ${\rho}_{KM}(\check r_\alpha)$ and $\ol{\rho}_{KM}(\check r_\alpha)$ are orthogonal with respect to
suitable choices of invariant bilinear forms on $\mathfrak a$ and $\ol{\mathfrak a}$. Since the invariant form on $\ol{\mathfrak{a}}$ is non-degenerate (cf.\ the proof of Proposition~\ref{IntertwinerWeyl1}), the map $\ol{\rho}_{KM}(\check r_\alpha)$ is in fact the unique orthogonal reflection at the hyperplane $\ol{H}_\alpha$. This implies in particular that the map $\alpha \mapsto \ol{H}_\alpha$ defines a one-to-one correspondence between positive reals roots $\alpha \in \Phi^+$ and reflection hyperplanes $\ol{H}_\alpha$.  
\begin{proposition}\label{hyperplanelog}
  Assume that $G$ is of non-affine type.
Under the chart $\varphi_e$ the singular set of the pointed maximal flat $(e, \ol{A})$ in $\ol{G}/\ol{K}$ corresponds to the union of the reflection hyperplanes of root reflections under $\widehat{\rho}_{KM}$, i.e.
 \[\varphi_e^{-1}(\ol{A}^{\rm sing}(e)) = \ol{\mathfrak a}^{\rm sing} = \bigcup_{\alpha \in \Phi^+} \ol{H}_{\alpha}.\]
\end{proposition}

\begin{proof} Assume first that $X \in \ol{\mathfrak a}^{\rm sing}$. By \eqref{asing1} there exists $k \in K$ with $\Ad(k) \ol{\mathfrak a}\neq  \ol{\mathfrak a}$ such that $X\in  \ol{\mathfrak a}\cap \Ad(k) \ol{\mathfrak a}$. Recall that $\ol{T}$ is the unique maximal torus of $\ol{G}$ such that $\ol{A} = \ol{T} \cap \ol{\tau}(\ol{G})$. Consequently, $\ol{T}^k$ is the unique maximal torus of $\ol{G}$ such that $\ol{A}^k = \ol{T}^k \cap \ol{\tau}(\ol{G})$. Assuming $X \neq 0$, one obtains a non-trivial intersection $H := \ol{T} \cap \ol{T}^k \ni \exp(X)$. 

As in \cite[Proposition~4.6]{Caprace09} let
\[\Phi^H = \{ \alpha \in \Phi \mid [U_\alpha, H] = 1 \} \qquad \text{ and } \qquad \ol{G}^H = \ol{T}.\langle U_\alpha \mid \alpha \in \Phi^H \rangle.\] Since $H$ is contained in the distinct tori $\ol{T}$ and $\ol{T}^k$, it is not regular in the sense of \cite[Section~4.2.3]{Caprace09}, i.e., $H$ fixes more than a single twin apartment of the twin building $\Delta$. Hence \cite[Proposition~4.6(i)(ii)]{Caprace09} imply that $(\ol{G}^H,(U_\alpha)_{\alpha \in \Phi^H})$ is a locally $\R$-split twin root datum with Weyl group $W^H = \langle s_\alpha \mid \alpha \in \Phi^H \rangle$ and maximal torus $\ol{T}$. Also $\ol{T}^k$ is a maximal torus of $\ol{G}^H$ by \cite[Proposition~4.6(v)]{Caprace09}, and $\ol{G}^H$ centralizes $H$. Since $\ol{G}^H$ acts transitively on twin apartments of the twin building associated with the twin root datum $(\ol{G}^H,(U_\alpha)_{\alpha \in \Phi^H})$ and these correspond to maximal tori in $\ol{G}^H$  (see e.g.\ \cite[Corollary~8.78]{AbramenkoBrown2008}), one deduces that $\ol{T}$ and $\ol{T}^k$ are conjugate in $\ol{G}^H$. 

Next observe that $H$ is $\theta$-invariant as $\ol{T}$ and $\ol{T}^k$ are. It then follows that for each $\alpha \in \Phi^H$ one has $-\alpha \in \Phi^H$, because \[[U_\alpha, H] = 1 \qquad \Longleftrightarrow \qquad [U_{-\alpha}, H] = [\theta(U_\alpha), \theta(H)] = 1.\] Therefore $\theta$ leaves $\langle U_\alpha, U_{-\alpha} \rangle$, $\alpha \in \Phi^H$ invariant and acts as an automorphism on $\ol{G}^H$. Consequently the group $\ol{G}^H$ admits an Iwasawa decomposition $\ol{G}^H = \ol{K}^H \ol{A}\, \ol{U}^H$, where $\ol{K}^H\leq \ol{K}\cap \ol{G}^H$ and $\ol{U}^H\leq \ol{U}_+\cap \ol{G}^H$. 

By \cite[Theorem 1.2]{deMedtsGramlichHorn09} the commutator subgroup $[\ol{K}^H,\ol{K}^H]$ is generated by the family $(\ol{K} \cap \langle U_\alpha, U_{-\alpha} \rangle)_{\alpha \in \Phi^H}$ and acts chamber-transitively on each half of the twin building of $\ol{G}^H$. In particular, there exist suitable $\beta_i \in \Pi^H$ and $k_i \in \ol{K} \cap \langle U_{\beta_i}, U_{-\beta_i} \rangle$ such that \[\prod_{i=1}^t k_i =: k^H \in [\ol{K}^H,\ol{K}^H] \] maps any chosen pair $(c, \theta(c))$ of opposite chambers of the twin building of $\ol{G}^H$ with stabilizer $\ol{T}$ onto some pair $(d,\theta(d))$ of opposite chambers with stabilizer $\ol{T}^k$. Consequently, the groups $\ol{T}$ and $\ol{T}^k$ are conjugate by the element $k^H \in [\ol{K}^H,\ol{K}^H]$.

%
%
%
Not all elements $k_i$ can normalize $\ol{T}$, for otherwise $\ol{T}^k = \ol{T}$.
Pick $i \in \{ 1, \dots, t \}$ such that $\ol{T}^{k_i} \neq \ol{T}$. Then $H \leq \ol{T}^{k_i} \cap \ol{T}$, as $k_i \in \ol{G}^H$. Furthermore, $\ol{T}^{k_i} \cap \ol{T}$ has corank $1$ in $\ol{T}$, because $\beta_i \in \Pi^H \subset \Phi^H \subset \Phi$ and $k_i \in \langle U_{\beta_i},U_{-\beta_i} \rangle$. Now  $\widetilde s_{\beta_i} \in N_{\ol{K^H}}(A) \leq N_{\ol{K}}(\ol{A}) \leq N_{\ol{G}}(\ol{T})$ (cf.\ Section~\ref{extendedWeylgroup} and Lemma~\ref{lem:NGT=A NKT}) fixes the intersection $\ol{T}^{k_i} \cap \ol{T}$ and, as it has corank one in $\ol{T}$, this intersection must be the exponential of the reflection hyperplane of $H_{\beta_i}$. 
This shows that $X \in H_{\beta_i}$ and, since $X$ was arbitrary, one obtains $ \ol{\mathfrak a}^{\rm sing} \subset \bigcup_{\alpha \in \Phi} H_{\alpha}$.

Conversely, if $X \in H_{\alpha}$, then $\exp(X) \in \ol{A} \cap \ol{A}^{k}$, where $k\in K \cap \langle U_{\alpha}, U_{-\alpha}\rangle$ is any element not normalizing $\ol{T}$.
\end{proof}

One concludes from Proposition~\ref{hyperplanelog} that the subset $\ol{\mathfrak a}^{\rm sing} \subset \ol{\mathfrak a}$ is precisely the hyperplane arrangement which is denoted by the same symbol $\ol{\mathfrak a}^{\rm sing}$ in the appendix. Note in passing that  Proposition~\ref{hyperplanelog} carries over to non-reduced Kac--Moody symmetric space as follows: 

\begin{corollary}\label{hyperplanelogcor} Assume that $G$ is of non-affine type. Under the chart $\varphi_e: \mathfrak a \to AK$, $X \mapsto \exp(X)K$ the singular set of the pointed maximal flat $(e, {A})$ in $G/K$ is given by
 \[\varphi_e^{-1}({A}^{\rm sing}(e)) = {\mathfrak a}^{\rm sing} = \bigcup_{\alpha \in \Phi^+} H_{\alpha}.\]
\end{corollary}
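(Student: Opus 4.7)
The plan is to deduce this corollary from Proposition~\ref{hyperplanelog} via the projection $\pi_{\mathbf{A}} : G/K \to \overline{G}/\overline{K}$, using that its fiber is absorbed by the decomposition $T = AM$ with $M \subset K$. Let $p : \mathfrak a \to \overline{\mathfrak a}$ denote the canonical quotient map.

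First I would verify that $H_\alpha = p^{-1}(\overline{H}_\alpha)$ for every $\alpha \in \Phi^+$. Since $\mathfrak c = \bigcap_i \ker \alpha_i$ and $\check r_{\alpha_i}(h) = h - \alpha_i(h)\check\alpha_i$ fixes every $h$ in $\ker \alpha_i$, the Weyl group acts trivially on $\mathfrak c \cap \mathfrak a$. Hence $\mathfrak c \cap \mathfrak a \subset H_\alpha$ for every real root $\alpha$, so $H_\alpha$ is saturated by the fibers of $p$ and $p(H_\alpha) = \overline{H}_\alpha$. Consequently
\[
\bigcup_{\alpha \in \Phi^+} H_\alpha \;=\; p^{-1}\!\left(\bigcup_{\alpha \in \Phi^+} \overline{H}_\alpha\right) \;=\; p^{-1}(\overline{\mathfrak a}^{\rm sing}).
\]

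Next I would establish that for any $k \in K$ with image $\overline{k} \in \overline{K}$ and any $X \in \mathfrak a$ one has
\[
\exp(X)K \in kAK \iff \exp(p(X))\overline{K} \in \overline{k}\,\overline{A}\,\overline{K}.
\]
The forward direction is immediate by applying $\pi_{\mathbf{A}}$. For the converse, suppose $\exp(p(X)) = \overline{k}\,\overline{a}\,\overline{k}'$ with $\overline{a} \in \overline{A}$ and $\overline{k}' \in \overline{K}$. Lift $\overline{a}$ to $a \in A$ via $\exp:\mathfrak a \to A$ and $\overline{k}'$ to $k' \in K$ (using surjectivity of $K \to \overline{K}$ noted after Proposition~\ref{CCIdescends}). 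Then $\exp(X)$ and $k a k'$ project to the same element of $\overline{G}$, so $\exp(X) = c \cdot k a k'$ for some $c \in C$. Centrality of $C$ in $G$ (Proposition~\ref{kernelsemisimpleadjoint}) yields $\exp(X) = k(ca)k'$, and since $C \subset T = AM$ with $M \subset K$ (because $\tau(m) = m^2 = e$ for $m \in M$ by Lemma~\ref{tauonT}, hence $m \in K$), one writes $c = a_c m_c$ with $a_c \in A$ and $m_c \in M \subset K$. As $A$ is abelian, $ca = a a_c \cdot m_c$, whence $\exp(X) = k(a a_c)(m_c k') \in kAK$.

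Combining this with the fact that the flats through $eK$ in $G/K$ are exactly the sets $kAK$ with $k \in K$, and that the projection $\pi_{\mathbf A}$ induces a bijection between flats through $eK$ in $G/K$ and flats through $e\overline{K}$ in $\overline{G}/\overline{K}$ (by Corollary~\ref{ConsequencesFlatClassification}(iii), using that any flat $\overline{F}$ through $e\overline{K}$ has the form $\overline{k}\,\overline{A}\,\overline{K}$ whose lift $kAK$ automatically contains $eK$), the singularity of $\exp(X)K$ with respect to $eK$ in $AK$ is equivalent to the singularity of $\exp(p(X))\overline{K}$ with respect to $e\overline{K}$ in $\overline{A}\,\overline{K}$. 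Applying Proposition~\ref{hyperplanelog} to the latter and invoking the first step yields
\[
\varphi_e^{-1}(A^{\rm sing}(e)) \;=\; p^{-1}(\overline{\mathfrak a}^{\rm sing}) \;=\; \bigcup_{\alpha \in \Phi^+} H_\alpha.
\]

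The main obstacle is the converse in the membership equivalence: transporting a factorization in $\overline{G}$ back up to $G$ modulo $C$ produces an extra central element, and one has to see that this element can be re-absorbed into the $AK$ decomposition. The decisive fact is that the finite part $M$ of $T$ is contained in $K$, which is precisely what allows $CA$ to sit inside $AK$ despite $C$ being only a proper subgroup of $T$.
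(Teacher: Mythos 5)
Your argument is correct, and it is exactly the route the paper takes: the paper's proof of Corollary~\ref{hyperplanelogcor} simply cites Corollary~\ref{ConsequencesFlatClassification} (the bijective lifting of maximal flats through the basepoint) together with Proposition~\ref{hyperplanelog} and the compatibility of the root hyperplanes under $\mathfrak a \to \overline{\mathfrak a}$ (Proposition~\ref{IntertwinerWeyl1}), which is precisely what you spell out. Your careful handling of the central factor $c\in C$ via $C\subset T=AM$ with $M=T\cap K$ fills in the detail the paper leaves implicit.
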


\begin{proof}
This follows from Corollary~\ref{ConsequencesFlatClassification} and Proposition~\ref{hyperplanelog}; see also Proposition~\ref{IntertwinerWeyl1}.
\end{proof}

We also record the following consequence of the proof of Proposition \ref{hyperplanelog}:

\begin{corollary}\label{twopointsintwoflats}
Assume that $G$ is of non-affine type and let $F_1$, $F_2$ be maximal flats of $\ol{\XXX}$. Then there exists $g \in \ol{G}$ stabilizing $F_1 \cap F_2$ elementwise with $g(F_1) = F_2$.
\end{corollary}

\begin{proof}
If $|F_1 \cap F_2| \leq 1$, this is an immediate consequence of strong transitivity. If $|F_1 \cap F_2| \geq 2$, then strong transitivity allows one to assume $e \in F_1 \cap F_2$. Then, as in the proof of Proposition~\ref{hyperplanelog}, the maximal flats $F_1$ and $F_2$ correspond to maximal tori $\ol{T}$ and $\ol{T}^k$ with non-trivial intersection $H$. By the arguments given in that proof the maximal tori $\ol{T}$ and $\ol{T}^k$ are in fact conjugate by an element of $\ol{K} = \Stab_{\ol{G}}(e)$ centralizing $H$.
\end{proof}
The same argument also applies to the $G$-action on maximal flats in $\XXX$.

\medskip
Proposition \ref{hyperplanelog} allows one to compute the group $\GL(p, F, F^{\rm sing}(p))$ of local transformations of $(p, F)$ for a pointed flat $(p, F)$. By strong transitivity, every pointed maximal flat $(p, F)$ can be mapped by an automorphism of $\ol{\XXX}$ to the standard pointed flat $(e, \overline{AK})$. Composing the chart $\varphi_e$ defined in \eqref{StandardChart} with this automorphism provides a chart $\varphi: \overline{\mathfrak a} \to F$ centred at $p$ which by Proposition \ref{hyperplanelog} identifies $ \ol{\mathfrak a}^{\rm sing}$ with $F^{\rm sing}(p)$. Hence for every pointed flat $(p, F)$ there exists an isomorphism
\[\GL(p, F) \cong  \GL(\ol{\mathfrak a}, \ol{\mathfrak a}^{\rm sing}) = \left\{f \in \GL(\ol{\mathfrak a})\mid f\left( \bigcup_{\alpha \in \Phi^+} \ol{H}_{\alpha}\right) = \bigcup_{\alpha \in \Phi^+} \ol{H}_{\alpha} \right\}.\]
Since ${\bf A}$ is irreducible, symmetrizable and non-affine, it follows from Corollary \ref{CanHomothetyClass} that there exists a non-degenerate bilinear form  $B$ on ${\ol{\mathfrak a}}$ such that every element of $\GL(\ol{\mathfrak a}, \ol{\mathfrak a}^{\rm sing})$ is a similarity with respect to $B$, i.e.\ a product of a $B$-orthogonal linear transformation and a homothety. Moreover, this form $B$ is unique up to multiples. Since this result depends only on the hyperplane arrangement $(\ol{\mathfrak a}, \ol{\mathfrak a}^{\rm sing})$ one concludes the following:
\begin{corollary}\label{CanonicalFormpF} Assume that ${\bf A}$ is non-affine. Let $(p, F)$ be a pointed flat and let $\varphi: \R^{\rk({\bf A})} \to F$ be a chart centred at $p$. Then $\varphi^{-1}(F^{\rm sing}(p))$ is a hyperplane arrangement in $ \R^{\rk({\bf A})} $, and there exists a non-degenerate bilinear form $B$ on $ \R^{\rk({\bf A})} $, unique up to multiples, such that every linear transformation of $\R^{\rk({\bf A})}$ preserving this hyperplane arrangement is a similarity with respect to $B$. \qed
\end{corollary}
Let $f \in \GL(p, F, F^{\rm sing}(p))$ be a local transformation. Then for every chart $\varphi: \R^{\rk({\bf A})} \to F$ centred at $p$ the map $\varphi^{-1} \circ f \circ \varphi$ is a linear map preserving the hyperplane arrangement $\varphi^{-1}(F^{\rm sing}(p))$. By the preceding corollary it can be written as a product of a homothety and a $B$-orthogonal transformation.

The map $f$ is called a \Defn{local automorphism} of $(p, F)$ if it is $B$-orthogonal, i.e., if it does not involve a non-trivial homothety. This notion does not depend on the choice of $B$ (since $B$ is unique up to multiples), nor on the choice of chart (since a change of charts maps the corresponding hyperplane arrangements and, thus, the associated forms to each other). Denoting the group of local automorphisms of $(p, F)$  by $\Aut(p, F)$, leads to a splitting
\[
\GL(p, F, F^{\rm sing}(p)) \cong \R^{>0} \times \Aut(p, F),
\]
where $\R^{>0}$ acts on $F$ by homotheties.

It is possible to describe the right-hand side explicitly. Under a suitable chart, $$\Aut(p, F) = \mathrm{O}(\overline{\mathfrak a}, \overline{\mathfrak a}^{\rm sing}) := {\rm O}(\ol{\mathfrak a},B) \cap \GL(\ol{\mathfrak a} , \ol{\mathfrak a}^{\rm sing}),$$ where $B$ is a bilinear form in the canonical homothety class for $(\ol{\mathfrak a}, \ol{\mathfrak a}^{\rm sing})$ (cf.\ Definition \ref{DefCanHomothetyClass}) that, under a suitable isomorphism $\ol{\mathfrak{a}} \cong \R^{\rk({\bf A})}$, can in fact be chosen to be the bilinear form $B$ from Corollary~\ref{CanonicalFormpF}.

Fix a simplicial Coxeter complex $\Sigma$ for $(W, S)$ (see Subsection \ref{appendixAA}), denote by $\Aut(\Sigma)$ its group of simplicial automorphisms
and by $\Aut(W, S)$ the automorphism group of the Coxeter graph of $W$ with respect to $S$. Proposition~\ref{autocoxeter} and Remark~\ref{SphericalAffineNonTrichotomy} imply the following:
\begin{corollary}\label{LocalAutomorphismClassification} Assume that ${\bf A}$ is  non-spherical and non-affine. Then for every pointed flat $(p, F)$ one has
\[
\Aut(p, F) \cong \Aut(\Sigma) \times \Z/2\Z \cong (W \rtimes \Aut(W,S)) \times \Z/2\Z,
\]
and hence
\[
\GL(p, F, F^{\rm sing}(p)) \cong \R^{>0} \times(W \rtimes \Aut(W,S)) \times \Z/2\Z.
\]
\end{corollary}
\begin{remark}
\begin{enumerate}
\item In the spherical case, the same result holds, except that the $\Z/2\Z$-factor is missing (see Remark \ref{SphericalAffineNonTrichotomy}).
\item The isomorphisms in Corollary~\ref{LocalAutomorphismClassification} can be made more explicit: Let $g$ be an automorphism of $\ol{\XXX}$ which maps $(p, F)$ to the standard pointed flat $(e\ol{K}, \ol{A}\ol{K})$ and let $\varphi_e:  \ol{\mathfrak a} \to \ol{A}\ol{K}$ as in \eqref{StandardChart}. Then $\varphi: = g \circ \varphi_e :  \ol{\mathfrak a} \to F$ is a chart for $F$ centred at $p$ with $\varphi(F^{\rm sing}(p)) = \ol{\mathfrak a}^{\rm sing}$. In particular, if $f \in \GL(p, F, F^{\rm sing}(p)) $, then $\phi \circ f \circ \phi^{-1} \in \GL(\ol{\mathfrak a}, \ol{\mathfrak a}^{\rm sing}) < \GL(\ol{\mathfrak a})$. This linear map can then be written as a product of a homothety, an element of the Weyl group acting on $\overline{\mathfrak a}$ by the reduced Kac--Moody representation (see Definition \ref{DefKMRep}), a Cayley graph isomorphism of $(W,S)$ and possibly the antipode map $v \mapsto -v$. Here the action of $\Aut(W,S)$ on $\ol{\mathfrak a}$ is given as follows: By the discussion in Subsection \ref{TitsCones}, the reduced Tits cone $\ol{\mathcal C} \subset \ol{\mathfrak a}$ is a cone over a coloured polyhedral complex whose dual graph is isomorphic to the Cayley graph of $W$ with respect to $(W,S)$, and hence $\Aut(W,S)$ acts on the reduced Tits cone by combinatorial automorphisms, which can be realized uniquely by linear automorphisms of the ambient vector space $\ol{\mathfrak a}$.
\item All homotheties and all elements of $\Aut(\Sigma)$ preserve the Tits cone when acting on $\overline{\mathfrak{a}}$, whereas the antipodal map exchanges the Tits cone and its negative. In particular, all elements of $\GL(p, F, F^{\rm sing}(p))$ preserve the \Defn{Tits double cone}, i.e., the union of the Tits cone and its negative.
\end{enumerate}
\end{remark}

\subsection{Local vs.\ global automorphisms}

By Corollary~\ref{ConsequencesFlatClassification} the Kac--Moody group $\ol{G}$ and hence the full automorphism group $\Aut(\XXX)$ act strongly transitively on $\XXX$. In particular, the corresponding Weyl groups $W(\Aut(\ol{\XXX}) \curvearrowright \ol{\XXX})$ and $W(\ol{G}\curvearrowright\ol{\XXX})$ and local actions are well-defined (see Definition~\ref{DefLocalAction}). A priori, these local actions take values in the group $\GL(p, F, F^{\rm sing}(p))$ of local transformation of a given pointed flat. If ${\bf A}$ is non-affine, then they actually take value in the subgroup $\Aut(p, F) < \GL(p, F, F^{\rm sing}(p))$ of local automorphisms, as we will discuss in this section.

Recall that $\ol{M} < \ol{T}$ denotes the torsion subgroup of $\ol{T}$ so that $\ol{T} = \ol{A} \times \ol{M}$.
\begin{proposition}\label{GeometricWeylGroup1} Assume that ${\bf A}$ is non-affine and let $(p, F)$ be a pointed flat in $\ol{\XXX}$.
\begin{enumerate}
\item $\Stab_{\ol{G}}(p, F) \cong N_{\ol{K}}(\ol{T})$ and  $\Fix_{\ol{G}}(p, F) \cong \ol{M}$.
\item The geometric Weyl group $W(\ol{G}\curvearrowright\ol{\XXX})$ is isomorphic to the algebraic Weyl group $W$ of $\ol{G}$.
\item There exists a chart $\phi: \overline{\mathfrak a} \to F$ centred at $p$ which intertwines the action of $W$ on $F$ via the isomorphism in (ii) and the reduced Kac--Moody representation.
\end{enumerate}
\end{proposition}
\begin{proof} By Proposition \ref{2.29} one may assume without loss of generality that $(p, F)$ is given by the standard pointed flat $(e, \ol{AK})$. By Remark \ref{5.4} the stabilizer in $\overline{G}$ of the standard flat $\ol{AK}$ is given by $N_{\ol{G}}(\ol{T})$. Since the fixator of $e$ is given by $\ol{K}$ one has $\Stab_{\ol{G}}(e, \ol{AK}) = N_{\ol{K}}(\ol{T})$. Recall from Corollary~\ref{AbstractExtendedWeylGroup} that if $\pi: G \to \ol{G}$ denotes the canonical projection, then $N_{\ol{K}}(\ol{T}) = \pi(\widetilde{W})$ is the image of the extended Weyl group. In particular, since $M < \widetilde{W}$, the stabilizer $\Stab_{\ol{G}}(e, \ol{AK})$ contains $\ol{M} = \pi(M)$.

Consider the action of $\Stab_{\ol{G}}(e, \ol{AK}) $ on the standard flat $\ol{AK}$. The subgroup $\ol{M}$ centralizes $\ol{A}$ and is contained in $\ol{K}$, hence acts trivially on $\ol{AK}$, i.e., $\ol{M}<\Fix_{\ol{G}}(e, \ol{AK})$. Consequently, the action of $\Stab_{\ol{G}}(e, \ol{AK})$ factors through the group
\[
\Stab_{\ol{G}}(e, \ol{AK})/\ol{M} = \pi(\widetilde{W})/\pi(M) = \pi(\widetilde{W}/M).
\]
The standard chart $\phi: \ol{\rm a} \to \ol{AK}$ from \ref{StandardChart} intertwines the action of this group on $\ol{AK}$ with the restriction of the adjoint action on $\ol{\mathfrak a}$. 
As discussed in Subsection \ref{extendedWeylgroup} there exists an isomorphism $\widetilde{W}/M \cong W$ and under this isomorphism the adjoint action of $\widetilde{W}/M$ on $\mathfrak a$ is given by the Kac--Moody representation of $W$. It follows that the adjoint action of $\Stab_{\ol{G}}(e, \ol{AK})/\ol{M}$ on $\ol{a}$ identifies $\Stab_{\ol{G}}(e, \ol{AK})/\ol{M}$ with the subgroup $\ol{\rho}_{KM}(W) < \GL(\ol a)$. In particular, since the reduced Kac--Moody representation is faithful, one obtains an isomorphism $\Stab_{\ol{G}}(e, \ol{AK})/\ol{M} \cong W$. Moreover, since every element of $W$ acts non-trivially on $\ol{\mathfrak a}$ the inclusion $\ol{M}\hookrightarrow \Fix_{\ol{G}}(e, \ol{AK})$ is actually an equality. This finishes the proof.
\end{proof}
As before denote by $\Sigma = \Sigma(W,S)$ a simplicial Coxeter complex of the Coxeter system $(W,S)$ underlying {\bf A}. Recall from Corollary \ref{LocalAutomorphismClassification} (or Lemma~\ref{AutWS}) that the simplicial automorphism group  $\Aut(\Sigma)$ splits as the semidirect product $\Aut(\Sigma) = W \rtimes \Aut(W,S)$, where $\Aut(W,S)$ denotes the group of automorphisms of the Coxeter diagram, and that $ \GL(p, F, F^{\rm sing}(p)) \cong \R^{>0} \times(W \rtimes \Aut(W,S)) \times \Z/2\Z$ as long as ${\bf A}$ is non-spherical and non-affine. Proposition~\ref{GeometricWeylGroup1} and Corollary~\ref{LocalAutomorphismClassification} imply the following:
\begin{corollary}\label{GeometricWeylGroup2}
Assume that ${\bf A}$ is non-spherical and non-affine and let $(p, F)$ be a pointed flat in $\ol{\XXX}$. Then the local action of $ W(\ol{G}\curvearrowright\ol{\XXX})$ on $(p, F)$ is intertwined by the isomorphisms from Corollary \ref{LocalAutomorphismClassification} and Proposition \ref{GeometricWeylGroup1} with the canonical inclusion 
\[
W \hookrightarrow \R^{>0} \times(W \rtimes \Aut(W,S)) \times \Z/2\Z,
\]
i.e.\ the local action fits into a commutative diagram of the form
\[\begin{xy}\xymatrix{ W(\ol{G}\curvearrowright\ol{\XXX}) \ar[d]_\cong \ar[r]& \GL(p, F, F^{\rm sing}(p))\ar[d]^\cong \\
W \ar[r]&\R^{>0} \times(W \rtimes \Aut(W,S)) \times \Z/2\Z,
}\end{xy}\]
In particular, the local action takes values in the subgroup $\Aut(p, F) <  \GL(p, F, F^{\rm sing}(p))$ of local automorphisms.\qed
\end{corollary}
We conclude this section by analyzing the action of the group $W(\Aut(\ol{\XXX}) \curvearrowright \ol{\XXX})$. Recall from Remark~\ref{RemarkCoxeterVsDynkin} that $\Aut(\Gamma_{\mathbf{A}})<\Aut(W,S)$. 
\begin{theorem}\label{ExtendLocalAutomorphisms} 
Assume that ${\bf A}$ is non-spherical and non-affine and let $(p, F)$ be a pointed flat in $\ol{\XXX}$. Then the local action of $W(\Aut(\ol{\XXX}) \curvearrowright \ol{\XXX})$ fits into a commutative diagram of the form
\[\begin{xy}\xymatrix{ W(\Aut(\ol{\XXX})\curvearrowright\ol{\XXX}) \ar[d]_\cong \ar[r]& \GL(p, F, F^{\rm sing}(p))\ar[d]^\cong \\
(W \rtimes \Aut(\Gamma_{\mathbf{A}}))  \times \Z/2\Z \ar[r]&\R^{>0} \times(W \rtimes \Aut(W,S)) \times \Z/2\Z.
}\end{xy}\]
In particular, the local action takes values in the group of local automorphisms. Moreover, every local automorphism extends to a global automorphism if and only if $\Aut(\Gamma_{\mathbf{A}})= \Aut(W,S)$.
\end{theorem}
\begin{proof} By Theorem \ref{ThmGlobalAut} one has $\Aut(\ol{\XXX}) \cong \Aut(\ol{G})$ and by Theorem \ref{ThmAut} every automorphism of $\ol{G}$ can be written as a product of an inner automorphism of $\ol{G}$, a diagram automorphism, a diagonal automorphism and a power of the Cartan--Chevalley involution $\theta$. One needs to determine which of these automorphisms stabilize the standard pointed flat $(p, F) = (e, \ol{AK})$, and how they act on $F$. Among the inner automorphism, these are precisely the elements of $N_{\ol{K}}(\ol{T})$, and these correspond to the elements of $W$ by Corollary \ref{GeometricWeylGroup2}. In addition, all diagram automorphisms stabilize the standard pointed flat and act as Coxeter automorphisms, and all diagonal automorphisms fix the standard pointed flat pointwise. Finally, the Cartan-Chevalley involution preserves the standard pointed flat and acts on it by inversion, hence it corresponds to the generator of $\Z/2\Z$. The theorem follows.
\end{proof}
The same argument also shows that $W(\Aut^+(\ol{\XXX})\curvearrowright\ol{\XXX}) \cong W \rtimes \Aut(\Gamma_{\mathbf{A}})$. We have shown Theorem~\ref{thm1.11} and Corollary~\ref{thm1.12}.

\section{Causal structures and the causal boundary}\label{SecCausal}

We keep the notation of the previous section. That is, $G$ denotes a simply connected centred split real Kac--Moody group with semisimple adjoint quotient $\ol{G}$ and adjoint quotient $\Ad(G)$. We are going to consider the reduced Kac--Moody symmetric space $\ol{\XXX}$ in its group model $\ol{G}/\ol{K}$. 
\begin{convention}\label{ConventionSecCausal} Throughout Section \ref{SecCausal} we will assume that $G$ is of non-spherical and non-affine type.
\end{convention}

\subsection{Invariant causal structures}\label{SecCausalStructureEx}

The goal of this subsection is to introduce an $\Aut(\ol{\XXX})$-invariant field of double cones in $\ol{\XXX}$. Our starting point is the observation that the vector space $\ol{\mathfrak a}$ contains a canonical cone $\ol{\CCC} \subset \ol{\mathfrak a}$ with open interior $\ol{\CCC}^o$ and tip $0$, called the \Defn{reduced Tits cone}, see Section~\ref{TitsCones} in the appendix. Since the generalized Cartan matrix {\bf A} is irreducible non-spherical and non-affine, this cone is \Defn{pointed} in the sense that
\[
\ol{\CCC} \cap (-\ol{\CCC}) = \{0\}.
\]
Refer to the union $\ol{\CCC}^o \cup (- \ol{\CCC}^o)$ as the \Defn{open Tits double cone} in $\ol{\mathfrak a}$. Denote by $\ol{A}^o_\pm := \exp(\pm\ol{\CCC}^o)$ the corresponding subsemigroups of $\ol{A}$ and refer to $\ol{A}^o_+ \cup \ol{A}^o_-$ as the \Defn{canonical (open) double cone} in $\ol{A}$.

\begin{remark}\label{ConeWellDefined}
Let $F$ be an arbitrary flat through $e$ in the group model of $\ol{\XXX}$. By strong transitivity, there exists $k \in \Aut(\ol{\XXX})_e$ such that $k.\ol{A} = F$. Moreover, the subset $\ol{C}^{+, -}_e(F) := k.(\ol{A}^o_+ \cup \ol{A}^o_-) \subset F$ is independent of the choice of $k$. Indeed, if $k'$ is a different choice, then $k^{-1}k'$ acts on $\ol{A}$ by a local automorphism, and by Theorem \ref{ExtendLocalAutomorphisms} any such automorphism leaves the canonical double cone invariant. 
\end{remark}
Define
\[
\ol{C}^{+, -}_e := \bigcup \ol{C}^{+, -}_e(F),
\]
where the union is taken over all flats containing the basepoint $e$. 
\begin{proposition} Assume that ${\bf A}$ is non-spherical and non-affine. Then for every flat $F$ containing $e$ one has \[\ol{C}^{+, -}_e \cap F = \ol{C}^{+, -}_e(F).\] In particular, $\ol{C}^{+, -}_e$ intersects each flat in a double cone, whose two halves do not intersect.
\end{proposition}
\begin{proof} One needs to show that, if $F_1$ and $F_2$ are flats containing $e$ and $x \in F_1 \cap \ol{C}_e^\pm(F_2)$, then $x \in \ol{C}^\pm(F_1)$. By Corollary \ref{twopointsintwoflats} there exists $\alpha \in \Aut(\ol{\XXX})$ which fixes $F_1 \cap F_2$ pointwise and maps $F_2$ to $F_1$. In particular, since $x \in F_1 \cap F_2$ one has $x = \alpha(x) \in \alpha(\ol{C}_e^\pm(F_2))$. Moreover, since $e \in F_1 \cap F_2$ one has $\alpha \in \Aut(\ol{\XXX})_e$ and hence $\alpha(\ol{C}_e^\pm(F_2)) = \ol{C}_e^\pm(F_1)$ by the argument above. This shows $x \in \ol{C}_e^\pm(F_1)$ and finishes the proof.
\end{proof}

By abuse of language, we will also call $\ol{C}^{+, -}_e$ a double cone. By construction, this double cone is invariant under all automorphisms in $ \Aut(\ol{\XXX})_e$. In particular, if $x \in \ol{\XXX}$ and if $\alpha \in \Aut(\ol{\XXX})$ maps $e$ to $x$, then
\[
\ol{C}^{+, -}_x := \alpha(\ol{C}^{+, -}_e)
\]
is independent of the choice of $\alpha$. Moreover, if $\varphi: \ol{\mathfrak a}\to F$ is any chart centred at $x$, then
\[
\ol{C}^{+, -}_x [F] := \ol{C}^{+, -}_x \cap F = \varphi(\ol{\CCC}^o \cup (- \ol{\CCC}^o)).
\]
Note also that by construction the family $(\ol{C}^{+, -}_x)_{x \in \ol{\XXX}}$ of double cones is $\Aut(\ol{\XXX})$-invariant in the sense that
\[
\alpha(\ol{C}^{+, -}_x) = \ol{C}^{+, -}_{\alpha(x)} \quad (\alpha \in \Aut(\ol{\XXX}), x \in \ol{\XXX}).
\]
Refer to $(\ol{C}^{+, -}_x)_{x \in \ol{\XXX}}$ as the \Defn{canonical double cone field} on $\ol{\XXX}$.

If $\phi, \phi': (0, \ol{\mathfrak a}) \to (p, F)$ are charts, then $\phi^{-1} \circ \phi'$ is a linear map preserving the decomposition $\ol{\mathfrak a}  =\ol{\mathfrak a}^{\rm reg} \sqcup \ol{\mathfrak a}^{\rm sing}$ as well as the open double Tits cone $\ol{\CCC}^o \cup (- \ol{\CCC}^o) \subset \ol{\mathfrak a}$. There are thus two possibilities: Either $\phi^{-1} \circ \phi'$ preserves the open Tits cone or it maps the open Tits cone to its negative.

\begin{definition} Two charts $\phi, \phi': (0, \ol{\mathfrak a}) \to (p, F)$ of $F$ centred at the same point $p$ are called \Defn{causally equivalent} if $\phi^{-1} \circ \phi'$ preserves the open Tits cone. A \Defn{causal orientation} of $\ol{\XXX}$ is a choice of one of the two causal equivalence classes of charts for every maximal pointed flat $(p, F)$.

If a group $H$ acts by automorphisms on $\ol{\XXX}$, then a causal orientation is called \Defn{$H$-invariant} if for every $h \in H$ and every chart $\phi$ in the chosen causal equivalence class, also the chart $h \circ \phi$ is in the chosen equivalence class.
\end{definition}
\begin{proposition}\label{2causalstructures} There exist exactly two $\Aut^+(\ol{\XXX})$-invariant causal orientations on $\ol{\XXX}$.
\end{proposition}

\begin{proof} Since $\Aut^+(\ol{\XXX})$ acts strongly transitively on $\ol{\XXX}$ and since every pointed maximal flat admits only two causal equivalence classes, there are at most two $G$-invariant causal structures on $\ol{\XXX}$. By Theorem~\ref{ExtendLocalAutomorphisms} one has $W(\Aut(\ol{\XXX}) \curvearrowright \ol{\XXX}) \cong (W \rtimes \Aut(\Gamma_{\mathbf{A}}))  \times \Z/2\Z$, where the first factor acts on the Tits cone, and the second factor swaps the Tits cone and its negative. Moreover, $W(\Aut^+(\ol{\XXX}) \curvearrowright \ol{\XXX})$ is given by the subgroup $(W \rtimes \Aut(\Gamma_{\mathbf{A}})) \times \{e\}$. One thus obtains two distinct $\Aut^+(\ol{\XXX})$-invariant causal orientations, one for which the charts $\{\alpha \circ \exp\mid \alpha \in  \Aut^+(\ol{\XXX})\}$ are positive, and one for which the charts $\{-\alpha \circ \exp\mid \alpha \in  \Aut^+(\ol{\XXX})\}$ are positive.
\end{proof}
Charts in the unique $\Aut^+(\ol{\XXX})$-invariant causal orientation containing $\exp$ are called \Defn{positive charts}, charts in the unique $\Aut^+(\ol{\XXX})$-invariant causal orientation containing $-\exp$ \Defn{negative charts}. Given a pointed maximal flat $(x, F)$ in $\ol{\XXX}$ and a positive chart $\varphi: \ol{\mathfrak a} \to F$ centred at $x$ define
\[
\ol{C}^{+}_x [F] := \varphi(\ol{\CCC}^o) \quad \text{and} \quad \ol{C}^{-}_x [F] := \varphi(-\ol{\CCC}^o).
\]
By definitions, these cones do not depend on the choice of positive chart, and if one defines
\[
\ol{C}^{\pm}_x := \bigcup_{F \ni x} \ol{C}^{\pm}_x[F],
\]
then $\ol{C}^{+,-}_x = \ol{C}^{+}_x \cup \ol{C}^{-}_x$. This decomposes the canonical double cone field on $\ol{\XXX}$ into two cone fields.
\begin{definition} The cone field $(\ol{C}^{+}_x)_{x \in \ol{\XXX}}$ is called the \Defn{positive causal structure} on $\ol{\XXX}$, and the cone field $(\ol{C}^{-}_x)_{x \in \ol{\XXX}}$ is called the \Defn{negative causal structure} on $\ol{\XXX}$.
\end{definition}
Note that the positive and negative causal structure are invariant under $\Aut^+(\ol{\XXX})$, and in particular $G$-invariant. At this point we have established Proposition~\ref{PropCausalIntro}.
\begin{remark} In Lorentzian geometry, invariant causal structures arise naturally. Namely, if $(g_x)_{x \in X}$ is a Lorentzian metric on a manifold $X$, then the associated field of light cones $(\ol{\CCC}_x \subset T_xX)_{x \in X}$ is invariant under all Lorentzian automorphisms. In our setting, there is always an invariant bilinear form on $\ol{\mathfrak a}$, since ${\bf A}$ is assumed to be symmetrizable. However, this bilinear form need not be Lorentzian, and even if it is Lorentzian it may happen that the Tits cone is not contained in the light cone of the invariant Lorentzian form (see e.g. \cite{FKN}). We emphasize that our $G$-invariant causal structures are modelled on the Tits cone, rather than the light cone of a bilinear form, hence our geometry here is {\em causal} rather than {\em Lorentzian}. This being said, in certain hyperbolic examples, including $E_{10}$, the interiors of the Tits cone and the light cone coincide according to \cite{FKN}, \cite{CFF}; hence in these cases our results do admit a Lorentzian interpretation. In these examples our construction of causal boundaries below is a global version of the lightcone embedding provided in \cite{CFF}.
\end{remark}

\subsection{Causal geodesic rays and the municipality}

The positive causal structure gives rise to a notion of causal curves in the following standard way:
\begin{definition} Let $I\subseteq \R$ be an interval which is open on the right, i.e., for every $t\in I$ there is $\epsilon>0$ such that $t+\epsilon\in I$. A continuous map $\gamma:I \to \ol{\XXX}$ is called a \Defn{causal curve} if for every $t\in I$ there exists $\epsilon>0$ such that
\[
\gamma((t, t + \epsilon)) \subset\ol{C}^{+}_{\gamma(t)}.
\]
If instead for every $t \in I$ there exists $\epsilon>0$ such that
\[
\gamma((t, t + \epsilon)) \subset\ol{C}^{-}_{\gamma(t)},
\]
then $\gamma$ is called an \Defn{anti-causal curve}.
\end{definition}
A (anti-)causal curve, which is also a geodesic ray, respectively a geodesic segment, will be called a \Defn{(anti-)causal ray}, respectively \Defn{(anti-)causal segment}.

\begin{lemma}\label{CausalGeodesics} Let $r:[0, \infty) \to \ol{\XXX}$ be a geodesic ray, let $0< S < T < \infty$ and let $\gamma : [S, T] \to \ol{\XXX}$ be the geodesic segment obtained by restricting $r$ to $[S, T]$. Then the following are equivalent.
\begin{enumerate}
\item $\gamma$ is a causal segment.
\item $r(t) \in \ol{C}^+_{r(0)}$ for some $t \in \R$.
\item $r(t) \in \ol{C}^+_{r(s)}$ for all $0 \leq s < t < \infty$.
\item $\gamma(t) = r(t) \in \ol{C}^+_{r(s)}$ for all $S \leq s < t \leq T$.
\item $r$ is a causal ray.
\end{enumerate}
\end{lemma}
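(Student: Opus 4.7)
The plan is to reduce all five conditions to a single statement about the direction vector of $r$ in a positive chart of a maximal flat containing it. By Theorem~\ref{IntroFlats}(ii) the geodesic carrying $r$ is contained in some maximal flat $F$. Fix a positive chart $\varphi:\overline{\mathfrak{a}}\to F$ centered at $r(0)$, whose existence is guaranteed by Corollary~\ref{ConsequencesFlatClassification} and Proposition~\ref{2causalstructures}. Since $\varphi^{-1}(r)\subset\overline{\mathfrak{a}}$ is a rank-one Euclidean subflat through $0$ and $\varphi^{-1}\circ r$ intertwines the reflection space structures, the parametrization axiom forces $\varphi^{-1}(r(t))=tv$ for some $v\in\overline{\mathfrak{a}}\setminus\{0\}$. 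I would then show that each of (i)--(v) is equivalent to $v\in\mathcal{C}^o$, where $\mathcal{C}^o$ is the interior of the Tits cone.

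The key tool is translation invariance of the positive causal structure. By Proposition~\ref{PropTranslationGroups}, $T_r$ lies in $\mathrm{Trans}(\overline{\XXX})=\mathrm{Ad}(G)$ (Proposition~\ref{PropMainTransKM}), which is contained in $\mathrm{Aut}^+(\overline{\XXX})$ by~\eqref{Aut+}, and the latter preserves the positive cone field by construction. Hence the unique $\tau_s\in T_r$ with $\tau_s(r(0))=r(s)$ satisfies $\tau_s(\overline{C}^+_{r(0)})=\overline{C}^+_{r(s)}$ and $\tau_s(r(u))=r(s+u)$. Combined with $\overline{C}^+_{r(0)}\cap F=\varphi(\mathcal{C}^o)$, this yields the master equivalence
\[
r(t)\in\overline{C}^+_{r(s)}\ \iff\ (t-s)v\in\mathcal{C}^o.
\]
Since $\mathcal{C}^o$ is an open cone with tip $0$, closed under strictly positive scaling, the right-hand side with $t-s>0$ is equivalent to $v\in\mathcal{C}^o$, independently of $s,t$.

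Applying the master equivalence is then a matter of unpacking definitions: (iii) and (iv) each translate directly to ``$(t-s)v\in\mathcal{C}^o$ for all $t>s$'', i.e.\ $v\in\mathcal{C}^o$. For (ii), interpreting $r(t)$ via the domain $[0,\infty)$ of the ray, the condition forces $t>0$ (as $0\notin\mathcal{C}^o$), and $tv\in\mathcal{C}^o$ with $t>0$ is again $v\in\mathcal{C}^o$. For (v), the causal-curve condition at $r(t)$ is, after applying $\tau_{-t}$, the existence of $\epsilon>0$ with $r((0,\epsilon))\subset\overline{C}^+_{r(0)}$, i.e.\ $sv\in\mathcal{C}^o$ for arbitrarily small $s>0$, once more equivalent to $v\in\mathcal{C}^o$ by the cone property; note that this is the same condition for every $t$, so local causality at $t=0$ upgrades automatically to causality on all of $[0,\infty)$. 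Condition (i) for $\gamma=r|_{[S,T]}$ unfolds identically, with the restriction $S\le t<t+\epsilon\le T$ having no effect on the reduction.

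The main obstacle I anticipate is conceptual rather than technical: keeping straight that ``positive chart centered at $r(0)$'' is well-defined, that the chart identifies the full cone $\overline{C}^+_{r(0)}$ intersected with $F$ with $\varphi(\mathcal{C}^o)$, and that the transvection group $T_r$ genuinely lies inside $\mathrm{Aut}^+(\overline{\XXX})$ and not merely in $\mathrm{Aut}(\overline{\XXX})$. Once these three points are in place via the references above, the lemma reduces to the elementary observation that $\mathcal{C}^o$ is stable under multiplication by positive scalars.
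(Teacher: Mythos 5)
Your proof is correct and follows essentially the same route as the paper's: both reduce the five conditions to the single statement that the direction of $r$ in a chart of a containing maximal flat lies in $\mathcal{C}^o$, using strong transitivity to normalize into the standard flat and the stability of $\mathcal{C}^o$ under positive scaling. Your hub-and-spoke organization via the ``master equivalence'' just makes explicit the translation-invariance of the cone field under $T_r$ that the paper's cyclic chain of implications treats as immediate from the definitions.
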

\begin{proof} The implications (i)$\Rightarrow$(ii), (iii)$\Rightarrow$(iv)$\Rightarrow$(i) and (iii)$\Rightarrow$(v)$\Rightarrow$(i) are immediate from the definitions. To show the remaining implication (ii)$\Rightarrow$(iii) one may assume by strong transitivity, that $r$ is contained in $\ol{A}$ and emanates from $e$, i.e.,  $r(t) = \exp(tX)$ for some $X \in \ol{\mathfrak a}$. Under this assumption, (ii) amounts to $tX \in \ol{\CCC}$ for some $t \in \R$. This implies that $(t-s)X \in \ol{\CCC}$ for all $0 \leq s < t < \infty$, which is (iii). 
\end{proof}

In the sequel $\partial_{\bullet}\ol{\XXX}$ denotes the collection of all geodesic rays $r: [0, \infty) \to \ol{\XXX}$. Then $\partial_{\bullet}\ol{\XXX}$ fibers over $\ol{\XXX}$ by the map
\begin{equation}\label{ev0}
{\rm ev}_0: \partial_{\bullet}\ol{\XXX} \to {\ol{\XXX}}, \quad r \mapsto r(0),
\end{equation}
and we refer to the  fiber $\partial_x \ol{\XXX} := {\rm ev}_0^{-1}(x)$ over $x$ as the \Defn{point horizon} of $x$. Given a flat $F$ containing $x$ we also denote by $\partial_x{\ol{\XXX}}[F] \subset \partial_\bullet{\ol{\XXX}}$ the subset of rays emanating from $x$ and contained in $F$. The action of the automorphism group $\Aut(\ol{\XXX})$ preserves geodesic rays and thus induces an action on $\partial_{\bullet}\ol{\XXX}$, for which the projection ${\rm ev}_0$ is equivariant. In particular, for every $x \in \ol{\XXX}$ the point stabilizer $\Aut(\ol{\XXX})_x$ acts on $\partial_x \ol{\XXX}$, and $\Aut(x, F)$ acts on $\partial_x{\ol{\XXX}}[F]$.

To explicitly parametrize geodesic rays in $\ol{\XXX}$, consider again the standard pointed maximal flat $(e, \ol{A})$ in the group model of $\ol{\XXX}$. Then the geodesic rays contained in $\ol{A}$ and emanating from $e$ are given by $r_{e, X}(t) := \exp(tX)$, where $X$ runs through the Lie algebra $\ol{\mathfrak a}$. Since $\ol{\XXX}$ is strongly transitive, every geodesic ray in $\ol{\XXX}$ is of the form  $r_{g, X}(t) := g.\exp(tX)$ for some $g \in \Aut(\ol{\XXX})$ and $X \in \ol{\mathfrak a}$. One thus obtains a surjective map
\[ \Aut(\ol{\XXX}) \times \ol{\mathfrak a} \to \partial_\bullet \ol{\XXX}, \quad (g, X) \mapsto r_{g,X}.\]
Note that this map is not injective, i.e.\ the ray $r_{g, X}$ does not determine the parameters $g$ and $X$.
\begin{definition} A geodesic ray $r:[0, \infty) \to \ol{\XXX}$ is called \Defn{regular} if it is contained in a unique maximal flat of $\ol{\XXX}$ and \Defn{singular} otherwise. 
\end{definition}

Note that by Lemma~\ref{LemmaBasicFlats} these notions are invariant under automorphisms of $\ol{\XXX}$. Recall the notation $\ol{\mathfrak a}^{\rm sing} := \log(\ol{A}^{\rm sing}(e))$ for the logarithm of the singular set of $(e, \ol{A})$ from Subsection \ref{SubsecLocalAut}; denote by $\ol{\mathfrak a}^{\rm reg} := \ol{\mathfrak a} \setminus\ol{\mathfrak a}^{\rm sing}$ its complement. In terms of the parametrization above, regular and singular geodesic rays can be characterized as follows.

\begin{lemma}\label{LemmaRegSing} The geodesic ray $r_{g, X}$ is singular if $X \in \ol{\mathfrak a}^{\rm sing}$ and regular if $X \in \ol{\mathfrak a}^{\rm reg}$.
\end{lemma}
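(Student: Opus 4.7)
The plan is to reduce to the case $g = e$ and then invoke Proposition~\ref{hyperplanelog}, which already identifies the singular set of the standard pointed flat $(e,\overline{A}\overline{K})$ with the union $\bigcup_{\alpha \in \Phi^+} \overline{H}_\alpha = \overline{\mathfrak a}^{\rm sing}$. First I would observe that by Lemma~\ref{LemmaBasicFlats}(ii), any $\alpha \in {\rm Aut}(\overline{\XXX})$ permutes the collection of maximal flats, hence being contained in a unique maximal flat is an invariant property of geodesic rays under ${\rm Aut}(\overline{\XXX})$. Since $r_{g,X} = g \cdot r_{e,X}$, it suffices to analyze the ray $r_{e,X}(t) = \exp(tX)\overline{K}$, which lies in the standard flat $\overline{A}\overline{K}$.

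For the direction $X \in \overline{\mathfrak a}^{\rm reg} \Rightarrow r_{e,X}$ regular, I would argue by contrapositive. Suppose $r_{e,X}$ is singular, so it is contained in some maximal flat $F' \neq \overline{A}\overline{K}$. By Corollary~\ref{ConsequencesFlatClassification} one has $F' = k.\overline{A}\overline{K}$ for some $k \in \overline{K}$ not normalizing $\overline{A}$. Then each point $\exp(tX)\overline{K}$ with $t > 0$ lies simultaneously in two distinct maximal flats through $e\overline{K}$, so $\exp(tX) \in \overline{A}^{\rm sing}(e)$; by Proposition~\ref{hyperplanelog} this gives $tX \in \overline{\mathfrak a}^{\rm sing}$. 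Since $\overline{\mathfrak a}^{\rm sing}$ is a union of linear hyperplanes and hence stable under positive scaling, one concludes $X \in \overline{\mathfrak a}^{\rm sing}$.

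For the converse $X \in \overline{\mathfrak a}^{\rm sing} \Rightarrow r_{e,X}$ singular, choose $\alpha \in \Phi^+$ with $X \in \overline{H}_\alpha = \ker \alpha$. Then $\exp(tX)$ commutes with both $U_\alpha$ and $U_{-\alpha}$ for every $t$, since the conjugation action of $\exp(tX)$ on these root groups is trivial once $\alpha(X) = 0$. In particular $\exp(tX)$ is centralized by the compact subgroup $\overline{K} \cap \langle U_\alpha, U_{-\alpha}\rangle$, which inside the rank-one group $\langle U_\alpha, U_{-\alpha}\rangle \cong {\rm (P)SL}_2(\R)$ is a circle subgroup; only finitely many of its elements normalize the one-dimensional torus $\overline{A} \cap \langle U_\alpha, U_{-\alpha}\rangle$. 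Picking any $k$ in this circle subgroup outside the normalizer, one has $k.\overline{A}\overline{K} \neq \overline{A}\overline{K}$ while $k.\exp(tX)\overline{K} = \exp(tX)k\overline{K} = \exp(tX)\overline{K}$ for every $t$, so $r_{e,X} = k.r_{e,X} \subset k.\overline{A}\overline{K}$, exhibiting a second maximal flat containing the ray.

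The main step in which care is needed is ensuring that in the converse direction one can genuinely find an element $k$ that simultaneously (a) fixes every point of the ray and (b) moves the standard flat $\overline{A}\overline{K}$ off itself; the crucial input is that centralizing $\exp(tX)$ (for $X \in \ker\alpha$) is automatic for the entire rank-one subgroup $\langle U_\alpha, U_{-\alpha}\rangle$, so one has the full maximal compact ${\rm (P)SO}(2)$ at one's disposal, only a finite subset of which normalizes $\overline{A}$. The rest of the argument is a direct application of Proposition~\ref{hyperplanelog} together with strong transitivity from Corollary~\ref{ConsequencesFlatClassification}.
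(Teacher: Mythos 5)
Your proof is correct and follows essentially the same route as the paper: reduce to $g=e$ by automorphism-invariance, then combine Proposition~\ref{hyperplanelog} with the fact that $\overline{\mathfrak a}^{\rm sing}$ is a (conical) hyperplane arrangement. The only cosmetic difference is that for the singular direction you unpack the explicit witness $k\in \overline{K}\cap\langle U_\alpha,U_{-\alpha}\rangle$ rather than citing the converse part of the proof of Proposition~\ref{hyperplanelog}, which already provides it.
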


\begin{proof} By invariance of regular/singular rays under automorphisms it suffices to show this for $g = e$. It therefore remains to prove that, if $X \in \ol{\mathfrak a}^{\rm reg}$, then the whole open ray $\{tX \mid t \in (0, \infty)\}$ is contained in $\ol{\mathfrak a}^{\rm reg}$. This, however, follows from the fact that $\ol{\mathfrak a}^{\rm sing}$ is a hyperplane arrangement by Proposition~\ref{hyperplanelog}.
\end{proof}

\begin{definition}\label{Muni} The subset $\Delta_\bullet \subset \partial_\bullet \ol{\XXX}$ consisting of all causal and anti-causal rays is called the \Defn{municipality} of $\ol{\XXX}$.
\end{definition}

The terminology refers to the fact, to be proved in Proposition~\ref{LocalMunicipality} below, that the fibers of $\Delta_\bullet$ with respect to the map ${\rm ev}_0$ are geometric realizations of the twin building of $G$, hence we will think of the municipality as a collection of (mutually isomorphic) twin buildings parametrized by $\ol{\XXX}$. By construction, $\Delta_\bullet \subset \partial_\bullet \ol{\XXX}$ is $\Aut(\ol{\XXX})$-invariant, and if one denotes by $\Delta^\pm_\bullet \subset \Delta_\bullet$ the collections of causal/anti-causal rays, then these are invariant under $\Aut^+(\ol{\XXX})$. Also note that one can characterize causal/anti-causal rays in terms of the standard parametrization as follows.
\begin{proposition} The ray $r_{g, X}$ is contained in $\Delta_\bullet$ if and only if $X \in \ol{\CCC}^o \cup - \ol{\CCC}^o$. \qed
\end{proposition}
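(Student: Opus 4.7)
The plan is to reduce the claim, via $\mathrm{Aut}(\overline{\XXX})$-invariance of the canonical double cone field, to the case $g = e$ and then read off the condition from the description of the cones in the standard flat $\overline{A}$.

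First I would invoke Lemma~\ref{CausalGeodesics} (and its obvious dual statement for anti-causal rays, obtained by interchanging $\overline{C}^+$ and $\overline{C}^-$ throughout) to rephrase membership in $\Delta_\bullet$ as a purely pointwise condition: $r_{g,X} \in \Delta_\bullet$ if and only if there exists $t > 0$ with
\[
r_{g,X}(t) \;\in\; \overline{C}^{+}_{r_{g,X}(0)} \cup \overline{C}^{-}_{r_{g,X}(0)} \;=\; \overline{C}^{+,-}_{g.e}.
\]
In particular, distinguishing between causal and anti-causal is unnecessary for this statement.

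Next I would apply the $\mathrm{Aut}(\overline{\XXX})$-equivariance of the canonical double cone field, established right after its construction in Subsection~\ref{SecCausalStructureEx}: namely $g^{-1}(\overline{C}^{+,-}_{g.e}) = \overline{C}^{+,-}_e$ regardless of whether $g \in \mathrm{Aut}^+(\overline{\XXX})$ (this is where the two pieces of the double cone may get swapped, but their union is preserved). Hence the condition becomes $\exp(tX) \in \overline{C}^{+,-}_e$ for some $t > 0$, reducing everything to the standard pointed flat $(e, \overline{A})$.

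Finally, the ray $t \mapsto \exp(tX)$ lies entirely in $\overline{A}$, so $\exp(tX) \in \overline{C}^{+,-}_e$ if and only if $\exp(tX) \in \overline{C}^{+,-}_e \cap \overline{A} = \overline{C}^{+,-}_e[\overline{A}]$. Since $\exp : \overline{\mathfrak{a}} \to \overline{A}$ is a positive chart centered at $e$ (it is, by definition, among the distinguished representatives of the positive causal equivalence class from Proposition~\ref{2causalstructures}), this intersection equals $\exp(\mathcal{C}^o \cup -\mathcal{C}^o)$. As $\exp$ is a bijection onto $\overline{A}$ by Proposition~\ref{PropexpHomeomorphism} and both $\mathcal{C}^o$ and $-\mathcal{C}^o$ are open cones with tip $0$, we conclude that
\[
\exists\, t > 0 : tX \in \mathcal{C}^o \cup -\mathcal{C}^o \iff X \in \mathcal{C}^o \cup -\mathcal{C}^o,
\]
completing the equivalence. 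There is no serious obstacle here; the whole argument is a bookkeeping exercise in invariance, and the only point that deserves a moment of care is making sure one uses only the invariance of the \emph{union} $\overline{C}^+ \cup \overline{C}^-$ under the full automorphism group, rather than the invariance of the two pieces separately (which holds only for $\mathrm{Aut}^+$).
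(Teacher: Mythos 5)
Your proof is correct. The paper offers no written argument for this proposition (it is stated with a bare \verb|\qed| as immediate from the construction of the canonical double cone field), and your argument supplies exactly the bookkeeping the authors leave implicit: reduce to the standard pointed flat $(e,\overline{A})$ via the $\mathrm{Aut}(\overline{\XXX})$-invariance of the double cone field, intersect with $\overline{A}$ using $\overline{C}^{+,-}_e\cap \overline{A}=\overline{C}^{+,-}_e(\overline{A})=\exp(\mathcal{C}^o\cup-\mathcal{C}^o)$, and use that $\mathcal{C}^o\cup-\mathcal{C}^o$ is a union of open cones, so $tX$ lies in it for some $t>0$ if and only if $X$ does.
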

Denote by
\[
\Delta^{\rm reg}_\bullet = \{ r_{g,X} \in \Delta_\bullet \mid X \in \ol{\mathfrak{a}}^{\mathrm{reg}} \}, \text{ respectively } \Delta^{\rm sing}_\bullet = \{ r_{g,X} \in \Delta_\bullet \mid X \in \ol{\mathfrak{a}}^{\mathrm{sing}} \},
\]
the subsets of regular, respectively singular rays in the municipality. Furthermore, given $x \in \ol{\XXX}$, denote by  $\Delta_x$,  $\Delta^{\rm reg}_x$ and $\Delta^{\rm sing}_x$ the corresponding fibers over $x$ by the map ${\rm ev}_0$.

Since the notion of a municipality ray is invariant under automorphisms, the subset $\Delta_\bullet \subset \partial_\bullet\ol{\XXX}$ is $\Aut(\ol{\XXX})$-invariant, and the induced $\Aut(\ol{\XXX})$-action preserves the decomposition $\Delta_\bullet = \Delta^{\rm reg}_\bullet \sqcup  \Delta^{\rm sing}_\bullet$. Consequently, for every $x \in \ol{\XXX}$ the point stabilizer $\Aut(\ol{\XXX})_x$ acts on $\Delta_x$ preserving the decomposition $\Delta_x= \Delta^{\rm reg}_x \sqcup  \Delta^{\rm sing}_x$.

\subsection{Ideal polyhedral complexes and their combinatorial descriptions}\label{SecIdealPoly}
Our next goal is to equip the municipality with a certain polyhedral structure and to compare this structure with a certain polyhedral realization of the twin building. In contrast to the spherical case, the classical language of simplicial complexes is not sufficient to discuss these matters for a number of reasons: Firstly, our complexes will be built from more general polyhedra than simplices. Secondly, we also need to discuss cones over polyhedral complexes, which have a polyhedral structure with \emph{non-compact} cells. Finally, we will need to work with subsets of polyhedral complexes in which some faces (of codimension $\geq 2$) are missing. We thus need to develop a framework which deals with all of these complications.

A \Defn{halfspace} in $\R^n$ is defined as a connected component of the complement of an affine hyperplane in $\R^n$. An intersection of finitely many half-spaces will be called a \Defn{polyhedron} if it is non-emtpy. Thus by definition polyhedra are always closed and convex, but not necessarily compact. A non-empty convex subset $F$ of a polyhedron $P$ is called a \Defn{face} if for every $x \in F$ and every $y, z \in P$ such that $x$ lies on the line segment between $y$ and $z$ we have $\{y,z\} \subset F$. Every face of a polyhedron is again a polyhedron.

\begin{defn}
A pair $(X, (\varphi_i)_{i \in I})$ consisting of a set $X$ and a family of injective maps $\varphi_i: P_i \to X$ from polyhedra $P_i$ to $X$ is called a \Defn{polyhedral complex} if the following two conditions are satisfied for all $i, j \in I$:
\begin{enumerate}
\item If $F$ is a face of $P_i$, then there exists $k \in I$ such that $\varphi_k(P_k) = \varphi_i(F)$ and $\varphi_k^{-1} \circ \varphi_i|_F$ is an isometry.
\item If $\varphi_i(P_i) \cap \varphi_j(P_j) \neq \emptyset$, then there exist $k \in I$ and $\varphi_k: P_k \to X$ such that $\varphi_k(P_k) = \varphi_i(P_i) \cap \varphi_j(P_j)$ and $\varphi_i^{-1} \circ\varphi_k$ and $\varphi_j^{-1}\circ \varphi_k$ are isometric embeddings.
\end{enumerate}
The \Defn{weak topology} on $X$ is the weakest topology which makes all the inclusions $\varphi_i$ continuous. 
\end{defn}
In the sequel we will always equip polyhedral complexes with their weak topology unless mentioned otherwise. If all the polyhedra $P_i$ are simplices, then we recover the notion of a \Defn{simplicial complex}. We refer to the images $\varphi(P_i)$ as \Defn{closed cells} of $X$; the interior of a closed cell is called an \Defn{open cell}, and an open or closed cell which is not the face of any other cell is called an open or closed \Defn{chamber}. An open or closed cell $\sigma$ is called a \Defn{face} of an open or closed cell $\tau$ if $\sigma \subset \overline{\tau}$. In this case the difference between the dimension of $\tau$ and the (covering) dimension of $\sigma$ is called the \Defn{codimension} of $\sigma$ in $\tau$.

If $X$ is a polyhedral complex, then the \Defn{polyhedral cone} $CX$ over $X$ is the following polyhedral complex: The underlying set of $CX$ is the quotient of $([0, \infty) \times X)$ obtained by collapsing $\{0\} \times X$. The polyhedral structure is then obtained by declaring the basepoint $[(0,x)]$ to be a closed cell and declaring the image of $[0, \infty) \times \sigma$ to be a closed cell for every closed cell $\sigma \subset X$.  

\begin{defn} If $\overline{X}$ is a polyhedral complex, then a subset $X\subset \overline{X}$ is called an \Defn{ideal polyhedral complex} with \Defn{completion} $\overline{X}$ if $X$ is 
a union of open cells and contains all open chambers of $\overline{X}$ and their codimension $1$ faces.  If $X_1$ and $X_2$ are two ideal polyhedral complexes, then a bijection $f: X_1 \to X_2$ will be called a \Defn{geometric isomorphism} if $f$ is a homeomorphism with respect to the respective weak topologies and maps open cells homeomorphically onto open cells. An action of a group on an ideal polyhedral complex is called \Defn{cellular} if it is by geometric automorphisms.
\end{defn}

Typical examples of ideal polyhedral complexes are given by ``ideal tesselations'' of the hyperbolic plane with some vertices at infinity, hence the name. 

\begin{remark}\label{LinkComplex}
Assume that $V$ is a vector space and that $\mathcal C \subset V$ is a subset which carries the structure of an ideal polyhedral complex such that every closed cell contains $0$ and is invariant under the action of $\R^{>0}$ on $V$ by homotheties. Let $\mathbb S(V) := (V \setminus \{0\})/\R^{>0}$ and denote by $\mathbb S: (V \setminus \{0\}) \to \mathbb S(V)$ the canonical projection. Then $\mathbb S(\mathcal C)$ has an ideal polyhedral structure whose closed cells are of the form $\mathbb S(\tau)$, where $\tau$ is a closed cell of $\mathcal C$ different from $\{0\}$. Alternatively, one can realize $\mathbb S(\mathcal C)$ as the intersection of $\mathcal C$ with an arbitrary sphere centered at $0$, hence we refer to $\mathbb S(\mathcal C)$ as the \Defn{link complex} of $\mathcal C$. Furthermore, if $\mathcal{C}$ is contained in an open half-space of $V$, then one can also realize $\mathbb S(\mathcal C)$ as the intersection of $\mathcal{C}$ with a suitable affine hyperplane of $V$. 
\end{remark}

If $X$ is an ideal polyhedral complex, then we define a partial order on the set of open cells of $X$ by setting $\sigma \leq \tau$ if $\sigma$ is a face of $\tau$. We denote by $\Sigma(X)$ the resulting poset (partially ordered set). Posets together with order preserving maps form a category, and we say that two ideal polyhedral complexes are \Defn{combinatorially isomorphic} if their underlying posets are isomorphic in this category. If $X$ is an ideal polyhedral complex and $\Sigma$ is a poset with $\Sigma \cong \Sigma(X)$, then we say that $X$ is a \Defn{polyhedral realization} of $\Sigma$. For example, the $r$-dimensional simplex $\Delta_r$ realizes the poset $\blacktriangle_r$ given by all non-empty subsets of $\{0, \dots, r\}$.

A poset is called \Defn{polyhedral}, respectively \Defn{simplicial} if it can be realized by a polyhedral or simplicial complex. \Defn{Ideal polyhedral posets} and \Defn{ideal simplicial posets} are defined similary.  If $\Sigma$ is a poset, then its \Defn{augmentation} $\Sigma^+$ is the poset obtained from $\Sigma$ by adjoining a minimum $\emptyset_\Sigma$.
If $\Sigma$ can be realized by a polyhedral complex $X$, then $\Sigma^+$ can be realized by the polyhedral cone $CX$; in particular, augmentations of polyhedral posets are polyhedral.

A poset $\Sigma$ is simplicial if and only if for all $\sigma, \tau \in \Sigma$ there exists a greatest lower bound $\sigma \wedge \tau$ and for every $\sigma \in \Sigma$ the \Defn{downward link}
\[
\Sigma_{\leq \sigma} = \{\tau \in \Sigma \mid \tau \leq \sigma\}
\]
is isomorphic to $\blacktriangle_r$ for some $r \in \mathbb N_0$, cf.\ \cite[p.\ 661]{AbramenkoBrown2008}. We then call $\sigma$ an \Defn{(abstract) $r$-simplex} of $\Sigma$ and refer to $r$ as its \Defn{dimension}. The $0$-simplices of $\Sigma$ are also called its \Defn{vertices} and if $\sigma$ is an $r_1$-simplex, $\tau$ is an $r_2$-simplex and $\sigma \leq \tau$, then $\sigma$ is called a \Defn{face} of $\tau$ of \Defn{codimension} $r_2-r_1$. Ideal simplicial posets are the subposets of simplicial cosets which contain all maximal and comaximal elements. 

Every simplicial poset $\Sigma$ admits a canonical simplicial realization $|\Sigma|$ called its \Defn{geometric realization}, cf.\ \cite[p.\ 662]{AbramenkoBrown2008}; denote by $|\Sigma^+| := C|\Sigma|$ the realization of $\Sigma^+$ given by the simplicial cone over $|\Sigma|$. In the geometric realization of a simplicial poset $\Sigma$, every abstract $r$-simplex is realized by an $r$-dimensional simplex. While every simplicial realization of $\Sigma$ is geometrically isomorphic to $|\Sigma|$, there may exist other (non-simplicial) ideal polyhedral realizations of $\Sigma$ which are not geometrically isomorphic to $|\Sigma|$, and in which abstract $r$-simplices are realized by polyhedra of dimensions different from $r$ (cf.\ Example \ref{simplicialcox}). 

If $S$ is a finite set, then an \Defn{$S$-coloring} of a polyhedral poset $\Sigma$ is a map from the comaximal elements of $\Sigma$ to $S$ which restricts to a bijection on the codimension $1$ faces of each given chamber. If $\Sigma = \Sigma(X)$ for an ideal polyhedral complex $X$, then such a map is also called a coloring of $X$, and we call $\Sigma$ together with this map an \Defn{$S$-coloured ideal polyhedral poset}. In this case every $s \in S$ defines an equivalence relation $\sigma_s$ on the set ${\rm Ch}(\Sigma)$ of chambers of $\Sigma$ by setting $\sigma \sim_s \tau$ if $\sigma \wedge \tau$ is a codimension $1$ face coloured by $s$. The pair $({\rm Ch}(\Sigma), (\sim_s)_{s \in S})$ is then a chamber system in the sense of  \cite[Def. 5.21]{AbramenkoBrown2008}, called the \Defn{underlying chamber system} of $\Sigma$ (or of $X$). If $X$ is an ideal polyhedral complex with completion $\overline{X}$, then every $S$-colouring of $\overline{X}$ restricts to an $S$-colouring of $X$, and this restriction determines the underlying chamber system uniquely. We say that two $S$-coloured polyhedral complexes or posets are \Defn{chamber isomorphic} if the underlying chamber systems are isomorphic in the sense that there is a bijection between chambers preserving all of the equivalence relations.

For coloured polyhedral complexes we thus have three notions of isomorphism: geometric isomorphism (the strongest), combinatorial isomorphism and chamber isomorphism (the weakest).

\subsection{Ideal polyhedral realizations of the twin building}\label{TwinBuildinRealization}

So far we have considered the twin building $\Delta$ associated with $G$ as a chamber system. Indeed, in our previous notation we have $\Delta = \Delta^-\sqcup \Delta^+$, where $\Delta^\pm = G/B_\pm$ are the sets of chambers of the two halves. Since $B_\pm$ are self-normalizing we can identify $\Delta^+$ and $\Delta^-$ with the set of conjugates of $B_+$ and $B_-$ respectively. More generally, we can consider the sets $\Sigma(\Delta^+)$ and $\Sigma(\Delta^-)$ of \emph{all} parabolic subgroups of $G$ (excluding $G$) which contain a conjugate of $B_+$, respectively $B_-$. If we define partial orders on these sets by reverse inclusion, then $\Delta^\pm$ can be seen as the underlying chamber systems of the posets $\Sigma(\Delta^\pm)$ with respect to a suitable colouring. Note that the augmentations $\Sigma^+(\Delta^\pm) = \Sigma(\Delta^\pm) \cup \{G\}$ give rise to the same chamber complex.

The posets $\Sigma(\Delta^\pm)$ are in fact simplicial, hence admit simplicial geometric realizations $|\Sigma(\Delta^\pm)|$ with underlying chamber systems $\Delta^\pm$. See \cite[Chapter 4]{AbramenkoBrown2008} for a discussion of these simplicial complexes. In the context of our municipalities we will be interested in different realizations of the chamber systems $\Delta^\pm$.

\begin{definition} The \Defn{positive/negative Davis poset} is the subposet $\Sigma_{\rm sph}(\Delta^\pm) \subset \Sigma(\Delta^\pm)$ consisting of all parabolic subgroups of spherical type. We also define the \Defn{Davis poset} by  $\Sigma_{\rm sph}(\Delta) :=  \Sigma_{\rm sph}(\Delta^-) \sqcup  \Sigma_{\rm sph}(\Delta^+)$, and use similar notation for the augmented versions.
\end{definition}
The significance of these posets was pointed out by Davis who provided a CAT(0) ideal polyhedral realization $|\Sigma_{\rm sph}(\Delta^\pm)|_{\rm Davis}$ of $\Sigma_{\rm sph}(\Delta^\pm)$ in \cite{Davis98}, which is now called the \Defn{Davis realization}. Such a CAT(0) realization was known previously in the affine case by classical results of Bruhat and Tits. To obtain a CAT(0) realization in the spherical case one has to replace the positive/negative Davis poset by the \Defn{augmented positive/negative Davis poset} $\Sigma^+_{\rm sph}(\Delta^\pm) := \Sigma_{\rm sph}(\Delta^\pm) \cup \{G\}$ in order to avoid positive curvature.

By construction, the positive/negative Davis posets realize the chamber systems $\Delta^\pm$. In particular, the Davis posets inherit a canonical colouring, and $G$ acts on them via conjugation by colouring-preserving automorphisms. If we consider only those parabolic subgroups in $\Sigma_{\rm sph}(\Delta^\pm)$ which contain a fixed split torus, then we obtain a poset which is isomorphic to the Davis--Moussong poset of the underlying Coxeter poset (see Definition \ref{DavisMoussongPoset} and the subsequent discussions).

We now describe the alternative realization of the Davis poset which we will use in our study of the municipality of a reduced Kac--Moody. Recall from Subsection \ref{TitsCones} that the closed fundamental chamber of the reduced Tits cone is given by
\[
\overline{C} = \{X \in \ol{\mathfrak a} \mid \forall\, i = 1, \dots, n:\; \alpha_i(X) \geq 0\} \subset \ol{\mathfrak a},
\]
and that it is a polyhedral cone which is bounded by the root hyperplanes $\ol{H}_{\alpha_i}$ of the simple roots in $\ol{\mathfrak a}$; its polyhedral cells are given by intersections of these hyperplanes, and there is a natural colouring of the faces by $S$, which colours each reflection hyperplane with the corresponding simple reflection. The intersection of the fundamental chamber with the interior of the Tits cone is the ideal subcomplex given by the union of those cells which have finite stabilizer under the reduced Kac--Moody representation of the Weyl group on $\ol{\mathfrak a}$. We denote  by $P_{\mathbb S(\ol{\mathfrak a})}$ the image of this ideal polyhedral cone under the projection $\mathbb S: \ol{\mathfrak a}\setminus \{0\} \to \mathbb S(\ol{\mathfrak a})$ from Remark \ref{LinkComplex} and call it the \Defn{reduced ideal fundamental cell}. It is an ideal polyhedral complex with a single chamber whose faces are coloured by $S$.

We now form the quotient of direct products $\Delta^\pm \times P_{\mathbb S(\ol{\mathfrak a})}$ by identifying $(C, x)$ and $(C', x)$ in each half provided $C$ and $C'$ are $s$-adjacent for some $s \in S$ and $x$ is contained in the closure of the face of $P_{\mathbb S(\ol{\mathfrak a})}$ labelled by $s$. This yields a coloured ideal polyhedral complex $|\Delta^\pm|_{\overline{\mathfrak a}}$ and we set $|\Delta|_{\ol{\mathfrak a}} := |\Delta^+|_{\overline{\mathfrak a}} \sqcup |\Delta^-|_{\overline{\mathfrak a}}$. By construction, $|\Delta^\pm|_{\ol{\mathfrak a}}$ are realizations of the chamber systems $\Delta^\pm$, and hence we refer to $|\Delta|_{\ol{\mathfrak a}}$ as the \Defn{$\ol{\mathfrak a}$-realization} of the twin building $\Delta$. Note that the action of the combinatorial automorphism group ${\rm Aut}(\Delta)$ on the first factor of $\Delta \times P_{\mathbb S(\ol{\mathfrak a})}$ descends to an action by polyhedral automorphisms on $|\Delta^\pm|_{\overline{\mathfrak a}}$. In particular, ${\rm Aut}(\ol{\XXX})$ acts on $|\Delta^\pm|_{\overline{\mathfrak a}}$ via the embedding ${\rm Aut}(\ol{\XXX}) \hookrightarrow {\rm Aut}(\Delta)$.

We have thus obtained three realizations of the twin building $\Delta$: The simplicial realization $|\Delta|$, the Davis realization $|\Sigma_{\rm sph}(\Delta)|_{\rm Davis}$ and the $\ol{\mathfrak a}$-realization $|\Delta|_{\ol{\mathfrak a}}$. All three realizations are chamber isomorphic, but in general not geometrically isomorphic. We will see below that (under our standing assumption that ${\bf A}$ is non-spherical and non-affine) $|\Sigma_{\rm sph}(\Delta^\pm)|_{\rm Davis}$ and $|\Delta|_{\ol{\mathfrak a}}$ are combinatorially isomorphic, namely they both realize the Davis poset $\Sigma_{\rm sph}(\Delta^\pm)$. On the other hand, the simplicial realization is not combinatorially isomorphic to either of them. Our polyhedral structure on the municipality will be modelled on the $\ol{\mathfrak a}$-realization.

\subsection{The polyhedral cell structure of the municipality}\label{SecMuni}
Recall from Definition \ref{Muni} the definition of the municipality $\Delta_\bullet$. Throughout this subsection we fix $x \in \ol{\XXX}$ and denote by $\Delta_x$ the fiber of $\Delta_\bullet$ under the surjection ${\rm ev}_0$ from \eqref{ev0}. By definition, $\Aut(\ol{\XXX})$ acts on $\Delta_\bullet$ and the stabilizer $\Aut(\ol{\XXX})_x$ preserves $\Delta_x$. The goal of this subsection is to define an ideal polyhedral structure on $\Delta_x$ and to show that the resulting ideal polyhedral complex is $\Aut(\ol{\XXX})_x$-equivariantly geometrically isomorphic to the $\ol{\mathfrak a}$-realization $|\Delta|_{\ol{\mathfrak a}}$, and combinatorially isomorphic to the Davis realization $|\Sigma_{\rm sph}(\Delta)|_{\rm Davis}$. As a by-product we will obtain that $|\Delta|_{\ol{\mathfrak a}}$ realizes $\Sigma_{\rm sph}(\Delta)$.

Our first goal is to construct a polyhedral structure on $\Delta_x$ for a fixed $x \in \ol{\XXX}$. For this we will need to recall some results from the appendix. Firstly, the Weyl group $W$ acts on $\ol{\mathfrak a}$ by the reduced Kac--Moody representation, and in view of Convention \ref{ConventionSecCausal} we deduce from Corollary \ref{CorollaryRedKMRepFaithful} that $\left(\ol{\mathfrak{a}}, (-|-), \pi(\check \Pi_{\rm nor})\right)$ is a root basis for $(W,S)$ under this action. By \eqref{TitsCone} and the discussion in \ref{TitsCones}, the \Defn{reduced Tits cone}
\[
{\ol{\CCC}} = \{X \in \ol{\mathfrak a}\mid\alpha(X) \geq 0 \text{ for almost all }\alpha \in \Phi^+\} \subset \ol{\mathfrak a}
\]
is isomorphic to the dual Tits cone associated with this root basis, hence provides a polyhedral realization of the augmented Coxeter poset $\Sigma^+(W,S)$. Note that, in fact, $\Sigma^+(W,S)= \Sigma(W,S)$, since $W$ is assumed to be non-spherical. With this observation it then follows from Proposition~\ref{TitsInterior} that the interior of the Tits cone is a polyhedral realization of the Davis--Moussong poset $\Sigma_{\rm sph}(W,S)$. In particular, this polyhedral complex admits a canonical colouring by $S$.

We call a ray in $\ol{\mathfrak a}$ with origin $0$ a \Defn{Tits ray} if it is contained in the interior of the Tits cone. We can then identify the sets of all Tits rays with the link complex of the interior of the Tits cone in the sense of Remark \ref{LinkComplex}, and thereby define an ideal polyhedral structure on the set of all Tits rays. This complex then realizes the Davis--Moussong poset $\Sigma_{\rm sph}(W,S)$. Geometrically it is isomorphic to a twin apartment in $|\Delta|_{\ol{\mathfrak a}}$, since the closed fundamental chambers in both complexes carry the same geometry by definition, and since $W$ acts chamber-transitively on both complexes preserving the geometry. We refer to open chambers in this complex as \Defn{open Weyl chambers} in $\mathbb S(\ol{\mathfrak a})$.

Given $x \in \ol{\XXX}$ and a flat $F$ containing $x$ pick a positive chart $\phi: \ol{\mathfrak a} \to F$ so that $\phi(\pm \ol{\CCC}^o) = \ol{C}^{\pm}_x$. If one denotes by $\Delta^\pm_x(F) \subset \Delta^\pm_x$ the subset of rays contained in $F$, then $\phi$ sends regular Tits rays (respectively, their negatives) to geodesic rays in $\Delta^\pm_x(F)$, hence induces bijections
\[
\phi_*: \mathbb S(\pm {\ol{\CCC}}) \to  \Delta_x^{\pm}(F).
\]
By transport of structure one can thus turn $\Delta_x^{+}(F)$ and $\Delta_x^{-}(F)$ into $S$-coloured ideal polyhedral complexes. We thus refer to these polyhedral structures on $\Delta^\pm_x(F)$ as the \Defn{canonical ideal polyhedral structure} on $\Delta_x^{\pm}(F)$ and refer to their open chambers as \Defn{open Weyl chambers} in $\Delta_x^{\pm}(F)$. We also denote by $\Sigma(\Delta_x^\pm(F))$ the underlying posets.
\begin{remark} In order to define the canonical ideal polyhedral structure we have chosen a positive chart $\phi: \ol{\mathfrak a} \to F$. If $\psi$ is any other positive chart, then by Remark \ref{ConeWellDefined} and Theorem \ref{ExtendLocalAutomorphisms} we have a commutative diagram 
\[\begin{xy}\xymatrix{
\overline{\mathfrak a} \ar[r]^{\phi} & F,\\
\overline{\mathfrak a} \ar[u]^{\alpha} \ar[ur]_\psi
}\end{xy}\]
where up to an automorphism of the Coxeter diagram which changes only the labelling $\alpha$ is given by an element of the Weyl group, acting via the reduced Kac--Moody representation of $\overline{\mathfrak a}$. Since the Weyl group acts on the double Tits cone and its interior by geometric automorphisms, we deduce that the canonical ideal polyhedral structure is independent of the choice of positive chart $\phi$ used to define it.
\end{remark}
We can summarize the properties of our construction so far as follows:
\begin{corollary}\label{ApartmentsRight} For every pointed flat $(x, F)$ the set $\Delta_x(F) := \Delta^-_x(F) \sqcup \Delta^+_x(F)$ with its canonical ideal polyhedral cell structure is geometrically isomorphic to a twin apartment in the $\ol{\mathfrak a}$-realization $|\Delta|_{\ol{\mathfrak a}}$ and combinatorially isomorphic to a twin apartment in the Davis realization $|\Sigma_{\rm sph}(W,S)|_{\rm Davis}$ of the twin building $\Delta$ of $G$, hence $\Sigma(\Delta_x^\pm(F))$ is isomorphic to the Davis--Moussong poset $\Sigma_{\rm sph}(W,S)$.
Under any such isomorphisms the subsets $\Delta^\pm_x(F) \subset \Delta_x(F)$ correspond to the two halves of the twin apartment.\qed
\end{corollary}
Our next goal is to establish a global equivariant version of this result. To formulate our result, we first discuss the relevant actions of ${\rm Aut}(\ol{\XXX})$.
\begin{remark}\label{AutomorpshismsBuilding}
We have the following actions of subgroups of $\Aut(\ol{\XXX})$:
\begin{enumerate}
\item The action of $\Aut(\ol{\XXX})$ on $\ol{\XXX}$ induces an action of the stabilizer $\Aut(\ol{\XXX})_x$ on $\Delta_x$. This action is by geometric automorphisms, hence induces an action on the underlying poset $\Sigma_x$.
\item Every colouring-preserving automorphism $\alpha$ of the coloured poset $\Sigma(\Delta)$ preserves $\Sigma_{\rm sph}(\Delta)$ and induces a type-preserving automorphism of the chamber system $\Delta$. Moreover, $\alpha$ can be recovered from the corresponding automorphism of $\Delta$ by \cite[Cor.\ 4.11]{AbramenkoBrown2008}. We may thus identify colouring-preserving automorphisms of $\Sigma(\Delta)$ or $\Sigma_{\rm sph}(\Delta)$ and type-preserving automorphisms of $\Delta$. Similarly, we can (and will) identify ${\rm Aut}(\Delta)$ with the automorphism groups of the poset $\Sigma(\Delta)$ or of the poset $\Sigma_{\rm sph}(\Delta)$. Any such automorphism induces a geometric automorphism of the $\overline{\mathfrak a}$-realization $|\Delta|_{\ol{\mathfrak a}}$.
\item Via the canonical embedding $\Aut(\ol{\XXX}) \into \Aut(\Delta)$, the group  $\Aut(\ol{\XXX})$ acts on the chamber complex $\Delta$. By the previous remark this induces embeddings
\[
\Aut(\ol{\XXX}) \hookrightarrow \Aut(\Sigma_{\rm sph}(\Delta)) \quad \text{and} \quad \Aut(\ol{\XXX}) \hookrightarrow \Aut(|\Delta|_{\ol{\mathfrak a}}).
\] 
\end{enumerate}
\end{remark}
\begin{proposition}\label{LocalMunicipality} There exists a unique ideal polyhedral structure on $\Delta_x$ such that for every pointed flat $(x, F)$ the subset $\Delta_x(F) \subset \Delta_x$ is an ideal polyhedral subcomplex and carries its canonical cell structure. With this structure, 
$\Delta_x$ is ${\rm Aut}(\XXX)_x$-equivariantly geometrically isomorphic to $|\Delta|_{\ol{\mathfrak a}}$ and ${\rm Aut}(\XXX)_x$-equivariantly combinatorially isomorphic to the Davis realization $|\Sigma_{\rm sph}(W,S)|$ of the twin building $\Delta$. In particular, it is an ideal polyhedral realization of $\Sigma_{\rm sph}(\Delta)$. 
\end{proposition}
In view of the proposition we refer to $\Delta_x \subset \partial_x \ol{\XXX}$ as the \Defn{twin building at the horizon of $x$}. We will refer to the polyhedral structure on $\Delta_x$ given by the proposition as the \Defn{canonical polyhedral structure}. Since $\Delta_x$ is covered by the subsets $\Delta_x(F)$ there is clearly at most one such structure. In order to obtain existence of the canonical polyhedral structure and to deduce Proposition \ref{LocalMunicipality} from Corollary \ref{ApartmentsRight} we need to discuss the effect of automorphisms on the various complexes above.

First note that if $\alpha \in \Aut(\ol{\XXX})$ maps the pointed flat $(x, F)$ to a pointed flat $(x', F')$, then by equivariance of our construction $\alpha$ induces a geometric isomorphisms $\Delta^{\pm}_x(F) \to \Delta^{\pm}_x(F')$, which in turn induces a combinatorial isomorphism $\Sigma(\Delta_x^\pm(F)) \to \Sigma(\Delta_x^\pm(F))$. Moreover, this maps preserves the respective colourings if $\alpha \in  \Aut_S(\ol{\XXX})$.

Now assume that $F, F'$ are two flats through $x$ and denote by $I := F \cap F' \subset F$ their intersection. Also set $\Sigma_x(F, I) := \{C \in \Sigma(\Delta_x(F))\mid C \subset I\}$ and define $\Sigma_x(F', I)$ accordingly. Finally, let $\Delta_x(I) := \{r\in \Delta_x \mid r((0, \infty)) \subset I\}$ and note that $\Delta_x(I)= \Delta_x(F) \cap \Delta_x(F')$.

By Corollary \ref{twopointsintwoflats} there exists an automorphism $\alpha \in {\rm Aut}^+_S(\ol{\XXX})$ which maps $F$ to $F'$ and fixes $I$, hence in particular $x$. By construction $\alpha$ induces a geometric isomorphism between the polyhedral complexes $\Delta^\pm_x(F)$ and $\Delta^\pm_x(F')$. In particular, since $\alpha$ fixes $I$ one obtains $\Sigma_x(F, I) = \Sigma_x(F', I)$; moreover, this set is the underlying poset of an ideal polyhedral structure on  $\Delta^{\pm}_x(I)$. With this polyhedral structure, $\Delta^{\pm}_x(I)$ is an ideal polyhedral subcomplex of both $\Delta_x(F)$ and $\Delta_x(F')$. It is therefore possible to glue $\Delta_x(F)$ and $\Delta_x(F')$ along $\Delta_x(I)$ to obtain an ideal  polyhedral structure on $\Delta_x(F) \cup \Delta_x(F')$. All these glueing are compatible, and hence one obtains the desired canonical ideal polyhedral structure on
\[
\Delta_x = \bigcup_{F \ni x} \Delta_x(F).
\]
The underlying poset is
\[
\Sigma_x = \bigcup_{F \ni x} \Sigma_x(F),
\]
and it inherits a colouring from the $\Sigma_x(F)$. It remains to show that there exists an ${\Aut}(\XXX)_x$-equivariant isomorphism $\Sigma_x \to \Sigma_{\rm sph}(\Delta)$. This will establish the desired combinatorial statement, and the geometric statement will follow, since the geometries of the cells are matched in each apartment and since these cover the buildings in question. To prove the combinatorial statement we compare flats through $x$ in $\ol{\XXX}$ to twin apartments of the twin building $\Delta$. For this we denote by $\FFF_x$ the set of maximal flats in $\ol{\XXX}$ containing $x$ and by $\AAA_x$ the set of twin apartments of $\Delta$ which are invariant under $s_x$. Then one observes the following:
\begin{lemma}\label{GlueingLemma} For every $x \in \ol{\XXX}$ there exists an $\Aut(\ol{\XXX})_x$-equivariant bijection $\phi_x: \FFF_x \to \AAA_x$.
\end{lemma}
\begin{proof} Argue in the group model. By transitivity of $\Aut(\ol{\XXX})$ on $\ol{\XXX}$ it suffices to establish the lemma for the basepoint $x = e$. Recall from 
\eqref{TorusParametrization} that maximal flats are in $G$-equivariant bijection with maximal tori of $G$, hence with twin apartments in $\Delta$. Since the point reflection $s_e$ is induced by $\theta$, the flats through $e$ correspond to $\theta$-stable tori and thus to twin apartments which are invariant under $s_e$, and this correspondence is equivariant with respect to the point stabilizer $K$ of $e$ in $G$. 

One can argue as follows to establish that the correspondence is $\Aut(\ol{\XXX})_e$-equivariant: By Theorem \ref{ThmGlobalAut} one has $\Aut(\ol{\XXX})_e \cong \Aut(G)_e$ and every element of $\Aut(G)_e$ is a product of an element of $K$ with an automorphism which fixes both the flat $\ol{A}$ through $e$ and the corresponding twin apartment $\phi_e(\ol{A})$ of $\Delta$. It follows that the given bijection is not only $K$-equivariant, but moreover $\Aut(\ol{\XXX})_e$-equivariant.
\end{proof}
\begin{proof}[Proof of Proposition \ref{LocalMunicipality}] 
  Choose a bijection $\phi_x$ as in Lemma~\ref{GlueingLemma} and a flat $F \in \FFF_x$. Set $A := \phi_x(F)$ and let $\Sigma_{\rm sph}(A) \subset \Sigma_{\rm sph}(\Delta)$ the subset of the Davis poset of $\Delta$ corresponding to the twin apartment $A$.  By Corollary~\ref{ApartmentsRight} there is a poset isomorphism $\iota_o: \Sigma_x(F) \to \Sigma_{\rm sph}(A)$, which one may choose to be colouring preserving. It remains to show that $\iota_o$ can be extended to an $\Aut(\ol{\XXX})_x$-equivariant combinatorial isomorphism $\iota: \Sigma_x \to \Sigma_{\rm sph}(\Delta)$. 
  
For this let $c$ be a polyhedral cell in $\Delta_x$ which is contained in $\Delta_x(F')$ some flat $F'$. By Corollary~\ref{twopointsintwoflats} there exists an automorphism $\alpha \in \Aut(\ol{\XXX})_c$ which maps $F$ to $F'$ and fixes $F \cap F'$, hence in particular fixes $x$. Define $\iota(c') := \alpha \circ \iota_o \circ \alpha^{-1} (c)$ and observe that this definition does not depend on the choice of automorphism $\alpha$. Indeed, assume that $\beta$ is another automorphism which maps $F$ to $F'$ and fixes $F \cap F'$. Then
\[
\beta \circ \iota_o \circ \beta^{-1} = \alpha \circ (\alpha^{-1} \circ \beta \circ \iota_o \circ \beta^{-1} \circ \alpha) \circ \alpha^{-1}.
\]
Now $\beta^{-1} \circ \alpha$ is an automorphism which stabilizes $F$ and fixes $x$; by Theorem \ref{ExtendLocalAutomorphisms} it is thus given by an automorphism of the Coxeter complex up to possibly swapping the two halves of the apartment. It thus follows that $\beta^{-1} \circ \alpha$ commutes with $\iota_o$, and one concludes
\[
\beta \circ \iota_o \circ \beta^{-1} = \alpha \circ\iota \circ \alpha^{-1}.
\]
This proves that $\iota$ is well-defined, and it is $\Aut(\ol{\XXX})_x$-equivariant by construction.
\end{proof}

\subsection{The global structure of the municipality}

In the previous subsection we have constructed an ideal polyhedral structure on $\Delta_x$ for every $x \in \ol{\XXX}$. Combining these structures we also obtain an ideal polyhedral structure on the disjoint union $\Delta_\bullet = \bigsqcup \Delta_x$. We denote the underlying poset by $\Sigma_\bullet = \bigsqcup \Sigma_x$; by definition elements of $\Sigma_\bullet$ are open cells in $\Delta_\bullet$. Also turn $\ol{\XXX} \times \Sigma_{\rm sph}(\Delta)$ into a poset by setting $(x, c) \leq (x', c')$ if and only if $x = x'$ and $c \leq c'$, and turn 
\[\ol{\XXX} \times |\Delta|_{\ol{\mathfrak a}} = \bigsqcup_{x \in \ol{\XXX}} |\Delta|_{\ol{\mathfrak a}}
\] into a (disconnected) ideal polyhedral complex by equipping each $|\Delta|_{\ol{\mathfrak a}}$ with its canonical polyhedral structure. By Remark \ref{AutomorpshismsBuilding} the group ${\rm Aut}(\ol{\XXX})$ acts on $|\Delta|_{\ol{\mathfrak a}}$, and hence diagonally on $\ol{\XXX} \times |\Delta|_{\ol{\mathfrak a}}$ by polyhedral automorphisms. This action then induces an action by combinatorial automorphisms on the underlying poset $\ol{\XXX} \times \Sigma_{\rm sph}(\Delta)$.

For the remainder of this section we fix a basepoint $o \in \ol{\XXX}$ and an $\Aut(\ol{\XXX})_o$-equivariant geometric isomorphism $|\iota_o|: |\Delta|_{\mathfrak a} \to \Delta_o$ with underlying combinatorial isomorphism $\iota_o: \Sigma_{\rm sph}(\Delta) \to \Sigma_o$. We may and will assume that $\iota_o$ maps $\Sigma_{\rm sph}(\Delta^+)$ to $\Sigma^+_o$ and preserves colourings. A geometric isomorphism $|\iota|: \ol{\XXX} \times |\Delta|_{\ol{\mathfrak a}} \to \Delta_\bullet$ will be called an \Defn{extension} of $|\iota_o|$ if $|\iota|(o, \cdot) = |\iota_o|$.
\begin{proposition}\label{PropGlobalEquiv} $|\iota_o|$ admits a unique colouring-preserving $\Aut(\ol{\XXX})$-equivariant extension
\[
|\iota|: \ol{\XXX} \times |\Delta|_{\ol{\mathfrak a}} \to \Delta_\bullet, \quad (x, \xi) \mapsto |\iota_x|(\xi),
\]
such that $|\iota_x|:|\Delta|_{\ol{\mathfrak a}}  \to \Sigma_x$ is an isomorphism of posets for every $x \in \ol{\XXX}$. 
\end{proposition}
\begin{remark}\begin{enumerate}
\item It is immediate from Proposition \ref{LocalMunicipality} that $\ol{\XXX} \times |\Delta|_{\ol{\mathfrak a}} \cong \Delta_\bullet$ as polyhedral complexes. The point of the proposition is that the extension can be chosen \emph{equivariantly}. 
\item The proposition implies on the combinatorial level that $\iota_o$ can be extended to a unique $\Aut(\ol{\XXX})$-equivariant automorphism of posets
\[
\iota:\ol{\XXX} \times \Sigma_{\rm sph}(\Delta) \to \Sigma_\bullet, \quad (x, C) \mapsto \iota_x(C),
\]
such that $\iota_x: \Sigma_{\rm sph}(\Delta) \to \Sigma_x$ is an isomorphism of posets for every $x \in \ol{\XXX}$. 
\item Conversely, the combinatorial statement in (ii) implies the geometric one, since the cells have the same geometry by Proposition \ref{LocalMunicipality}.
\end{enumerate}
\end{remark}
\begin{proof}[Proof of Proposition \ref{PropGlobalEquiv}] By the previous remark, it suffices to show the combinatorial statement. Every equivariant extension $\iota$ clearly has to satisfy $\iota_x(C) := \alpha^{-1}(\iota_o(\ol{\alpha}(C)))$ for any $\alpha$ mapping $x$ to $o$. We now show that this formula defines indeed a map with the desired properties.

Thus let $(x, C) \in \ol{\XXX} \times \Sigma_{\rm sph}(\Delta)$. To define $\iota(x, C)$ we pick $\alpha \in \Aut_S(\XXX)$ with $\alpha(x) = o$.
Via the isomorphism $\Aut_S(\XXX) \to \Aut_S(\Ad(G))$, this $\alpha$ corresponds to an automorphism $\ol{\alpha}$ of $\Ad(G)$. Think of the chamber $C$ as a parabolic subgroup of $\Ad(G)$; then $\ol{\alpha}(C)$ is also a parabolic subgroup (of the same type, since $\alpha$ and hence $\ol{\alpha}$ are type-preserving) and hence one may define
\[
\iota_x(C) := \alpha^{-1}(\iota_o(\ol{\alpha}(C))).
\]
This does not depend on the choice of $\alpha$. Indeed, let $\beta \in \Aut(\XXX)$ with $\beta(x) = o$. Then $\beta\alpha^{-1} \in \Aut(\XXX)_o$, and thus by $\Aut(\XXX)_o$-equivariance of $\iota_o$, 
\begin{eqnarray*}
 \beta^{-1}(\iota_o(\ol{\beta}(C))) =  \beta^{-1}(\iota_o(\ol{\beta\alpha^{-1}} \; \ol{\alpha}(C)) = \beta^{-1}\beta\alpha^{-1}\iota_o(\ol{\alpha}(C)) =  \alpha^{-1}(\iota_o(\ol{\alpha}(C))).
\end{eqnarray*}
We have now established that $\iota$ is well-defined; by construction it is order- and colouring-preserving. Finally, it is $\Aut(\XXX)$-equivariant by the following argument. Let $(x, C) \in \ol{\XXX} \times \Delta$ and $\beta \in \Aut(\ol{\XXX})$. If $\alpha \in \Aut(\XXX)$ satisfies $\alpha(x) = o$, then $\gamma := \alpha \circ \beta^{-1}$ satisfies $\gamma(\beta(x)) = o$. One thus gets
\[
\iota_{\beta(x)}(\ol{\beta}(C)) =  \gamma^{-1}(\iota_o(\ol{\gamma}(\ol{\beta}(C)))) = \beta \alpha^{-1}(\iota_o(\ol{\alpha \circ \beta^{-1}}(\ol{\beta}(C)))) = \beta(\iota_x(C)).
\]
This finishes the proof.
\end{proof}
From now on we will use the notations $|\iota|$ and $|\iota_x|$ for the maps defined by Proposition \ref{PropGlobalEquiv} and write $\iota$ and $\iota_x$ for the corresponding combinatorial maps. Note that, if $\alpha \in \Aut(\ol{\XXX}) \subset \Aut(\Sigma_{\rm sph}(\Delta))$ satisfies $\alpha(x) = y$ for some $x, y \in \ol{\XXX}$,  then the diagrams
\begin{equation}\label{IotaDefiningSquare}
\begin{xy}\xymatrix{
\Delta_x \ar[rr]^{\alpha} && \Delta_y&&\Sigma_x \ar[rr]^{\alpha} && \Sigma_y\\
|\Delta|_{\ol{\mathfrak a}} \ar[u]^{|\iota_x|}  \ar[rr]_{\alpha}&&|\Delta|_{\ol{\mathfrak a}}\ar[u]_{|\iota_y|}&&\Sigma_{\rm sph}(\Delta) \ar[u]^{\iota_x}  \ar[rr]_{\alpha}&&\Sigma_{\rm sph}(\Delta)\ar[u]_{\iota_y}
}\end{xy}
\end{equation}
commute, and this property together with the choice of $|\iota_o|$ determines all the isomorphisms $|\iota_x|$. 

Given $x, y \in \ol{\XXX}$ we denote $|\iota_{x,y}| := |\iota_y| \circ |\iota_x^{-1}|$ and $\iota_{x,y} := \iota_y \circ \iota_x^{-1}$. Then, by definition, $|\iota_{x,y}|$ is a colouring-preserving isomorphism of ideal polyhedral complexes with underlying combinatorial isomorphism $\iota_{x,y}$ and the following diagrams commute:
\begin{equation}\label{IotaTriangle}
\begin{xy}\xymatrix{
\Delta_x \ar[rr]^{|\iota_{x,y}|} & &\Delta_y,&\Sigma_x \ar[rr]^{\iota_{x,y}} && \Sigma_y,\\
&|\Delta|_{\ol{\mathfrak a}} \ar[lu]^{|\iota_x|} \ar[ru]_{|\iota_{y}|}&&&\Sigma_{\rm sph}(\Delta) \ar[lu]^{\iota_x} \ar[ru]_{\iota_{y}}
}\end{xy}.
\end{equation}
\begin{remark}\label{PositivityPreserved} Each $\Delta_x$ decomposes into the subsets $\Delta^+_x$ and $\Delta^-_x$ of causal and anti-causal rays emanating from $x$. Accordingly, $|\iota_{o,x}|$ splits into two combinatorial isomorphisms
\begin{equation}
|\iota_{o,x}|^+: \Delta_o^+ \to \Delta^+_x \quad \text{and} \quad |\iota_{o,x}|^-: \Delta_o^- \to \Delta^-_x,
\end{equation}
as a consequence of the following simple observation.
\end{remark}
\begin{lemma}\label{SeePositivityAtInfty} Let $r \in \Delta_\bullet$. Then $r$ is causal if and only if $|\iota_{r(0), o}|(r) \in \Delta_o^+$.
\end{lemma}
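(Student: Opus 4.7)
The plan is to reduce the statement to the basepoint $o$ by using the commuting diagram \eqref{IotaDefiningSquare} and then invoke the normalization condition on $\iota_o$. The key structural input is the parity dichotomy: Theorem~\ref{thm1.9} exhibits ${\rm Aut}^+(\overline{\XXX})$ as the index-two subgroup of ${\rm Aut}(\overline{\XXX})$ that preserves each half of the twin building (rather than swapping them); and by the construction in Proposition~\ref{2causalstructures}, this same subgroup is exactly the stabilizer of the positive causal structure (the complementary coset swapping causal with anti-causal rays).

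Given $r \in \Delta_x$ with $x = r(0)$, I would pick any $\alpha \in {\rm Aut}(\overline{\XXX})$ with $\alpha(x) = o$. The commuting square \eqref{IotaDefiningSquare} with $y = o$ gives $\alpha \circ |\iota_x| = |\iota_o| \circ \alpha$, hence
\[ |\iota_x|^{-1}(r) \in |\Delta^+| \iff |\iota_o|^{-1}(\alpha(r)) \in \alpha(|\Delta^+|). \]
I would then split into two cases according to whether $\alpha \in {\rm Aut}^+(\overline{\XXX})$. In the first case, $\alpha$ preserves $|\Delta^+|$ and also sends causal rays to causal rays, so by the normalization $|\iota_o|(|\Delta^+|) = \Delta_o^+$ the right-hand side becomes "$\alpha(r)$ is causal", equivalently "$r$ is causal". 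In the second case, $\alpha$ swaps $|\Delta^+|$ with $|\Delta^-|$ and simultaneously swaps causal with anti-causal rays, so the two inversions cancel and one again arrives at "$r$ is causal". In either case the equivalence holds.

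The only real point to verify carefully—and the small obstacle in the argument—is that the "parity" of any $\alpha \in {\rm Aut}(\overline{\XXX})$ with respect to the decomposition $\Delta = \Delta^+ \sqcup \Delta^-$ agrees with its parity with respect to the decomposition of $\Delta_\bullet$ into causal and anti-causal rays. Since both index-two subgroups coincide with ${\rm Aut}^+(\overline{\XXX})$ (the first by Theorem~\ref{thm1.9}, the second by Proposition~\ref{2causalstructures} together with the fact that the Cartan--Chevalley involution $\theta = s_o$ interchanges the positive and negative cone fields), this parity match is automatic, and the lemma follows.
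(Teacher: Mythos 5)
Your proof is correct and follows essentially the same route as the paper: reduce to the basepoint $o$ via the commuting square \eqref{IotaDefiningSquare} and then invoke the normalization of $\iota_o$. The only difference is that the paper avoids your case split entirely by choosing $\alpha$ in ${\rm Aut}^+(\overline{\XXX})$ from the outset (which is possible since that subgroup already acts strongly transitively and preserves both the halves of $\Delta$ and the causal/anti-causal decomposition), whereas you allow arbitrary $\alpha$ and verify that the two sign changes cancel — a valid but slightly longer path to the same conclusion.
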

\begin{proof} Note that the action of the subgroup $\Aut^+(\ol{\XXX})$ on $\Delta$ and $\Delta_\bullet$ preserves the two halves. Since $\Aut^+(\ol{\XXX})$ acts strongly transitively on $\ol{\XXX}$ and in view of the commuting diagram \eqref{IotaDefiningSquare} one may thus assume that $x = o$, whence the lemma follows from our choice of $\iota_o$.
\end{proof}
\subsection{Asymptoticity of causal and anti-causal rays}\label{SubsecAsymptotic}

Recall that two geodesic rays in a Riemannian symmetric space are called \Defn{asymptotic} if they are at bounded Hausdorff distance. For example, geodesic rays $r_1, r_2$ in $\E^n$ are called \Defn{asymptotic} provided they are parallel and point in the same direction, i.e., they are of the form $r_1(t) = x + tv$ and $r_2(t) = y + tv$ for some $x,y \in \R^n$ and a unit vector $v$. Similarly, two geodesics rays in the hyperbolic plane $\HH^2$ are asymptotic if they converge to the same point in $\partial \HH^2 \cong S^1$. 
Our goal is to define similar notions of asymptoticity for causal and anti-causal rays in Kac--Moody symmetric spaces. We keep the notation of the previous subsection and define:
\begin{definition} Two rays $r_1 \in \Delta_{x}$ and $r_2 \in \Delta_{y}$ are \Defn{asymptotic}, denoted $r_1 \parallel r_2$, provided $|\iota_{x,y}|(r_1) = r_2$.
\end{definition}
\begin{remark} By Remark \ref{PositivityPreserved} the isomorphisms $|\iota_{x,y}|: \Delta_x \to \Delta_y$ preserve the two halves, and thus induce combinatorial isomorphisms
\[
|\iota_{x,y}|^{\pm}: \Delta_x^\pm \to \Delta_y^\pm.
\]
In particular, causal rays can only be asymptotic to causal ray, and similarly anti-causal rays can only be asymptotic to anti-causal rays.
\end{remark}
The following proposition summarizes the main properties of the equivalence relation $\parallel$. Concerning the statement of the proposition we observe that if $G_i < G$ is a standard rank one subgroup, then the orbit $G_i.o \subset \ol{\XXX}$ is an embedded hyperbolic plane $\HH^2_{(i)} \subset \ol{\XXX}$. We then refer to a subset of $\ol{\XXX}$ of the form $g.\HH^2_{(i)}$ for some $g \in G$ and $i \in \{1, \dots, n\}$ as a \Defn{standard hyperbolic plane} in $\ol{\XXX}$.

\begin{proposition}\label{PropAsy} 
Let $x, y \in \ol{\XXX}$ and let $r_1 \in \Delta_x$ and $r_2 \in \Delta_y$. Then the equivalence relation $\parallel$ satisfies the following properties:
\begin{axioms}{A}
\item For every $r \in \Delta_x$ there exists a unique $r' \in \Delta_y$ with $r \parallel r'$.
\item $\parallel$ is invariant under $\Aut(\ol{\XXX})$, i.e., if $r_1 \parallel r_2$, then $\alpha(r_1)\parallel \alpha(r_2)$ for all $\alpha \in \Aut(\ol{\XXX})$.
\item  If $r_1, r_2$ are contained in a standard hyperbolic plane, then $r_1 \parallel r_2$ if and only if they are asymptotic in the hyperbolic sense.
\item If $r_1, r_2$ are contained in a common maximal flat $F$, then $r_1 \parallel r_2$ if and only if they are asymptotic in the Euclidean sense.
\end{axioms}
\end{proposition}
\begin{proof} (A1) is immediate from the fact that $|\iota_{x,y}|$ is a bijection.

(A2) If $r_1\parallel r_2$, then there is a $\xi \in |\Delta|_{\ol{\mathfrak a}}$ such that $r_1 = |\iota|(x, \xi)$ and $r_2 = |\iota|(y, \xi)$. Since $|\iota|$ is $\Aut(\ol{\XXX})$-equivariant one thus has
\[
\alpha(r_1) = \alpha( |\iota|(x, \xi))= |\iota|(\alpha(x), \alpha(\xi)) \quad \text{and} \quad \alpha(r_2) = \alpha( |\iota|(y, \xi))= |\iota|(\alpha(y), \alpha(\xi)),
\]
which implies $\alpha(r_1)\parallel \alpha(r_2)$.

(A3) By (A2) it suffices to prove the statement under the assumption that $r_1$ and $r_2$ are contained in $\HH^2_{(j)}$ for some $j=1, \dots, n$. One can identify $\HH^2_{(j)}$ with the upper half-plane model $\HH^2$ of the hyperbolic plane in such a way that the base point $o$ gets identified with $i$. Furthermore, one can identify the image $\iota_o^{-1}(\HH^2_{(j)}) \subset |\Delta|_{\ol{\mathfrak a}}$ with $\R\cup \{\infty\}$ in such a way that $\iota_o^{-1}|_{\HH^2_{(j)}}$ identifies geodesics in $\HH^2_{(j)}$ emanating from $o$ with the endpoint of the corresponding geodesic in $\HH^2$ in $\R\cup \{\infty\}$.

Fix this identification and work in the upper half plane model from now on. If $x = i+\lambda \in \HH^2$ for some $\lambda \in \R$, then an automorphism of $\HH^2$ mapping $o$ to $x$ is given by $\tau_\lambda: z \mapsto z+\lambda$. This automorphism is induced by an element of the corresponding rank one subgroup $G_j$, hence extends to $\ol{\XXX}$. Given $r \in\Delta_x$, by \eqref{IotaDefiningSquare} one has $|\iota_x|^{-1}(r) = (\tau_\lambda \circ |\iota_o|^{-1} \circ \tau_\lambda^{-1})(r)$. In other words, $\iota_x^{-1}(r)$ is obtained by translating $r$ by $\lambda$ to the left, taking the endpoint and then shifting it by $\lambda$ to the right. This, however, is the same as just taking the endpoint of $r$, since this is the case for vertical geodesic rays emanating from $x$ and the construction is equivariant with respect to the point stabilizer of $x$ in the automorphism group. One deduces that $r \in \Delta_x \HH$ is asymptotic to $r' \in \Delta_o$ if and only if $r$ and $r'$ have the same endpoint. Since every pair of points in $\HH^2$ can be mapped by an automorphism of $\HH^2$ to $(i, i+\lambda)$ for a suitable $\lambda$, and since any such automorphism extends to an automorphism of $\ol{\XXX}$, one deduces that our notion of asymptoticity restricts to usual hyperbolic asymptoticity on $\HH^2_{(j)}$.

(A4) In view of (A2) one may assume that $F = \ol{A}$ is the standard maximal flat in the group model and that $x = o$. Let $\vec\sigma$ be the unique oriented geodesic segment from $o$ to $y$ and let $\tau:= t[\vec \sigma]$ be parallel transport along $\vec\sigma$. Then $\tau$ acts on $F$ as a Euclidean translation. By \eqref{IotaDefiningSquare} one has a commuting diagram
\begin{equation}
\begin{xy}\xymatrix{
\Delta_o \ar[rr]^{\tau} && \Delta_y\\
 |\Delta|_{\ol{\mathfrak a}} \ar[u]^{|\iota_o|}  \ar[rr]_{\tau}&& |\Delta|_{\ol{\mathfrak a}} \ar[u]_{|\iota_y|}
}\end{xy}.
\end{equation}
Now the map  $\tau: |\Delta|_{\ol{\mathfrak a}} \to |\Delta|_{\ol{\mathfrak a}}$ is given by an element of the maximal torus $\ol{T} \subset \ol{A}$, which fixes pointwise the realization of the apartment corresponding to $\ol{A}$. Thus if one denotes by $\Delta_o(\ol{A})$, respectively $\Delta_y(\ol{A})$, the subsets of $\Delta_o$ and $\Delta_y$ consisting of causal or anti-causal rays in $\ol{A}$, then one has a commuting diagram
\[
\begin{xy}\xymatrix{
\Delta_o(\ol{A}) \ar[rr]^{\tau} && \Delta_y(\ol{A})\\
  &|\Delta|_{\ol{\mathfrak a}} \ar[ul]^{|\iota_o|}  \ar[ur]_{|\iota_y|}&
}\end{xy}.
\]
This shows that the restriction of $|\iota_{o,y}|$ to $\Delta_o(\ol{A})$ is induced by $\tau$, i.e.\ $r_o \in \Delta_o(\ol{A})$ is parallel to $r_y \in \Delta_y(\ol{A})$ if and only if $r_y$ is obtained from $r_o$ by a Euclidean translation, i.e., $r_y$ is parallel to $r_o$ in the Euclidean sense.
\end{proof}
One can also describe the equivalence relation $\parallel$ in group-theoretic terms. For this we introduce some notations concerning parabolic subgroups of $\ol{G}$. Given an element $\xi \in |\Delta|_{\ol{\mathfrak a}}$, we denote by $P_\xi$ the stabilizer of $\xi$ in $\Ad(G)$ and by ${\rm supp}(\xi) \in \Delta$ the smallest open cell containing $\xi$. Then $P_\xi$ is the stabilizer of ${\rm supp}(\xi)$ in $\Delta$, whence a parabolic subgroup, and in particular acts transitively on $\ol{\XXX}$ by the Iwasawa decomposition. Depending on whether $\xi \in |\Delta^+|$ or $\xi \in |\Delta|_{\ol{\mathfrak a}}^-$ we call the parabolic $P_\xi$ a \Defn{positive} or a \Defn{negative parabolic}. By \cite[Theorem 6.4.1]{Remy02} every parabolic subgroup $P_\xi$ splits as a semidirect product $P_\xi = M_\xi \ltimes U_\xi$, where $M_\xi$ is a Levi factor and $U_\xi$ is generated by the appropriate positive root subgroups. Given a point $x \in \ol{\XXX}$ and $\xi \in |\Delta|_{\ol{\mathfrak a}}$ we refer to the orbit $H_\xi(x) := U_\xi.x$ as the \Defn{horosphere} with \Defn{center} $\xi$ through $x$, and we call the horosphere positive or negative according to whether $\xi \in |\Delta^\pm|$.
\begin{proposition} \label{parabolicatinfinity}
  Let $x \in \ol{\XXX}$, let $r_x \in \Delta_x$ and let $\xi = |\iota_x|^{-1}(r_x) \in |\Delta|_{\ol{\mathfrak a}}$. Then $r \in \Delta_\bullet$ is asymptotic to $r_x$ if and only if there exists $p \in P_\xi$ such that $r = p.r_x$.
 \end{proposition}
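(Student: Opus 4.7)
The proof has two directions, and both reduce to the defining property of $\iota$ together with a transitivity statement for $P_\xi$.

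First I would unwind the definition: by construction of the asymptoticity relation, a ray $r \in \Delta_\bullet$ based at $y := r(0)$ is parallel to $r_x$ if and only if $|\iota_y|^{-1}(r) = |\iota_x|^{-1}(r_x) = \xi$, i.e.\ if and only if $r = |\iota_y|(\xi)$. Hence the asymptoticity class of $r_x$ is precisely $\{\,|\iota_y|(\xi) \mid y \in \overline{\XXX}\,\}$.

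For the forward-easy direction: let $p \in P_\xi$, so $p$ fixes $\xi \in |\Delta|$. Using ${\rm Aut}(\overline{\XXX})$-equivariance of $|\iota|$ from Proposition~\ref{PropGlobalEquiv} one computes
\[
p.r_x \;=\; p.|\iota_x|(\xi) \;=\; |\iota_{p(x)}|(p(\xi)) \;=\; |\iota_{p(x)}|(\xi),
\]
which is asymptotic to $r_x$ by the first paragraph.

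For the converse, suppose $r \parallel r_x$ and set $y := r(0)$. The plan is to produce $p \in P_\xi$ with $p(x) = y$; then equivariance gives $p.r_x = |\iota_y|(\xi) = r$ as desired. This reduces the proposition to the claim that $P_\xi$ acts transitively on $\overline{\XXX}$. The key observation here is that any (positive or negative) parabolic contains a Borel subgroup of matching sign, and that every Borel subgroup acts transitively on $\overline{\XXX}$: indeed, the topological Iwasawa decomposition of Theorem~\ref{ThmIwasawa} yields $\overline{G} = \overline{U}_{\pm}\overline{A}\,\overline{K}$, so since $\overline{A}$ normalizes $\overline{U}_{\pm}$ one also has $\overline{G} = \overline{A}\,\overline{U}_{\pm}\overline{K} \subseteq \overline{B}_{\pm}\overline{K}$, and hence $\overline{B}_{\pm}$ acts transitively on $\overline{G}/\overline{K} = \overline{\XXX}$. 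Any $P_\xi$ is $\mathrm{Ad}(G)$-conjugate to a standard parabolic containing either $\overline{B}_+$ or $\overline{B}_-$ (depending on whether $\xi \in |\Delta^+|$ or $\xi \in |\Delta^-|$, see Lemma~\ref{SeePositivityAtInfty}), so transitivity of $P_\xi$ on $\overline{\XXX}$ follows.

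The only real obstacle is the transitivity claim; the rest is a formal manipulation of definitions. Once transitivity is in hand, the argument above closes immediately. Note also that the sign compatibility (causal versus anti-causal, equivalently positive versus negative parabolic) is automatic: since $|\iota_x|$ identifies $|\Delta^\pm|$ with $\Delta^\pm_x$, the simplex $\mathrm{supp}(\xi)$ lies in the half of $\Delta$ corresponding to whether $r_x$ is causal or anti-causal, and therefore $P_\xi$ is a positive, respectively negative, parabolic subgroup exactly in the expected case.
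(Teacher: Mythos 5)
Your proof is correct, and its overall structure coincides with the paper's: the easy implication is the same equivariance computation $p.r_x = |\iota_{p.x}|(p.\xi) = |\iota_{p.x}|(\xi)$, and the substantive implication is reduced in both cases to the transitivity of $P_\xi$ on $\overline{\XXX}$.

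The only difference is how that transitivity is obtained. You invoke the Iwasawa decomposition $\overline{G}=\overline{U}_\pm\overline{A}\,\overline{K}=\overline{B}_\pm\overline{K}$ and the fact that $P_\xi$ contains a (conjugate of a) Borel subgroup — which is exactly the justification the paper itself gives in the paragraph preceding the proposition ("acts transitively on $\overline{\XXX}$ by the Iwasawa decomposition"). The paper's written proof instead constructs the required element explicitly as $p=kg$, where $g$ moves $x$ to $y$ and $k\in K_y$ is chosen, using transitivity of $K_y$ on points of $|\Delta|$ with fixed barycentric coordinates, to correct $g.\xi$ back to $\xi$; this simultaneously exhibits $p\in P_\xi$ and $p.x=y$. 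The two arguments are dual forms of the same $G=PK$ decomposition, so nothing essential is gained or lost either way; your remark that the sign compatibility is automatic via Lemma~\ref{SeePositivityAtInfty} is also consistent with the remark the paper makes immediately after the proposition.
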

\begin{proof} Let $y \in \ol{\XXX}$, denote by $K_y$ the stabilizer of $y$ in $G$ let $g \in G$ with $g.x= y$. Then $g.r_x \in \Delta_y$ and
\[
|\iota_y|^{-1}(g.r_x) = |\iota_{gx}|^{-1}(g.r_x) = g.|\iota_x|^{-1}(r_x) = g.\xi.
\]
Recall that $|\Delta|_{\ol{\mathfrak a}}$ was defined as a certain quotient of $\Delta \times P_{\mathbb S(\ol{\mathfrak a})}$ on which $G$ acts only on the first factor; there thus exist $C, C'\ \in \Delta$ and $p \in P_{\mathbb S(\ol{\mathfrak a})}$ such that $\xi = [(C, p)]$ and $g.\xi = [(C', p)]$. Since $K_y$ acts transitively on $\Delta$ there exists $k \in K_y$ such that $k.C' = C$. If we define $p:= kg$, then
\[
p.\xi = k.(g.\xi) = k.[(C', p)] = [(k.C', p)] = [(C, p)] = \xi \quad \text{and} \quad p.x = k.(g.x)) = k.y = y.
\]
From the former we deduce that $p \in P_\xi$, and from the latter we deduce that
\[
|\iota_y|^{-1}(p.r_x) = |\iota_{p.x}|^{-1}(p.r_x) = |\iota_{x}|^{-1}(r_x) = \xi.
\]
Thus the unique ray $r_y \in \Delta_y$ with $r_y \parallel r_x$ is given by $r_y = p.r_x$. 
This show that the asymptoticity class of $r_x$ is contained in $P_\xi.r_x$. Conversely, if $r = pr_x$ for some $p \in P_\xi$ and $y := p.x$, then 
\[
|\iota_y|^{-1}(r) = |\iota_{p.x}|^{-1}(p.r_x) = p.|\iota_x|^{-1}(r_x) = p.\xi = \xi,
\]
showing that $r \parallel r_x$.
\end{proof}
\begin{remark}
In conjunction with Lemma~\ref{SeePositivityAtInfty}, Proposition~\ref{PropAsy} implies that parallel classes of causal rays are orbits of positive parabolic subgroups, and parallel classes of anti-causal rays are orbits of negative parabolic subgroups. In particular, parallel classes of regular causal rays are orbits of positive Borel subgroups. Geometrically this means that one can obtain all rays parallel to a given regular causal ray $r$ by translating $r$ inside a flat and then sliding along a suitable positive horosphere.
\end{remark}

\subsection{The causal boundary}

\begin{definition} The space $\Delta_\parallel := \Delta _\bullet/ \parallel$ of asymptoticity classes of causal and anti-causal rays in $\ol{\XXX}$ is called the \Defn{causal boundary} of $\ol{\XXX}_G$. Its subset $\Delta_\parallel^+ := \Delta^+_\bullet/ \parallel$ is called the \Defn{future boundary} of $\ol{\XXX}$, and the complement $\Delta_\parallel^- := \Delta^-_\bullet/ \parallel$ is called the \Defn{past boundary} of $\ol{\XXX}$
\end{definition}

By Proposition~\ref{PropAsy} the $\Aut(\ol{\XXX})$-action on $\Delta_\bullet$ descends to an $\Aut(\ol{\XXX})$-action on $\Delta_\parallel$, and similarly the subgroup $\Aut^+(\ol{\XXX})$ acts on the future and the past boundary, whereas each point reflection swaps the two boundaries. 

\begin{corollary}\ \label{theend}
\begin{enumerate}
\item There exists a unique ideal polyhedral structure on $\Delta_\parallel$ such that for every $x \in \ol{\XXX}$ the map
\[
\varphi_x: \Delta_x \into \Delta_\bullet \to \Delta_\parallel
\]
is a geometric isomorphism.
\item The group $\Aut(\ol{\XXX})$ acts on $\Delta_\parallel$ by geometric automorphisms with respect to this structure.
\item The ideal polyhedral complex $\Delta_\parallel$ is $\Aut(\ol{\XXX}_G)$-equivariantly geometrically isomorphic to the $\ol{\mathfrak a}$-realization $|\Delta|_{\ol{\mathfrak a}}$ of the twin building $\Delta$. In particular, it is combinatorially isomorphic to the Davis realization $|\Delta|_{\rm Davis}$ and an ideal polyhedral geometric realization of the chamber system $\Delta$.
\item Every automorphism of $\ol{\XXX}$ is uniquely determined by the induced combinatorial automorphism of the causal boundary.
\end{enumerate}
\end{corollary}
\begin{proof} Given $x, y \in \ol{\XXX}$ we consider the diagram
\[\begin{xy}\xymatrix{
&\Delta_x \ar[dd]^{|\iota_{x,y}|} \ar[r]& \Delta_\bullet \ar[rd]&\\
|\Delta|_{\ol{\mathfrak a}}\ar[ru]^{|\iota_x|}\ar[rd]_{|\iota_y|}& &&\Delta_\parallel\\
&\Delta_y \ar[r]& \Delta_\bullet \ar[ru]&,
}\end{xy}
\]
where the horizontal maps are the canonical inclusions and the final maps are the canonical quotient maps. We observe that the diagram commutes, since the left-hand side commutes by \eqref{IotaTriangle} and the right-hand side commutes by definition of asymptoticity. In particular, the map $\varphi :=  \varphi_x \circ |\iota_x|: |\Delta|_{\ol{\mathfrak a}} \to \Delta_\parallel$ is independent of the choice of $x$.
Moreover, $\varphi$ is a bijection by Property (A1) of Proposition \ref{PropAsy}. Since both $|\iota_x|$ and $\varphi_x$ are $\Aut(\ol{\XXX})_x$-equivariant, so is the map $\varphi = \varphi_x \circ |\iota_x|$, and since the groups $\Aut(\ol{\XXX})_x$ generate $\Aut(\ol{\XXX})$ it follows that $\varphi$ is ${\rm Aut}(\XXX)$-equivariant. 

Now the ideal polyhedral structure on $|\Delta|_{\ol{\mathfrak a}}$ defines an ideal polyhedral structure on $\Delta_\parallel$ by transport of structure via $\varphi$. Since $|\iota_x|$ is a geometric isomorphism for every $x \in \ol{\XXX}$, we conclude that also $\varphi_x  = |\iota_x|^{-1} \circ \varphi$ is a geometric isomorphism. Since $\varphi_x$ is surjective this implies (i), and since $\varphi$ is  ${\rm Aut}(\XXX)$-equivariant, we deduce that (ii) and (iii) hold.

Unravelling definitions one now checks that the composition
\[
\Aut(\ol{\XXX}_G) \to \Aut(\Delta_\parallel) \to \Aut(\Delta)
\] 
coincides with the inclusion $\Aut(\ol{\XXX}_G) \cong{\Aut(G)} \to \Aut(\Delta)$ given by Theorem \ref{ThmGlobalAut} and Proposition~\ref{AutBuilding}. This implies (iv) and finishes the proof.
\end{proof}

We have shown Theorems~\ref{thm1.14} and \ref{thm1.15}. 

\subsection{Causal curves and the causal pre-order}

\begin{definition} A \Defn{piecewise geodesic causal curve} is a causal curve $\gamma: [S, T] \to \ol{\XXX}$ with $0< S < T< \infty$ for which there exist $S=t_0 < t_1 < \dots < t_N = T$ such that $\gamma|_{[t_i, t_{i+1}]}$ is a causal segment for every $i=0, \dots, N-1$.

Given $x, y \in \ol{\XXX}$ write $x \prec y$ and say that $x$ \Defn{strictly causally precedes} $y$ if there exists a piecewise geodesic causal curve $\gamma: [S, T] \to \ol{\XXX}$ with $\gamma(S) = x$ and $\gamma(T) = y$. Write $x \preceq y$ if $x \prec y$ or $x = y$ and say that $x$ \Defn{causally precedes} $y$.
\end{definition}

By definition, $\preceq$ is a pre-order, i.e.\ a reflexive and transitive relation, called the \Defn{causal pre-order} on $\ol{\XXX}$.
Since the group $\Aut^+(\ol{\XXX})$ preserves the class of piecewise geodesic causal curves, it also preserves the causal pre-order $\preceq$ in the sense that
\begin{equation}
x\preceq y \; \Rightarrow \;\alpha(x) \preceq \alpha(y) \quad (x,y \in \ol{\XXX}, \alpha \in \Aut^+(\ol{\XXX})).
\end{equation}
\begin{definition}
Let $x \in \ol{\XXX}$. The \Defn{strict causal future}, respectively \Defn{strict causal past} of $x$ are defined as 
\[
\ol{\XXX}^+_x := \{y \in \ol{\XXX}\mid y \succ x\} \quad \text{and} \quad \ol{\XXX}^-_x := \{y \in \ol{\XXX}\mid y \prec x\}.
\]
\end{definition}
\begin{remark}
If one denotes by $\ol{S}^\pm \subset \ol{G}$ the semigroups generated by $\ol{A}_\pm$ (as defined in Section~\ref{SecCausalStructureEx}) and $\ol{K}$, then 
$\ol{\XXX}^\pm_e$ is simply the $\ol{S}^\pm$--orbit through $e$. Note that, by definition, 
\[
\ol{S}^\pm = \bigcup_{n=1}^\infty \left(\ol{K}\,\ol{A}_\pm\ol{K}\right)^n
\]
and that $\ol{A}_+$ is a subsemigroup of $\ol{G}$. Since $\ol{S}^{\pm}$ contains $\ol{K}$, the semigroups $\ol{S}^{\pm}$ can also be characterized as
\[
\ol{S}^+ = \{g\in \ol{G}\mid e \preceq g.e\}  \quad \text{and} \quad \ol{S}^- = \{g\in \ol{G}\mid g.e \preceq e\} 
\]
In particular, $\preceq$ is a partial order if and only if $\ol{S}^+ \cap \ol{S}^- = \ol{K}$.
\end{remark}
\begin{proposition}\label{OrderDichotomy} Exactly one of the following two options holds in $\ol{\XXX}$:
\begin{enumerate}
\item $\preceq$ is a partial order and $\ol{S}^+ \cap \ol{S}^- = \ol{K}$.
\item $g \preceq h \preceq g$ for all $g, h \in \ol{\XXX}$ and $\ol{S}^+ = \ol{S}^- = \ol{G}$.
\end{enumerate}
\end{proposition}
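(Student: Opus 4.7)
My approach is to introduce the set $H := \overline{S}^+ \cap \overline{S}^-$, show it is a subgroup of $\overline{G}$ containing $\overline{K}$, and establish the dichotomy by proving $H$ is either equal to $\overline{K}$ (yielding option (i)) or to $\overline{G}$ (yielding option (ii)). Mutual exclusivity is immediate since option (ii) precludes anti-symmetry of $\preceq$ on any pair of distinct points, whereas under the standing hypothesis $\overline{\XXX}$ is never a single point.

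First I would verify that $H$ is a subgroup. Closure under multiplication is inherited from the semigroup property of $\overline{S}^\pm$. The key ingredient for closure under inversion is the identity $(\overline{S}^+)^{-1} = \overline{S}^-$: applying $g^{-1}$ (and using $\overline{G}$-invariance of $\preceq$) to the relation $g.e \succeq e$ yields $e \succeq g^{-1}.e$, so $g^{-1} \in \overline{S}^-$. One also notes that $H$ is $\theta$-invariant, since $\theta(\overline{A}_\pm) = \overline{A}_\mp$ and $\theta(\overline{K}) = \overline{K}$ force $\theta(\overline{S}^\pm) = \overline{S}^\mp$. By $\overline{G}$-homogeneity of $\overline{\XXX}$, the pre-order $\preceq$ is anti-symmetric if and only if the $\sim$-equivalence class of $e$ equals $\{e\}$, which in turn is equivalent to $H = \overline{K}$. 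The dichotomy therefore reduces to the assertion: $H \supsetneq \overline{K}$ implies $H = \overline{G}$.

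For the main step, I would proceed as follows. Starting from $g \in H \setminus \overline{K}$, I would use $\overline{K}$-biinvariance of $H$ together with the decomposition of $\overline{G}$ to extract a non-trivial element $a \in H \cap \overline{A}$. The intersection $H \cap \overline{A}$ is a subgroup of $\overline{A}$ normalized by $N_{\overline{K}}(\overline{A})$, hence invariant under the action of the Weyl group $W$ on $\overline{A}$; combined with the additive semigroup structure of $\log(H \cap \overline{A}_+) \subseteq \mathcal{C}^o$ and the irreducibility of the $W$-representation on $\overline{\mathfrak a}$ (guaranteed by irreducibility of $\mathbf{A}$), this should force $\overline{A}_+ \subseteq H$. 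Applying $\theta$-invariance then gives $\overline{A}_- \subseteq H$, whence $\overline{A} \subseteq H$ since $\overline{A}_+ \cdot \overline{A}_- = \overline{A}$. Finally, the decomposition $\overline{G} = \bigcup_{n \in \N}(\overline{K}\,\overline{A}\,\overline{K})^n$ from Proposition~\ref{lem:geod-conn}, together with the subgroup property of $H$, yields $H = \overline{G}$, and hence $\overline{S}^+ = \overline{S}^- = \overline{G}$.

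The main obstacle is the delicate step of promoting a single non-trivial element of $H \cap \overline{A}$ to the full open cone $\overline{A}_+ \subseteq H$: one has closure under addition (integer multiples) and under the Weyl group action, but not directly closure under arbitrary positive scalar multiples, so the argument cannot simply invoke a density or topological closure statement. This will likely require a convexity-type argument exploiting the specific geometry of the Tits cone in $\overline{\mathfrak a}$, in the spirit of Kostant's convexity theorem that underlies Theorem~\ref{CausalIntro}; the secondary difficulty of extracting a non-trivial element of $H \cap \overline{A}$ from a general $g \in H \setminus \overline{K}$ (where the Iwasawa decomposition only allows one to strip off $\overline{K}$-factors, not the unipotent part) must also be handled.
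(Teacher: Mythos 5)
Your reduction is sound as far as it goes: mutual exclusivity, the identity $(\overline{S}^+)^{-1}=\overline{S}^-$, the fact that $H:=\overline{S}^+\cap\overline{S}^-$ is a $\theta$-invariant subgroup containing $\overline{K}$, the equivalence of anti-symmetry with $H=\overline{K}$, and the final implication $\overline{A}\subseteq H\Rightarrow H=\overline{G}$ via Proposition~\ref{lem:geod-conn} all check out and match the endgame of the paper's proof. But the two steps you flag as obstacles are not minor technicalities to be filled in later; one of them, as you propose to attack it, cannot work, and the other is only accessible through a tool you never invoke.

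The fatal point is your step from a single non-trivial element of $H\cap\overline{A}$ to $\overline{A}_+\subseteq H$. The ingredients you list -- the group structure of $H\cap\overline{A}$, its invariance under $N_{\overline{K}}(\overline{A})$ and hence under $W$, the additive semigroup structure of $\log(H\cap\overline{A}_+)$, and irreducibility of the $W$-action on $\overline{\mathfrak a}$ -- produce at best the subgroup of $(\overline{\mathfrak a},+)$ generated by the $W$-orbit of one vector. Since $W$ is countable, this is a countable subset of $\overline{\mathfrak a}$; irreducibility tells you it spans $\overline{\mathfrak a}$ over $\R$, not that it contains an open cone, and no convexity theorem closes that gap because $H\cap\overline{A}$ need not be closed under convex combinations or under scalar multiplication by non-integers. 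What is actually needed is an input that is invisible in the purely algebraic semigroup picture: if $e\prec h$, then by transitivity $e\prec w$ for \emph{every} $w\in\overline{C}^+_h$, i.e.\ $\overline{\XXX}^+_e$ swallows the entire open cone based at $h$, which is a full-dimensional open set. The paper combines this with parallel transport along the geodesic through $e$ and $z=\exp(-X)$ (where $X\in\mathcal C^o$) to get $e\prec\exp(-nX)$ for all $n\geq 1$, and then uses the elementary fact that $\bigcup_{n\geq 1}(\mathcal C^o-nX)=\overline{\mathfrak a}$ because $X$ lies in the interior of the cone; this yields $\overline{A}\,\overline{K}\subseteq\overline{\XXX}^+_e$ in one stroke, with no Weyl-group or Kostant-convexity input whatsoever. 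Your first obstacle (producing the element $\exp(-X)$ with $X\in\mathcal C^o$ in the first place) is likewise resolved not by massaging the Iwasawa decomposition of an arbitrary $g\in H\setminus\overline{K}$ -- which, as you note, leaves you stuck with the unipotent factor -- but by returning to the geometric definition of $\prec$: the last causal geodesic segment of a loop $e\prec x\prec e$ terminates at $e$ inside some maximal flat through $e$, and conjugating that flat into $\overline{A}$ by an element of $\overline{K}$ lands you exactly on such a $z=\exp(-X)$ with $z\prec e\prec z$. Without these two geometric moves your argument does not close.
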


\begin{proof} Obviously the two conditions are mutually exclusive. Assume that (i) fails, i.e., that $\preceq$ is not anti-symmetric. By $G$-invariance one then finds $x \in \ol{\XXX}$ such that
\[
e \prec x \prec e.
\]
By definition, this means that there exist points $x_1, \dots, x_n = x$, $y_1, \dots, y_n = y$ and causal geodesic segments from $e$ to $x_1$, $x_1$ to $x_2$, \dots, $x_{n-1}$ to $x_n$ and $x_n$ to $y_1$, \dots, $y_{n-1}$ to $y_n$ and $y_n$ to $e$. In particular, $y \prec e$ is contained in a common flat $F$ with $e$ and the geodesic ray in $F$ emanating from $y$ and through $e$ is causal. Since $K$ acts transitively on flats through $e$ there exists $k \in K$ which maps $F$ to the standard flat $\ol{A}\,\ol{K}$. Then $z := k . y$ has the following properties: Firstly, since $y \prec e \prec y$ and $k.e = e$ one has
\[
z \prec e \prec z.
\]
Moreover, $z$ lies in $\ol{A}$ and the geodesic ray emanating from $e$ through $z$ is anti-causal. In other words, $z = \exp(-X)$ for some $X \in \CCC^o$. Now consider parallel transport $\tau$ along the geodesic segment from $e$ to $z$. One has $\tau(\exp(-nX)) = \exp(-(n+1)X)$ for all $n \geq 0$. Therefore $e \prec z$ implies that for all $n \geq 0$,
\[
\exp(-nX) = \tau^n(e) \prec \tau^n(z) = \exp(-(n+1)X).
\]
Thus transitivity of $\prec$ yields
\[
e \prec \exp(-nX) \quad \text{for all }n \geq 1,
\]
and thus
\[
\ol{\XXX}^+_e \supset \bigcup_{n \geq 1} \ol{C}^+_{\exp(-nX)}.
\]
In particular,
\[
\ol{\XXX}^+_e \cap \ol{A}\,\ol{K} \supset  \bigcup_{n \geq 1} (\ol{C}^+_{\exp(-nX)} \cap \ol{A}\,\ol{K}) \supset \bigcup_{n \geq 1} \exp(\CCC^o - nX)\,\ol{K} = \exp\left(\bigcup_{n \geq 1} \CCC^o - nX\right)\,\ol{K} = \exp(\ol{\mathfrak a})\,\ol{K} = \ol{A}\,\ol{K},
\]
i.e., $\ol{A}\,\ol{K} \subset \ol{\XXX}^+_e= \{x \in \ol{\XXX}\mid x \succ e\}$. Since \[\ol{S}^+ = \bigcup_{n=1}^\infty (\ol{K}\,\ol{A}_+\ol{K})^n = \{g\in \ol{G}\mid g.e \succ e\},\] the semigroup $\ol{S}^+$ contains $\ol{A}$ and $\ol{K}$. It therefore contains each of the finite products $\ol{K}\,\ol{A}\,\ol{K} \cdots \ol{A}\, \ol{K}$. Proposition~\ref{lem:geod-conn} implies $\ol{\XXX}^+_e = \ol{\XXX}$, i.e.\ $x \succ e$ for all $x \in \ol{\XXX}$, and thus (ii) holds.
\end{proof}

We have shown Proposition~\ref{dichintro}.
\appendix

\section{Complex Kac--Moody algebras and their Weyl groups}\label{AppendixAlgebrasAndWeyl}

\subsection{Ideal polyhedral complexes associated with Coxter systems} \label{appendixAA}
Recall that a \Defn{Coxeter system} is a pair $(W,S)$ consisting of a group $W$ and a (finite) generating system $S = \{r_1, \dots, r_n\}$ such that
\[
W = \left\langle r_1, \dots, r_n \mid r_i^2  = 1, \; (r_ir_j)^{m_{ij}} = 1 \right\rangle
\]
for suitable $(m_{ij})_{i,j} \subset \Z \cup \{ \infty \}$ is a presentation of $W$ by generators and relations. The matrix $M = (m_{ij})_{i,j}$ is called the \Defn{Coxeter matrix} of the Coxeter 
system $(W,S)$. The group $W$ is then called a \Defn{Coxeter group}. Finite Coxeter groups are also called \Defn{spherical} Coxeter groups.

With each Coxeter system one can associate certain canonical ideal polyhedral complexes; concerning such complexes and the underlying posets we will use the language and notation from Subsection \ref{SecIdealPoly}.

If $T \subset S$ is a subset, then $W_T := \langle T \rangle < W$ is called a \Defn{standard parabolic subgroup} of $W$, and any conjugate of a standard parabolic subgroup is called a \Defn{parabolic subgroup} of $W$. Given a Coxeter system $(W,S)$ we denote by $\Sigma^+(W,S)$ the poset of all left-cosets of standard parabolic subgroups of $W$, ordered by reverse inclusion, and set $\Sigma(W,S) := \Sigma^+(W, S) \setminus \{W\}$. Then $\Sigma(W,S)$ is a simplicial poset, called the  \Defn{Coxeter poset} of $(W,S)$, and its augmentation $\Sigma^+(W,S)$ is called the \Defn{augmented Coxter poset}. The geometric realization $|\Sigma(W,S)|$ of $\Sigma(W,S)$ is called the \Defn{Coxeter complex} of $(W,S)$; the cone $|\Sigma^+(W,S)|$ over $|\Sigma(W,S)|$ is called the \Defn{augmented Coxeter complex}.

The comaximal elements in $\Sigma(W,S)$ and $\Sigma^+(W,S)$ are of the form $\sigma = w\langle s \rangle$ for some $w \in W$ and $s \in S$, and if we colour each coset of $\langle s \rangle$ by $s$, then we obtain \Defn{canonical colourings} of $\Sigma(W,S)$ and $\Sigma^+(W,S)$ by $S$. The group $W$ acts on $\Sigma(W,S)$ and $\Sigma^+(W,S)$, and this action is both order- and colouring-preserving. Similarly, $W$ acts cellularly on $|\Sigma(W,S)|$ and $|\Sigma^+(W,S)|$, preserving the colouring.

By definition, the Coxeter complex is a simplicial realization of $\Sigma(W,S)$; however, it is sometimes convenient to work with non-simplicial realizations. 
\begin{example} \label{simplicialcox}
For $1 \leq i \leq n$ given $m_i \in \left(\mathbb{N} \cup \{ \infty \}\right) \backslash \{ 1 \}$ with $\sum_{i = 1}^n \frac{1}{m_i} < n-2$ there exists a (possibly ideal) hyperbolic $n$-gon embedded in the Poincar\'e disc with interior angles $\frac{\pi}{m_i}$.  According to Poincar\'e's theorem (see \cite[Theorem~6.4.3]{DavisBook}) the hyperbolic reflections in the sides of the hyperbolic polygon generate a Coxeter group $W$, called a hyperbolic $n$-gon group. The $W$-translates of the closure of the hyperbolic polygon in the closed disc provide a polyhedral realization of $\Sigma(W,S)$. For $n \geq 4$ this realization certainly is not simplicial, and its dimension is always $2$ (and hence smaller than the dimension of $|\Sigma(W,S)|$).
\end{example}
In the previous example, it is natural to consider also the ideal polyhedral complex obtained by removing ideal vertices, i.e.\ intersecting all cells with the open disc, since this ideal polyhedral complex admits a CAT(-1) metric. The search for CAT(0)-realizations of Coxeter complexes led Moussong in his thesis \cite{Moussong} to consider the following subposets of the extended Coxeter complex:
\begin{defn}\label{DavisMoussongPoset} The \Defn{augmented Davis--Moussong poset} of the coloured subposet  $\Sigma_{\rm sph}^+(W,S)$ of  $\Sigma^+(W,S)$ consists of all cosets of spherical parabolic subgroups, and we define the \Defn{Davis--Moussong poset} $\Sigma_{\rm sph}(W,S) := \Sigma_{\rm sph}^+(W,S) \setminus \{W\} \subset \Sigma(W,S)$.
\end{defn}
Note that if $W$ is spherical, then $\Sigma_{\rm sph}^+(W,S) = \Sigma_{\rm sph}(W,S)^+$; otherwise we have $\Sigma_{\rm sph}^+(W,S) = \Sigma_{\rm sph}(W,S)$.
\begin{remark} The Davis--Moussong poset $\Sigma_{\rm sph}(W,S)$ and its augmentation $\Sigma^+_{\rm sph}(W,S)$ have the same underlying chamber system as the Coxeter poset $\Sigma(W, S)$. By \cite[Proposition~A.20]{AbramenkoBrown2008}, the Coxeter poset can be recovered from this chamber system as the residue poset. This implies that every automorphism of the chamber system, and in particular every automorphims of the Davis--Moussong poset extends to an automorphism of the Coxeter poset.
\end{remark}

Moussong has established in his thesis that the {augmented Davis--Moussong poset} always admits a CAT(0)-realization (cf.\ \cite{Moussong}, \cite[Chapter 12]{DavisBook}). A variant of this construction was later given by Krammer 
in his thesis \cite[Appendix B]{Krammer94}. Krammer's construction is based on the notion of a root basis {\cite[Definition~1.2.1]{Krammer94}}, which we briefly recall:
\begin{defn}\label{rootbasisKrammer} A triple $(E, (-|-), \Pi)$ is called a \Defn{root basis} if $E$ is a real vector space, $(-|-)$ is a symmetric bilinear form on $E$ and $\Pi \subset E$ is a finite set such that the following hold:
\begin{enumerate}
\item For every $\xi \in \Pi$ one has $(\xi|\xi) = 1$.
\item For any pair of distinct $\xi_1, \xi_2 \in \Pi$ one has \[(\xi_1|\xi_2) \in \{-\cos(\pi/m)\mid m \in \N\} \cup (-\infty, -1].\]
\item There exists $\lambda \in E^*$ such that $\lambda(\xi) >0$ for all $\xi \in \Pi$.
\end{enumerate}
\end{defn}

\begin{lemma}
Let $(E, (-|-), \Pi)$ be a root basis. Then $0$ is not contained in the set $$\left\{ \sum_{ \xi \in \Pi} \lambda_\xi \xi \mid \lambda_\xi \geq 0 \text{ for all $\xi \in \Pi$, and } \lambda_\xi \neq 0 \text{ for some $\xi \in \Pi$} \right\}.$$  
\end{lemma}

\begin{proof}
  Any $\lambda \in E^*$ satisfies $$\lambda\left(\sum_{ \xi \in \Pi} \lambda_\xi \xi\right) =  \sum_{ \xi \in \Pi} \lambda_\xi \lambda(\xi).$$
  If the $\lambda_\xi$ are non-negative with at least one positive and $\lambda$ is as in Definition \ref{rootbasisKrammer}.(iii), then the hypothesis $\lambda(\xi) > 0$ for all $\xi \in \Pi$ implies that the right-hand side of this equality is greater than $0$, which by linearity of $\lambda$ implies that the linear combination on the left-hand side is different from zero.  
\end{proof}

\begin{remark}\label{HowlettApplies}
  The preceding lemma implies that a root basis in the sense of Definition~\ref{rootbasisKrammer} is also a root basis in the sense of \cite[Definition~2.1]{Howlett/Rowley/Taylor:1997}.
\end{remark}

The relation to Coxeter groups is as follows:
\begin{proposition}[cf.\ {\cite[Theorem~1.2.2]{Krammer94}}] \label{rootbasiscoxeter}
Let  $(E, (-|-), \Pi)$ be a root basis, for each $\xi \in \Pi$ define $s_\xi \in \GL(E)$ via
\[
s_\xi(v) = v- 2(\xi|v)\xi,
\]
let ${S} := \{s_{\xi}\mid \xi \in \Pi\}$, and let ${W} := \langle {S} \rangle < \GL(E)$. Then $({W}, {S})$ is a Coxeter system and ${W} < {\rm O}(E,  (-|-))$ is a discrete subgroup.\qed
\end{proposition}
In the situation of Proposition \ref{rootbasiscoxeter} we say that $(E, (-|-), \Pi)$ is a root basis for the Coxeter system $(W,S)$. The following example shows that every Coxeter group admits a root basis:
\begin{example}
Let $(W,S)$ a Coxeter group with Coxter system, $S = \{r_1, \dots, r_n\}$ with Coxeter matrix $(m_{ij})_{1 \leq i,j \leq n}$. Then the \Defn{classical root basis} given by $E = \R^n$, $\Pi = \{e_1, \dots, e_n\}$ and $(e_i\mid e_j) = -\cos(\pi/m_{ij})$ if $m_{ij} < \infty$ and $(e_i \mid e_j) = -1$ if $m_{ij} = \infty$ is a root basis for $(W,S)$.
\end{example}
In the case of the classical root basis, $\Pi$ is a basis of $E$. In general, we do not assume that $\Pi$ is linearly independent, and we explicitly allow $(-|-)$ to be degenerate. From now on, $(W,S)$ denotes a Coxeter system and $(E, (-|-), \Pi)$ denotes a root basis for $(W,S)$. For $I \subset \Pi$ we defined a subset of the dual space $E^*$ of $E$ by
\[
C^*_I := \{\varphi \in E^*\mid (\forall \alpha \in I:\, \varphi(\alpha) = 0) \text{ and } (\forall \alpha \in \Pi \setminus I:\, \varphi(\alpha) > 0)\}.
\]
We say that $I \subset \Pi$ is \Defn{facial} if $C_I \neq \emptyset$. The set 
\[
C^* := \bigsqcup_{I \subset \Pi \text{ facial}} C^*_I
\]
is called the  \Defn{closed fundamental chamber} of the given root basis; it has dense interior given by ${\rm Int}(C^*) = C^*_\emptyset$. The group $W$ acts on $E^*$ by the dual action, i.e.\ $w.\varphi(x) = \varphi(w^{-1}.x)$, and we refer to the translates $w.C^*$ of the closed fundamental chambers (respectively their interiors) as \Defn{closed (resp. open) Tits chambers}; the union 
\[
\mathcal C^* = \bigcup_{w \in W} w.C^* \subset E^*
\]
of the closed Tits chambers is called the \Defn{dual Tits cone} of the root bases. By \cite[Thm. 1.2.2]{Krammer94} the dual Tits cone is naturally a polyhedral complex whose cells are given by the subsets of the form $w.C^*_I$ with $w \in W$ and $I \subset \Pi$. Note, however, that the weak-topology on $\mathcal C^*$ is finer than the subspace topology from $E^*$ and that $\mathcal C^*$ is not closed in $E^*$.

If we identify $\Pi$ with $S$ by identifying $\alpha \in \Pi$ with the corresponding reflections $r_\alpha$, then the stabilizer of $x \in w.C^*_I$ is given by the parabolic subgroup $wW_Iw^{-1}$. Its underlying poset is the subposet $\Sigma_{\rm fac}^+(W,S) \subset \Sigma^+(W,S)$ consisting of those $wW_I$ for which $I$ is facial. 
\begin{proposition}[{\cite[Cor. 2.2.5]{Krammer94}}]\label{TitsInterior} The interior ${\rm Int}(\mathcal C^*)$ of the dual Tits cone is the union of the cells $w.C^*_I$ such that $W_I$ is spherical. In particular, the interior of the dual Tits cone is a realization of the augmented David-Moussong poset $\Sigma^+_{\rm sph}(W,S)$.\qed
\end{proposition}
\begin{remark} As explained in \cite{Krammer94}, the interior of the dual Tits cone has several advantages over the full dual Tits cone. Firstly, it is open by definition, whereas the dual Tits cone is neither open nor closed in general. Secondly, the $W$-action on ${\rm Int}(\mathcal C^*)$ is proper, whereas the action on $\mathcal C^*$ is in general not proper. Thirdly, the interior of the dual Tits cone admits a CAT(0) metric (namely, the Moussong metric). Finally, while the subspace topology of $\mathcal C^*$ is finer than the weak topology in general, the subspace topology on ${\rm Int}(\mathcal C^*)$ actually coincides with the weak topology. \end{remark}
From the dual Tits cone we can also construct a realization of the (non-augmented) Davis--Moussong poset $\Sigma_{\rm sph}(W,S)$ by passing to the link complex as in Remark \ref{LinkComplex}:
\begin{corollary}\label{SphericalTitsCone} Let $W$ be non-spherical. If $\mathcal C^*$ is the dual Tits cone of a root basis $(E, (-|-), \Pi)$ for $(W,S)$, then the link complex $\mathbb S({\rm Int(\mathcal C^*)})$ is an ideal polyhedral realization of the Davis--Moussong poset $\Sigma_{\rm sph}(W, S)$.\qed
\end{corollary}

\subsection{Complex Kac--Moody algebras} \label{AppendixAlgebrasAndWeylAB}

Throughout this appendix let $\mathbf{A}$ be an irreducible generalized Cartan matrix in the sense of Definition~\ref{GCMdef} (see also \cite[\S1.1]{Kac90}). We will mostly be interested in the case where $\mathbf{A}$ is neither of spherical nor of affine type, and starting from Subsection \ref{bases} we will have to assume that $\mathbf{A}$ is symmetrizable. However, for the moment no such assumptions are necessary.

One can associate to ${\mathbf A}$ several complex Lie algebras as follows. In  \cite[\S1.3]{Kac90} Kac defines a quadruple
\begin{equation}
  ({\mathfrak g}({\mathbf{A}}), \mathfrak h({\mathbf{A}}), \Pi, \check \Pi)   \label{quadruple}
\end{equation}
  consisting of a complex Lie algebra $\mathfrak g({\mathbf{A}})$, an abelian subalgebra $\mathfrak h({\mathbf{A}})$ and finite subsets $\Pi = \{\alpha_1, \dots, \alpha_n\} \subset \mathfrak h({\mathbf{A}})^*$ and $\check \Pi = (\check \alpha_1, \dots, \check \alpha_n) \subset \mathfrak h({\mathbf{A}})$ called \Defn{simple roots} and \Defn{simple coroots} respectively. A useful characterization of this quadruple $({\mathfrak g}({\mathbf{A}}), \mathfrak h({\mathbf{A}}), \Pi, \check \Pi)$ is given in \cite[Proposition~1.4]{Kac90}. In the present article $\mathfrak g({\mathbf{A}})$ is called the \Defn{complex Kac-Moody algebra} associated with ${\mathbf{A}}$. If one denotes by
\[\Delta := \left\{ \alpha \in \sum_{i=1}^n \Z \alpha_i \mid \mathfrak{g}_\alpha \neq \{ 0 \} \right\}\] the set of $\mathfrak h(\mathbf{A})$-roots in $\mathfrak g(\mathbf{A})$, then
by \cite[(1.3.1)]{Kac90} one has the \Defn{root space decomposition}
\begin{equation} \label{rootspacedec}
\mathfrak g({\mathbf{A}}) = \bigoplus_{\alpha \in \Delta} \mathfrak{g}_\alpha.
\end{equation}
Denote by $\mathfrak g_i < \mathfrak g({\mathbf{A}})$ the complex subalgebra generated by the root spaces $\mathfrak{g}_{\alpha_i}$ and $\mathfrak{g}_{-\alpha_i}$. By \cite[(1.3.3), (1.4.1), (1.4.2)]{Kac90} one has \[\mathfrak{g}_i = \langle \mathfrak{g}_{\alpha_i}, \mathfrak{g}_{-\alpha_i} \rangle \cong \mathfrak{sl}(2,\C).\]
Given $I\subset \{1, \dots, n\}$, define \[\mathfrak g_I := \langle \mathfrak g_i\mid i \in I \rangle\] and call $\mathfrak g_I$ a \Defn{standard rank $|I|$} subalgebra of $\mathfrak g({\mathbf{A}})$.

The main object of interest in this appendix is the derived subalgebra
\begin{equation}\label{defKMalg}
\mathfrak g := [\mathfrak g({\mathbf{A}}), \mathfrak g({\mathbf{A}})] < \mathfrak g({\mathbf{A}}),
\end{equation}
which is called the \Defn{derived complex Kac-Moody algebra} associated with ${\mathbf{A}}$. It is denoted by $\mathfrak g'({\mathbf{A}})$ in \cite[\S1.3]{Kac90}. The Lie algebra $\mathfrak g$ contains all the standard rank one subalgebras, as $\mathfrak{sl}(2,\C)$ is perfect, and in fact is generated by these by \cite[Proposition~1.4]{Kac90}. The intersection
\begin{equation}\label{hdef}
  \mathfrak h := \mathfrak h({\mathbf{A}}) \cap \mathfrak g = \sum_{i = 1}^n \C \check \alpha_n
\end{equation}
  is given by the complex span of the simple coroots, see \cite[\S1.3]{Kac90} (where it is denoted by $\mathfrak{h}'$). By \cite[Proposition~1.6]{Kac90} the Lie algebra $\mathfrak{h}$ contains the center of $\mathfrak g({\mathbf{A}})$ and of $\mathfrak{g}$, which is given by
\begin{equation}\label{KMAcenter}
\mathfrak{z}(\mathfrak g({\mathbf{A}})) = \mathfrak{z}(\mathfrak{g}) = \mathfrak c := \{h \in \mathfrak h({\mathbf{A}}) \mid \forall i = 1, \dots, n: \; \alpha_i(h) = 0\}.
\end{equation}
The third Lie algebra of interest in this appendix is the quotient \[\ol{\mathfrak g} := \mathfrak g/\mathfrak c,\] called the \Defn{adjoint complex Kac-Moody algebra} associated with ${\mathbf{A}}$. Since $\mathfrak{sl}(2,\C)$ is simple, the standard rank one subalgebras $\mathfrak{g}_i$ embed into $\ol{\mathfrak g}$ and so do in fact all root spaces $\mathfrak{g}_\alpha$ for $\alpha \neq 0$, from \eqref{rootspacedec}, whereas the image of $\mathfrak{g}_0 = \mathfrak h$ in $\overline{\mathfrak g}$ is given by $\ol{\mathfrak h} := \mathfrak h/ \mathfrak c$.

If ${\mathbf{A}}$ is of size $n \times n$ and of rank $l$, then the complex dimensions of the abelian subalgebras are given by
\begin{equation} \label{dimofh}
\dim_\C \mathfrak h({\mathbf{A}}) = 2n-l, \quad \dim_\C \mathfrak h = n, \quad \dim_\C \ol{\mathfrak h}  = l,
\end{equation}
cf.\ \cite[(1.1.3), resp.\ \S1.3, resp.\ Proposition~1.6]{Kac90}. In particular, on one hand one has the following:

\begin{observation}
Let $\mathbf{A}$ be an invertible generalized Cartan matrix. Then \[\mathfrak{g}(\mathbf{A}) = \mathfrak{g} = \ol{\mathfrak{g}}.\]
\end{observation}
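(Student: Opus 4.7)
The proof should reduce entirely to the dimension formulas \eqref{dimofh} and the description of the center \eqref{KMAcenter}, together with the inclusions $\mathfrak{h} \subset \mathfrak{h}(\mathbf{A})$ and $\mathfrak{c} \subset \mathfrak{h}$. The plan is to verify the two equalities $\mathfrak{g}(\mathbf{A}) = \mathfrak{g}$ and $\mathfrak{g} = \overline{\mathfrak{g}}$ separately, each by showing that a certain subspace of the Cartan collapses in the invertible case.

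First I would handle $\mathfrak{g}(\mathbf{A}) = \mathfrak{g}$. Using the root space decomposition \eqref{rootspacedec}, $\mathfrak{g}(\mathbf{A}) = \mathfrak{h}(\mathbf{A}) \oplus \bigoplus_{\alpha \in \Delta \setminus \{0\}} \mathfrak{g}_\alpha$. Every non-zero root space $\mathfrak{g}_\alpha$ lies in the derived algebra $\mathfrak{g}$, since for any $h \in \mathfrak{h}(\mathbf{A})$ with $\alpha(h) \neq 0$ and any $x \in \mathfrak{g}_\alpha$ one has $x = \alpha(h)^{-1}[h,x] \in [\mathfrak{g}(\mathbf{A}),\mathfrak{g}(\mathbf{A})]$. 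It therefore suffices to show $\mathfrak{h}(\mathbf{A}) \subset \mathfrak{g}$. By \eqref{hdef}, $\mathfrak{h} = \sum_{i=1}^n \C \check\alpha_i \subset \mathfrak{g}$ automatically, so the point is to show $\mathfrak{h} = \mathfrak{h}(\mathbf{A})$ under invertibility. Here the assumption $l = n$ plugs into \eqref{dimofh} to give $\dim_\C \mathfrak{h}(\mathbf{A}) = 2n - l = n = \dim_\C \mathfrak{h}$, and combined with $\mathfrak{h} \subset \mathfrak{h}(\mathbf{A})$ this forces equality. Hence $\mathfrak{g}(\mathbf{A}) = \mathfrak{g}$.

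Next I would handle $\mathfrak{g} = \overline{\mathfrak{g}} = \mathfrak{g}/\mathfrak{c}$, for which one just needs $\mathfrak{c} = 0$. By \eqref{KMAcenter} the center $\mathfrak{c}$ is cut out inside $\mathfrak{h}(\mathbf{A})$ by the linear forms $\alpha_1,\dots,\alpha_n$, and by \eqref{mistake} (or equivalently by \eqref{dimofh} applied to $\overline{\mathfrak{h}} = \mathfrak{h}/\mathfrak{c}$) one has $\dim_\C \mathfrak{c} = n - \mathrm{rk}(\mathbf{A}) = n - l$. Invertibility of $\mathbf{A}$ gives $l = n$ and therefore $\mathfrak{c} = 0$, so the canonical projection $\mathfrak{g} \to \overline{\mathfrak{g}}$ is an isomorphism.

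No single step is a genuine obstacle: the argument is a direct bookkeeping exercise in Kac's foundational identities, and the only thing to be careful about is the distinction between the simple coroot span $\mathfrak{h}$ and the full Cartan $\mathfrak{h}(\mathbf{A})$, which is exactly what the invertibility hypothesis collapses. Combining the two steps yields $\mathfrak{g}(\mathbf{A}) = \mathfrak{g} = \overline{\mathfrak{g}}$, as claimed.
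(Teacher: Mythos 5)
Your proof is correct and follows exactly the route the paper intends: the Observation is presented as an immediate consequence of the dimension formulas \eqref{dimofh} and the description \eqref{KMAcenter} of the center, and your two dimension counts ($2n-l=n$ forcing $\mathfrak h=\mathfrak h(\mathbf A)$, and $n-l=0$ forcing $\mathfrak c=0$) are precisely that argument, with the routine verification that the non-zero root spaces lie in the derived algebra filled in. Nothing to add.
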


On the other hand, the following example illustrates the differences between the Lie algebras $\mathfrak g({\mathbf{A}})$, $\mathfrak g$ and $\ol{\mathfrak g}$ for an irreducible generalized Cartan matrix of affine type.
\begin{example}
Let ${\mathbf{A}}$ be an irreducible generalized Cartan matrix of affine type and denote by $\overset{o}{\mathfrak g}$ the finite-dimensional simple Lie algebra associated with the corresponding Cartan matrix of finite type. Then in the notation of \cite[Chapter 7]{Kac90} the Lie algebra \[\ol{\mathfrak g} = \LLL(\overset{o}{\mathfrak g})\] is the loop algebra of $\overset{o}{\mathfrak g}$, whereas \[\mathfrak g = \LLL(\overset{o}{\mathfrak g}) \oplus \C K\] is a one-dimensional central extension of the loop algebra and \[\mathfrak g({\mathbf{A}}) = \LLL(\overset{o}{\mathfrak g}) \oplus \C K \oplus \C d\] for a certain derivation $d$. The complex dimensions of $\mathfrak h({\mathbf{A}})$, $\mathfrak h$ and $\ol{\mathfrak h}$ are given by $\rk(\overset{o}{\mathfrak g})+2$, $\rk(\overset{o}{\mathfrak g})+1$ and $\rk(\overset{o}{\mathfrak g})$, respectively.
\end{example}

For a symmetrizable generalized Cartan matrix ${\mathbf{A}}$ as defined in \cite[\S2.1]{Kac90} (see also Definition~\ref{GCMdef}) the Gabber--Kac Theorem provides a very efficient way of defining the derived Kac--Moody algebra $\mathfrak{g}$.

\begin{theorem}[Gabber--Kac Theorem] \label{GabberKac}
  Let $\mathbf{A} = (a_{ij})_{1 \leq i, j \leq n}$ be a symmetrizable generalized Cartan matrix of size $n \times n$. Then the derived complex Kac--Moody algebra $\mathfrak{g}$ is isomorphic to the quotient of the free complex Lie algebra on $3n$ generators $e_i$, $f_i$, $h_i$, $1 \leq i \leq n$, modulo the following relations:
  \begin{eqnarray*}
    & & [h_i,h_j]=0, \qquad [e_i,f_i]=h_i, \qquad [e_i,f_j]=0 \,\,\, (i \neq j), \\
    & & [h_i,e_j]=a_{ij}e_j, \qquad [h_i,f_j] = -a_{ij}f_j, \\
    & & (\ad e_i)^{1-a_{ij}}e_j = 0 \,\,\, (i \neq j), \qquad (\ad e_i)^{1-a_{ij}}f_j = 0 \,\,\, (i \neq j),
  \end{eqnarray*}
via the homomorphism that maps $h_i$ to $\check \alpha_i$ and transforms $a_{ij}$ into $\alpha_j(\check \alpha_i)$.
  
In particular, $\mathfrak{g}$ is the colimit of the amalgam of Lie algebras consisting of its standard subalgebras $\mathfrak{g}_i$, $\mathfrak{g}_{i,j}$ of rank one and two.
\end{theorem}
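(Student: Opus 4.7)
The plan is to follow the standard Gabber–Kac argument as presented in \cite[Chapter~9]{Kac90} and recast its output in the amalgamation language. First I would recall the ``free'' Kac--Moody algebra $\widetilde{\mathfrak g}(\mathbf{A})$ from \cite[\S1.3]{Kac90}: the Lie algebra generated by symbols $e_i, f_i$ ($1 \leq i \leq n$) together with $\mathfrak h(\mathbf{A})$ subject only to the Chevalley relations $[h,h']=0$, $[e_i,f_j]=\delta_{ij}\check\alpha_i$, $[h,e_j]=\alpha_j(h)e_j$, $[h,f_j]=-\alpha_j(h)f_j$. By \cite[Theorem~1.2]{Kac90} this algebra admits a triangular decomposition $\widetilde{\mathfrak g}(\mathbf{A}) = \widetilde{\mathfrak n}_- \oplus \mathfrak h(\mathbf{A}) \oplus \widetilde{\mathfrak n}_+$, where $\widetilde{\mathfrak n}_\pm$ is the free Lie algebra on the $e_i$'s, respectively $f_i$'s. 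There is a unique maximal two-sided ideal $\mathfrak r \trianglelefteq \widetilde{\mathfrak g}(\mathbf{A})$ with $\mathfrak r \cap \mathfrak h(\mathbf{A}) = 0$, and by definition $\mathfrak g(\mathbf{A}) = \widetilde{\mathfrak g}(\mathbf{A})/\mathfrak r$, see \cite[\S1.3]{Kac90}. Restricting to the derived subalgebra yields $\mathfrak g = \langle e_i, f_i, \check\alpha_i \rangle/ (\mathfrak r \cap \langle e_i,f_i,\check\alpha_i\rangle)$.

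The core of the theorem is then the identification of $\mathfrak r$ with the ideal $\mathfrak s$ generated by the Serre elements $(\mathrm{ad}\, e_i)^{1-a_{ij}}e_j$ and $(\mathrm{ad}\, f_i)^{1-a_{ij}}f_j$ for $i\neq j$. The inclusion $\mathfrak s \subseteq \mathfrak r$ follows from direct weight considerations: each Serre element lies in a root space $\widetilde{\mathfrak g}_\alpha$ with $\alpha \neq 0$, so by maximality of $\mathfrak r$ it suffices to observe that these elements are primitive, i.e.\ annihilated by every $\mathrm{ad}(f_k)$ with $k\neq i$, as verified by the rank-two $\mathfrak{sl}_2$-representation theory inside $\widetilde{\mathfrak g}_{\{i,j\}}(\mathbf A)$. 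The reverse inclusion $\mathfrak r \subseteq \mathfrak s$ is the content of the \emph{Gabber--Kac theorem} proper and is the main obstacle of the proof: here one uses the symmetrizability hypothesis ${\mathbf A} = DB$ with $D$ diagonal and $B$ symmetric to construct a non-degenerate symmetric invariant bilinear form $(\cdot,\cdot)$ on the quotient $\widetilde{\mathfrak g}(\mathbf{A})/\mathfrak s$, following \cite[Theorem~2.2]{Kac90}. Once this form is in place, any element of $\mathfrak r/\mathfrak s \subseteq \widetilde{\mathfrak g}(\mathbf{A})/\mathfrak s$ would lie in the radical of $(\cdot,\cdot)$; a Casimir-type argument together with the decomposition of $\mathfrak r/\mathfrak s$ into weight spaces (all of weight $\neq 0$ since $\mathfrak r \cap \mathfrak h(\mathbf A)=0$) then forces $\mathfrak r/\mathfrak s = 0$, yielding the presentation.

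Having established the presentation, the map sending $h_i \mapsto \check\alpha_i$ (and $e_i, f_i$ to chosen generators of the root spaces $\mathfrak g_{\pm\alpha_i}$) realizes the isomorphism $\mathfrak g \cong \langle e_i,f_i,h_i\rangle/(\text{Chevalley + Serre})$ claimed in the theorem, with $a_{ij}$ corresponding to $\alpha_j(\check\alpha_i)$ by construction of the quadruple in \eqref{quadruple}. The ``in particular'' statement is then a formal consequence: every relation in the presentation involves at most two of the indices $1,\dots,n$, so the defining presentation can be read as a pushout (colimit) of the diagram whose vertices are the rank one subalgebras $\mathfrak g_i$ (presented by $\{e_i,f_i,h_i\}$ together with the $\mathfrak{sl}_2$-relations) and whose edges are the rank two subalgebras $\mathfrak g_{\{i,j\}}$ (presented by $\{e_i,f_i,h_i,e_j,f_j,h_j\}$ together with all relations involving only indices $i,j$), with the obvious inclusions. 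This exhibits $\mathfrak g$ as the colimit in the category of complex Lie algebras of the amalgam of its standard rank one and rank two subalgebras, as asserted.

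The technical heart of the argument, and the only step requiring genuine work beyond bookkeeping, is the construction of the invariant form on $\widetilde{\mathfrak g}(\mathbf{A})/\mathfrak s$ and the verification of its non-degeneracy on root spaces — this is precisely where symmetrizability of $\mathbf A$ is indispensable, and without it the equality $\mathfrak r = \mathfrak s$ can fail.
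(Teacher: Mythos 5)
Your outline is correct, but note that the paper does not actually prove this theorem: its ``proof'' consists entirely of the citation to \cite{GabberKac81} and \cite[Theorem~9.11, Remark~1.5]{Kac90}, which is exactly the argument you have reconstructed (free Kac--Moody algebra, Serre ideal contained in the maximal ideal $\mathfrak r$ via primitivity, the reverse inclusion via the invariant form and Casimir operator using symmetrizability, and the colimit statement read off from the fact that every defining relation involves at most two indices). The only point worth making explicit in your last step is that identifying the edge groups of the amalgam with the rank-two standard subalgebras requires the presentation theorem for the $2\times 2$ principal submatrices as well, which is harmless since every $2\times 2$ generalized Cartan matrix is symmetrizable.
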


\begin{proof}
See \cite{GabberKac81}, \cite[Theorem~9.11]{Kac90} plus \cite[Remark~1.5]{Kac90}.
\end{proof}

The presentation of $\mathfrak{g}$ from the preceding theorem is called the \Defn{Gabber--Kac presentation}. Of course, one obtains a presentation of $\ol{\mathfrak g}$ by adding the elements of $\mathfrak c$ as relators to the Gabber--Kac presentation.

\begin{notation} \label{defa} \label{defofa}
Since $\mathfrak h = \sum_{i = 1}^n \C \check \alpha_n$, one can define a real form $\mathfrak a$ of $\mathfrak h$ by setting
\[
\mathfrak a := {\mathrm{span}}_\R(\check \alpha_1, \dots, \check \alpha_n).
\] 
Dually, also define a subspace $V \subset \mathfrak h({\bf A})^*$ by
\[
V := {\rm span}_\R(\alpha_1, \dots, \alpha_n)
\]

Then the image of $\mathfrak a$ under the canonical projection $\mathfrak h \to \ol{\mathfrak h}$ defines a real form of $\ol{\mathfrak h}$ which is denoted by $\ol{\mathfrak a}$. One then has the following commutative diagram, where all maps are the obvious inclusions/projections, respectively their dual maps:
\begin{equation}\label{BigDiagram}
\begin{xy}
\xymatrix{
  & \mathfrak h({\bf A}) &&& \mathfrak h({\bf A})^* \ar@{->>}[dl]_{\iota^*} \ar@{->>}[d]^{\iota_\C^*}&V\ar@{->>}[d] \ar@{_{(}->}[l]\\
\mathfrak a \ar@{^{(}->}[ur]^\iota \ar@{->>}[d]_\pi  \ar@{^{(}->} [r]^{j}& \mathfrak h \ar@{->>}[d]^{\pi_\C}\ar@{^{(}->}[u]_{\iota_\C}&&\mathfrak a^*& \mathfrak h^* \ar@{->>}[l]_{j^*} & \iota_\C^*(V)\ar@{_{(}->}[l] \\
\ol{\mathfrak a}\ar@{^{(}->}[r] & \ol{\mathfrak h}&&\ol{\mathfrak a}^*\ar@{^{(}->}[u]^{\pi^*}& \ol{\mathfrak h}^*\ar@{^{(}->}[u]_{\pi_\C^*}\ar@{->>}[l] &
}
\end{xy}\end{equation}
\end{notation}
All of these maps are linear (over $\R$ and $\C$ respectively) and injective/surjective as indicated by the arrows. The dual maps $\iota^*$ and $j^*$ are defined by considering $\iota$ and $j$ as linear maps between real vector spaces. 

\subsection{The Weyl group, its Coxeter system and its Kac--Moody representation} \label{appendixweylgroup}

\begin{definition}\label{Weylgroup}
Following \cite[\S3.7]{Kac90}), given $i\in \{1, \dots, n\}$ define $r_{\alpha_i} \in \GL(\mathfrak h({\mathbf{A}})^*)$ by
\begin{equation}\label{WeylReflDef2}
r_{\alpha_i}(\lambda) = \lambda - \lambda(\check \alpha_i)\alpha_i;
\end{equation}
dually, define $\check r_{\alpha_i} \in \GL(\mathfrak h({\mathbf{A}}))$ by
\begin{equation}\label{WeylReflDef}
\check r_{\alpha_i}(h) = h - \alpha_i(h)\check \alpha_i.
\end{equation}
The groups $W:=\langle \check r_{\alpha_1}, \dots, \check r_{\alpha_n} \rangle <  \GL(\mathfrak h({\mathbf{A}}))$ and $\langle r_{\alpha_1}, \dots, r_{\alpha_n} \rangle <  \GL(\mathfrak h({\mathbf{A}})^*)$ are canonically isomorphic via $\check r_{\alpha_i} \mapsto r_{\alpha_i}$; the group $W$ is called the \Defn{Weyl group} associated with the generalized Cartan matrix ${\mathbf{A}}$. 
\end{definition}

For $S := \{r_{\alpha_1}, \dots, r_{\alpha_n}\}$ the pair $(W,S)$ is a Coxeter system by \cite[\S3.13]{Kac90}. According to \cite[Proposition~3.13]{Kac90} (see also Definition~\ref{GCMdef}) its Coxeter matrix $M = (m_{ij})_{i, j}$ is given by $m_{ii} = 1$ and $m_{ij}$ for $i \neq j$ by
\[
m_{ij} = \left\{\begin{array}{ll}2, & a_{ij}a_{ji} = 0,\\
3, & a_{ij}a_{ji} = 1,\\
4, & a_{ij}a_{ji} = 2,\\
6, & a_{ij}a_{ji} = 3, \\
\infty, & a_{ij}a_{ji} \geq 4;
\end{array} \right.
\]
recall here from \cite[(1.1.2)]{Kac90} that $a_{ij} = \alpha_j(\check \alpha_i)$.

The action of the Weyl group $W$ on $\mathfrak h({\mathbf{A}})^*$ defined in \eqref{WeylReflDef2} preserves the set $\Delta$ of $\mathfrak h(\mathbf{A})$-roots in $\mathfrak g(\mathbf{A})$,
and the elements of $\Phi = W.\Pi \subset \Delta$ are called the \Defn{real roots} of $\mathfrak g(\mathbf{A})$. To a real root $\alpha = w.\alpha_i \in \Phi$, $w \in W$, corresponds the \Defn{root reflection} $\check r_\alpha : = w\check r_{\alpha_i}w^{-1} \in W$, which depends only on $\alpha$; see \cite[proof of Lemma~3.10]{Kac90}.

The tuple $(W, S, \Phi, \Pi)$ is called the \Defn{Coxeter datum} associated with the generalized Cartan matrix ${\mathbf{A}}$. Note that the Coxeter datum determines uniquely a system $\Phi^+ \subset \Phi$ of \Defn{positive roots} by demanding that $\Phi^+$ contains $\Pi$.

With the notation introduced in Notation \ref{defa} one has:

\begin{proposition} \label{KMRepWeyl}
\begin{enumerate}
\item The action of the Weyl group $W$ defined in \eqref{WeylReflDef} stabilizes the complex subalgebra $\mathfrak h < \mathfrak{h}(\mathbf{A})$ and its real form
$\mathfrak a$, acts trivially on $\mathfrak{c}$ and thus induces actions of $W$ on $\ol{\mathfrak h}$ and $\ol{\mathfrak{a}}$.
\item The action of the Weyl group $W$ defined in \eqref{WeylReflDef2} stabilizes the real subspace $V <\mathfrak{h}(\mathbf{A})^*$.
\item The map $j^*$ induces an isomorphism
\[
 \iota_\C^*(V) \xto{\cong} \pi^*(\ol{\mathfrak{a}}^*)
\]
\item The action of the Weyl group $W$ from assertion (ii) acts trivially on $\ker(\iota^*)$ and, thus, induces an action of $W$ on $\pi^*(\ol{\mathfrak{a}}^*) \cong \iota_\C^*(V)$ and, by transport of structure, on $\ol{\mathfrak{a}}^*$.
\end{enumerate}
\end{proposition}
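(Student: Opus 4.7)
The four assertions are essentially bookkeeping about how the defining formulas \eqref{WeylReflDef} and \eqref{WeylReflDef2} interact with the inclusions and projections in diagram \eqref{BigDiagram}. I would organize the argument by exploiting the single key observation that, for every $i,j$, one has $\alpha_j(\check\alpha_i)=a_{ij}\in\Z$, and that $\check\alpha_i\in\mathfrak{a}\subset\mathfrak{h}$ while $\alpha_i$ vanishes on $\mathfrak{c}$ by the very definition \eqref{KMAcenter} of $\mathfrak{c}$.

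For assertion (i), the formula $\check r_{\alpha_i}(h)=h-\alpha_i(h)\check\alpha_i$ shows that $\check r_{\alpha_i}(h)$ differs from $h$ by a scalar multiple of $\check\alpha_i$. Since $\check\alpha_i$ lies in $\mathfrak{a}\subset\mathfrak{h}$ and since $\alpha_i(h)\in\R$ whenever $h\in\mathfrak{a}$ (as $\alpha_i$ takes integer values on the simple coroots), the subspaces $\mathfrak{h}$ and $\mathfrak{a}$ are stabilized. For $h\in\mathfrak{c}$, the definition of $\mathfrak{c}$ forces $\alpha_i(h)=0$, so $\check r_{\alpha_i}(h)=h$; hence $W$ acts trivially on $\mathfrak{c}\subset\mathfrak{h}$ and on $\mathfrak{c}\cap\mathfrak{a}\subset\mathfrak{a}$, and the action descends to the quotients $\overline{\mathfrak{h}}$ and $\overline{\mathfrak{a}}$. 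For assertion (ii), the formula $r_{\alpha_i}(\lambda)=\lambda-\lambda(\check\alpha_i)\alpha_i$ shows that for $\lambda=\sum c_k\alpha_k\in V$ the coefficient $\lambda(\check\alpha_i)=\sum c_k a_{ki}$ is real, so $r_{\alpha_i}(\lambda)$ is again a real linear combination of the $\alpha_k$, i.e.\ it lies in $V$.

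Assertion (iii) is the main computational step. I would check three things. First, the image of $j^*\circ\iota_\C^*$ restricted to $V$ lands in $\pi^*(\overline{\mathfrak{a}}^*)$: any $\lambda\in V$ vanishes on $\mathfrak{c}$ (since all $\alpha_i$ do), hence its restriction to $\mathfrak{a}$ vanishes on $\mathfrak{c}\cap\mathfrak{a}$ and therefore factors through $\pi$. Second, this map is injective: since $\mathfrak{a}\otimes_\R\C=\mathfrak{h}$ (both equal $\sum\C\check\alpha_i$), any $\C$-linear functional on $\mathfrak{h}$ that vanishes on $\mathfrak{a}$ is zero, so $j^*|_{\iota_\C^*(V)}$ has trivial kernel. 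Third, dimensions match: writing $\lambda=\sum c_i\alpha_i\in V$, one has $\iota_\C^*(\lambda)=0$ if and only if $\sum_i c_i a_{ij}=0$ for all $j$, i.e.\ if and only if $(c_1,\dots,c_n)$ lies in the left kernel of $\mathbf{A}$, a real subspace of dimension $n-l$ where $l=\mathrm{rk}(\mathbf{A})$. Hence $\dim_\R\iota_\C^*(V)=n-(n-l)=l=\dim_\R\overline{\mathfrak{a}}^*=\dim_\R\pi^*(\overline{\mathfrak{a}}^*)$, and the injection is an isomorphism.

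Assertion (iv) now follows formally: if $\mu\in\ker(\iota_\C^*)$ then in particular $\mu(\check\alpha_i)=0$ for every $i$, whence $r_{\alpha_i}(\mu)=\mu-\mu(\check\alpha_i)\alpha_i=\mu$, so $W$ fixes $\ker(\iota_\C^*)$ pointwise and consequently the $W$-action on $V$ descends through $\iota_\C^*|_V$ to an action on $\iota_\C^*(V)$; transporting via the isomorphism from (iii) yields the desired action on $\overline{\mathfrak{a}}^*$. I do not anticipate a genuine obstacle in any of these steps; the only real care is required in (iii), where one must keep straight which spaces are real and which are complex, and use that $\mathfrak{h}=\mathfrak{a}\otimes_\R\C$ together with the rank computation for the left kernel of $\mathbf{A}$ to conclude that $\iota_\C^*(V)$ and $\pi^*(\overline{\mathfrak{a}}^*)$ have the same real dimension $l$.
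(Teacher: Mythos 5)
Your proposal is correct and follows essentially the same route as the paper: (i), (ii) and (iv) by direct inspection of the defining formulas together with $\alpha_i|_{\mathfrak c}=0$, and (iii) by showing $\iota_\C^*(V)$ restricts into $\pi^*(\overline{\mathfrak a}^*)$ and then matching real dimensions via the rank $l$ of $\mathbf A$. The only (harmless) addition is your explicit check that $j^*$ is injective on $\iota_\C^*(V)$ via $\mathfrak h=\mathfrak a\otimes_\R\C$, a point the paper absorbs by working directly with the composite restriction map $V\to\mathfrak a^*$.
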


\begin{proof} It is immediate from \eqref{WeylReflDef2} and \eqref{WeylReflDef} that each $r_{\alpha_i}$ maps simple roots to $\R$-linear combinations of simple roots and each $\check r_{\alpha_i}$ maps simple coroots to $\R$-linear combinations of simple coroots. Moreover, each $r_{\alpha_i}$ acts trivially on $\mathfrak c$ by \eqref{KMAcenter} and \eqref{WeylReflDef}. Assertions (i) and (ii) follow.

In order to prove (iii) recall from \eqref{dimofh} that the quotient $\ol{\mathfrak{a}}$ has $\R$-dimension $l$, and so do $\ol{\mathfrak{a}}^*$ and $\pi^*(\ol{\mathfrak{a}}^*)$. For each $h \in \mathfrak{c} \cap \mathfrak{a}$ one has $(\iota^*(\alpha_i))(h) = (\alpha_i \circ \iota)(h) = (\alpha_i)_{\mathfrak{a}}(h) = 0$. That is, each $\iota^*(\alpha_i) = {\alpha_i}_{|\mathfrak{a}}$ in fact is of the form $\pi^*(\ol{\alpha}_i) = \ol{\alpha}_i \circ \pi$ for a uniquely determined $\ol{\alpha}_i \in \ol{\mathfrak{a}}^*$; in other words, $\iota^*(\alpha_i) \in \pi^*(\ol{\mathfrak{a}}^*)$. Since $V$ equals the $\R$-span of the simple roots $\alpha_1,\dots,\alpha_n$, the image $\iota^*(V)$ equals the $\R$-span of the images $\iota^*(\alpha_1),\dots,\iota^*(\alpha_n)$. In particular, $\iota^*(V) \leq \pi^*(\ol{\mathfrak{a}}^*)$.

Since $\mathfrak{a}$ is the $\R$-span of the simple coroots $\check\alpha_1,\dots,\check\alpha_n$, the $\R$-dimension of the image $\iota^*(V)$ equals the rank of the generalized Cartan matrix $\mathbf{A}$, i.e., $\dim_\R(\iota^*(V)) = l$. One concludes $\pi^*(\ol{\mathfrak{a}}^*) = \iota^*(V)$.

In order to prove (iv), observe that $\lambda \in \ker(\iota^*)$ if and only if for each $1 \leq i \leq n$ one has $\lambda(\check \alpha_i) = 0$. Therefore for any $\lambda \in \ker(\iota^*)$ one has $r_{\alpha_i}(\lambda) = \lambda - \lambda(\check \alpha_i) \alpha_i = \lambda$ by \eqref{WeylReflDef2}; that is, $W$ acts trivially on $\ker(\iota^*)$.
\end{proof}
\begin{definition}\label{DefKMRep}  The real representations
 \[{\rho}_{KM}: W \to \GL(\mathfrak a) \qquad \text{ and } \qquad \ol{\rho}_{KM}: W \to \GL(\ol{\mathfrak a})\]
discussed in Proposition~\ref{KMRepWeyl}  are called the \Defn{Kac--Moody representation} of $W$ and the \Defn{reduced Kac--Moody representation} of $W$, respectively. The real representation
\[
W \to \GL(\ol{\mathfrak a}^*)
\]
is called the \Defn{dual reduced Kac--Moody representation}.
\end{definition}
Note that the Kac--Moody representation of $W$ depends on the generalized Cartan matrix $\mathbf{A}$ and not just on the Coxeter system $(W,S)$ (or the Coxeter matrix $M$), hence the name.

\subsection{Existence of root bases for Weyl groups and symmetrizability} \label{bases}

In general, given an irreducible generalized Cartan matrix $\mathbf{A}$, one cannot find a root basis for $\rho_{KM}(W)$ in $\mathfrak a$ or for $\ol{\rho}_{KM}(W)$ in $\ol{\mathfrak a}$. For instance, if $\mathbf{A}$ is not symmetrizable, then by \cite[Exercise~1.5E(2)]{Kumar02} (also \cite[\S 2.10, Exercise~2.3]{Kac90}) there simply does not exist a suitable $\rho_{KM}(W)$-invariant bilinear form on $\mathfrak a$. We will see in this section that non-symmetrizability actually is the only obstruction for the existence of a root basis in $\mathfrak a$. The case of the quotient $\ol{\mathfrak a}$ is a bit more subtle; however, if one excludes the affine case, it is also possible to construct a root basis for $W$ in $\ol{\mathfrak a}$, as we will discuss below.

For a symmetrizable generalized Cartan matrix $\mathbf{A}$ and a diagonal matrix $D = {\rm diag}(\epsilon_1, \dots, \epsilon_n)$ with positive entries such that $D^{-1}\mathbf{A} = (b_{ij})$ is symmetric, following \cite[(2.1.4)]{Kac90} one defines an \Defn{invariant symmetric bilinear form} on $\mathfrak a$ via
\[
(\check \alpha_i|\check \alpha_j) := b_{ij}\epsilon_i\epsilon_j.
\]
Note that $b_{jj}\epsilon_j = a_{jj} = 2$, whence
\begin{eqnarray}
\check r_{\alpha_j}(\check \alpha_i) & \stackrel{\eqref{WeylReflDef}}{=} & \check \alpha_i - \alpha_j(\check \alpha_i)\check \alpha_j
=
\check \alpha_i - a_{ij}\check \alpha_j \notag \\ & = & \check \alpha_i - \epsilon_{i}b_{ij}\check \alpha_j= \check \alpha_i - 2\frac{b_{ij}\epsilon_i\epsilon_j}{b_{jj}\epsilon_j^2}\check \alpha_j = \check \alpha_i - 2 \frac{( \check \alpha_j|\check\alpha_i)}{(\check \alpha_j|\check \alpha_j)}\check \alpha_j, \label{formcoroots}
\end{eqnarray}
i.e., ${{\check r}_{\alpha_j}}|_{\mathfrak{a}}$ is the $(-|-)$-orthogonal reflection associated with $\check \alpha_j$, in particular $(-|-)$ is invariant under the action of $W$ on $\mathfrak a$.

Define the \Defn{normalized coroots} by 
\[\check n_j := \frac{\check \alpha_j}{(\check \alpha_j|\check \alpha_j)^{\frac 1 2}} = \frac{1}{\sqrt{2\epsilon_j}} \check \alpha_j\]
and set $\check \Pi_{\rm nor} := \{ \check n_1, \dots, \check n_n \}$.

Following \cite[(2.1.6)]{Kac90}, one dually defines an \Defn{invariant symmetric bilinear form} on $V$ via \[
(\alpha_i|\alpha_j) := b_{ij} = \frac{a_{ij}}{\epsilon_i}.
\] 

As above one computes
\begin{eqnarray}
r_{\alpha_j}(\alpha_i) & \stackrel{\eqref{WeylReflDef2}}{=} & \alpha_i - \alpha_i(\check \alpha_j)\alpha_j=
\alpha_i - a_{ji}\alpha_j \notag \\ & = & \alpha_i - b_{ji}\epsilon_{j}\alpha_j= \alpha_i - 2\frac{b_{ji}\epsilon_j}{a_{jj}}\alpha_j = \alpha_i - 2 \frac{( \alpha_j|\alpha_i)}{(\alpha_j|\alpha_j)}\alpha_j. \label{formroots}
\end{eqnarray}
Define the \Defn{normalized roots} by 
\[n_j := \frac{\alpha_j}{(\alpha_j|\alpha_j)^{\frac 1 2}} = \frac{\sqrt{\epsilon_j}}{\sqrt{2}} \alpha_j\]
and set $\Pi_{\rm nor} := \{ n_1, \dots, n_n \}$.

\begin{proposition}\label{IntertwinerWeyl1}
  Let $\mathbf{A}$ be a symmetrizable irreducible generalized Cartan matrix and let $(W, S)$ be the associated Coxter system (cf.\ Proposition~\ref{rootbasiscoxeter}).
  
  Then the triples \[\left(\mathfrak{a}, (-|-), \check \Pi_{\rm nor}\right) \qquad \text{and} \qquad \left(V, (-|-), \Pi_{\rm nor}\right)\] are root bases for $(W,S)$. If $\mathbf{A}$ is non-affine, then also their images \[\left(\pi(\mathfrak{a})=\ol{\mathfrak{a}}, (-|-)/\ker(\pi), \pi(\check \Pi_{\rm nor})\right) \qquad \text{and} \qquad  \left(\pi^*(\ol{\mathfrak{a}}^*) = \iota^*(V), (-|-)/\ker(\iota^*),\iota^*(\Pi_{\rm nor})\right)\] are root bases for $(W,S)$.
\end{proposition}

\begin{proof}
  One computes \[(\check n_i|\check n_i) = (\frac{1}{\sqrt{2\epsilon_i}} \check \alpha_i|\frac{1}{\sqrt{2\epsilon_i}} \check \alpha_i) = \frac{1}{2\epsilon_i}(\check\alpha_i|\check\alpha_i) = \frac{1}{2\epsilon_i}b_{ii}\epsilon_i\epsilon_i = \frac{1}{2\epsilon_i}a_{ii}\epsilon_i = 1\] and \[(\check n_i|\check n_j) = (\frac{1}{\sqrt{2\epsilon_i}} \check \alpha_i|\frac{1}{\sqrt{2\epsilon_j}} \check \alpha_j) = \frac{1}{2\sqrt{\epsilon_i\epsilon_j}}(\check \alpha_i|\check\alpha_j) = \frac{1}{2} \frac{\sqrt{\epsilon_j}}{\sqrt{\epsilon_i}}a_{ij} = -\frac{1}{2}\sqrt{\frac{a_{ji}}{a_{ij}}}|a_{ij}| = -\frac{1}{2}\sqrt{a_{ij}a_{ji}}.\]

Moreover, \[(n_i|n_i) = (\frac{\sqrt{\epsilon_i}}{\sqrt{2}} \alpha_i|\frac{\sqrt{\epsilon_i}}{\sqrt{2}} \alpha_i) = \frac{\epsilon_i}{2}(\alpha_i|\alpha_i) = 1\] and \[(n_i|n_j) = (\frac{\sqrt{\epsilon_i}}{\sqrt{2}} \alpha_i|\frac{\sqrt{\epsilon_j}}{\sqrt{2}} \alpha_j) = \frac{\sqrt{\epsilon_i\epsilon_j}}{2}(\alpha_i|\alpha_j) = \frac{1}{2} \frac{\sqrt{\epsilon_j}}{\sqrt{\epsilon_i}}a_{ij} = -\frac{1}{2}\sqrt{\frac{a_{ji}}{a_{ij}}} |a_{ij}| = -\frac{1}{2}\sqrt{a_{ij}a_{ji}}.\]
It follows that $(\check n_i|\check n_j), (n_i|n_i) \in \{-\cos(\pi/m)\mid m \in \N\}\,\cup\,\, ]-\infty,-1]$.
    Altogether, $(\mathfrak{a}, (-|-), \check \Pi_{\rm nor})$ and $(V, (-|-), \Pi_{\rm nor})$ satisfy axioms (i) and (ii) of the definition of a root basis.
    Linear independence of $\check \Pi_{\rm nor}$ and $\Pi_{\rm nor}$ furthermore imply axiom (iii). Thus $(\mathfrak{a}, (-|-), \check \Pi_{\rm nor})$ and $(V, (-|-), \Pi_{\rm nor})$  are root bases, and in view of Proposition~\ref{rootbasiscoxeter} it follows from the explicit formulas \eqref{formcoroots} and \eqref{formroots} that the corresponding Coxeter systems are isomorphic to $(W,S)$. 
    
    Equality \eqref{formcoroots} moreover implies that the radical of the invariant bilinear form on $\mathfrak{a}$ equals $\ker(\pi) = \mathfrak{c} \cap \mathfrak{a}$ (see also~\cite[Lemma~2.1]{Kac90}). Equality \eqref{formroots} implies that the radical of the invariant bilinear form on $V$ equals $\ker(\iota^*)$, as for any $\lambda \in \ker(\iota^*)$ one has $r_{\alpha_i}(\lambda) = \lambda - \lambda(\check \alpha_i) \alpha_i = \lambda$ and $\dim_\R \ker(\iota^*) = n-l$, where $l$ is the rank of $\mathbf{A}$. Thus if ${\bf A}$ is non-affine, then \cite[Proposition~6.1.3]{Krammer94} applies, and the images of $(\mathfrak{a}, (-|-), \check \Pi_{\rm nor})$ and $(V, (-|-), \Pi_{\rm nor})$ on $\ol{\mathfrak{a}}$ and $\iota^*(V)$ are root bases as well, for the same Coxeter system.
\end{proof}
Note that the bilinear form $(-|-)/\ker(\pi)$ on $\ol{\mathfrak a}$ is always non-degenerate, since the radical of the invariant bilinear form on $\mathfrak{a}$ equals $\ker(\pi) = \mathfrak{c} \cap \mathfrak{a} <  \{h \in \mathfrak h({\mathbf{A}}) \mid \forall i = 1, \dots, n: \; \alpha_i(h) = 0\}$. In the sequel we will usually denote the bilinear form $(-|-)/\ker(\pi)$ on $\ol{\mathfrak a}$ simply by $(- | -)$, unless we want to distinguish it explicitly from the form $(-|-)$ on $\mathfrak a$. We will also write $\sigma_i := \ol{\rho}_{KM}(\check r_{\alpha_i})$ for the Coxeter generators of $\ol{\rho}_{KM}(W)$.

\begin{corollary}\label{CorollaryRedKMRepFaithful} If ${\bf A}$ is irreducible, symmetrizable and non-affine, then $\left(\ol{\mathfrak{a}}, (-|-), \pi(\check \Pi_{\rm nor})\right)$ is a root basis for the Coxeter system $(\ol{\rho}_{KM}(W), \{\sigma_1, \dots, \sigma_n\}) \cong (W,S)$ and the reduced Kac--Moody representation
$\ol{\rho}_{KM}: W \to \GL(\ol{\mathfrak{a}})$ is faithful. \qed
\end{corollary}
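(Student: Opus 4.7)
The plan is to extract both claims almost directly from Proposition~\ref{IntertwinerWeyl1}. First I would observe that the statement that $\left(\overline{\mathfrak{a}}, (-|-), \pi(\check \Pi_{\rm nor})\right)$ is a root basis is literally part of the conclusion of Proposition~\ref{IntertwinerWeyl1} in the non-affine case. What remains is to identify the Coxeter system $(\overline{W},\overline{S})$ produced by applying Proposition~\ref{rootbasiscoxeter} to this root basis with the pair $(\overline{\rho}_{KM}(W), \{\sigma_1,\dots,\sigma_n\})$, and then to deduce faithfulness.

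Next I would unfold the definition of the reflections associated with the root basis $\pi(\check \Pi_{\rm nor})$ by Proposition~\ref{rootbasiscoxeter}: the generator attached to $\pi(\check n_i)$ is the map $v\mapsto v-2(\pi(\check n_i)|v)\pi(\check n_i)$ on $\overline{\mathfrak{a}}$. Using equation~\eqref{formcoroots} and the fact that $\check n_i$ is a positive scalar multiple of $\check \alpha_i$ (so the orthogonal reflection it induces coincides with that induced by $\check \alpha_i$), I would verify that this operator equals $\sigma_i = \overline{\rho}_{KM}(\check r_{\alpha_i})$. Consequently the Coxeter system produced by Proposition~\ref{rootbasiscoxeter} is precisely $(\overline{\rho}_{KM}(W), \{\sigma_1,\dots,\sigma_n\})$, and Proposition~\ref{IntertwinerWeyl1} then tells us that it is isomorphic to $(W,S)$. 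This establishes the first half of the corollary.

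For faithfulness I would argue as follows. The map $\overline{\rho}_{KM}\colon W\to \overline{\rho}_{KM}(W)$ is a surjective group homomorphism which sends the Coxeter generator $\check r_{\alpha_i}$ of $(W,S)$ to the Coxeter generator $\sigma_i$ of $(\overline{\rho}_{KM}(W),\{\sigma_i\})$. By the first half of the proof, the Coxeter matrix of the target coincides with that of $(W,S)$, so this surjection is a morphism between Coxeter systems of the same type that is compatible with the distinguished generating sets. The universal property of the Coxeter presentation of $W$ then forces $\overline{\rho}_{KM}$ to be an isomorphism, and in particular injective.

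There is essentially no obstacle to overcome here beyond the bookkeeping identification in the middle step, since the heavy lifting (checking the root basis axioms, computing the Coxeter matrix from the inner products of the $\check n_i$, and handling the passage to the non-affine quotient via \cite[Proposition~6.1.3]{Krammer94}) has already been absorbed into Proposition~\ref{IntertwinerWeyl1}. The statement of the corollary is really a packaging of that proposition in the form most convenient for subsequent use in the body of the paper.
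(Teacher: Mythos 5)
Your proposal is correct and follows the same route the paper intends: the corollary is stated with a \qed precisely because it is meant to be read off from Proposition~\ref{IntertwinerWeyl1}, with the generator identification $\pi(\check n_i)\mapsto\sigma_i$ via \eqref{formcoroots} and the standard "surjection of Coxeter systems of the same type matching generators is an isomorphism" argument supplying faithfulness. Your write-up just makes explicit the bookkeeping the paper leaves implicit.
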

Note that the statement of the corollary does not hold in the affine case. Here the image $\ol{\rho}_{KM}(W)$ is just the canonical finite quotient of $W$ given by the underlying spherical Coxeter diagram, and thus the reduced Kac-Moody representation is \emph{not} faithful.

\subsection{The unreduced and reduced Tits cone}\label{TitsCones}
From now on we will always assume that our irreducible generalized Cartan matrix ${\bf A}$ is symmetrizable. As before we denote by $(W,S)$ the associated Coxeter system. By Proposition \ref{IntertwinerWeyl1} we then have a root basis for $(W,S)$ given by $\left(\mathfrak{a}, (-|-), \check \Pi_{\rm nor}\right)$. We refer to the associated dual Tits cone $\mathcal C^* \subset \mathfrak a^*$ as the \Defn{unreduced dual Tits cone} of ${\bf A}$. Explicitly, the fundamental chamber of the unreduced dual Tits cone is given by
\[C^*:=\{ \varphi \in {\mathfrak{a}}^* \mid \varphi(\check n_i) \geq 0 \text{ for $1 \leq i \leq n$} \} = \{ \varphi \in {\mathfrak{a}}^* \mid \varphi(\check \alpha_i) \geq 0 \text{ for $1 \leq i \leq n$} \} \subset \mathfrak{a}^*.\] 
If ${\bf A}$ is non-affine, then Proposition \ref{IntertwinerWeyl1} also provides another root basis for $(W,S)$, given by $(\ol{\mathfrak{a}}, (-|-), \pi(\check \Pi_{\rm nor}))$, where $\pi: \mathfrak a \to \overline{\mathfrak a}$ denotes the canonical projection as before. If ${\bf A}$ is affine, then $(\ol{\mathfrak{a}}, (-|-), \pi(\check \Pi_{\rm nor}))$ is still a root basis, but the associated Coxeter system is no longer $(W,S)$, but rather the underlying spherical Coxeter system. Either way we refer to the associated dual Tits cone  $\overline{\mathcal C}^* \subset \overline{\mathfrak a}^*$ as the \Defn{reduced dual Tits cone} of ${\bf A}$. Explicitly, the fundamental chamber of the reduced dual Tits cone is given by
\[\overline{C}^*:=\{ \varphi \in \overline{\mathfrak{a}}^* \mid \varphi(\pi(\check n_i)) \geq 0 \text{ for $1 \leq i \leq n$} \} = \{ \varphi \in \overline{\mathfrak{a}}^* \mid \varphi(\pi(\check \alpha_i)) \geq 0 \text{ for $1 \leq i \leq n$} \} \subset \overline{\mathfrak{a}}^*.\] 
Note that the form $(-|-)$ on $\overline{\mathfrak a}$ is non-degenerate and $W$-invariant, it induces a $W$-equivariant linear isomorphism $\overline{\mathfrak a} \to \overline{\mathfrak a}^*$ by $v \mapsto (v|-)$. We denote by $\overline{\mathcal C}$ and $\overline{C}$ respectively the preimages of $\overline{\mathcal C}^*$ and $\overline{C}^*$ under this linear isomorphism, which we call the \Defn{reduced Tits cone} of ${\bf A}$, respectively its \Defn{fundamental chamber}. By definition,
\[
\overline{C} = \{v \in \overline{\mathfrak{a}} \mid (v|\pi(\check n_i)) \geq 0  \text{ for $1 \leq i \leq n$}\} \quad \text{and} \quad \overline{\mathcal C} = W.\overline{C}.
\]
To describe these sets more explicitly we observe:
\begin{lemma} \label{allisthesame}
  If ${\bf A}$ is symmetrizable, 
 then the linear map
\begin{eqnarray*}
  \ol{\phi} : \left(\pi(\mathfrak{a})=\ol{\mathfrak{a}}, (-|-)/\ker(\pi), \pi(\check \Pi_{\rm nor})\right) & \to & \left(\pi^*(\ol{\mathfrak{a}}^*) = \iota^*(V), (-|-)/\ker(\iota^*),\iota^*(\Pi_{\rm nor})\right) \\
  \pi(\check n_j) & \mapsto & \iota^*(n_j)
\end{eqnarray*}
is a well-defined isometry. Furthermore, \[\big(\pi(\check n_i)|\pi(\check n_j)\big) = \big(\iota^*(n_i)|\iota^*(n_j)\big) = n_j(\check n_i).\]
\end{lemma}

\begin{proof}
  Note that the family $(\pi(\check n_j))_{1 \leq j \leq n}$ is not necessarily linearly independent and so, a priori, it is not even clear that there exists a linear map at all such that $\pi(\check n_j)  \mapsto  \iota^*(n_j)$. However, there certainly exists a linear map
  \begin{eqnarray*}
    \phi : \left(\mathfrak{a}, (-|-), \check \Pi_{\rm nor}\right) & \to & \left(V, (-|-), \Pi_{\rm nor}\right) \\
    \check n_j & \mapsto & n_j.
  \end{eqnarray*}
By the computation in the proof of Proposition~\ref{IntertwinerWeyl1} one has \[(\phi(\check n_i)|\phi(\check n_j)) = (n_i|n_j) = \frac{1}{2}\frac{\sqrt{\epsilon_j}}{\sqrt{\epsilon_i}}a_{ij} = \frac{1}{2}\frac{\sqrt{\epsilon_j}}{\sqrt{\epsilon_i}} \alpha_j(\check \alpha_i) = n_j(\check n_i).\] By that proof, moreover, $\ker(\pi)$ equals the radical of the bilinear form on $\mathfrak{a}$ and $\ker(\iota^*)$ equals the radical of the bilinear form on $V$, so that factoring out the respective radicals induces the desired isometry between $\pi(\mathfrak{a})$ and $\iota^*(V)$.
\end{proof}
Now every element $v \in \overline{\mathfrak a}$ can be written as $v = \sum v_j \pi(\check n_j)$ for some $v_j \in \R$, and since by Lemma \ref{allisthesame} 
\[
(\pi(\check n_i)|v) = \sum_{j=1}^n v_j (\pi(\check n_i)|\pi(\check n_j)) = \sum_{j=1}^n v_j \iota^*n_i(\pi(\check n_j) = \iota^*n_i(v),
\]
we have $(\pi(\check n_i)|-) = \iota^*n_i$, and thus
\[
\overline{C} = \{v \in \overline{\mathfrak{a}} \mid \iota^*n_i(v) \geq 0  \text{ for $1 \leq i \leq n$}\} =  \{v \in \overline{\mathfrak{a}} \mid \iota^*\alpha_i(v) \geq 0  \text{ for $1 \leq i \leq n$}\}, 
\]
and hence we have $\overline{C} = \pi(C)$, where
\[
C = \{v \in \mathfrak a \mid \alpha_i(v) \geq 0  \text{ for $1 \leq i \leq n$}\}.
\]
Since $\pi: \mathfrak a \to \overline{\mathfrak a}$ is $W$-equivariant we thus have $\overline{\mathcal C} = \pi(\mathcal C)$, where $\mathcal C = \bigcup_{w \in W} w.C$. The set $\mathcal C \subset \mathfrak a$ is precisely the intersection of $\mathfrak a$ with the set which Kac calls the Tits cone in \cite[§ 3.12]{Kac90}; we will refer to it as the \Defn{unreduced Tits cone}. To summarize, our reduced Tits cone is the projection to $\overline{\mathfrak a}$ of the intersection of Tits cone (in the sense of Kac) with $\mathfrak a$, and it is geometrically isomorphic to the dual Tits cone (in the sense of Krammer) of the root basis $\left(\mathfrak{a}, (-|-), \check \Pi_{\rm nor}\right)$. If ${\bf A}$ is non-affine, then the latter is a root basis for $(W,S)$, and hence Proposition \ref{TitsInterior} and Corollary \ref{SphericalTitsCone} imply:
\begin{corollary}\label{TitsConeInterior} Assume that ${\bf A}$ is of non-spherical and non-affine type. Then the interior ${\rm Int}(\overline{\mathcal C})$ of the reduced Tits cone is an ideal polyhedral realization of the augmented Davis--Moussong poset $\Sigma^+_{\rm sph}(W,S)$, and its link complex $\mathbb S({\rm Int}(\overline{\mathcal C}))$ is an ideal polyhedral realization of the Davis--Moussong poset $\Sigma_{\rm sph}(W,S)$. In particular, both have the same underlying chamber system which is isomorphic to the chamber system of the Coxeter complex $|\Sigma(W,S)|$.\qed
\end{corollary}
We close this subsection by discussing various alternative descriptions of the reduced and unreduced Tits cone. These descriptions apply both in the affine and the non-affine case (although the reduced Tits cone is less interesting in the affine case).

Firstly, since every simple root reflection turns precisely one positive root negative, the unreduced Tits cone can be characterized as
\begin{equation}\label{TitsCone}
\CCC = \{X \in {\mathfrak a}\mid\alpha(X) \geq 0 \text{ for almost all }\alpha \in \Phi^+\};
\end{equation}
cf.\ \cite[Proposition~3.12(c)]{Kac90}. Since $\overline{\CCC} = \pi(\CCC)$ this also yields a description of the reduced Tits cone. Secondly we can obtain a description of the reduced and unreduced Tits cone in terms of the following hyperplane arrangements. 
\begin{definition} \label{overlinehDEF} Let $\alpha \in \Phi$ be a real root. Then $H_{\alpha} :=  \ker(\alpha|_{\mathfrak a}) \subset \mathfrak a$ and  $\ol{H}_{\alpha} := \pi(H_\alpha)\subset \ol{\mathfrak a}$  are called the \Defn{root hyperplanes} of $\alpha$ in $\mathfrak a$ and $\ol{\mathfrak a}$ respectively.
\end{definition}
Since $\alpha|_{\mathfrak a} \neq 0$ for all $\alpha \in \Phi$, the subspaces $H_\alpha$ are indeed hyperplanes, and since by \eqref{KMAcenter} one has
\[
\mathfrak c \cap \mathfrak a = \bigcap_{i=1}^n H_{\alpha_i},
\]
the subspaces $\ol{H}_\alpha$ are hyperplanes as well. By definition, $H_\alpha$ and $\ol{H}_\alpha$ are precisely the fixpoint sets of the roots reflections ${\rho}_{KM}(\check r_\alpha)$ and $\ol{\rho}_{KM}(\check r_\alpha)$ respectively. We refer to elements in the unions
 \[{\mathfrak{a}}^{\mathrm{sing}} :=  \bigcup_{\alpha \in \Phi} {H}_\alpha, \quad \text{respectively}\quad\ol{\mathfrak{a}}^{\mathrm{sing}} :=  \bigcup_{\alpha \in \Phi} \ol{H}_\alpha\]
 as \Defn{singular points} of $\mathfrak a$ and $\ol{\mathfrak a}$ respectively. Non-singular points are called \Defn{regular points} and we write 
\[{\mathfrak{a}}^{\mathrm{reg}} := {\mathfrak{a}} \setminus {\mathfrak{a}}^{\mathrm{sing}}, \; \mathcal C^{\rm reg} := \mathcal C \cap \mathfrak a^{\rm reg} \quad \text{and} \quad \ol{\mathfrak{a}}^{\mathrm{reg}} := \ol{\mathfrak{a}} \setminus \ol{\mathfrak{a}}^{\mathrm{sing}}, \; \ol{\mathcal C}^{\rm reg} := \ol{\mathcal C} \cap \ol{\mathfrak a}^{\rm reg}.\]  
Since the fundamental chamber is bounded by root hyperplanes, and the arrangement of root hyperplanes is $W$-invariant by construction, we deduce that the connected components of $\mathcal C^{\mathrm{reg}}$ and $\ol{\mathcal C}^{\mathrm{reg}}$ are precisely the open chambers of the respective Tits cones. We thus refer to these connected components as \Defn{open Tits chambers} and to their closures as \Defn{closed Tits chambers}.

Note that the hyperplanes bounding the fundamental chamber are preicsely the root hyperplanes $\ol{H}_{\alpha_i}$ corresponding to the simple roots. Thus if we fix a chamber $C_o$ of $|\Sigma(W,S)|$ and denote by $H_i$ the face of $C_o$ labelled by the element $\check r_{\alpha_i}\in S$, then we can restate Corollary \ref{TitsConeInterior} as follows:
\begin{corollary}\label{TitsCoxeterIso} If ${\bf A}$ is of non-spherical and non-affine type, then there is a unique incidence-preserving bijection
$\ol{\phi}$ between the set of chambers and co-dimension $1$ faces of the Coxeter complex $|\Sigma(W,S)|$ and the set of chambers and co-dimension one faces of the interior of the reduced Tits cone (respectively, its link complex) such that the following hold:
\begin{enumerate}
\item $\ol{\phi}$ is $\ol{\rho}_{KM}(W)$-equivariant and inclusion preserving.
\item $\ol{\phi}$ maps the chamber $C_o$ of $|\Sigma(W,S)|$ to the fundamental chamber $\overline{C}$ of $\overline{\CCC}$ (respectively, to $\mathbb S(\overline{C})$).
\item $\ol{\phi}(H_i) = \ol{H}_{\alpha_i}$ (respectively, $\ol{\phi}(H_i) =\mathbb S(\ol{H}_{\alpha_i})$).\qed
\end{enumerate}
\end{corollary}

\subsection{Automorphisms of the unreduced and reduced Tits cone}\label{AppendixCoxeterAut}

Keep the assumption that ${\bf A}$ be a symmetrizable irreducible generalized Coxeter matrix with associated Coxeter system $(W,S)$ and denote by $\Sigma := |\Sigma(W,S)|$ the underlying Coxeter complex. We are interested in the group $\Aut(\Sigma)$ of simplicial automorphisms of the Coxeter complex, which do not necessarily preserve the colouring. Equivalently, one can think of $\Aut(\Sigma)$ as the automorphisms of the Cayley graph ${\rm Cay}(W,S)$ (not necessarily preserving the edge colouring). Denote by $\Aut(W,S) < \Aut(W)$ the subgroup of automorphisms of $W$ which preserve $S$ as a set. This subgroup acts faithfully by automorphisms on the Cayley graph of $(W,S)$ and thus $\Aut(W, S) < \Aut(\Sigma)$. Also, $W$ acts by automorphisms on $\Sigma$ and thus can be considered as a subgroup of $\Aut(\Sigma)$. 
\begin{lemma}[{\cite[3.34, 3.35]{AbramenkoBrown2008}}]\label{AutWS} The automorphism group $\Aut(\Sigma) $ splits as a semidirect product $\Aut(\Sigma) = W \rtimes \Aut(W,S)$. Moreover, $\Aut(W,S)$ is isomorphic to the group of automorphisms of the Coxeter diagram of $(W,S)$.\qed
\end{lemma}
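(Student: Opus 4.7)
The plan is to proceed in two stages: first exhibit the semidirect product decomposition, then identify the complement with the Coxeter diagram automorphism group. Throughout, write $\Sigma = \Sigma(W,S)$ and fix the fundamental chamber $C_o$; recall that $W$ acts simply transitively on the chambers of $\Sigma$, so we obtain an embedding $W \hookrightarrow {\rm Aut}(\Sigma)$.

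First I would establish the decomposition ${\rm Aut}(\Sigma) = W \cdot {\rm Aut}(\Sigma)_{C_o}$. Given any $\phi \in {\rm Aut}(\Sigma)$, the image $\phi(C_o)$ is a chamber, so by simple transitivity of $W$ there is a unique $w \in W$ with $w^{-1}\phi(C_o) = C_o$; then $\alpha := w^{-1}\phi$ lies in ${\rm Aut}(\Sigma)_{C_o}$. Uniqueness of this factorization follows again from $W$ acting simply transitively on chambers, which also shows $W \cap {\rm Aut}(\Sigma)_{C_o} = \{e\}$. To upgrade this to a semidirect product, I would check that $W$ is normal in ${\rm Aut}(\Sigma)$: any $\phi \in {\rm Aut}(\Sigma)$ permutes the reflections of $W$ (they are the fixators of walls), so conjugation preserves $W$. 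Consequently
\[
{\rm Aut}(\Sigma) = W \rtimes {\rm Aut}(\Sigma)_{C_o}.
\]

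Next I would identify ${\rm Aut}(\Sigma)_{C_o}$ with ${\rm Aut}(W,S)$. An automorphism $\alpha$ fixing $C_o$ permutes the $n$ panels (codimension-one faces) of $C_o$, which are in canonical bijection with $S$ via $s \mapsto \{C_o, sC_o\} \cap C_o$. This yields a homomorphism $\Psi: {\rm Aut}(\Sigma)_{C_o} \to {\rm Sym}(S)$. The map $\Psi$ is injective because any chamber $wC_o$ is encoded by the sequence of panels crossed along a minimal gallery from $C_o$ to $wC_o$, and $\alpha$ maps galleries to galleries; hence knowing $\Psi(\alpha)$ determines $\alpha$ on every chamber, and then on every simplex. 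Its image lies in ${\rm Aut}(W,S)$: if $\alpha$ permutes panels by $\sigma \in {\rm Sym}(S)$, then for $s, t \in S$ the restriction of $\alpha$ to the rank-two residue through $C_o$ of type $\{s,t\}$ carries the $2m_{st}$-gon determined by $(s,t)$ to the one determined by $(\sigma(s),\sigma(t))$, forcing $m_{\sigma(s)\sigma(t)} = m_{st}$; hence $\sigma$ preserves the Coxeter matrix and so extends to an automorphism of $(W,S)$. Conversely, any diagram automorphism $\sigma$ induces an automorphism $\tilde\sigma$ of $W$ which permutes chambers $wC_o \mapsto \tilde\sigma(w)C_o$ and this map is readily checked to be a simplicial automorphism of $\Sigma$ fixing $C_o$.

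The main obstacle will be the faithfulness of $\Psi$ and the verification that a diagram automorphism really does induce a simplicial automorphism of $\Sigma$ (rather than merely a permutation of chambers). For the former, the key input is the standard fact that $\Sigma$ is a chamber complex and adjacent chambers share exactly one panel, so that the entire simplicial structure is reconstructed from the chamber graph together with the panel-labelling; for the latter, one needs to observe that the simplices of $\Sigma$ correspond to cosets $w W_J$ for subsets $J \subset S$, and a diagram automorphism $\sigma$ maps $wW_J \mapsto \tilde\sigma(w)W_{\sigma(J)}$ preserving inclusion, hence is simplicial. Combining this with the previous paragraph yields ${\rm Aut}(\Sigma)_{C_o} \cong {\rm Aut}(W,S)$, which coincides with the automorphism group of the Coxeter diagram by the standard presentation of $(W,S)$ in terms of that diagram.
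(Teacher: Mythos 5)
Your proof is correct and follows essentially the same route as the paper's source for this lemma (the paper gives no proof of its own, citing \cite[3.34, 3.35]{AbramenkoBrown2008}, where exactly this argument — the chamber-transitivity factorization, normality of $W$ via the wall/reflection characterization, and the identification of the chamber stabilizer with the diagram automorphisms via the panel permutation and the coset description of simplices — is carried out). No gaps to report.
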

Now assume that ${\bf A}$ is non-affine so that the reduced Kac--Moody representation $\ol{\rho}_{KM}: W \to \GL(\ol{\mathfrak a})$ is faithful (Corollary \ref{CorollaryRedKMRepFaithful}). Every diagram automorphism $\alpha \in \Aut(W,S)$ then corresponds to a permutation of the walls of the fundamental chamber which preserves angles, and any such permutation can be realized by a unique linear map $\ol{\alpha}$ of the ambient vector space $\ol{\mathfrak a}$. One thus obtains a monomorphism 
\[
\ol{\rho}: \Aut(\Sigma) = W \rtimes \Aut(W,S) \to \GL(\ol{\mathfrak a})
\]
which maps each diagram automorphism $\alpha$ to $\ol{\alpha}$ and restricts to  $\ol{\rho}_{KM}$ on $W$. Refer to $\ol{\rho}$ as the \Defn{canonical linear realization} of $\Aut(\Sigma)$ over $\ol{\mathfrak a}$. By construction, this representation takes values in the group
\[
\GL(\ol{\mathfrak a} , \ol{\mathfrak a}^{\rm sing}) := \{f \in \GL(\ol{\mathfrak a}) \mid f( \ol{\mathfrak a}^{\rm sing}) =  \ol{\mathfrak a}^{\rm sing}\}
\]
of those linear automorphisms of $\ol{\mathfrak a}$ which preserve the hyperplane arrangement $\ol{\mathfrak a}^{\rm sing}$.

The semidirect product $\Aut(\Sigma) = W \rtimes \Aut(W,S)$ certainly also preserves the non-degenerate symmetric bilinear form $(-|-)$ on $\ol{\mathfrak a}$ from Section~\ref{bases}. One concludes that the representation $\ol{\phi}$ actually takes values in
\[
{\rm O}(\ol{\mathfrak a} , \ol{\mathfrak a}^{\rm sing}) := {\rm O}(\ol{\mathfrak a},(-|-)) \cap \GL(\ol{\mathfrak a} , \ol{\mathfrak a}^{\rm sing}).
\]
Both the hyperplane arrangement and the bilinear form are also invariant under $-{\rm Id}_{\ol{\mathfrak a}}$, which may or may not be contained in the image of $\ol{\rho}$. One can thus extend the canonical linear realization to a homomorphism
\[
\ol{\rho}:  \Aut(\Sigma) \times \Z/2\Z \to {\rm O}(\ol{\mathfrak a} , \ol{\mathfrak a}^{\rm sing}),
\]
by letting the generator of $\Z/2\Z$ act by $-{\rm Id}_{\ol{\mathfrak a}}$. One then has the following rigidity result, which was pointed out to us by Bernhard~M\"uhlherr.

\begin{proposition}[M\"uhlherr, personal communication] \label{autocoxeter}
Let $\mathbf{A}$ be a non-affine irreducible symmetrizable generalized Cartan matrix of size $n \times n$ with $n \geq 2$, let $(W,S)$ be the associated Coxeter system and let $\Sigma $ be an associated Coxeter complex. Then the canonical linear realization defines a surjective homomorphism
\[
\ol{\rho}: \Aut(\Sigma) \times \Z/2\Z \to {\rm O}(\ol{\mathfrak a} , \ol{\mathfrak a}^{\rm sing}).
\]
If $-{\rm Id}_{\ol{\mathfrak a}} \not \in \ol{\rho}(\Aut(\Sigma))$, then this map is an isomorphism.
\end{proposition}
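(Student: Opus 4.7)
The plan is to prove surjectivity by a step-by-step normalisation argument, reducing an arbitrary $f \in \mathrm{GL}(\overline{\mathfrak{a}}, \overline{\mathfrak{a}}^{\mathrm{sing}})$ to the identity through composition with elements in the image, and then to read off injectivity from the faithfulness of $\overline{\rho}|_{\mathrm{Aut}(\Sigma)}$ together with the hypothesis on $-\mathrm{Id}$.

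For surjectivity, let $f \in \mathrm{GL}(\overline{\mathfrak{a}}, \overline{\mathfrak{a}}^{\mathrm{sing}})$. The crucial first step would be to show that $f$ preserves $\mathcal{C}^{o} \cup (-\mathcal{C}^{o})$. The idea is to characterise this set intrinsically from the arrangement: by classical results of Vinberg and Tits on the Tits cone, $\mathcal{C}^{o} \cup (-\mathcal{C}^{o})$ is precisely the open locus on which the family of reflection hyperplanes is locally finite, whereas outside it the hyperplanes accumulate on the imaginary cone. This is a topological invariant of the pair $(\overline{\mathfrak{a}}, \overline{\mathfrak{a}}^{\mathrm{sing}})$ and is therefore preserved by $f$. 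In the non-affine non-spherical case $\mathcal{C} \cap (-\mathcal{C}) = \{0\}$, so $\mathcal{C}^{o}$ and $-\mathcal{C}^{o}$ are the two connected components of this locus; composing with $-\mathrm{Id}$ if needed, one may assume $f(\mathcal{C}^{o}) = \mathcal{C}^{o}$. Then $f$ permutes the Tits chambers $\{wC\}_{w \in W}$, and by the simply transitive action of $\overline{\rho}_{KM}(W)$ on these, composition with a suitable $\overline{\rho}_{KM}(w^{-1})$ further reduces to the case $f(C) = C$.

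At this stage $f$ induces a permutation $\sigma$ of the $n$ walls $\overline{H}_{\alpha_i}$ of $C$. To show $\sigma \in \mathrm{Aut}(W,S)$, I would examine the rank-two pattern at a codimension-two face $\overline{H}_{\alpha_i} \cap \overline{H}_{\alpha_j}$: the linear bijection $f$ sends the local arrangement there --- a dihedral cluster of $2m_{ij}$ chambers if $m_{ij} < \infty$, an accumulating family otherwise --- to the analogous pattern at $\overline{H}_{\alpha_{\sigma(i)}} \cap \overline{H}_{\alpha_{\sigma(j)}}$, forcing $m_{ij} = m_{\sigma(i)\sigma(j)}$. Composing with $\overline{\rho}(\sigma)^{-1}$, the remaining $f$ preserves each $\overline{H}_{\alpha_i}$ setwise. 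Dually $f^{\ast}(\overline{\alpha}_i) = \mu_i \overline{\alpha}_i$ with $\mu_i > 0$ (using $f(C) = C$); applying the same preservation condition to the wall $\overline{H}_{s_j \alpha_i} = \ker(\overline{\alpha}_i - a_{ji}\overline{\alpha}_j)$ whenever $a_{ij} \neq 0$ forces $\mu_i = \mu_j$, and irreducibility of $\mathbf{A}$ propagates this to a single scalar $\mu$. Finally, the requirement that $f$ permute the discrete root system compatibly with the combinatorial structure it preserves pins down $\mu = 1$, yielding $f = \mathrm{Id}$.

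For injectivity, any element $(g, \epsilon) \in \mathrm{Aut}(\Sigma) \times \mathbb{Z}/2\mathbb{Z}$ in $\ker \overline{\rho}$ must satisfy $\overline{\rho}(g) = (-\mathrm{Id})^\epsilon$; faithfulness of $\overline{\rho}|_{\mathrm{Aut}(\Sigma)}$, guaranteed by Corollary~\ref{CorollaryRedKMRepFaithful} together with the fact that diagram automorphisms act faithfully on the set of walls of $C$, combined with the hypothesis $-\mathrm{Id} \notin \overline{\rho}(\mathrm{Aut}(\Sigma))$, forces $g = 1$ and $\epsilon = 0$. The most delicate part of the argument is the first step --- intrinsically recognising the Tits cone from the hyperplane arrangement alone --- since this is exactly where the non-spherical non-affine hypothesis truly bites (in the spherical case the Tits cone is all of $\overline{\mathfrak{a}}$, in the affine case it is a half-space, and in neither does the local-finiteness criterion identify a canonical two-component locus).
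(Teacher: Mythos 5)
Your overall strategy is genuinely different from the paper's. The paper never tries to show first that $f$ preserves the Tits cone; instead it conjugates the Coxeter system by $\phi$, observes that each wall $\overline{H}_i$ of the fundamental chamber is the fixed-point set of a unique reflection $\widetilde{\sigma}_i\in\overline{W}^\phi$, shows $\widetilde{\sigma}_i=\sigma_i$ by comparing their action on the $i$-adjacent chamber, concludes that $\phi$ normalizes $\overline{\rho}_{KM}(W)$ and preserves reflections, and only then invokes transitivity of $W$ on root bases to split off a diagram automorphism. Your route --- normalize $f$ step by step until it becomes a scalar --- is in principle attractive, but it has two genuine gaps.

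The first and more serious one is the step you yourself flag as delicate: the claim that $\mathcal{C}^{o}\cup(-\mathcal{C}^{o})$ is \emph{precisely} the locus where the hyperplane family is locally finite is false. Already in the rank-two non-affine case ($n=2$, $m_{12}=\infty$) every reflection hyperplane is a line contained in $\overline{\mathcal{C}}\cup(-\overline{\mathcal{C}})$, so the arrangement is locally finite (indeed empty) on the two open ``spacelike'' wedges outside the closed double cone; the local-finiteness locus is the complement of the two isotropic lines and has four components, not two. In higher rank the hyperplanes do leave the double cone, but there is still no reason for local finiteness to fail everywhere outside it. Since everything downstream (the reduction to $f(C)=C$, the rank-two wall analysis, the diagram automorphism) depends on this identification, the argument does not get off the ground as written. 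Some intrinsic recognition of the double Tits cone from the pair $(\overline{\mathfrak a},\overline{\mathfrak a}^{\rm sing})$ may well be salvageable (e.g.\ via the set of accumulation points of the arrangement together with which complementary components actually meet $\overline{\mathfrak a}^{\rm sing}$), but that is a different argument and would need to be carried out; the paper sidesteps the issue entirely.

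The second gap is the endgame. You correctly reduce (modulo the first gap) to $f=\mu\,{\rm Id}$ with $\mu>0$, but the appeal to ``the requirement that $f$ permute the discrete root system'' is not available: an element of ${\rm GL}(\overline{\mathfrak a},\overline{\mathfrak a}^{\rm sing})$ is only required to preserve the union of the hyperplanes, and every positive scalar does so while acting on the dual root directions by rescaling; the roots themselves are not part of the preserved data. So $\mu=1$ does not follow. To be fair, your normalization here exposes a weakness that is also present in the last sentence of the paper's own proof (``they map each hyperplane to the same hyperplane; this is only possible if they are equal or differ by a global minus sign'' fails for scalars), but your proposal neither resolves it nor identifies it --- it papers over it with an appeal to structure that the hypothesis does not provide.
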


\begin{proof} Let $\phi \in \GL(\ol{\mathfrak a} , \ol{\mathfrak a}^{\rm sing})$. First establish that $\phi$ normalizes $\ol{W} := \ol{\rho}_{KM}(W)$ and that conjugation by $\varphi$ preserves reflections in $\ol{W}$. To this end, as before denote by $\sigma_i := \ol{\rho}_{KM}(r_{\alpha_i})$ the orthogonal reflection at the hyperplane $\ol{H}_i := \ol{H}_{\alpha_i}$. Recall that the hyperplanes $\ol{H}_1, \dots, \ol{H}_n$ bound the fundamental chamber $C \in \CCC$. 

 Since the pair $(\ol{W}, \{\sigma_1, \dots, \sigma_{n}\})$ is a Coxeter system, its conjugate $(\ol{W}^\phi, \{\sigma_1^\phi, \dots, \sigma_n^\phi\})$ by $\phi$ is also a Coxeter system. Each $\sigma_i^\phi$ is a reflection, because it has a $1$-eigenspace of codimension $1$ and is of order $2$. It follows that all reflections of the Coxeter system $(\ol{W}^\phi, \{\sigma_1^\phi, \dots, \sigma_n^\phi\})$ act by reflections on $\ol{\mathfrak{a}}$. These reflections preserve $\ol{\mathfrak{a}}^{\mathrm{sing}}$, since $\phi$ does. Moreover, every hyperplane in $\ol{\mathfrak{a}}^{\mathrm{sing}}$ is the set of fixed points of a unique reflection in $\ol{W}^\phi$, since $\phi(\ol{\mathfrak{a}}^{\mathrm{sing}}) = \ol{\mathfrak{a}}^{\mathrm{sing}}$. In particular for every $i=1, \dots, n$ there is a unique reflection $\widetilde{\sigma}_i$ in $\ol{W}^\phi$ with fixed-point set $\ol{H}_i$. 

Note that, by definition, $\widetilde{\sigma}_i$ exchanges $i$-adjacent Tits chambers. In particular, both $\sigma_i$ and $\widetilde{\sigma}_i$ map the fundamental chamber $C$ to its unique $i$-adjacent chamber. It follows that for $i=1, \dots, n$ the linear map $ \widetilde{\sigma_i} \sigma_i^{-1}$ preserves the hyperplane $\ol{H}_i$ pointwise and the fundamental chamber $C$ setwise. Since $\mathbf{A}$ is irreducible with $n \geq 2$, the product $\widetilde{\sigma_i} \sigma_i^{-1}$ therefore fixes a basis of $\ol{\mathfrak a}$ and hence $\widetilde{\sigma_i} =  \sigma_i$ for all $i = 1, \dots, n$. In particular, $\ol{W} =  \langle \widetilde{\sigma}_1, \dots, \widetilde{\sigma}_n \rangle$ is a subgroup of $\ol{W}^\phi$.

The reflections $\{\widetilde{\sigma}_1, \dots \widetilde{\sigma}_n\}$ actually generate $\ol{W}^\phi$. Indeed, since $\ol{W}^\phi$ is generated by reflections at certain hyperplanes $\ol{H}_\alpha$, it will suffice to show that $\ol{W} = \langle \widetilde{\sigma}_1, \dots, \widetilde{\sigma}_n \rangle$ contains reflections at all such hyperplanes. Since $\ol{W}$ acts sharply transitively on the Coxeter complex of $(W,S)$, it acts sharply transitively on chambers in the reduced Tits cone. In particular, it contains reflections at all hyperplanes in $\ol{\mathfrak a}^{\rm sing}$ which intersect the Tits cone. Since in fact every wall in $\ol{\mathfrak a}^{\rm sing}$ intersects the Tits cone, one deduces that $\ol{W}  = \ol{W}^\phi$.

That is, $\phi$ normalizes $\ol{W}$. Moreover, $\phi$ maps fundamental reflections, and thus arbitrary reflections, to reflections. If $\phi \in {\rm O}(\ol{\mathfrak a} , \ol{\mathfrak a}^{\rm sing})= {\rm O}(\ol{\mathfrak a},(-|-)/\ker(\pi)) \cap \GL(\ol{\mathfrak a} , \ol{\mathfrak a}^{\rm sing})$, then by \cite[Theorem~1.2]{Howlett/Rowley/Taylor:1997} for any root basis $\Pi$ there exists $w \in W$ such that $\phi(\Pi) = \pm w\Pi$.  (Note that this theorem applies by Remark \ref{HowlettApplies}.)

%
%
One may thus assume that  the automorphism $\ol{\phi}$ of $\ol{W}$ induced by conjugation with $\phi$ stabilizes $S$ and thus induces an automorphism $\alpha$ of $\Sigma$. Then $\ol{\rho}(\alpha)$ agrees with $\varphi$ up to $\lambda{\rm Id}_{\ol{\mathfrak a}}$ for $\lambda \in \mathbb{R} \backslash \{ 0 \}$. Indeed, by definition $\varphi$ and $\ol{\rho}(\alpha)$ are both linear maps preserving the hyperplane arrangement $\ol{\mathfrak a}^{\rm sing}$ and (since every hyperplane intersects the Tits cone) they map each hyperplane in $\ol{\mathfrak a}^{\rm sing}$ to the same hyperplane. This is only possible if they are linear multiples of one another, that is, if they coincide up to multiplication with $\pm{\rm Id}_{\ol{\mathfrak a}}$, by orthogonality.
%
\end{proof}
\begin{remark}\label{SphericalAffineNonTrichotomy}
For an irreducible symmetrizable generalized Cartan matrix ${\bf A}$ of size $\geq 2$ one has the following trichotomy concerning the isomorphy type of ${\rm O}(\ol{\mathfrak a} , \ol{\mathfrak a}^{\rm sing})$:
\begin{enumerate}
\item If ${\bf A}$ is spherical, then $\ol{\CCC} = \ol{\mathfrak a}$ and thus $-{\rm Id}_{\ol{\mathfrak a}} \in \ol{\rho}(\Aut(\Sigma))$. In this case, $\ol{\rho}$ yields an isomorphism $ \Aut(\Sigma)\cong  {\rm O}(\ol{\mathfrak a} , \ol{\mathfrak a}^{\rm sing})$.
\item If ${\bf A}$ is non-spherical and non-affine, then by \eqref{TitsCone} we have
\[
\ol{\CCC} \cap (- \ol{\CCC}) = \{\pi(X) \in \ol{\mathfrak a}\mid X \in \mathfrak a, \alpha(X) = 0 \text{ for almost all }\alpha \in \Phi^+\} = \{0\},
\]
i.e.\ the reduced Tits cone and its negative only meet at their tips. The action of $\ol{\rho}(\Aut(\Sigma))$ on $\ol{\mathfrak a}$ preserves the two cones $\ol{\CCC}$ and $-\ol{\CCC}$, whereas $-{\rm Id}_{\ol{\mathfrak a}}$ exchanges the two cones. In particular, $\ol{\rho}$ induces an isomorphism
\[
 \Aut(\Sigma) \times \Z/2\Z \cong  {\rm O}(\ol{\mathfrak a} , \ol{\mathfrak a}^{\rm sing})
\]
in this case.
\item If ${\bf A}$ is affine, then the action of $W$ on $\ol{\mathfrak a}$ is not faithful, and the $W$-module $\ol{\mathfrak a}$ is given by the Kac--Moody representation of the underlying spherical Coxeter system $(W_o, S_o)$. In this case one, thus, has $ {\rm O}(\ol{\mathfrak a} , \ol{\mathfrak a}^{\rm sing}) \cong \Aut(\Sigma(W_o, S_o))$ by (i).
\end{enumerate}
\end{remark}
The proof of Proposition \ref{autocoxeter} implies the following statement concerning arbitrary linear automorphisms preserving the hyperplane complement $\ol{\mathfrak a}^{\rm sing}$.
\begin{corollary} \label{rescalingtimesweyl} Let $\mathbf{A}$ be a non-affine irreducible symmetrizable generalized Cartan matrix of size $n \times n$ with $n \geq 2$, let $(W,S)$ be the associated Coxeter system and let $\Sigma$ be an associated Coxeter complex. Then every $\psi \in \GL(\ol{\mathfrak a}, \ol{\mathfrak a}^{\rm sing})$ can be written as $\psi = T_\lambda \circ \phi = \phi \circ T_\lambda$, where $\lambda \in \R^\times$, $T_\lambda$ is the homothety $x \mapsto \lambda x$ and $\phi \in \ol{\rho}(\Aut(\Sigma))$ is induced by an automorphism of $\Sigma$.\qed
\end{corollary}

\subsection{The canonical homothety class of bilinear forms of $(\ol{\mathfrak a},\ol{\mathfrak a}^{\rm sing})$}
We keep the notation of the previous subsection. The bilinear form $(-|-)$ on $\ol{\mathfrak a}$ is non-degenerate and invariant under the reduced Kac--Moody representation of the Weyl group. Moreover, reflections of the Weyl group act by reflections along the hyperplanes contained in $\ol{\mathfrak a}^{\rm sing}$ with respect to this bilinear form. Certainly, any non-zero multiple of this invariant form also satisfies these properties. The following proposition states that this actually characterizes this homothety class of bilinear forms. 

\begin{proposition} \label{uniquehomothety}
Let $(W,S)$ be a non-spherical non-affine irreducible Coxeter system, let $\ol{\mathfrak{a}}$ be the $W$-module afforded by the reduced Kac--Moody representation and let $b : \ol{\mathfrak a} \times \ol{\mathfrak a} \to \mathbb{R}$ be a nondegenerate symmetric bilinear form with the property that the reflections of the Weyl group act on $\ol{\mathfrak a}$ as $b$-orthogonal maps. Then $b$ is a multiple of the bilinear form $(-|-)$.   
\end{proposition}

\begin{proof}
  Let $s \in S$ and let $\sigma_s \in W$ be a simple reflection. Then by hypothesis the $(-|-)$-orthogonal eigenspace decomposition $\ol{\mathfrak a} = E_1(\sigma_s) \oplus E_{-1}(\sigma_s)$ is also $b$-orthogonal.
  Given two reflections $\sigma_s$ and $\sigma_t$ with orthogonal eigenspace decompositions $E_1(\sigma_s) \oplus E_{-1}(\sigma_s) = \ol{\mathfrak a} = E_1(\sigma_t) \oplus E_{-1}(\sigma_t)$, we conclude that the $(-|-)$-orthogonal projection of $E_{-1}(\sigma_s)$ onto $E_1(\sigma_t)$ also is $b$-orthogonal. By induction there exists a decomposition of $\ol{\mathfrak a}$ into a direct sum of one-dimensional subspaces that is both $(-|-)$-orthogonal and $b$-orthogonal. With other words, there exists a $(-|-)$-orthogonal basis $(b_i)_{1 \leq i \leq \dim(\ol{\mathfrak a})}$ of $\ol{\mathfrak a}$ that is also a $b$-orthogonal basis. We conclude that, with respect to this basis $(b_s)_{s \in T}$, the forms $b$ and $\ol{\mathfrak a}$ only differ by rescaling with a diagonal matrix $\mathrm{diag}(\lambda_1,\ldots,\lambda_t)$.

  Since $W$ is irreducible, to any given pair of vectors $b_i$, $b_j$ in this basis there exists a reflection hyperplane $H$ in $\ol{\mathfrak a}^{\rm sing}$ that contains neither $b_i$ nor $b_j$. Repeating the above construction with the reflection of $W$ that has $H$ as eigenspace with respect to the eigenvalue $1$, one necessarily has $\lambda_i = \lambda_j$. The claim follows.
\end{proof}

\begin{corollary}\label{CanHomothetyClass}
The  homothety class $[(-|-)]$ is the unique homothety class of nondegenerate symmetric bilinear forms on $\ol{\mathfrak{a}}$ such that $\GL(\ol{\mathfrak a}, \ol{\mathfrak a}^{\rm sing})$ is a subgroup of the group of linear similarities of that class.
\end{corollary}

\begin{proof}
By Corollary~\ref{rescalingtimesweyl} any element of $\GL(\ol{\mathfrak a}, \ol{\mathfrak a}^{\rm sing})$ is a scalar multiple of the image of an element of $\Aut(\Sigma) = W \rtimes \Aut(W,S)$ under its canonical linear realization over $\ol{\mathfrak a}$. By Proposition~\ref{uniquehomothety} this determines the homothety class $[(-|-)]$.
\end{proof}

\begin{definition}\label{DefCanHomothetyClass}
The unique homothety class of non-degenerate symmetric bilinear forms on $\ol{\mathfrak a}$ with the property that $\GL(\ol{\mathfrak a}, \ol{\mathfrak a}^{\rm sing})$ acts by linear similarities is called the \Defn{canonical homothety class of bilinear forms on $(\ol{\mathfrak a},\ol{\mathfrak a}^{\rm sing})$}.
\end{definition}

\begin{bibdiv}
\begin{biblist}
\bib{AbramenkoBrown2008}{book}{
   author={Abramenko, Peter},
   author={Brown, Kenneth S.},
   title={Buildings. Theory and Applications},
   publisher={Springer},
   address={Berlin},
   year={2008},
}

\bib{AbramenkoMuehlherr}{article}{
   author={Abramenko, Peter},
   author={M\"uhlherr, Bernhard},
   title={Pr\'esentations de certaines BN-paires jumel\'ees comme sommes amalgam\'ees.},
   journal={C.\ R.\ Acad.\ Sci.\ Paris S\'er.\ I Math.},
   date={1997},
   volume={325},
   pages={701--706},
}

\bib{Bardy-PanseGaussentRousseau}{article}{
  AUTHOR = {{Bardy-Panse}, Nicole},
  author = {Gaussent, St\'ephane},
  author = {Rousseau, Guy},
     TITLE = {Iwahori--{H}ecke algebras for {K}ac--{M}oody groups over local
              fields},
   JOURNAL = {Pacific J. Math.},
    VOLUME = {285},
      YEAR = {2016},
     PAGES = {1--61},
}


\bib{BorelTits65}{article}{
   author={Borel, Armand},
   author={Tits, Jacques},
   title={Groupes r\'eductifs},
   journal={Inst. Hautes \'Etudes Sci. Publ. Math.},
   date={1965},
   number={27},
   pages={55--150},
}

\bib{BourbakiLie4-6}{book}{
    AUTHOR = {Bourbaki, Nicolas},
     TITLE = {Lie groups and {L}ie algebras. {C}hapters 4--6},
 PUBLISHER = {Springer-Verlag, Berlin},
      YEAR = {2002},
}

\bib{Caprace05}{article}{
   author={Caprace, Pierre-Emmanuel},
   title={Primitive symmetric spaces},
   journal={Bull. Belg. Math. Soc. Simon Stevin},
   date={2005},
   volume = {12},
   number={3},
   pages={321--328},
}

\bib{Caprace:2007}{article}{
   author={Caprace, Pierre-Emmanuel},
   title={On $2$-spherical Kac--Moody groups ad their central extensions},
   journal={Forum Math.},
   date={2007},
   volume = {19},
   pages={763--781},
}

\bib{Caprace09}{book}{
   author={Caprace, Pierre-Emmanuel},
   title={``Abstract'' homomorphisms of split Kac--Moody groups},
   series={Memoirs of the AMS},
   volume={198},
   date={2009}   
}

\bib{CFF}{unpublished}{
AUTHOR = {Carbone, Lisa},
AUTHOR = {Feingold, Alex},
AUTHOR = {Freyn, Walter},
TITLE = {A lightcone embedding of the twin building of a hyperbolic Kac--Moody group},
 NOTE= {{\tt arXiv:1606.05638}},
  year={2016},
}

\bib{CMR}{unpublished}{
  author = {Ciobotaru, Corina},
  author = {M\"uhlherr, Bernhard},
  author = {Rousseau, Guy},
  title = {The cone topology on masures},
  note = {{\tt arXiv:1703.00318}},
  year={2017},
}

\bib{Davis98}{incollection}{
    author = {Davis, Michael},
     title = {Buildings are {${\rm CAT}(0)$}},
     conference={
       title={Geometry and cohomology in group theory},
       address={Durham},
       date={1994},
     },
     book={
       series={London Math. Soc. Lecture Note Ser.},
       volume={252},
       publisher={Cambridge Univ. Press, Cambridge},
     },
     pages = {108--123},
      date = {1998},
}

\bib{DavisBook}{book}{
    AUTHOR = {Davis, Michael W.},
     TITLE = {The geometry and topology of {C}oxeter groups},
 PUBLISHEr = {Princeton University Press},
      YEAR = {2008},
     PAGES = {xvi+584},
      ISBN = {978-0-691-13138-2; 0-691-13138-4},
}

\bib{deMedtsGramlichHorn09}{article}{
   author={De Medts, Tom},
   author={Gramlich, Ralf},
   author={Horn, Max},
   title={Iwasawa decompositions of split {K}ac--{M}oody groups},
   journal={J.\ Lie Theory},
   volume={19},
   date={2009},
   number={2},
   pages={311--337},
}

\bib{DevillersMuehlherr}{article}{
  author={Devillers, Alice},
  author={M\"uhlherr, Bernhard},
  title={On the simple connectedness of certain subsets of buildings},
  journal={Forum Math.},
  volume={19},
  year={2007},
  pages={955--970},
}

\bib{FranklinThomas77}{article}{
   author={Franklin, Stanley},
   author={Thomas, Barbara},
   title={A survey of $k_\omega$-spaces},
   journal={Topology Proc.},
   date={1977},
   volume={2},
   pages={111--124},
}

\bib{FKN}{article}{
   author={Fleig, Philipp},
   author={Koehn, Michael},
    author={Nicolai, Hermann },
   title={On Fundamental Domains and Volumes of Hyperbolic Coxeter--Weyl Groups},
   journal={Lett. Math. Phys.},
   date={2012},
   volume={100(3)},
   pages={261--278},
}

\bib{Freyn}{incollection}{
    AUTHOR = {Freyn, Walter},
     TITLE = {Kac-{M}oody geometry},
 BOOKTITLE = {Global differential geometry},
    SERIES = {Springer Proc. Math.},
    VOLUME = {17},
     PAGES = {55--92},
 PUBLISHER = {Springer, Heidelberg},
      YEAR = {2012},
}

\bib{GabberKac81}{article}{
  author={Gabber, Ofer},
  author={Kac, Victor},
  title={On defining relations of certain infinite-dimensional Lie algebras},
  journal={Bull.\ Amer.\ Math.\ Soc.},
  volume={5},
  pages={185--189},
  year={1981},
}

\bib{GaussentRousseau08}{article}{
   author={Gaussent, St\'ephane},
   author={Rousseau, Guy},
   title={Kac--Moody groups, hovels and Littelmann paths},
   journal={Ann.\ Inst.\ Fourier},
   date={2008},
   volume={58},
   pages={2605--2657},
}

\bib{GaussentRousseau14}{article}{
  AUTHOR = {Gaussent, St\'ephane},
  author = {Rousseau, Guy},
     TITLE = {Spherical {H}ecke algebras for {K}ac--{M}oody groups over local
              fields},
   JOURNAL = {Ann. of Math.},
    VOLUME = {180},
      YEAR = {2014},
     PAGES = {1051--1087},
}

\bib{GramlichHornMuehlherr}{article}{
  AUTHOR = {Gramlich, Ralf},
  author = {Horn, Max},
  author = {M{\"u}hlherr, Bernhard},
     TITLE = {Abstract involutions of algebraic groups and of {K}ac--{M}oody
              groups},
   JOURNAL = {J. Group Theory},
    VOLUME = {14},
      YEAR = {2011},
     PAGES = {213--249},
}

\bib{Hainke/Koehl/Levy:2015}{article}{
  AUTHOR = {Hainke, Guntram},
  author = {K\"ohl, Ralf},
  author = {Levy, Paul},
     TITLE = {Generalized spin representations},
      NOTE = {With an appendix by Max Horn and Ralf K\"ohl},
   JOURNAL = {M\"unster J. Math.},
    VOLUME = {8},
      YEAR = {2015},
     PAGES = {181--210},
}

\bib{HartnickKoehl}{article}{
  AUTHOR = {Hartnick, Tobias},
  author = {K\"ohl, Ralf},
     TITLE = {Two-spherical topological {K}ac--{M}oody groups are {K}azhdan},
   JOURNAL = {J. Group Theory},
    VOLUME = {18},
      YEAR = {2015},
     PAGES = {649--654},
}

\bib{HartnickKoehlMars}{article}{
   author={Hartnick, Tobias},
   author={K{\"o}hl, Ralf},
   author={Mars, Andreas},
   title={On topological Kac--Moody groups and their twin buildings},
   journal={Innov. Incidence Geom.},
   volume={13},
   date={2013},
   pages={1--71},
}

\bib{Heintze}{article}{
    AUTHOR = {Heintze, Ernst},
     TITLE = {Toward symmetric spaces of affine {K}ac-{M}oody type},
   JOURNAL = {Int. J. Geom. Methods Mod. Phys.},
    VOLUME = {3},
      YEAR = {2006},
    NUMBER = {5-6},
     PAGES = {881--898},
}

\bib{HelminckWang93}{article}{
   author={Helminck, Aloysius},
   author={Wang, Shu-Ping},
   title={On rationality properties of involutions of reductive groups},
   journal={Adv. Math.},
   volume={99},
   date={1993},
   number={1},
   pages={26--96},
}
\bib{HilgertNeeb12}{book}{
   author={Hilgert, Joachim},
   author={Neeb, Karl-Hermann},
   title={Structure and geometry of Lie groups},
   publisher={Springer, New York},
   date={2012},
   pages={x+744},
   isbn={978-0-387-84793-1},
   isbn={978-0-387-84794-8},
   doi={10.1007/978-0-387-84794-8},
}

\bib{HilgertOlafsson}{book}{
      AUTHOR = {Hilgert, Joachim},
      AUTHOR= {\'Olafsson, Gestur},
     TITLE = {Causal symmetric spaces},
 PUBLISHER = {Academic Press, Inc., San Diego, CA},
      YEAR = {1997},
     PAGES = {xvi+286},
}

\bib{Horn:Decomp}{misc}{
   author={Horn, Max},
   title={Decompositions of split {Kac--Moody} groups},
   date={2017},
   note={{\tt  	arXiv:1708.05566}},
}

\bib{Howlett/Rowley/Taylor:1997}{article}{
  author = {Howlett, Robert},
  author = {Rowley, Peter},
  author = {Taylor, Donald},
  title = {On outer automorphism groups of Coxeter groups},
  journal = {Manuscripta Math.},
  volume = {93},
  pages = {499--513},
  year = {1997},
}

\bib{Kac90}{book}{
   author={Kac, Victor},
   title={Infinite-dimensional Lie algebras},
   edition={third edition},
   publisher={Cambridge University Press},
   date={1990},
   pages={xxii+400},
}

\bib{KacPeterson83}{incollection}{
   author={Kac, Victor},
   author={Peterson, Dale},
   title={Regular functions on certain infinite-dimensional groups},
   booktitle={Arithmetic and geometry, {V}ol. {II}},
   series={Progr. Math.},
   volume={36},
   journal={Ast\'erisque},
   date={1983},
   pages={141--166},
   publisher={Birkh\"auser Boston},
   address={Boston, MA},
}

\bib{KacPeterson84}{article}{
  AUTHOR = {Kac, Victor},
  author = {Peterson, Dale},
     TITLE = {Unitary structure in representations of infinite-dimensional
              groups and a convexity theorem},
   JOURNAL = {Invent. Math.},
    VOLUME = {76},
      YEAR = {1984},
     PAGES = {1--14},
}

\bib{KacPeterson85c}{article}{
   author={Kac, Victor},
   author={Peterson, Dale},
   title={Defining relations of certain infinite-dimensional groups},
   note={The mathematical heritage of \'Elie Cartan (Lyon, 1984)},
   journal={Ast\'erisque},
   date={1985},
   pages={165--208},
}

\bib{KacWang92}{article}{
   author={Kac, Victor},
   author={Wang, Shu-Ping},
   title={On automorphisms of Kac--Moody algebras and groups},
   journal={Adv.\ Math.},
   date={1992},
   volume={92},
   pages={129--195},
}

\bib{Kleiner/Leeb:1997}{article}{
  AUTHOR = {Kleiner, Bruce},
  author = {Leeb, Bernhard},
     TITLE = {Rigidity of quasi-isometries for symmetric spaces and
              {E}uclidean buildings},
   JOURNAL = {Inst. Hautes \'Etudes Sci. Publ. Math.},
      YEAR = {1997},
     PAGES = {115--197 (1998)},
}

\bib{Kostant73}{article}{
   author={Kostant, Bertram},
   title={On convexity, the Weyl group and the Iwasawa decomposition},
   journal={Ann.\ Sci.\ \'Ecole Norm.\ Sup.},
   date={1973},
   volume={6},
   pages={413--455},
}

\bib{Krammer94}{article}{
  author={Krammer, Daan},
  title={The conjugacy problem for Coxeter groups},
  journal={Groups Geom.\ Dyn.},
  volume={3},
  pages={71--171},
  year={2009},
}

\bib{Kumar02}{book}{
   author={Kumar, Shrawan},
   title={Kac--{M}oody groups, their flag varieties and representation theory},
   date={2002},
   pages={xvi+606},
   publisher={Birkh\"auser Boston Inc.},
   address={Boston, MA},
}

\bib{Lang1956}{article}{
   author={Lang, Serge},
   title={Algebraic groups over finite fields},
   journal={Amer.\ J.\ Math.},
   volume={78},
   date={1956},
   pages={555--563},
}

\bib{Lang1999}{book}{
   author={Lang, Serge},
   title={Fundamentals of differential geometry},
   publisher={Springer},
   address={Berlin},
   year={1999},
}

\bib{LawsonLim2007}{article}{
   author={Lawson, Jimmie},
   author={Lim, Yongdo},
   title={Symmetric spaces with convex metrics},
   journal={Forum Math.},
   volume={19},
   date={2007},
   pages={571--602},
}


\bib{Loos68}{article}{
   author={Loos, Ottmar},
   title={Reflexion spaces and homogeneous symmetric spaces},
   journal={Bull. Amer. Math. Soc.},
   volume={73},
   date={1967},
   pages={250--253},
   issn={0002-9904},
}

\bib{Loos67}{article}{
    AUTHOR = {Loos, Ottmar},
     TITLE = {Spiegelungsr\"aume und homogene symmetrische {R}\"aume},
   JOURNAL = {Math. Z.},
    VOLUME = {99},
      YEAR = {1967},
     PAGES = {141--170},
}

\bib{Loos69I}{book}{
   author={Loos, Ottmar},
   title={Symmetric spaces. I: General theory},
   publisher={W. A. Benjamin, Inc., New York-Amsterdam},
   date={1969},
   pages={viii+198},
}


\bib{Marquis:2015}{unpublished}{
  author = {Marquis, Timoth\'ee},
  title = {Around the {L}ie correspondence for complete {K}ac--{M}oody groups and the {G}abber--{K}ac simplicity},
  note= {{\tt arXiv:1509.01976v2}},
  year={2015},
}

\bib{Milne:1980}{book}{
    AUTHOR = {Milne, James S.},
     TITLE = {\'Etale cohomology},
 PUBLISHER = {Princeton University Press, Princeton, N.J.},
      YEAR = {1980},
}

\bib{Moussong}{book}{
 AUTHOR = {Moussong, Gabor},
  TITLE = {Hyperbolic Coxeter groups},
  note={PhD Thesis},
  publishers = {The Ohio State University},
  date = {1988}
}

\bib{Muehlherr/Ronan:1995}{article}{
 author={M{\"u}hlherr, Bernhard},
 author={Ronan, Mark},
 title={Local to global structure in twin buildings},
 journal={Invent.\ Math.},
 volume={122},
 date={1995},
 pages={71--81},
}

\bib{Neeb2002}{article}{
   author={Neeb, Karl-Hermann},
   title={A Cartan--Hadamard Theorem for Banach--Finsler manifolds},
   journal={Geom.\ Dedicata},
   volume={95},
   date={2002},
   pages={115--156},
}

\bib{Neeb2017}{misc}{
  author={Neeb, Karl-Hermann},
  title={On the geometry of standard subspaces},
  note={{\tt arXiv:1707.05506}},
  year={2017},
}
  
\bib{Oeh2017}{misc}{
  author={Oeh, Daniel},
  title={Antiunitary representations of symmetric spaces},
  address={University Erlangen--N\"urnberg},
  note={MSc Thesis},
  year={2017},
}

\bib{Remy02}{article}{
   author={R{\'e}my, Bertrand},
   title={Groupes de {K}ac--{M}oody d\'eploy\'es et presque d\'eploy\'es},
   JOURNAL = {Ast\'erisque},
    NUMBER = {277},
    YEAR = {2002},
    pages = {viii+348},
}

\bib{Richardson1982}{article}{
   author={Richardson, Roger},
   title={Orbits, invariants, and representations associated to involutions of reductive groups},
   journal={Invent.\ math.},
   volume={66},
   date={1982},
   pages={287--312},
}

\bib{Rousseau06}{article}{
    AUTHOR = {Rousseau, Guy},
     TITLE = {Groupes de {K}ac--{M}oody d\'eploy\'es sur un corps local,
              immeubles microaffines},
   JOURNAL = {Compos. Math.},
    VOLUME = {142},
      YEAR = {2006},
     PAGES = {501--528},
}

\bib{Rousseau:2011}{article}{
    AUTHOR = {Rousseau, Guy},
     TITLE = {Masures affines},
   JOURNAL = {Pure Appl. Math. Q.},
    VOLUME = {7},
      YEAR = {2011},
     PAGES = {859--921},
}

\bib{Tits87}{article}{
    AUTHOR = {Tits, Jacques},
     TITLE = {Uniqueness and presentation of {K}ac--{M}oody groups over
              fields},
   JOURNAL = {J. Algebra},
    VOLUME = {105},
      YEAR = {1987},
     PAGES = {542--573},
}

\end{biblist}
\end{bibdiv}

\end{document}